\numberwithin{section}{chapter}
\numberwithin{equation}{chapter}
\def\frontmatter{\clearpage\pagenumbering{roman}}
\def\mainmatter{\clearpage\pagenumbering{arabic}}
\DeclareMathAlphabet{\mathpzc}{OT1}{pzc}{m}{it}
\newcommand{\Hom}{\ensuremath{\mathrm{Hom}}}
\newcommand{\C}{\ensuremath{\mathscr{C}}}
\newcommand{\Cat}{\ensuremath{\mathbf{Cat}}}
\newcommand{\F}{\ensuremath{\mathcal{F}}}
\newcommand{\T}{\ensuremath{\mathcal{T}}}
\renewcommand{\L}{\ensuremath{\mathscr{L}}}
\newcommand{\tuple}{\ensuremath{ \F; \T \,|\, \E_\F\,;\,
    \E_\T}}
\newcommand{\Fr}{\ensuremath{\mathbb{F}}}
\newcommand{\N}{\ensuremath{\mathbb{N}}}
\newcommand{\lmb}{\ensuremath{\mathrm{lmb}}}
\newcommand{\of}{\ensuremath{\cdot}}
\newcommand{\M}{\ensuremath{\mathscr{M}}}
\newcommand{\D}{\ensuremath{\mathscr{D}}}
\newcommand{\Struct}{\ensuremath{\mathrm{Struct}}}
\newcommand{\n}{\ensuremath{\mathcal{N}}}
\newcommand{\tensor}{\ensuremath{\otimes}}
\renewcommand{\S}{\ensuremath{\mathbb{S}}}
\newcommand{\Rr}{\ensuremath{\mathbb{R}}}
\newcommand{\Shape}{\ensuremath{\mathrm{Shape}}}
\newcommand{\var}{\ensuremath{\mathrm{Var}}}
\newcommand{\Red}{\ensuremath{\mathrm{Red}}}
\newcommand{\R}{\ensuremath{\mathcal{R}}}
\newcommand{\Out}{\ensuremath{\mathrm{Out}}}
\newcommand{\Nat}{\ensuremath{\mathrm{Nat}}}
\newcommand{\op}{\ensuremath{\mathrm{op}}}
\newcommand{\Sing}{\ensuremath{\mathrm{Sing}}}
\newcommand{\Th}{\ensuremath{\mathrm{Th}}}
\newcommand{\diag}{\ensuremath{\bigtriangleup}}
\newcommand{\iso}{\ensuremath{\cong}}
\newcommand{\supp}{\ensuremath{\mathrm{supp}}}
\newcommand{\Cn}{\ensuremath{{\mathbb{C}_n}}}
\newcommand{\SCn}{\ensuremath{{\mathbb{SC}_n}}}
\newcommand{\Sub}{\ensuremath{\mathrm{Sub}}}
\newcommand{\V}{\ensuremath{\mathcal{V}}}
\newcommand{\dom}{\ensuremath{\mathrm{dom}}}
\newcommand{\sub}{\ensuremath{\mathrm{sub}}}
\newcommand{\ETh}{\ensuremath{\langle \F \,|\, \E_\F\rangle_\V}}
\newcommand{\ltrX}{\ensuremath{\langle \F; \T\, |\, \E_{\F}  \rangle_X}}
\newcommand{\ltr}{\ensuremath{\langle \F; \T\, |\, \E_{\F}  \rangle}}
\newcommand{\rtt}{\ensuremath{\langle\tuple\rangle}}
\newcommand{\E}{\ensuremath{\mathcal{E}}}
\newcommand{\cat}{\ensuremath{\langle \F; I(\E_\F) \,|\,
    \varnothing; \E_{I(\E_\F)}\rangle}}
\newcommand{\Span}{
  \ensuremath{
    \xymatrix@1{
      {u_1} & {s} \ar[l]_{\varphi_1}
      \ar[r]^{\varphi_2} & {u_2}}
  }
}
\newtheorem{theorem}{Theorem}[section]
\newtheorem{lemma}[theorem]{Lemma}
\newtheorem{definition}[theorem]{Definition}
\newtheorem{proposition}[theorem]{Proposition}
\newtheorem{corollary}[theorem]{Corollary}
\newtheorem{example}[theorem]{Example}
\newtheorem{q}{Question}
\newtheorem{project}[q]{Project}
\newcommand{\arrow}[2]{\ensuremath{
    \def\objectstyle{\scriptstyle}
    \def\labelstyle{\scriptstyle}
    \xymatrix@1{
      {#1} ~\to~  {#2}
    }
  }
}
\newcommand{\blanknonumber}{\newpage\thispagestyle{empty}}
\begin{document}
 
\frontmatter
\pagenumbering{roman}



\begin{center}
      \hrule height 4pt
    \vspace{4mm}
    \huge{
    \textbf{Coherence for rewriting $2$-theories}\\
  }
 
\vspace{2mm}
 \noindent\large{\bf
    General theorems with applications to presentations
    of Higman-Thompson groups and iterated monoidal categories.}
\end{center}
\vspace{4mm}
    \hrule height 4pt

\vspace{1in}
\begin{center}
  \Large{\textbf{Jonathan Asher Cohen}}
\end{center}

\vspace{1in}
\begin{center}
\large{
    A thesis submitted for the degree of Doctor of Philosophy\\ of the
    Australian National University. 
}
\end{center}

\vspace{1in}
\begin{center}
\large{
    June 2008
}
\end{center}


\mbox{}
\vspace{2.2cm}
\begin{center}
  \textbf{
    \Large{Declaration}
  }
\end{center}
\vspace{0.8cm}
\thispagestyle{empty}
The work in this thesis is my own except where otherwise stated.

\vspace{1in}

\hfill\hfill\hfill
Jonathan Asher Cohen%
\hspace*{\fill}
\blanknonumber
  \thispagestyle{empty}
\vspace*{\fill}

\begin{center}\emph{
Dedicated to the memory of my grandmother\\
Shirley Esther Lipinski (1930--2006)\\
who always listened to my ramblings.}
\end{center}

\vfill\vfill\vfill
\blanknonumber
\mbox{}
\vspace{2.2cm}
\begin{center}
  \textbf{
    \Large{Acknowledgements}
  }
\end{center}
\vspace{0.8cm}
Foremost, I would like to thank Mike Johnson for taking me on as a
student despite his many commitments and for guiding my work over the
past two and a half years. Mike has the uncanny ability to always say
just the right thing to get me thinking along fruitful paths. 

Since $2006$, I have been based at Macquarie University in
the Department of Computing. The department and the university have
been unfailingly hospitable during my lengthy visit. The members of
the Australian Category Seminar have been particularly welcoming and
the weekly seminars have been a highlight of my candidature. Thanks in
particular to Steve Lack for comments on earlier drafts of my thesis,
to Michael Batanin for useful conversations on iterated monoidal 
categories and to all of the members of the categories group for the
social camaraderie. Thanks are also due to Robin Cockett for
introducing me to coherence and encouraging me to attend StreetFest.

Thanks to Rajeev Gor\'{e} for supervising me while at ANU and for being
supportive of my lengthy absence. Conducting a PhD from several
hundred kilometers away is only possible with the aid of dedicated
administrators. I am particularly lucky to have had Di Kossatz,
Michelle Moravec and Suzanne van Haeften providing advice and
support from a distance. 

Thanks to Greg Restall and the members of the Department of Philosophy
at Melbourne University for their hospitality. Even though proof
theoretic semantics did not end up making it into my thesis, the stay
was still influential in shaping my thinking. 

Thanks to my family for all of their enthusiastic support. My friends
in Sydney, Canberra and Perth ensured that I had ample
welcome distractions. Eve Slavich and Saritha Manickam never let me
forget about my  thesis though.

Thanks to Anna for all the little things that make life a lot happier. \blanknonumber
\mbox{}
\vspace{2.2cm}
\begin{center}
  \textbf{
    \Large{Abstract}
  }
\end{center}
\vspace{0.8cm}
\addcontentsline{toc}{chapter}{Abstract}

  The problems of the identity of proofs, equivalences of reductions
  in term rewriting systems and coherence in categories all share the
  common goal of describing the notion of equivalence generated by a
  two-dimensional congruence. This thesis provides a unifying setting
  for studying such structures, develops general tools for determining
  when a congruence identifies all reasonable parallel pairs of
  reductions and examines specific applications of these results within
  combinatorial algebra. The problems investigated fall under the
  umbrella of ``coherence'' problems, which deal with the
  commutativity of diagrams in free categorical structures ---
  essentially a two-dimensional word problem. It is categorical
  structures equipped with a congruence that collapses the free
  algebra into a preorder that are termed ``coherent''.  

  The first main result links coherence problems with
  algebraic invariants of equational theories. It is shown that a
  coherent categorification of an equational theory yields a
  presentation of the associated structure monoid. It is
  subsequently shown that the higher Thompson groups $F_{n,1}$ and
  the Higman-Thompson groups $G_{n,1}$ arise as structure groups of
  equational theories, setting up the problem of obtaining coherent
  categorifications for these theories. 

  Two general approaches to obtaining coherence theorems are
  presented. The first applies in the case where the 
  underlying rewriting system is confluent and terminating. A general
  theorem is developed, which applies to many coherence problems
  arising in the literature. As a specific application of the result,
  coherent categorifications for the theories of higher order
  associativity and of higher order associativity and commutativity
  are constructed, yielding presentations for $F_{n,1}$ and $G_{n,1}$,
  respectively. 

  The second approach does not rely on the confluence of the
  underlying rewriting system and requires only a weak form of
  termination. General results are obtained in this setting for the
  decidability of the two-dimensional word problem and for determining
  when a structure satisfying the weakened properties is coherent. A
  specific application of the general theorem is made to obtain a
  conceptually straightforward proof of the coherence theorem for
  iterated monoidal categories, which form a categorical model of
  iterated loop spaces and fail to be confluent.
\blanknonumber
\tableofcontents
\mbox{}\newpage

\mainmatter

\chapter{Introduction}\label{ch:introduction}

Coherence problems arise in category theory when one wishes to
describe the free algebra generated by a particular
structure. Typically, this problem boils down to solving a sequence of
word problems: Which functors are equal? Which natural transformations
are equal? Which modifications are equal? And so on up the
dimensions. Our main interest here is in two-dimensional categorical
structures. Within this context, 
coherence problems are related to several other problems: When are two
proofs of the same theorem equivalent? When do two interpretations of
the same sentence assign the same meaning? When do two programs
implement the same algorithm? In order to gain some insight into the
importance and  meaning of coherence problems, we explore the analogy
with natural and artificial languages slightly deeper.  

A written language may be thought of as a collection of symbols together
with rules for manipulating and combining them. A sequence of such
symbols is called a sentence. A sentence is grammatical if it
can be constructed via the rules of the language.

Attempting to ascribe meaning to sentences of a language is
potentially fraught with difficulty. For a simple mathematical
language, such as arithmetic, the meaning of a sentence is abundantly
clear --- it is the natural number obtained by carrying out the
described calculation. For more complicated constructions, such as
natural language, the problem can be significantly more difficult. 

One typically wishes to assign a meaning to every possible grammatical
sentence of a language. If one considers sentences to be completely
independent of each other, then, for any reasonably complex language,
one would need to decide on the meaning of infinitely many
sentences. Such a task is unreasonable in practice. One way
in which to resolve this situation is to suppose that the language is
compositional.  That is, that the meaning of a sentence is composed
from the meaning of its 
subparts. It is important to note that two related claims are being
made here. First, there is a collection of basic syntactic structures,
which carry meaning. These can be words, such as ``dog'', ``cat'',
``table'', ``chair'' etc., or they may be more complicated phrases or
sentences. The second claim is that the meaning of a sentence built
from these basic pieces is a composition of the meanings of the
pieces. This compositionality principle is appealing on a number of
levels, not least of all because it provides a reasonable explanation
for a person's ability to comprehend sentences that they hear for the
first time. A more technical reason is that one can show that any
recursively enumerable language can be captured by some compositional
grammar \cite{janssen:compositionality}. 

Within richly expressive languages, there is the potential for
\emph{structural ambiguity}. That is, a given sentence may have two
distinct meanings even though the meanings of the individual words
remains constant. For example, the sentence ``The shooting of the
hunters was terrible'' may mean that the hunters had terrible aim, or
that it was a shame that the hunters were shot. Within the framework
of compositionality, the two meanings could only have arisen from
composing the words in a different manner.

When designing a computer programming language, one typically wishes
to avoid the presence of any structural ambiguity. More technically,
any two proofs (``compositions'') of the same typing judgement
(``sentence'') must carry the same meaning
\cite{Curien:coherence,Reynolds:coherence}. 

A language that contains no structural ambiguity whatsoever is termed
``coherent''. The name stems from Mac Lane's construction of a
coherent language for a monoidal structure on a category
\cite{MacLane_natural}, which is the real starting point for this
thesis. 

Mac Lane \cite{MacLane:TopLogic} attributes the motivation for his
development of the theory of monoidal categories to a question of
Norman Steenrod: When is there a canonical map between two specified
\emph{formal} combinations of modules? Steenrod was considering the
category of all modules over a commutative ring and the combinations
of such modules by applying the functors $\tensor$ and
$\Hom$. Monoidal categories abstract the structure of the tensor
product of modules to create a bifunctor $\tensor$ on an arbitrary
category. The main result of \cite{MacLane_natural} says, essentially,
that any two $n$-fold products that contain the same objects in the
same order are naturally isomorphic via a unique canonical natural
isomorphism. Interpreting the $n$-fold products as parsings of
sentences and natural isomorphisms as weak equivalences between
parsings, this result is akin to saying that monoidal categories do
not contain any structural ambiguity.   

The investigation of coherence is certainly not limited to monoidal
categories. Indeed, one may hope for a version of Mac Lane's theorem
for many different types of covariant structures. A covariant
structure on a category $\C$ consists of: 
\begin{itemize}
  \item A collection of basic functors of the form $\C^n \to \C$.
  \item A collection of equations between certain pairs of formally
    different terms built from the basic functors.
  \item A collection of natural transformations between certain terms
    formed from the basic functors.
  \item A collection of equations between pairs of formally different
    natural transformations constructed via a 
    sequence of compositions and substitutions of the basic natural
    transformations. These are typically called \emph{coherence axioms}.
\end{itemize}

One of the most basic covariant structures is that of a coherently
associative bifunctor $\tensor$. This structure consists of a natural
isomorphism $\alpha: a\tensor (b \tensor c) \to (a \tensor b)\tensor
c$ together with a coherence axiom stipulating that the following
diagram commutes:

\[
\vcenter{
  \begin{xy}
    (0,0)*+{a\tensor (b\tensor(c\tensor d)))}="1";
    (-25,-18)*+{(a\tensor b)\tensor (c \tensor d)}="2";
    (25,-18)*+{a\tensor((b\tensor c) \tensor d)}="3";
    (-25,-36)*+{((a\tensor b) \tensor c)\tensor d}="4";
    (25,-36)*+{(a\tensor (b\tensor c))\tensor d}="5";
    {\ar@{->}_>>>>>>>{\alpha}@/_/ "1"; "2"}
    {\ar@{->}^>>>>>>>{1\tensor\alpha}@/^/ "1"; "3"}
    {\ar@{->}_{\alpha} "2"; "4"}
    {\ar@{->}^{\alpha} (20,-21); (20,-32)}
    {\ar@{->}^{\alpha\tensor 1} (5,-36); (-12,-36)}
  \end{xy}
}
\]
A special case of Mac Lane's coherence theorem for monoidal categories
states that any other diagram constructed from $\alpha,\tensor$ and
the identity natural isomorphisms commutes by virtue of the
commutativity of the above diagram. 

In endeavouring to construct an analogous coherence theorem for an
arbitrary covariant structure carried by a category, one may ask two
related questions: 
\begin{enumerate}
  \item Is a given covariant structure coherent?
  \item What coherence axioms are required in order to make a given
    covariant structure coherent?
\end{enumerate} 

The main goal of this thesis is to tackle the above questions in the
greatest possible generality as well as to develop applications of the
resulting coherence theorems. In the following section, we give a more
complete outline.

\section*{Outline}

\noindent \textbf{Chapter 2:} The chapter starts by developing a
definition of rewriting $2$-theories. These form the main framework
for our investigations and the chapter  describes the free algebra
generated by a rewriting $2$-theory before showing that a rewriting
$2$-theory defines a Lawvere $2$-theory and, hence, a covariant
structure. After briefly 
discussing relations to other existing systems, the coherence problem
is rigorously defined within the context of rewriting
$2$-theories. Categorifications are introduced as a method for
weakening an equational variety into a categorical structure and it is
shown that a coherent 
categorification of an equational variety defines an equivalent
categorical structure to the variety. Finally, some useful general
tools for working with rewriting $2$-theories are introduced.\\

\noindent \textbf{Chapter 3:} Dehornoy \cite{Dehornoy:varieties}
introduced \emph{structure monoids} as algebraic invariants of
equational varieties. The main result of the chapter shows how to
construct a presentation of the structure monoid of an equational
variety $\E$ from a coherent categorification of $\E$. In certain
situations, the structure monoid forms a group in a natural way and
the result is extended to this setting.\\

\noindent \textbf{Chapter 4:} The main direction of this chapter is to
generalise Mac Lane's proof of coherence for monoidal categories to
rewriting $2$-theories that are confluent and
terminating. ``Terminating'' means that there are no infinite chains
of non-identity morphisms, while ``confluence'' is the property that 
every span may be completed into a square, as in the following diagram:
\[
\def\objectstyle{\scriptstyle}
\def\labelstyle{\scriptstyle}
\vcenter{
  \xymatrix{
    {\cdot} \ar[r] \ar[d] &  {\cdot} \ar@{-->}[d] \\
    {\cdot} \ar@{-->}[r] & {\cdot}
  }
}
\]
Subsequently, a general coherence theorem is developed for rewriting
$2$-theories describing invertible covariant structures, which
directly generalises the situation of monoidal categories.\\

\noindent \textbf{Chapter 5:} This chapter develops a surprising
application of the results of Chapter \ref{ch:unf}. Dehornoy
\cite{Dehornoy:thompson} has previously shown that Thompson's group
$F$ is the structure group of the variety of semigroups and that
Thompson's group $V$ is the structure group of the variety of
commutative semigroups. Dehornoy also constructed presentations of
these groups using Mac Lane's coherence axioms for the associated
categorifications. In light of the results of Chapter
\ref{ch:structure}, these presentations are not too
surprising. Indeed, the work in Chapter \ref{ch:structure} was
directly motivated by these results. Chapter \ref{ch:catalan} begins
by constructing varieties for higher-order associativity and
higher-order associativity and commutativity. It is shown that the
structure groups for these are the higher Thompson groups $F_{n,1}$
and the Higman-Thompson groups $G_{n,1}$, respectively. The chapter
goes on to construct categorifications of these varieties and thereby
to obtain new presentations of $F_{n,1}$ and $G_{n,1}$. The coherence
axioms for the categorifications directly generalise Mac Lane's axioms
for the binary case, although a new class of coherence axioms is
required in the higher-order case that are not present in the binary
situation. \\

\noindent \textbf{Chapter 6:} It is not the case that every coherent
rewriting $2$-theory is terminating and confluent. This chapter
develops general coherence theorems for rewriting $2$-theories that
are not confluent and only weakly terminating, in a precise sense. The
techniques are radically different from those of Chapter
\ref{ch:unf}. The driving philosophy is that a parallel pair of
morphisms are equal if and only if they admit a subdivision, each face
of which commutes. As such, the approach is primarily through
topological graph theory, where a subdivision is defined as a certain
ambient-isotopy class of planar graph embeddings whose boundary
consists of the parallel pair of maps under investigation. The resulting
coherence theorem is also used to construct examples of finitely
presented rewriting $2$-theories that cannot be made coherent via
only finitely many coherence axioms, but are otherwise well
behaved. The related coherence problem of when there exists a decision
procedure for the commutativity of diagrams arising from a rewriting
$2$-theory is also briefly investigated.  \\

\noindent \textbf{Chapter 7:} Iterated monoidal categories
\cite{iterated} arose as a categorical model of iterated loop
spaces. As a rewriting $2$-theory, they are particularly interesting
because they possess a nontrivial equational theory on both objects
and morphisms, as well as being non-confluent. A highly technical
proof that iterated monoidal categories are coherent is given in
\cite{iterated}. After introducing iterated monoidal categories, this
chapter goes on to exploit the results of Chapter \ref{ch:nunf} in
order to obtain a new, conceptually straightforward proof of
coherence. \\

The inter-dependence between chapters is indicated in the following
Hasse diagram:

\[
\begin{xy}
  (0,0)*+{2}="2";
  (-10,-10)*+{3}="3";
  (10,-10)*+{4}="4";
  (0,-20)*+{5}="5";
  (20,-20)*+{6}="6";
  (20,-34)*+{7}="7";
  {\ar@{-} "2"; "3"};
  {\ar@{-} "2"; "4"};
  {\ar@{-} "3"; "5"};
  {\ar@{-} "4"; "5"};
  {\ar@{-} "4"; "6"};
  {\ar@{-} "6"; "7"};
\end{xy}
\]

Throughout this thesis, we read $f \of g$ as ``$f$ followed by $g$''.\blanknonumber
\chapter{Rewriting $2$-theories}\label{ch:twostructures}

Our main goal in this chapter is to define the class of
two-dimensional algebraic structures that form the  basis of the
following chapters. The definition that we develop uses a base set of
variables. This is in contradistinction with the standard approach to
two-dimensional universal algebra, which prefers a variable-free
approach via categorical constructions. The reason for choosing to
work with variables is rather utilitarian: it retains a strong link to
first-order term rewriting theory and, therefore, preserves the strong
link with various computational and linguistic constructions.  A more
pragmatic reason for our definition in terms of variables is that it
is precisely what allows us to bring various computational and
combinatorial techniques to bear on otherwise categorical
constructions. The choice of working with variables has two technical
implications. First, it makes the transition from a presentation of a
theory to a concrete algebraic structure on an arbitrary category
slightly more difficult than it otherwise might be. Second, it does
not allow us to distinguish between certain different categorical
structures. For instance, the map $\iota: A \tensor A \to A \tensor A$
gives rise to two different possible semantic interpretations: a map
that preserves the order of the factors and one that reverses the
order of the factors. Our construction 
blurs the distinction between these two semantic interpretations;
indeed, either choice would  provide an
adequate semantics for the map. It is important to note that the claim
being made here is that the two maps arise purely from two different
semantic interpretations and that there is, a priori, no syntactic way
in which to distinguish two interpretations. no More fundamentally, the combinatorial
properties of the categorical structure are unaffected by the
particular semantic interpretation of the maps. Indeed, we shall see
in this chapter that all of the different choices of semantic
interpretations yield isomorphic structures. Before jumping into the
world of two-dimensional algebra, we seek some intuition from classical
one-dimensional algebra.  

When developing a classical definition of equational varieties, one
starts with a graded set of function symbols $\F$ and imposes a
collection of equations, $\E$, on the absolutely free term algebra generated
by $\F$ on some set of variables $X$, which we denote by
$\Fr_\F(X)$. Quotienting out by the smallest congruence generated by
$\E$ on $\Fr_\F(X)$ yields the free $\langle\F|\E\rangle$-algebra on
$X$, which we denote by $\Fr_{\langle \F | \E\rangle}(X)$. It is
at this point that we run into a conceptual problem: the set of
variables $X$ holds a privileged position in the construction. If we wish
to obtain the free $\langle\F|\E\rangle$-algebra on some other set
$Y$ then we run into a problem before we even start --- the very concept of an
$\langle\F|\E\rangle$-algebra was defined with the aid of $X$! The
traditional way around this problem is to define an
$\langle\F|\E\rangle$-algebra to be an algebra $\mathbb{A}$ of type
$\F$ such that for any equation $(s,t) \in \E$ and any homomorphism
$\rho: \Fr_\F(X) \to \mathbb{A}$, we have $\rho(s) = \rho(t)$
\cite{BurrisSankappanavar}.

The viewpoint of algebras as being induced by homomorphisms from some
parti\-cular free algebra is the starting point of Lawvere theories
\cite{Lawvere:functorial}. Here, we 
consider a function symbol of arity $n$ to be a function $n \to 1$
and use the Cartesian structure of {\bf Set} in order to permute,
duplicate and delete variables as we please. This allows us to replace
equations with commutative diagrams and yields the category
$\mathrm{Th}(\langle \F | \E\rangle)$. This category has finite
products; indeed, its objects are just the natural numbers, where a
number $n$ is considered to be the $n$-fold cartesian product of
$1$. The category of finite product preserving functors 
$\mathrm{Th}(\langle \F | \E\rangle) \to {\bf Set}$ forms the analogue
of algebras {\it qua} homomorphisms in the classical case, allowing us to
transfer the structure inherent in $\langle \F|\E\rangle$ to an
arbitrary set. 

Our basic strategy in this chapter is to replicate the above arguments
in the two-dimensional setting in order to provide an abstract framework
for categories with algebraic structure definable in a
variable-based manner. Our essential objects of study are rewriting
$2$-theories,  which consist of a first order term rewriting system
modulo a two dimensional congruence. This retains a strong link with
computational structures. Indeed, syntactically, rewriting 2-theories
can be seen as a generalisation of unconditional rewriting logic,
which arose primarily in the study of concurrent systems
\cite{Meseguer_rl}. The syntax and algebraic semantics of rewriting
$2$-theories is covered in sections \ref{sect:syntax} and
\ref{sect:semantics}, respectively. Connections with rewriting logic
and other systems are briefly outlined in Section \ref{sect:relations}.

The fundamental focus of this thesis is coherence for rewriting
$2$-theories and we introduce this concept formally in Section
\ref{sect:coherence}. Subsequently, we explore the relationship
between equational varieties and coherent rewriting $2$-theories in
Section \ref{sect:categorification} before sketching some basic
results in Section \ref{sect:basic} that will be of frequent use.

\section{Syntax}\label{sect:syntax}

The purpose of this section is to introduce a general class of term
rewriting systems whose semantics correspond to categories with
an additional covariant structure. Concretely, we work with a term
rewriting theory modulo a two-dimensional congruence. That is, a term
rewriting system equipped with an equational theory on terms and an
equational theory on reductions, together with an associated calculus
of proof terms. 

Syntactically, we shall be working with structures of the form
$\rtt$, where $\F$ is a set of function symbols, $\T$ is a set of
reduction (or transformation) rules, $\E_\F$ is an equational theory
on $\F$ and $\E_\T$ is an equational theory on $\T$ containing a
certain basic congruence. Our main task in this section is to describe
the structure that this data generates, which forms our
two-dimensional analogue of $\Fr_{\langle\F|\E\rangle}$. We begin by
building the one-dimensional   aspect of the structure.

\begin{definition}[Term Algebra]
  Given a graded set of function symbols $\F := \sum_n \F_n$ and a set
  $X$, the absolutely free term algebra generated by $\F$ on $X$ is
  denoted by $\Fr_\F(X)$. 
\end{definition}

The next layer of structure adds an equational theory to
$\Fr_\F(X)$:

\begin{definition}
  Given a graded set of function symbols $\F$, a set $X$ and a set of
  equations $\E_\F$ on $\Fr_\F(X)$, we denote by
  $\Fr_{\langle\F|\E_\F\rangle}(X)$ the quotient of $\Fr_\F(X)$ by the
  smallest congruence generated by $\E_\T$. We write $[t]$ for the
  image of a term $t$ under the canonical homomorphism $\Fr_\F(X) \to
  \Fr_{\langle\F|\E_\F\rangle}(X)$. 
\end{definition}

We can now begin to describe a two-dimensional term rewriting
theory. Our first step is to define a labelled term rewriting theory. 

\begin{definition}[Labelled term rewriting theory]\label{defn:trt}
A \emph{labelled term rewriting theory} is a structure $\langle \F;
\mathcal{L}; \T \,|\, \E_\F \rangle_X$, where $\F$ is a graded set of
function symbols, $X$ is a set of variables, 
$\E_\F$ is a system of $\Fr_\F(X)$-equations, $\mathcal{L}$ is
a set of labels and $\T$ is a subset of $\mathcal{L} \times
(\Fr_{\langle\F|\E_\F\rangle}(X))^2$ satisfying the following consistency 
conditions: 
\begin{center}
  If $(\alpha,s_1,t_1)$ and $(\alpha,s_2,t_2)$ are in $\T$ then
    $s_1 = s_2$ and $t_1 = t_2$.
\end{center}
If $(\alpha,s,t) \in \T$, we write $\alpha:s \to t$. A member
of $\T$ is called a \emph{labelled reduction rule}. 
\end{definition}

Given a labelled term rewriting theory $\langle \F;
\mathcal{L}; \T \,|\, \E_\F \rangle_X$, the particular choice of
$\mathcal{L}$ is irrelevant.  What is important is simply that there 
are sufficiently many labels for the number of reduction
rules. Accordingly, we shall henceforth 
suppress explicit mention of the labels and write
$\ltrX$ for a labelled term rewriting
theory. For the remainder of this thesis, we fix an arbitrary
countable infinite set $X$ and write $\ltr$ for $\ltr_X$ when the
particular choice of variable set is unimportant. A
labelled term rewriting theory embodies the basic reductions 
that are to generate all others. The next step is to obtain an
analogue of the absolutely free term algebra for this higher
dimensional layer of structure. This is achieved by the following
definition, where the notation $\overline{x}^n$ is an abbreviation for
$x_1,\dots,x_n$ and $F(\overline{s}^n/\overline{x}^n)$ denotes the
uniform substitution of the free variables $\overline{x}^n$ by
$\overline{s}^n$.

\begin{definition}\label{def:genreds}
  Given a labelled term rewriting theory $\L :=\ltr_X$, the
  set of \emph{reductions generated by $\L$} is
  denoted $\Fr_\L(X)$ and is constructed inductively by the following rules:   
      
  \medskip

  \begin{center}
    \begin{tabular}{cl}
      \AxiomC{}
      \UnaryInfC{$1_s:[s] \to [s]$}
      \DisplayProof
      
      &  {\rm (Identity)}

      \bigskip\\
      
      \AxiomC{$\varphi_1:[s_1] \to [t_1]~ \dots~ \varphi_n:[s_n] \to [t_n]$} 
      \UnaryInfC{$F(\varphi_1,\dots,\varphi_n):[F(s_1,\dots,s_n)] \to
        [F(t_1,\dots,t_n)]$} 
      \DisplayProof
      
      & {\rm (Structure)}

      \bigskip\\
      
      \AxiomC{$\tau:[F(\overline{x}^n)] \to [G(\overline{x}^n)]$}
      \AxiomC{$(\varphi_i:[s_i] \to [t_i])_{i=1}^n$}
      \BinaryInfC{$\tau(\varphi_1,\dots,\varphi_n):[F(\overline{s}^n/
        \overline{x}^n)] \to [G(\overline{t}^n/\overline{x}^n)]$} 
      \DisplayProof
      
      & {\rm (Replacement)}

      \bigskip\\
      
      \AxiomC{$\varphi: [s] \to [u]$}
      \AxiomC{$\psi: [u] \to [t]$}
      \BinaryInfC{$(\varphi\of\psi):[s] \to [t]$}
      \DisplayProof
      
      &  {\rm (Transitivity)}

    \end{tabular}
  \end{center}
  In the {\rm (Identity)} rule, $[s] \in
  \Fr_{\langle\F|\E_\F\rangle}(X)$. In the 
  {\rm (Structure)} rule, $F$ is a function symbol of rank $n$. In the
  {\rm (Replacement)} rule $\tau$ is a reduction rule of rank
  $n$. When the particular choice of $X$ is irrelevant, we write
  $\Fr(\L)$ for $\Fr_\L(X)$.
\end{definition}

\begin{example}\label{example:associative}
  
  Let $\L$ be the labelled rewriting theory consisting  of  a single  
  binary function symbol $\tensor$, an empty equational theory on
  terms and the single reduction rule:
  \[ \alpha(t_1,t_2,t_3) : t_1\tensor(t_2\tensor t_3) \to (t_1\tensor
  t_2)\tensor t_3.\] 
  
  A derivation of $$\arrow{[A \tensor (B \tensor (C \tensor 
    D))]}{[(A\tensor B)\tensor(C\tensor D)]}$$   
  in $\Fr(\L)$ is given by:
    \begin{prooftree}
      \AxiomC{}
      \UnaryInfC{$\arrow{1_A~:~[A]}{[A]}$}
      \AxiomC{}
      \UnaryInfC{$\arrow{1_B~:~[B]}{[B]}$}
      \AxiomC{}
      \UnaryInfC{$\arrow{1_C~:~[C]}{[C]}$}
      \AxiomC{}
      \UnaryInfC{$\arrow{1_D~:~[D]}{[D]}$}
      \BinaryInfC{$\arrow{1_C\tensor 1_D~:~ [C\tensor D]}{[C\tensor
          D]}$} 
      \TrinaryInfC{$\arrow{\alpha(1_A,1_B,1_C\tensor
          1_D)~:~[A\tensor(B\tensor(C\tensor D))]}{[(A\tensor B)\tensor
          (C\tensor D)]}$} 
    \end{prooftree}
\end{example}

The consistency condition in Definition \ref{defn:trt} easily yields
the following lemma, which asserts that we may equate reductions with
their labels, thus providing a term calculus for the reductions.

\begin{lemma}
  Let $\L$ be a labelled  term rewriting theory.
    If $\alpha:s \to t$ and $\alpha:s' \to t'$ are in
    $\Fr(\L)$, then $s = s'$ and $t = t'$.\qed
    
\end{lemma}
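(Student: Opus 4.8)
The plan is to induct on the structure of the proof term $\alpha$, following the inductive generation of $\Fr(\L)$ given in Definition \ref{def:genreds}. The first thing I would pin down is that a proof term parses unambiguously: its outermost constructor is one of $1_{(-)}$, a function symbol $F \in \F$, a reduction-rule label, or the composition symbol $\of$, and since these four kinds of symbol are drawn from pairwise disjoint alphabets, $\alpha$ determines without ambiguity which of (Identity), (Structure), (Replacement), (Transitivity) was applied last, along with its immediate sub-proof-terms. It therefore suffices to check, one rule at a time, that the source and target of $\alpha$ are forced by the symbols recorded in $\alpha$ together with --- inductively --- the sources and targets of its immediate subterms.

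The base case is (Identity), where $\alpha = 1_s$ records $s$ and both source and target are the class $[s]$. The (Structure) and (Transitivity) cases I expect to be pure bookkeeping. If $\alpha = F(\varphi_1,\dots,\varphi_n)$, the induction hypothesis fixes each source $[s_i]$ and target $[t_i]$, and because the congruence generated by $\E_\F$ respects application of $F$, the classes $[F(s_1,\dots,s_n)]$ and $[F(t_1,\dots,t_n)]$ depend only on those data; if $\alpha = (\varphi \of \psi)$, the hypothesis fixes the source of $\varphi$ and the target of $\psi$ (and forces agreement of the shared middle class), and these are exactly the source and target of $\alpha$.

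The case with genuine content is (Replacement), and this is where I would lean on the consistency condition of Definition \ref{defn:trt}. Writing $\alpha = \tau(\varphi_1,\dots,\varphi_n)$, the recorded label $\tau$ selects a single reduction rule of $\T$ --- this is exactly what the consistency condition guarantees --- so its source $[F(\overline{x}^n)]$ and target $[G(\overline{x}^n)]$ are uniquely determined rather than split across several rules sharing a label; combined with the inductively fixed $[s_i]$ and $[t_i]$, this determines the source $[F(\overline{s}^n/\overline{x}^n)]$ and target $[G(\overline{t}^n/\overline{x}^n)]$. The main (and really the only) obstacle I anticipate is checking that forming these substituted classes is well defined on equivalence classes at all; this amounts to substitution-invariance of the congruence generated by $\E_\F$, so that $[F(\overline{s}^n/\overline{x}^n)]$ depends only on the class $[F(\overline{x}^n)]$ and the classes $[s_i]$. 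I regard this as a soft obstacle, since precisely this invariance is what is needed for the (Replacement) rule of Definition \ref{def:genreds} to yield a well-formed reduction in the first place. Once these four cases are in place the induction closes and the lemma follows.
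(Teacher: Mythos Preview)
Your proposal is correct and matches the paper's approach: the paper states the lemma with a bare \qed, remarking only that it follows easily from the consistency condition of Definition~\ref{defn:trt}, and your structural induction on the formation rules of $\Fr(\L)$ is exactly the routine argument that unpacks this. The key point you identify --- that the consistency condition is what makes the (Replacement) case go through --- is precisely the content the paper is gesturing at.
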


At this point, we have in hand a notion of a labelled rewriting
theory, which corresponds to the usual abstract setting of rewriting
modulo an equational theory on terms. We now proceed to add an
equational theory on 
reductions to this framework. This allows us to consider problems
relating to equivalences of reductions in general rewriting
systems. We impose two restrictions on this structure. The first is
that we may only set two reductions to be equal if they have common
sources and targets since, in applications, we very rarely have a
sound ontological basis for equating \emph{arbitrary} reductions. The
second is that we enforce the presence of certain equations that
equate reductions differing only in the order of rewriting
nested and/or disjoint subterms. As we shall see in the following section,
this is precisely what is needed in order to ensure a sound
categorical semantics. The computational effect is to equate
orthogonal reductions --- those that do not rewrite a critical
pair. This congruence is usually dubbed the ``permutation congruence''
in the term rewriting literature \cite{terese_equivalence}. The
permutation congruence is also known as ``causal equivalence'' and the 
congruence classes that it generates correspond to the notion of
Mazurkiewicz traces arising in concurrency theory. The following
definition states these concepts more formally. 

\begin{definition}[Rewriting $2$-Theory]\label{def:rtt}
  A \emph{Rewriting $2$-Theory} is a tuple $\R := \rtt$, where
  $\ltr$ is a labelled  term rewriting theory and $\E_\T$ is a set
  of equations on $\Fr(\ltr)$ satisfying the following
  consistency condition:
  \begin{center}
    If $(\varphi_1,\varphi_2) \in \E_\T$ and $\varphi_1: [s_1] \to [t_1]$
    and $\varphi_2:[s_2] \to [t_2]$, then $[s_1] = [s_2]$ and $[t_1] = [t_2]$.
  \end{center}
  We further stipulate that the following
  equations are satisfied. We refer to these equations collectively as
  the \emph{standard congruence} and denote them by $S(\R)$ 
  \begin{center}
    \begin{tabular}{cl}
      $1_s\of\varphi = \varphi$ & {\rm (ID 1)}\\
      $\varphi\of 1_t = \varphi$ & {\rm (ID 2)}\\
      $\varphi\of(\psi\of\rho) = (\varphi\of\psi)\of\rho$ & {\rm (Assoc)}\\
      $F(\varphi_1,\dots,\varphi_n)\of F(\psi_1,\dots,\psi_n) =
      F(\varphi_1\of\psi_1,\dots,\varphi_n\of\psi_n)$ & {\rm (Funct)}\\
      $\varphi(\varphi_1,\dots,\varphi_n) =
      s(\varphi_1,\dots,\varphi_n)\of\varphi(1_{t_1},\dots,1_{t_n})$
      & {\rm (Nat 
        1)}\\
      $\varphi(\varphi_1,\dots,\varphi_n) = \varphi(1_{s_1},\dots,1_{s_n})\of
      t(\varphi_1,\dots,\varphi_n)$ & {\rm (Nat 2)}
    \end{tabular}
  \end{center}
  In the above, $F \in \F_n$ and
  $\varphi,\psi,\rho,\varphi_1,\dots,\varphi_n,\psi_i,\dots,\psi_n$ are
  reductions in $\Fr(\R)$ such that the above compositions are well defined.
\end{definition}

One of the benefits of allowing additional equations on reductions
beyond that provided by the standard congruence is that it allows
us to study invertible reduction rules, which arise when we recast an
equational theory as a rewriting system. Moreover, it provides enough
flexibility for us to be able to place equations on non-invertible
reduction rules, which model phenomena such as non-reversible
computations. 

\begin{definition}[Invertible]
  Given a rewriting $2$-theory $\rtt$, a reduction rule $\varphi:[s] \to [t]$
  in $\T$ is \emph{invertible} if there is a reduction rule
  $\psi:[t] \to [s]$ in $\T$ and a variable substitution $\sigma:X
  \to X$ such that the equations $\varphi^\sigma\of\psi^\sigma =
  1_{s^\sigma}$ and $\psi^\sigma\of\varphi^\sigma = 1_{t^\sigma}$ are both
  in $\E_\T$. 
  A rewriting $2$-theory is invertible if all of its reduction rules are
  invertible. We say that $\psi$ is an \emph{inverse} of $\varphi$. 
\end{definition}

In defining particular rewriting $2$-theories, we shall often just say
that a reduction is invertible, without explicitly giving the data for
its inverse. That is, if we say that a rewriting $2$-theory contains
an invertible reduction rule $\rho$, we mean that it also contains the
inverse $\rho^{-1}$ together with the necessary equations. Before
proceeding, we give an example of a rewriting $2$-theory.

\begin{example}\label{ex:AU}
 
  This example gives a presentation of an invertible rewriting $2$-theory
  involving associativity and unit reduction rules. We shall see in
  Section \ref{sect:semantics} that this example gives a presentation
  of the free monoidal category on a discrete category.
  
  The theory consists of  a binary
  function symbol $\otimes$ and a nullary function symbol $I$. We
  write $\otimes$ in infix notation. It has the following invertible
  reduction rules: 
\begin{eqnarray*}
      \alpha(t_1,t_2,t_3)&:& t_1\otimes(t_2\otimes t_3)
      \stackrel{\sim}{\longrightarrow} (t_1
      \otimes t_2)\otimes t_3\\
      \lambda(t)&:& I \otimes t  \stackrel{\sim}{\longrightarrow} t\\
      \rho(t)&:& t \otimes I  \stackrel{\sim}{\longrightarrow} t
    \end{eqnarray*}
    It has equations stating that the following diagrams commute: 
    \begin{eqnarray*}
  \begin{xy}
    (0,0)*+{a\tensor (b\tensor(c\tensor d)))}="1";
    (-25,-18)*+{(a\tensor b)\tensor (c \tensor d)}="2";
    (25,-18)*+{a\tensor((b\tensor c) \tensor d)}="3";
    (-25,-36)*+{((a\tensor b) \tensor c)\tensor d}="4";
    (25,-36)*+{(a\tensor (b\tensor c))\tensor d}="5";
    {\ar@{->}_<<<<<<<{\alpha}@/_/ "1"; "2"}
    {\ar@{->}^<<<<<<<{1\tensor\alpha}@/^/ "1"; "3"}
    {\ar@{->}_{\alpha} "2"; "4"}
    {\ar@{->}^{\alpha} (20,-21); (20,-32)}
    {\ar@{->}^{\alpha\tensor 1} (5,-36); (-12,-36)}
  \end{xy}
    &
    \xymatrix@d{
      {a \otimes (I \otimes b)} \ar[r]^{\alpha}
      \ar[d]_{1\otimes \lambda} & {(a \otimes I)\otimes b}
      \ar[dl]^{\rho\otimes 1}\\
      {a \otimes b}
    }
  \end{eqnarray*}\qed
\end{example}

\begin{definition}
  If $\R := \rtt$ is a rewriting $2$-theory, then 
  $[\E_\T + S(\R)]$ denotes the smallest congruence generated by
  $\E_\T$ and $S(R)$ on $\Fr(\R)$. It is generated inductively by
  the following rules: 

  \begin{center}
    \begin{tabular}{cll}
    
      \bigskip
      \AxiomC{}
      \UnaryInfC{$\varphi=\varphi$}
      \DisplayProof
      
      & {\rm (Identity)}
      
      & $\varphi \in \T$\\
      \bigskip
      
      \AxiomC{}
      \UnaryInfC{$\varphi_1 = \varphi_2$}
      \DisplayProof
      
      & {\rm (Inheritance)}

      & $(\varphi_1,\varphi_2) \in \E_\T + S(\R)$\\
      \bigskip
      
      \AxiomC{$\varphi = \psi$}
      \UnaryInfC{$\psi = \varphi$}
      \DisplayProof

      &  {\rm (Symmetry)}\\
      \bigskip
      \AxiomC{$\varphi_1=\psi_1~ \dots~ \varphi_n = \psi_n$} 
      \UnaryInfC{$F(\varphi_1,\dots,\varphi_n) = F(\psi_1,\dots,\psi_n)$} 
      \DisplayProof
    
      &  {\rm (Structure)}
      & $F \in \F_n$\\
      \bigskip
      
      \AxiomC{$\varphi_1=\psi_1~ \dots~ \varphi_n = \psi_n$}
      \UnaryInfC{$\tau(\varphi_1,\dots,\varphi_n) =
        \tau(\psi_1,\dots,\psi_n)$}  
      \DisplayProof
      
      & {\rm (Replacement)}
      & $\tau \in \T_n$\\
      \bigskip
      \AxiomC{$\varphi = \psi$}
      \UnaryInfC{$\varphi^\sigma = \psi^\sigma$}
      \DisplayProof
      & {\rm (Substitution)}
      & {\rm $\sigma$ a substitution.}\\
      
      \AxiomC{$(\varphi_1 = \psi_1): s \to u$}
      \AxiomC{$(\varphi_2=\psi_2): u \to t$}
      \BinaryInfC{$(\varphi_1\of\psi_1 = \varphi_2\of\psi_2) :s \to t$} 
      \DisplayProof
      
      &  {\rm (Transitivity)}
      
    \end{tabular}

  \end{center}
\end{definition}

All that remains is to quotient out by the congruence generated by an
equational theory on reductions.

\begin{definition}
Given a rewriting $2$-theory $\R := \rtt_X$, we use $\Fr_{\rtt}(X)$ to denote the
quotient $\Fr_{\ltr}(X)/[\E_\T + S(\R)]$.  Where explicit mention of the
set $X$ is not necessary, we write $\Fr(\R)$ for $\Fr_\R(X)$.
\end{definition}

In the following section, we investigate the semantics of rewriting
$2$-theories and establish that a rewriting $2$-theory provides a
presentation of a free structure carried by a discrete category.

\section{Semantics}\label{sect:semantics}

In this section, we shall provide a semantics for rewriting
$2$-theories akin to the semantics that Lawvere theories provide for
syntactically defined equational varieties. The appropriate
generalisation of Lawvere theories to this setting is a special case of
discrete enriched Lawvere theories --- algebraic theories on categories whose
hom-sets carry additional structure \cite{Power:enriched,
  Power:discrete}. The presense of the standard congruence on the set
of reductions is precisely what puts us in the $2$-categorical
setting. Had we omitted the requirement that $\E_\T$ contains the
standard congruence, then we would instead be in the more general setting of
sesquicategories, whose relationship with term rewriting was
investigated by Stell \cite{Stell:sesqui}. As we are in the $2$-categorical setting, the
hom-sets are themselves categories. We shall not require any deep
enriched category theory but shall make some use of the language of
$2$-dimensional categories, an introduction to which may be found in
\cite{KellyStreet:review}.

\begin{definition}[Lawvere $2$-theory]\label{def:lawvere2}
  A discrete finitary Lawvere $2$-theory is a small $2$-category $\L$
  with finite $2$-products, together with a finite-$2$-product preserving
  identity-on-objects $2$-functor $\iota: \Nat^\op \to \L$, where $\Nat$
  is the $2$-category of natural numbers and all maps between them. A map of
  discrete finitary Lawvere $2$-theories $\L \to
  \L'$ is a finite-product preserving $2$-functor $\Theta$ making the
  following diagram commute:
  \[
  \xymatrix{
    {\L} \ar[r]^{\Theta}  & {\L'}
    \\
    {\Nat^\op} \ar[u]^{\iota} \ar[ur]_{\iota'}
  }
  \]
\end{definition}

Since we shall not require any more sophisticated notion of Lawvere
$2$-theory, we use ``Lawvere $2$-theory'' to mean ``discrete finitary
Lawvere $2$-theory''. These are an alternative categorical
presentation of strongly finitary $2$-monads on $\mathbf{Cat}$, studied
in \cite{KellyLack:2monads}. The way in which to visualise a Lawvere $2$-theory
is to think of each object as $\D^n$ for some arbitrary category
$\D$ (although, strictly speaking, the objects are simply natural
numbers). The arrows are then maps $\D^n \to \D^m$ and the two-cells
are maps between such arrows. For us, all of the arrows of $\L$ will
be generated by basic arrows $\D^n \to \D$, corresponding to function 
symbols, and all of the two-cells will be generated by reduction
rules.  

A Lawvere $2$-theory is essentially a two-dimensional analogue of a
free algebra. As in the one-dimensional case, we define a category
having the structure specified by $\L$ by product-preserving functors
out of $\L$.

\begin{definition}
  A model of a Lawvere $2$-theory $\L$ in $\Cat$ is a
  finite-product preserving $2$-functor $M:\L \to \Cat$.
\end{definition}

In order to relate rewriting $2$-theories with Lawvere $2$-theories,
we need to show how to generate a Lawvere $2$-theory $\Th(\R)$ from a
given rewriting $2$-theory $\R$. This would allow us to translate the
purely syntactic $\R$ into an object that specifies an additional
structure on a category. 

In general, there is not a strictly unique way in which to construct
$\Th(\R)$, since there may be many possible ways in which to express a
given reduction rule, particularly in the case where $\R$ contains
reduction rules such as $A \tensor A \to A$. However, as we shall see,
$\Th(\R)$ is unique up to $2$-isomorphism of Lawvere $2$-theories, so
the distinction is inessential for our purposes.

\begin{definition}
Let $\R := \rtt$ be a rewriting $2$-theory. A Lawvere $2$-theory
associated to $\R$ is a Lawvere $2$-theory $\L$ containing precisely
the following structure:
\begin{enumerate}
  \item For every term $t \in \Fr(\R)$ of arity $n$, there is a
    one-cell $|t|: n \to 1$ in $\L$.
  \item For every reduction $\rho:[s]\to [t]$ in $\Fr(\R)$, there is a
    $2$-cell $|\rho|: |s| \to |t|$ in $\L$.
  \item $s = t$ if and only if $|s| = |t|$, for terms $s,t \in \Fr(\R)$.
  \item $\sigma = \tau$ if and only if $|\sigma| = |\tau|$, for
    reductions $\sigma,\tau \in \Fr(\R)$.
\end{enumerate}
\end{definition}

It is immediate from the definition that any two Lawvere $2$-theories
associated to a rewriting $2$-theory differ only in the precise way in
which the function symbols and reduction rules are represented. This
immediately implies the following lemma.

\begin{lemma}\label{lem:isotheories}
  Any two Lawvere $2$-theories associated to a rewriting $2$-theory
  $\R$ are $2$-isomorphic. \qed
\end{lemma}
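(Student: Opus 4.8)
The plan is to exhibit an explicit $2$-functor between any two associated Lawvere $2$-theories and show it is invertible, reading off all of its data from the four defining clauses. Fix two Lawvere $2$-theories $\L$ and $\L'$ associated to $\R$, and write $|{-}|$ and $|{-}|'$ for their respective interpretation assignments. Since both $\iota : \Nat^\op \to \L$ and $\iota' : \Nat^\op \to \L'$ are identity-on-objects, both theories have the natural numbers as their objects, so I would take $\Theta : \L \to \L'$ to be the identity on objects; this is in any case forced if the required triangle is to commute.

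On one-cells I first define $\Theta$ on the cells of the form $|t| : n \to 1$ by $\Theta(|t|) := |t|'$. This is a well-defined function precisely because clause (3) is a biconditional: $|s| = |t|$ in $\L$ iff $s = t$ in $\Fr(\R)$ iff $|s|' = |t|'$ in $\L'$, so the assignment neither conflates nor splits cells. Because $\L$ and $\L'$ have finite $2$-products and the object $m$ is the $m$-fold product of $1$, every one-cell $n \to m$ factors uniquely as a tuple of one-cells $n \to 1$ together with the projections and permutation/diagonal maps coming from $\Nat^\op$; I extend $\Theta$ to all one-cells by sending such a tuple componentwise and sending the structural maps out of $\Nat^\op$ to their primed counterparts. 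Dually, I define $\Theta$ on the generating $2$-cells $|\rho| : |s| \to |t|$ by $\Theta(|\rho|) := |\rho|'$, which is well-defined and injective by clause (4), and extend it to arbitrary $2$-cells via the finite-product structure.

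Next I would verify that $\Theta$ is a finite-product-preserving, identity-on-objects $2$-functor making the triangle with $\iota, \iota'$ commute. Preservation of products and commutativity of the triangle are immediate, since the structural maps out of $\Nat^\op$ are sent to their counterparts by construction. Functoriality --- compatibility with vertical composition, horizontal composition, whiskering and identities --- reduces to checking it on generators, because the phrase ``containing precisely the following structure'' means that every cell of $\L$ is built from the interpreted terms and reductions by the operations of a Lawvere $2$-theory, and the very same operations present $\L'$; thus each composite of interpreted cells is sent to the corresponding composite of primed interpreted cells.

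Finally, to see $\Theta$ is a $2$-isomorphism rather than merely a map, I would construct $\Theta' : \L' \to \L$ by the symmetric recipe $|t|' \mapsto |t|$ and $|\rho|' \mapsto |\rho|$, which is well-defined by the converse directions of clauses (3) and (4). Both composites $\Theta' \circ \Theta$ and $\Theta \circ \Theta'$ act as the identity on the generating one- and two-cells and fix the objects, so by the same ``precisely'' argument they are the respective identity $2$-functors. The only point needing care is the bookkeeping around the finite-product structure when extending $\Theta$ off the generating cells, where one must invoke that a map into $m$ is determined by its $m$ components into $1$; but no genuine obstacle arises, since the biconditionals in clauses (3) and (4) guarantee that the extension is forced and bijective at every stage.
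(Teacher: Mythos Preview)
Your proposal is correct and follows exactly the line the paper has in mind, but you should be aware that the paper gives no proof at all: the lemma is stated with a bare \qed, preceded only by the remark that ``any two Lawvere $2$-theories associated to a rewriting $2$-theory differ only in the precise way in which the function symbols and reduction rules are represented.'' Your argument is precisely the unpacking of that remark --- using the biconditionals in clauses (3) and (4) for well-definedness and injectivity, and the ``precisely'' clause to ensure surjectivity on generators --- so there is no genuine difference in approach, only in the level of detail supplied.
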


In light of the previous lemma, the following is well-defined:

\begin{definition}
  $\Th(\R)$ is the Lawvere $2$-theory associated to the the rewriting
  $2$-theory $\R$. 
\end{definition}

As it stands, the relationship between $\Th(\R)$ and $\R$ is still
quite vague. In the remainder of this section, we shall see how to
construct $\Th(\R)$ from $\R$ and we shall also see that no ``extra''
equations arise from the $2$-categorical nature of $\Th(\R)$. 

Let $\R := \rtt$ be a rewriting $2$-theory and let $\L$ be the initial
Lawvere $2$-theory. That is, $\L$ contains no structure other than
that implied by the existence of a finite-product preserving
identity-on-objects functor $\iota:\Nat^\op\to \L$. For each function
symbol $F \in \F$ of arity $n$, add a one-cell $|F|:n \to 1$ to
$\L$. Extend this inductively to terms by setting:
  \[
  |t| = \begin{cases}
    |F|(|s_1|,\dots,|s_n|) & \text{if $t = F(s_1,\dots,s_n)$}\\
    |t| & \text{if $t \in \F$}\\
  \end{cases}
  \]
For each equation $(s,t) \in \E_\F$, we enforce an equality $|s| =
|t|$ by making use of the cartesian structure of $\L$. In particular,
we may make use of the following operations:

\begin{itemize}
  \item We may duplicate an object by making use of the diagonal map
    $\diag: 1 \to 2$.
  \item We may delete the left hand-side of a pair of variables by
    making use of the first projection $\pi_1:2 \to 1$.
  \item We may delete the right hand-side of a pair of variables by
    making use of the second projection $\pi_2:2 \to 1$.
  \item We may commute two variables by making use of the twist map
    $\tau: 2 \to 2$. 
\end{itemize}

As an example, suppose that $\R$ contains the binary function symbol
$\tensor$ and the equation 
\[\tensor(a,\tensor(b,c)) = \tensor(\tensor(b,b),a).\]
This equation can be represented by saying that the following diagram
commutes: 
\[
\xymatrix{
  {3} \ar[r]^{1\times\tensor} \ar[d]_{1\times \pi_1} & {2}
  \ar[r]^{\tensor} & {1}\\
  {2} \ar[d]_{\tau}\\
  {2} \ar[r]_{\diag\times 1} & {3} \ar[r]_{\tensor\times 1} & {2}
  \ar[uu]_{\tensor}
}
\]

We may interpret the above diagram as saying that the following two
deductions are equal:

\begin{center}
  \begin{tabular}{ll}
    \AxiomC{$a,b,c$}
    \RightLabel{$1\times \tensor$}
    \UnaryInfC{$a,\tensor(b,c)$}
    \RightLabel{$\tensor$}
    \UnaryInfC{$\tensor(a,\tensor(b,c))$}
    \DisplayProof
    & \qquad

    \AxiomC{$a,b,c$}
    \RightLabel{$1\times\pi_1$}
    \UnaryInfC{$a,b$}
    \RightLabel{$\tau$}
    \UnaryInfC{$b,a$}
    \RightLabel{$\diag\times 1$}
    \UnaryInfC{$b,b,a$}
    \RightLabel{$\tensor\times 1$}
    \UnaryInfC{$\tensor(b,b), a$}
    \RightLabel{$\tensor$}
    \UnaryInfC{$\tensor(\tensor(b,b),a)$}
    \DisplayProof
  \end{tabular}
\end{center}

Of course, there are other ways in which to represent the
equation. However, any choice of diagram to represent the equation
induces the same congruence on one-cells.

Next, we need to construct a two-cell $|\rho|$ in $\L$ for every
reduction $\rho \in \Fr(\R)$. We accomplish this by constructing a
two-cell $|\rho|: |s| \to |t|$ for every reduction $\rho:[s] \to [t]$
in $\T$ and extending the construction inductively to arbitrary
reductions as follows:
  \[
  |\rho| = \begin{cases}
    |\rho'|(|\sigma_1|,\dots,|\sigma_n|) & \text{if $\rho =
      \rho'(\sigma_1,\dots,\sigma_n)$}\\
    |\rho_1|\of |\rho_2| & \text{if $\rho = \rho_1\of \rho_2$}\\
    |\rho| & \text{if $\rho \in \T$}
  \end{cases}
  \]

As in the construction of a congruence in $\L$ from $\E_\F$, there
is a choice as to how to construct $|\rho|$ for a given $\rho \in
\T$. However, in light of Lemma \ref{lem:isotheories}, this particular
choice is inconsequential. Finally, we enforce the equation
$|\sigma|=|\tau|$ for every $(\sigma,\tau) \in \E_\T$.

From our construction of $\Th(\R)$, we have that any equation that
holds in $\Fr(\R)$ holds, after suitable translation, in
$\Th(\R)$. The converse result holds but is not immediately
obvious. That is, it is not clear that the fact that $\L$ is a
$2$-category does not introduce any extra equations.

Since we are only interested in models of $\L$ in $\Cat$, checking
that all of the $2$-categorical axioms are satisfied in $\Fr(\R)$
amounts to checking the axioms for functoriality and naturality. 

Let $\tau:s \to t$ be a reduction rule of rank $n$. Naturality of $\tau$
amounts to the assertion that for all reductions $\sigma_i:s_i \to
t_i$, we have:
\[
\tau(1_{s_1},\dots,1_{s_n})\of t(\sigma_1,\dots,\sigma_n) =
s(\sigma_1,\dots,\sigma_n)\of \tau(1_{t_1},\dots,1_{t_n}).
\]
This follows immediately from the combination of (Nat1) and (Nat2).

Suppose that $F \in \F_n$.  Without loss of generality, we may assume
that $F$ is binary. The functoriality of $F$ is established as
follows:

  \begin{eqnarray*}
    F(\varphi,1)\of F(1,\psi) =& F(\varphi\of 1, 1\of \psi) &\text{by
      (Funct)} \\
    =& F(\varphi,\psi) &\text{by (ID1) and (ID2)}\\
    =& F(1\of \varphi, \psi\of 1) & \text{by (ID1) and (ID2)}\\
    =& F(1,\psi)\of F(\varphi,1) & \text{by (Funct)}
  \end{eqnarray*}

Our construction of $\Th(\R)$ from $\R$ carries the message that we
may view function symbols in $\R$ as functors and reduction rules in
$\R$ as natural transformations. Thus, a rewriting $2$-theory can be
seen as giving a syntactic specification of an additional structure
carried by a category.

\begin{example}\label{ex:monoidal}
  Let $\R$ be the rewriting $2$-theory from Example
  \ref{ex:AU}. Then, $\Th(\R)$ is the Lawvere $2$-theory for monoidal
  categories. 
\end{example}

In the following section, we discuss several systems related to
rewriting $2$-theories.

\section{Relation to other systems}\label{sect:relations}

Our basic structure of a rewriting $2$-theory, $\R := \rtt$,
simultaneously generalises several other systems, which we cover in
order of increasing generality.

\begin{enumerate}
  \item {\bf First order rewriting:} If both $\E_\F$ and
    $\E_\T$ are empty and we do not impose the standard congruence,
    then we are in the setting of first order term rewriting. However,
    our construction of $\Fr(\R)$ adds in identity reductions, which
    are not usually assumed to be present in term rewriting systems.
  \item {\bf Rewriting modulo an equational theory:} If $\E_\T$ is
    empty and we do not impose the standard congruence, then we are in
    the setting of rewriting modulo an equational theory, with the
    same caveat as for standard first order term rewriting.
  \item {\bf Calculus of Structures:} If $\E_\T$ is empty and we do
    not impose the standard congruence, then we
    are also in the setting of the Calculus of Structures
    \cite{Gugl:si,GugStrass:mell}. This is a proof theoretic framework
    that extends one sided Gentzen systems with the ability for
    inference rules to act arbitrarily deeply within a sequent. 
  \item {\bf Rewriting logic:} If $\E_\T$ is empty, then we are in the
    setting of unconditional rewriting logic \cite{Meseguer_rl}. This
    system has its roots in concurrency theory and particularly in the
    notions of causal equivalence and Mazurkiewicz trace languages. 
  \item {\bf Clubs:} The notion of a fully covariant club was
    introduced by Kelly \cite{Kelly_coherence} as a unified framework
    for covariant structures carried by a category. His description of
    fully covariant clubs is very similar to our notion of a rewriting
    $2$-theory, with several points of difference. First, Kelly's
    calculus of proof terms is provided implicitly by the categorical
    setting, whereas our calculus is generated inductively. Second,
    Kelly works purely within the framework of two-dimensional
    category theory, whereas we prefer an approach via term rewriting
    systems, which highlights the connection with computational
    notions. A more substantial technical point of differentiation is that
    Kelly gives a variable-free presentation, which does not allow the
    expression of certain equations at the term level, such as the
    commutativity of a binary function symbol. Indeed, the only
    equations expressible at the term level in Kelly's setting are the
    strongly regular ones --- those equations $(s,t)$ where $\var(s) =
    \var(t)$, each variable appears precisely once in both $s$ and $t$
    and the order in which the variables appear in $s$ is the same as
    the order in which they appear in $t$. 
\end{enumerate}

In the following section, we introduce the coherence problem, which
will be our main focus throughout the thesis.

\section{Coherence}\label{sect:coherence}

There are many interrelated problems that go under the name of
``coherence''. Ultimately, all of these questions relate to describing
the free algebra generated by some algebraic structure on a
category. The original manifestation of this problem was in Mac Lane's
investigation of monoidal categories \cite{MacLane_natural}. Since all
diagrams commute in the free monoidal category on a discrete category,
it was this phenomenon that was originally associated with the term
``coherence''. This was in keeping with work in algebraic topology on
defining algebraic operations on topological spaces together with
equations that hold only up to homotopy
\cite{Stasheff:associativity}. 

It is not the case that all algebraically defined structures on
categories enjoy the same strong coherence property that monoidal
categories do. For instance, simply removing one of the coherence
axioms from the definition of a monoidal category destroys this
property. This observation led to Kelly reformulating the coherence
problem to ask which diagrams commute purely as a result of the axioms
\cite{Kelly_coherence}. However, even this question may be too strong,
for we may not be able to even decide if a \emph{given} diagram
commutes as a result of the axioms. The view that a coherence problem
is essentially concerned with deciding whether given diagrams
commute has its roots in Lambek's investigation of residuated
structures arising in mathematical linguistics \cite{Lambek:coh1}. 

The main thrust of this thesis is the investigation of various
coherence problems for structures defined by rewriting
$2$-theories. While this does not cover the complete array of possible
categorical structures, it is sufficiently broad so as to encompass
many interesting and pathological examples. In this section, we set
out precise definitions of the various coherence problems.

One difficulty that arises when investigating coherence problems is that
the commutativity of a particular diagram may have no bearing on the
question at hand. For this reason, we need to carefully define those
diagrams and reductions that are of importance for us. These are the
diagrams that are in ``general position''. That is, they contain the
maximum number of distinct variables. Before making this precise, we
need the concept of the \emph{shape} of a reduction. 
\begin{definition}
  Let $\R$ be a rewriting $2$-theory. The \emph{Shape} of a
  reduction $\alpha \in \Fr(\R)$ is defined recursively by the following:
  \begin{equation*}
    \Shape(\alpha) = 
    \begin{cases}
      \Shape(\alpha_1)\of\Shape(\alpha_2) & \text{if $\alpha =
        \alpha_1\of\alpha_2$} \\
      \tau(\Shape(\alpha_1),\dots,\Shape(\alpha_n)) & \text{if $\alpha
        = \tau(\alpha_1,\dots,\alpha_n)$}\\
      F(\Shape(\alpha_1),\dots,\Shape(\alpha_n)) & \text{if $\alpha =
        F(\alpha_1,\dots,\alpha_n)$}\\
      \circ & \text{otherwise}
    \end{cases}
  \end{equation*}
\end{definition}

In the system from Example \ref{example:associative}, we have:
\begin{equation*}
  \Shape(\alpha(1_A,1_B,1_C)) = \Shape(\alpha(1_A,1_A,1_A)) =
  \alpha(\circ,\circ,\circ)
\end{equation*}
We now need a precise definition of the variables present in a
reduction.

\begin{definition}
  Given a rewriting $2$-theory $\R$, the set of variables in a
  reduction $\alpha \in \Fr(\R)$ is defined
  recursively as follows: 
  \begin{equation*}
    \var(\alpha) =
    \begin{cases}
      \var(\alpha_1)\cup\var(\alpha_2) &\text{if $\alpha =
        \alpha_1\of\alpha_2$}\\
      \bigcup_{i=1}^n \var(\alpha_i) & \text{if $\alpha =
        \tau(\alpha_1,\dots,\alpha_n)$}\\
      \bigcup_{i=1}^n \var(\alpha_i) & \text{if $\alpha =
        F(\alpha_1,\dots,\alpha_n)$}\\
      \alpha & \text{otherwise}
    \end{cases}
  \end{equation*}
\end{definition}

Returning to Example \ref{example:associative}, we find that
\[\var(\alpha(1_A,1_B,1_C)) = \{1_A,1_B,1_C\},\] whereas
\[\var(\alpha(1_A,1_A,1_A)) = \{1_A\}.\] We can finally nail down what
we mean when we say a reduction has the maximum possible number of
variables.

\begin{definition}
  Given a rewriting $2$-theory $\R$, a
  reduction $\alpha\in \Fr(\R)$ is \emph{in general position} if 
  \begin{equation*}
    |\var(\alpha)| = \max\{|\var(\tau)| ~:~ \tau \in
    \Fr(\R) ~ \textrm{and}~ \Shape(\tau) =
    \Shape(\alpha)\}.
  \end{equation*}
\end{definition}

\begin{example}
  Consider the system from Example \ref{example:associative} augmented
  with the following reduction rule:
  \[
  \beta(x) : x\tensor x \to x
  \]
  Then, 
  $$\alpha(1_A,1_A,1_B)\of(\beta(1_A)\tensor 1_B): A \tensor (A
  \tensor B) \to A \tensor B$$ 
  is in general position, whereas
  $$\alpha(1_A,1_A,1_B)~:~A\tensor(A\tensor B) \to (A\tensor
  A)\tensor B$$ 
  is not in general position. 
\end{example}

For coherence problems, we only need to focus on those diagrams whose
reductions are all in general position. This allows us to define the
various problems that will be our focus.

\begin{definition}
  Let $\R:= \rtt$ be a rewriting $2$-theory.
  \begin{enumerate}
    \item $\R$ is \emph{Mac Lane coherent} if any two parallel
      reductions in general position in  $\Fr(\R)$ are equal.
    \item $\R$ is \emph{Lambek coherent} if there is a decision
      procedure for the commutativity of diagrams in general position in $\Fr(\R)$.
  \end{enumerate}
\end{definition}

Unfortunately, deciding whether a given $2$-theory is coherent in
either the Mac Lane or Lambek sense is often impossible.

\begin{theorem}\label{thm:undecidable}
The decision problems for Mac Lane coherence and Lambek coherence are
undecidable over the class of finitely presented rewriting $2$-theories.
\end{theorem}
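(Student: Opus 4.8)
The plan is to obtain both undecidability results by a single uniform reduction from the Adian--Rabin theorem, exploiting the fact that the loops in $\Fr(\R)$ on a fixed one-cell form a monoid presented by $\E_\T$. Concretely, I would exhibit a computable map sending a finite group presentation $P = \langle a_1,\dots,a_k \mid r_1,\dots,r_m\rangle$ to a finitely presented rewriting $2$-theory $\R_P$ whose only nontrivial two-dimensional content is a copy of the group $G_P$ presented by $P$, and then read off both coherence properties of $\R_P$ from algorithmic properties of $G_P$.

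For the construction, take a single unary function symbol $F$, empty $\E_\F$, and for each generator $a_i$ an invertible reduction rule $\alpha_i : F(x) \to F(x)$ (a loop), so that $\R_P$ automatically contains the inverses $\alpha_i^{-1}$ together with the equations $\alpha_i \of \alpha_i^{-1} = 1_{F(x)}$ and $\alpha_i^{-1}\of\alpha_i = 1_{F(x)}$. I would then put into $\E_\T$ one equation $w_j = 1_{F(x)}$ for each relator $r_j$, where $w_j$ is the loop obtained by reading $r_j$ as a vertical composite of the $\alpha_i$ and their inverses. The intended invariant is that the monoid of loops $F(x) \to F(x)$ in $\Fr(\R_P)$, under vertical composition $\of$, is isomorphic to $G_P$. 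Since every such loop uses the single variable $x$, all loops on $F(x)$ are in general position and are pairwise parallel; the induced loops on the iterates $F^n(x)$, obtained via the (Structure) rule, are functorial images of $G_P$, so they contribute no new parallel pairs beyond quotients of $G_P$.

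Granting this invariant, the two statements follow from two Markov properties. For Mac Lane coherence, $\R_P$ is coherent exactly when all parallel reductions in general position agree, which here means that the loop monoid $G_P$ is trivial; since triviality of finitely presented groups is undecidable by the Adian--Rabin theorem, Mac Lane coherence is undecidable. For Lambek coherence, the commutativity problem for diagrams in general position in $\R_P$ is interreducible with the word problem of $G_P$ (the loops on $F(x)$ realise the hardest instance, and every other hom-set reduces to it), so $\R_P$ is Lambek coherent if and only if $G_P$ has solvable word problem; solvability of the word problem is also a Markov property --- a finitely generated subgroup of a group with solvable word problem again has solvable word problem, so a Boone group witnesses the embedding obstruction --- whence deciding it is impossible and Lambek coherence is undecidable.

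The main obstacle I anticipate is verifying the encoding invariant faithfully, namely that the loop monoid on $F(x)$ is exactly $G_P$ with neither missing relations nor accidental collapse. The subtlety is that the standard congruence forces (Funct), (Nat 1) and (Nat 2) in addition to the imposed relations, and one must check that, restricted to loops built by vertical composition from the $\alpha_i$ (i.e.\ with the variable substituted by $1_x$), these axioms impose nothing beyond the monoid structure and the relators. I would handle this by choosing a normal form for loops and showing that a word in the $\alpha_i^{\pm 1}$ represents $1_{F(x)}$ in $\Fr(\R_P)$ if and only if it equals the identity in $G_P$; the remaining bookkeeping --- that distinct hom-sets never produce parallel pairs not already accounted for, and that the general-position condition selects precisely the loop words --- is routine.
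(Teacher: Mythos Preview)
Your proposal is correct and follows essentially the same strategy as the paper: encode an algebraic structure as the loop monoid on $F(x)$ for a single unary symbol $F$, so that Mac Lane coherence becomes triviality and Lambek coherence becomes decidability of the word problem. The only real difference is that the paper encodes a finitely presented \emph{monoid} directly (with non-invertible loops $\tau_i:F(x)\to F(x)$ and relations $(\omega_i,\omega_j)$ copied from the monoid presentation, appealing to Markov's undecidability results for monoids), whereas you encode a finitely presented \emph{group} via invertible loops and relator equations, appealing to Adian--Rabin. The monoid version is marginally cleaner since you avoid introducing inverses and the extra equations $\alpha_i\of\alpha_i^{-1}=1$, but your added care about the standard congruence and general position is warranted and the paper simply elides it; with a unary $F$ and all reductions being loops on $F(x)$, the axioms (Funct), (Nat~1), (Nat~2) indeed contribute nothing beyond the monoid/group relations, so your invariant holds.
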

\begin{proof}
  Following the work of Markov \cite{Markov:undecidable}, we know that
  many problems are undecidable for finitely presented monoids. Our
  basic strategy is to show how to encode a monoid as a rewriting
  $2$-theory.   Let $M := \langle X |R\rangle$ be a finite presentation for
  a monoid. Let $\R(M)$ be the rewriting
  $2$-theory  consisting of a single unary function symbol $F$,
  reductions $\tau_i : F(x) \to F(x)$ for every $\tau_i \in X$ and
  relations $(\omega_i,\omega_j)$ for every $(\omega_i,\omega_j) \in
  R$. If we could solve the Mac Lane coherence problems for
  $\R(M)$, then we could decide whether $M$ is trivial. Similarly, if 
  we could solve the Lambek coherence problem for $\R(M)$, then we
  could solve the word problem for $M$. Since both of these monoid
  problems are undecidable in general, so too are the associated
  coherence problems.
\end{proof}

The notions of coherence that we have introduced here are focused
entirely on the congruence present on reductions. Historically, this
arose because equations on terms can often be converted into coherent
natural isomorphisms. We explore this phenomenon in the following
section.

\section{Categorification}\label{sect:categorification}

The fundamental group of a topological space is usually defined as
the group of homotopy-equivalence classes of based loops in the
space. This definition forgets the particular relationships between
any two loops lying in a given equivalence class. An alternative
approach might be to define a group structure on the space of based
loops together with explicit homotopies between elements. This notion
of algebraic structure ``up to homotopy'' was introduced in
\cite{Stasheff:associativity} and has been extended to handle quite
general structures \cite{Rosicky:homotopy}. A problem that arises with
this approach is that one then needs to examine the relationships
between the homotopies themselves. 

Translated into our language, the above process takes a labelled
rewriting theory theory $\L := \ltr$ and replaces it with a rewriting
$2$-theory $\R(\L)$ in which each equation in
$\E_\F$ is replaced with an invertible reduction. In order to
retain the link between the $\L$ and $\R(\L)$, one needs to show that
$\Th(\L)\simeq\Th(\R(\L))$. However, this can only be the case if any two
sequences of the new invertible reductions in $\R(\L)$ having the same
source and target are equal. In other words, one needs to construct
$\R(\L)$ in such a way that it is Mac Lane coherent. 

\begin{definition}
  A \emph{categorification} of a labelled rewriting theory $\L := \ltr$
  is a rewriting $2$-theory $\R(\L) := \langle \F; \T\cup I(\E_\F)\,|\,
  \varnothing; \E_{\T\cup I(\E_\F)}\rangle$, where:
  \begin{enumerate}
    \item $I(\E_\F)$ consists of reductions $\rho_{s,t}:s \to t$
      and $\rho_{s,t}^{-1}:t\to s$ for each $(s,t) \in \E_\F$.
    \item $\E_{\T\cup I(\E_\F)}$ contains $\E_t$, as well as the equations
      \begin{eqnarray*}
        \rho_{s,t}\of\rho_{s,t}^{-1} &=& 1_s\\
        \rho_{s,t}^{-1}\of\rho_{s,t} &=& 1_t
      \end{eqnarray*}
      for each $(s,t) \in \E_\F$.
  \end{enumerate}
  $\R(\L)$ is a \emph{coherent categorification} of $\L$ if it is Mac
  Lane coherent.
\end{definition}

\begin{example}
  Monoidal categories, as defined in Example \ref{ex:AU} are a
  coherent categorification of the theory for strict monoidal
  categories. This theory consists of a binary function symbol
  $\tensor$, a nullary function symbol $I$ as well as equations
  \begin{eqnarray*}
    a\tensor (b\tensor c) &=& (a\tensor b)\tensor c\\
    a \tensor I &=& a\\
    I \tensor a &=& a
  \end{eqnarray*}
  If we take models for the theory of strict monoidal categories to be
  product preserving functors into $\mathbf{Set}$, then we recover the
  variety of monoids. 
\end{example}

A categorification of an equational theory $\E :=
\langle\F\,|\,\E_\F\rangle$, is a categorification of the labelled
term rewriting theory
$\langle\F;\varnothing\,|\,\E_\F\rangle$. Similarly, we can define
$\Th(\L)$ for a labelled rewriting $\L := \ltr$ to be $\Th(\langle
\F;\T\,|\, \E_\F, \varnothing\rangle)$. The Lawvere $2$-theory
$\Th(\E)$ associated to an equational theory $\E$ is defined
analogously.

\begin{theorem}\label{thm:categorification}
  Let $\R(\E)$ be a  categorification of the equational theory
  $\E$. There is a biequivalence of 
  $2$-categories $\Th(\R(\E)) \simeq \Th(\E)$ if and only if $\R(\E)$
  is coherent.
\end{theorem}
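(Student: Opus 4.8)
The plan is to prove both directions of the biconditional by analysing how the categorification $\R(\E)$ relates to the original equational theory $\E$ at the level of the associated Lawvere $2$-theories. Recall that $\Th(\E)$ is a Lawvere $2$-theory in which the equations of $\E_\F$ hold as strict equalities on one-cells and there are no nontrivial two-cells, whereas in $\Th(\R(\E))$ each equation $(s,t) \in \E_\F$ has been replaced by an invertible reduction $\rho_{s,t}$, so that formerly-equal terms are now merely connected by an isomorphism two-cell. The central task is to construct a comparison $2$-functor $\Theta : \Th(\R(\E)) \to \Th(\E)$ and to determine exactly when it is a biequivalence.

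First I would define $\Theta$ on objects as the identity (both theories have the natural numbers as objects) and on one-cells by sending $|t|$ to the class $|[t]_{\E_\F}|$, which is well-defined precisely because the reduction rules $\rho_{s,t}$ were introduced only for pairs $(s,t) \in \E_\F$. On two-cells, $\Theta$ collapses every generating isomorphism $\rho_{s,t}$ to an identity, since the source and target become equal one-cells in $\Th(\E)$; this extends functorially along the (Structure), (Replacement), and (Transitivity) rules. One then checks that $\Theta$ is identity-on-objects and preserves finite products and the structure map $\iota$, so it is a genuine map of Lawvere $2$-theories. The key observation is that $\Theta$ is essentially surjective and full on one-cells by construction, and locally the fibres of $\Theta$ over a parallel pair of one-cells are exactly the parallel reductions in $\Th(\R(\E))$ between them.

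For the forward direction, suppose $\Theta$ is a biequivalence. Then for any parallel pair of two-cells $\sigma, \tau$ between one-cells in general position, their images agree in $\Th(\E)$ (which has only identity two-cells), and local faithfulness of a biequivalence forces $\sigma = \tau$ already in $\Th(\R(\E))$. Translating back through the correspondence of the previous section between $\Fr(\R(\E))$ and $\Th(\R(\E))$, this says any two parallel reductions in general position are equal, which is exactly Mac Lane coherence. For the converse, suppose $\R(\E)$ is coherent. The substantive point is to verify that $\Theta$ is \emph{locally} a biequivalence, i.e.\ an equivalence on each hom-category: local fullness and essential surjectivity come from the fact that every one-cell $|s|$ is connected to $|t|$ by the invertible $\rho_{s,t}$ whenever $[s]_{\E_\F}=[t]_{\E_\F}$, and local faithfulness is precisely where coherence enters, since it guarantees the hom-categories of $\Th(\R(\E))$ are preorders mapping isomorphically onto the (discrete) hom-sets of $\Th(\E)$.

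The main obstacle I expect is the converse direction's local essential surjectivity together with showing the comparison is full: one must verify that \emph{every} one-cell of $\Th(\E)$ lifts and that any two lifts of a given morphism are canonically isomorphic in $\Th(\R(\E))$, which requires chaining together the generating isomorphisms $\rho_{s,t}$ coherently. Here coherence is used twice over --- once to see that the chain of isomorphisms witnessing $[s]_{\E_\F}=[t]_{\E_\F}$ is independent of the chosen rewriting path (so the lift is canonical), and once to collapse the hom-categories to preorders. I would need to take care that the restriction to reductions \emph{in general position} does not weaken the conclusion, arguing that a coherence statement for reductions in general position propagates to all parallel reductions via variable identification, so that the full hom-categories and not merely their general-position fragments become preorders.
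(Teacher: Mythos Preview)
Your approach is essentially the paper's: both construct the quotient $2$-functor $\Theta$ (the paper calls it $G$) sending $s \mapsto [s]$ and every $\rho_{s,t}$ to an identity, and both locate the coherence hypothesis in the local faithfulness of this map. The only presentational difference is that the paper builds an explicit pseudo-inverse $F$ by choosing a representative $r([s])$ in each $\E_\F$-class, then checks $F\of G = 1$ and $G\of F \cong 1$; the naturality square for the latter isomorphism is a parallel pair of $2$-cells, and coherence makes it commute. This is exactly the ``main obstacle'' you anticipate---lifting one-cells and showing any two lifts are canonically isomorphic---resolved by a blunt choice of representatives rather than by chaining generating isomorphisms.

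One small slip: in your forward direction you assume $\Theta$ itself is a biequivalence, but the hypothesis only says \emph{some} biequivalence exists. Either run the argument with the given biequivalence in place of $\Theta$ (as the paper does, taking arbitrary $F$, $G$ with $G\of F \cong 1$), or note that a $2$-category biequivalent to one with only identity $2$-cells must have locally preordered hom-categories, so parallel $2$-cells coincide. Your worry about the general-position restriction is well taken; the paper's proof glosses over it entirely, and your proposed resolution via variable specialisation is the correct patch.
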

\begin{proof}

Let $\E$ be an equational theory and let $\R(\E)$ be a
categorification of $\E$. 

Suppose that $\R(\E)$ is coherent. For each congruence class
of $1$-cells $[s] \in \Th(\E)$, pick a distinguished element $r([s])$.
Define a pseudofunctor $F:\Th(\E) \to \Th(\R(\E))$ by:
\begin{itemize}
  \item $0$-Cells: Identity
  \item $1$-Cells: $F([s]) = r([s])$
  \item $2$-Cells: $\Th(\E)$ contains only identity $2$-cells. Define
    $F(1_{[s]}) = 1_{r([s])}$.
\end{itemize}

Next, define a $2$-functor $G: \Th(\R(\E)) \to \Th(\E)$ by:
\begin{itemize}
  \item $0$-Cells: Identity
  \item $1$-Cells: $G(s) = [s]$
  \item $2$-Cells: Since there is a $2$-Cell $s \to t$ in
    $\Th(\R(\E))$ precisely when $[s] = [t]$ in $\Th(\E)$, we can
    define $G(\rho:s \to t) = 1_{[s]}$.
\end{itemize}

It follows from the definitions that $F\of G = 1_{\Th(\E)}$. Since
$\R(\E)$ is coherent, the two legs of the following diagram commute:

\[
\xymatrix{
  {G\of F (s)} \ar[r]^{\iso} \ar[d]_{G\of F(\rho)} & {s}
  \ar[d]^{\rho}\\
  {G\of F(t)} \ar[r]^{\iso} & {t}
}
\]
It follows that $G\of F \iso 1$, so $\Th(\R(\E)) \simeq \Th(\E)$.

Conversely, suppose that $\Th(\R(\E)) \simeq \Th(\E)$. Then, there
exist functors $F:\Th(\E) \to \Th(\R(\E))$ and $G:\Th(\R(\E)) \to
\Th(\E)$ such that $F\of G \iso 1$. Suppose that $\rho_1,\rho_2:s \to t$
are a parallel pair of $2$-cells in $\Th(\R(\E))$. Then,
$F\of G(\rho_1) = F(1_{[s]}) = F\of G(\rho_2)$. Thus, $\R(\E)$ is
coherent.
\end{proof}

\begin{example}
  It follows from Theorem \ref{thm:categorification} that the
  theory for monoidal categories is biequivalent to the theory for
  strict monoidal categories. 
\end{example}

Given two rewriting $2$-theories $\R_1$ and $R_2$, we define
$\Th(\R_1) \cup \Th(\R_2) := \Th(\R_1 \cup \R_2)$. 

\begin{corollary}\label{cor:categorification}
  Let $\R := \rtt$ be a labelled rewriting theory and let 

\noindent $\langle \F;
  I(\E_\F)\,|\, \varnothing; \E_{I(F)}\rangle$ be a coherent
  categorification of $\langle \F | \E_\F\rangle$. Then
  \[\Th(\R) \simeq \Th(\langle \F; \T \cup I_{\E_\F} \,|\, \varnothing,
  \E_\T \cup \E_{I(\F)} \rangle).\]
\end{corollary}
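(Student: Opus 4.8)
The plan is to recognise both Lawvere $2$-theories as unions that share the reduction part $\T$, and then to transport the biequivalence of Theorem~\ref{thm:categorification} across that shared part. Write $\E := \langle \F \,|\, \E_\F\rangle$ for the underlying equational theory, $\R(\E) := \langle \F; I(\E_\F)\,|\, \varnothing; \E_{I(\F)}\rangle$ for the given coherent categorification, and $\R_\T := \langle \F; \T\,|\, \varnothing; \E_\T\rangle$ for the reduction part that is carried along on both sides. Then $\R = \E \cup \R_\T$ and the right-hand theory is $\R(\E)\cup\R_\T$, so by the union convention $\Th(\R) = \Th(\E)\cup\Th(\R_\T)$ and the right-hand side equals $\Th(\R(\E))\cup\Th(\R_\T)$. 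Since $\R(\E)$ is coherent, Theorem~\ref{thm:categorification} already supplies a biequivalence $\Th(\E)\simeq\Th(\R(\E))$, so the whole task reduces to showing that this biequivalence survives the union with the common factor $\Th(\R_\T)$.

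To do this I would extend the two functors built in the proof of Theorem~\ref{thm:categorification}. The strict $2$-functor $G\colon \Th(\R(\E))\to\Th(\E)$, which sends a term $s$ to its class $[s]$ and every invertible reduction in $I(\E_\F)$ to an identity $2$-cell, extends to $\bar G\colon \Th(\R(\E)\cup\R_\T)\to\Th(\R)$ by additionally sending each reduction coming from $\T$ to its counterpart in $\Th(\R)$. This $\bar G$ is well defined: the equations $\E_\T$ are present on both sides and are preserved verbatim, while every equation of $\E_{I(\F)}$ is an identity between parallel composites of invertible reductions, both sides of which $\bar G$ collapses to the same identity $2$-cell (so, notably, the well-definedness of $\bar G$ does not itself require coherence). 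In the other direction I would choose a representative $r([s])$ for each class and extend the pseudofunctor $F$ to $\bar F\colon\Th(\R)\to\Th(\R(\E)\cup\R_\T)$ by setting $\bar F([s]) = r([s])$ and conjugating each generating reduction $\tau\colon [s]\to[t]$ of $\Th(\R)$ by the canonical invertible reductions $\iota_s\colon s\to r([s])$, that is, $\bar F(\tau) = \iota_s^{-1}\of \tau\of \iota_t$. Here coherence of $\R(\E)$ is exactly what guarantees that each $\iota_s$ exists and is the \emph{unique} invertible reduction from $s$ to $r([s])$, so that $\bar F$ is unambiguous.

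The triangle $\bar F\of\bar G = 1_{\Th(\R)}$ then holds on the nose, because $\bar G$ annihilates the conjugating factors $\iota_s^{-1}$ and $\iota_t$ and returns each $\tau$ unchanged. It remains to produce the pseudonatural isomorphism $\bar G\of\bar F \iso 1$, with components $\iota_s^{-1}\colon r([s])\to s$; the one genuinely new thing to verify, beyond what Theorem~\ref{thm:categorification} already gives, is naturality of these components against the reductions drawn from $\T$. For a generating reduction $\tau\colon s\to t$ of $\T$ the relevant square commutes after cancelling $\iota_t\of\iota_t^{-1} = 1$, and for a generating invertible reduction $\rho_{s,t}$ it reduces to the equation $\iota_s^{-1}\of\rho_{s,t} = \iota_t^{-1}$ between parallel invertible reductions $r([s])\to t$, which holds precisely because $\R(\E)$ is coherent. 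Propagating naturality from these generators through the (Structure), (Replacement) and (Transitivity) rules is the main technical obstacle, and I expect to discharge it using the (Funct) and (Nat~1)/(Nat~2) equations of the standard congruence, which let one slide the $\iota$'s past applications of function symbols and through nested substitutions. Together these data furnish the pseudofunctors and pseudonatural isomorphisms witnessing $\Th(\R)\simeq\Th(\R(\E)\cup\R_\T)$, which is the asserted biequivalence.
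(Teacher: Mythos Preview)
Your approach is essentially the same as the paper's: both decompose $\R$ as a union of its equational part and its reduction part, invoke Theorem~\ref{thm:categorification} on the equational part, and reassemble. The paper's proof is a four-line chain of equalities that simply replaces $\Th(\E)$ by $\Th(\R(\E))$ inside the union without further comment; your version does the honest work of extending the pseudofunctors $F,G$ from the proof of Theorem~\ref{thm:categorification} across the $\T$-part and checking naturality against the new reductions, which is precisely what justifies the step the paper leaves tacit.
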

\begin{proof}
  By Theorem \ref{thm:categorification}, we have
  \begin{eqnarray*}
    \Th(\R) &=& \Th(\langle \F; \varnothing \,|\, \E_\F;
    \varnothing \rangle \cup \langle \varnothing; \T \, |\,
    \varnothing; \E_\T\rangle)\\
    &=& \Th(\langle \F; \varnothing \,|\, \E_\F;
    \varnothing \rangle) \cup \Th(\langle \varnothing; \T \, |\,
    \varnothing; \E_\T\rangle)\\
    &\simeq& \Th(\langle \F;
    I(\E_\F)\,|\, \varnothing; \E_{I(F)}\rangle) \cup
    \Th(\langle \varnothing; \T \, |\, \varnothing;
    \E_\T\rangle)\\ 
    &=& \Th(\langle \F; \T \cup I(\E_f) \,|\, \varnothing, \E_\T \cup
  \E_{I(\F)} \rangle).
  \end{eqnarray*}
\end{proof}

The above corollary roughly states that, for a given rewriting
$2$-theory, we can switch between an equational theory on terms and a
coherent invertible theory on terms as we please. This ability is very
useful in investigating coherent structures. In the following section,
we introduce some other useful concepts for investigating coherence.

\section{Basic properties}\label{sect:basic}

This section is predominantly intended as a collection of basic
concepts and results that will prove useful throughout the thesis.

Given a rewriting $2$-theory $\R$, we shall frequently need to break
up a reduction in $\Fr(\R)$ into a composite of smaller
reductions. Since all of the reductions in $\Fr(\R)$ are generated by
a set of reduction rules, this process must ultimately
terminate. However, it is important that we have some understanding of
the resulting normal forms.

\begin{definition}[Singular]
  Let $\R:= \rtt$ be a rewriting $2$-theory. The set of singular
  reductions in $\Fr(\R)$ is denoted $\Sing(\R)$ and is generated as
  follows: 
  \begin{itemize}
    \item If $\rho \in \T_n$ and
      $[t_1],\dots,[t_n]$ are congruence classes of terms in
      $\Fr(\R)$, then $\rho(1_{t_1},\dots, 1_{t_n})$ is singular. 
    \item If $F \in \F_n$ and $\rho$ is a singular reduction and $1
      \le i \le n$, then 
      \[{F}(\overbrace{1,\dots,1}^{i-1},{\rho},
      \overbrace{1,\dots,1}^{n-i}),\]
      is singular.
  \end{itemize}
\end{definition}

\begin{example}
  In the system from example \ref{ex:AU}, the reduction
  \[
  1_a\tensor\alpha(1_b,1_c,1_d): a\tensor(b\tensor(c\tensor d)) \to
  a\tensor((b\tensor c)\tensor d)
  \]
  is singular, whereas the reduction
  \[
  \alpha(1_a,\alpha(1_b,1_c,1_d),1_e):a\tensor((b\tensor(c\tensor
  d))\tensor e) \to (a\tensor((b\tensor c)\tensor d))\tensor e
  \]
  is not singular.
\end{example}

\begin{lemma}\label{lem:irred}
  Let $\R$ be a rewriting $2$-theory. Every non-identity reduction in
  $\Fr(\R)$ is equal to a composite of finitely many singular
  reductions. 
\end{lemma}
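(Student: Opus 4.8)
The plan is to prove the statement by induction on the structure of a representative reduction $\varphi$ in $\Fr(\ltr)$, following the four generation rules of Definition \ref{def:genreds}. It is cleanest to prove the slightly stronger dichotomy that \emph{every} reduction in $\Fr(\R)$ is equal, modulo $[\E_\T + S(\R)]$, either to an identity or to a composite of finitely many singular reductions; the lemma is then exactly the non-identity case. The base case $1_s$ and the transitivity case $\varphi = \psi_1\of\psi_2$ are immediate: for transitivity one concatenates the two decompositions supplied by the inductive hypothesis, reassociating freely by (Assoc) and deleting any identity factors using (ID 1) and (ID 2).

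The engine of the proof is the (Funct) axiom together with the unit laws, which let one sequentialise a structure reduction into singular pieces. Given $\varphi = F(\varphi_1,\dots,\varphi_n)$ with each $\varphi_i$ either an identity or, by hypothesis, a composite of singulars $\sigma_{i,1}\of\cdots\of\sigma_{i,k_i}$, I would repeatedly apply (Funct) backwards --- inserting identities in the untouched coordinates and collapsing them with (ID 1) and (ID 2) --- to rewrite $F(\varphi_1,\dots,\varphi_n)$ as the composite, taken over the non-identity coordinates $i$ and over $j = 1,\dots,k_i$, of the reductions $F(1,\dots,1,\sigma_{i,j},1,\dots,1)$ that act in a single coordinate. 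Each such factor is singular by the second clause of the definition of $\Sing(\R)$, and by iterating through nested function symbols the same argument handles an arbitrary \emph{structure reduction} $s(\varphi_1,\dots,\varphi_n)$ built from a term $s$; this generality is needed for the next case. If every $\varphi_i$ is an identity the composite is empty, and here I would invoke the identity-preservation half of functoriality, $F(1_{s_1},\dots,1_{s_n}) = 1_{F(s_1,\dots,s_n)}$, to conclude that $\varphi$ is itself an identity.

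The replacement case is where the naturality axioms do the decisive work. For $\varphi = \tau(\varphi_1,\dots,\varphi_n)$ with $\tau \in \T_n$ a rule $[s]\to[t]$, the axiom (Nat 1) gives $\tau(\varphi_1,\dots,\varphi_n) = s(\varphi_1,\dots,\varphi_n)\of\tau(1_{t_1},\dots,1_{t_n})$. The second factor $\tau(1_{t_1},\dots,1_{t_n})$ is singular by the first clause of the definition, while the first factor $s(\varphi_1,\dots,\varphi_n)$ is a structure reduction over the source term $s$, which the previous paragraph expresses as a composite of singulars; concatenating the two yields the desired form. Since this composite contains the genuine rule application $\tau(1_{t_1},\dots,1_{t_n})$, it is never collapsed to an identity, so the dichotomy is respected.

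I expect the main obstacle to be the bookkeeping around identities rather than any single hard step: one must verify that pulling $F(\varphi_1,\dots,\varphi_n)$ apart one coordinate and one singular factor at a time is legitimated purely by (Funct), (ID 1) and (ID 2); that the nested-term version of this sequentialisation goes through by an inner induction on the shape of $s$; and --- crucially --- that a reduction all of whose components are identities really is an identity, which relies on reading identity preservation $F(1_{s_1},\dots,1_{s_n}) = 1_{F(s_1,\dots,s_n)}$ as part of the functoriality established for $\Th(\R)$ in Section \ref{sect:semantics}. Without that last fact a stray non-singular factor $F(1,\dots,1)$ could survive, so it is the one point where care is essential.
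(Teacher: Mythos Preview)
Your proposal is correct and follows essentially the same route as the paper: both arguments use (Nat~1) to peel off a singular factor $\tau(1_{t_1},\dots,1_{t_n})$ from a replacement and use (Funct) with the unit laws to sequentialise $F(\varphi_1,\dots,\varphi_n)$ coordinate by coordinate. The only cosmetic difference is that the paper organises the induction via an integer rank function $R(\rho)$ counting rule applications, whereas you induct directly on the syntactic structure of $\varphi$; the content of each case is the same. Your explicit flagging of the identity-preservation issue $F(1,\dots,1) = 1$ is a point the paper's proof leaves implicit --- its rank-based induction simply treats the all-identities case as falling under the ``identity'' hypothesis without justifying why --- so your caution there is well placed rather than excessive.
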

\begin{proof}
  Let $\R$ be a rewriting $2$-theory and let $\rho$ be a reduction in
  $\Fr(\R)$. Define the rank of $\rho$ to be
  \[
  R(\rho) = \begin{cases}
    R(\rho_1) + R(\rho_2) &\text{If $\rho = \rho_1\of \rho_2$}\\
    \sum_{i=1}^n R(\tau_i) &\text{If $\rho =
      F(\tau_1,\dots,\tau_n)$}\\
    \sum_{i=1}^n R(\tau_i) &\text{If $\rho =
      \sigma(\tau_1,\dots,\tau_n)$}\\
    1 &\text{If $\rho \in \Sing(\R)$}
  \end{cases}
  \]

  We proceed by induction on $R(\rho)$ to show that $\rho$ is a
  composite of singular morphisms. If $\R(\rho) = 1$, then $\rho$ is
  singular.

  Suppose that $R(\rho) > 1$. Suppose that $\rho = \rho_1\of\rho_2$,
  where neither $\rho_1$ nor $\rho_2$ is an identity reduction. Then by
  induction each of $\rho_1$ and $\rho_2$ is a composite of finitely
  many singular reductions. Suppose that $\rho =
  \sigma(\tau_1,\dots,\tau_n)$, where $\sigma : s \to t$ and $\tau_i:s_i
  \to t_i$ are reductions in $\Fr(\R)$ such that at least one $\tau_i$
  is not an identity map. Then, by (Nat 1) in Definition
  \ref{def:rtt}, we may rewrite $\rho$ as $s(\tau_1,\dots,\tau_n)\of
  \sigma(1_{t_1},\dots, 1_{t_n})$. Since $\sigma(1_{t_1},\dots, 1_{t_n})$
  is singular by induction, we may assume that $\rho =
  F(\tau_1,\dots,\tau_n)$, where $\tau_i:[s_i] \to [t_i]$. Without
  loss of generality, suppose that $n = 2$. It follows from the
  functoriality of $F$ that $\rho = F(\tau_1,1_{s_2})\of
  F(1_{t_1},\tau_2)$. By induction, each of $\tau_1$ and $\tau_2$ is a
  composite of singular reductions. It follows then from the
  functoriality  of $F$ that $\rho$ is equal to a composite of
  $R(\rho)$-many singular reductions.
\end{proof}

In light of the above lemma, we know that any particular reduction is
equal to a composite of only finitely many singular
reductions. However, $\Fr(\R)$ might still contain an infinite
sequence of composable reductions. 

\begin{definition}[Terminating]
  A rewriting $2$-theory $\R$ is \emph{terminating} if any infinite
  sequence of composable singular reductions in $\Fr(\R)$
  contains cofinitely many identity reductions.
\end{definition}

Of particular importance in many investigations of various kinds of
term rewriting systems are those terms that are not the source of any
non-identity reduction. Often, one would like to assign such a term to
an arbitrary term.

\begin{definition}[Normal Form]
  Let $\R$ be a rewriting $2$-theory and let $[s]$ be a term in
  $\Fr(\R)$. A \emph{normal form} for $[s]$ is a term $[t]$ such that
  there is a reduction $[s] \to [t]$ in $\Fr(\R)$ and there are no
  non-identity reductions whose source is $[t]$ in $\Fr(\R)$. We say
  that $\R$ has normal forms if every term in $\R$ has a normal form.
\end{definition}

In an arbitrary rewriting $2$-theory $\R$, a given term may or may not have
a normal form. If $\R$ is terminating, then every term has at least
one normal form. In the fortunate situation where every term
in $\R$ has a \emph{unique} normal form, many investigations become
somewhat simpler. In order to guarantee this property, we need further
restrictions on $\R$.

\begin{definition}[Confluent]
  A rewriting $2$-theory $\R$ is \emph{confluent} if any diagram 
  \[
  \xymatrix{ {[t_1]} & {[s]} \ar[l]_{\rho_1} \ar[r]^{\rho_2} &
    {[t_2]}}
  \]
  in $\Fr(\R)$ can be completed into a (not necessarily commutative) square:
  \[
  \xymatrix{
    {[s]} \ar[r]^{\rho_2} \ar[d]_{\rho_1} & {[t_2]}
    \ar@{-->}[d]^{\gamma_2}\\ 
    {[t_1]} \ar@{-->}[r]_{\gamma_1} & {[u]}
  }
  \]
\end{definition}

\begin{definition}
  A rewriting $2$-theory is \emph{complete} if it is terminating and
  confluent.Otherwise it is \emph{incomplete}. 
\end{definition}

\begin{lemma}
A complete rewriting $2$-theory has unique normal forms.
\end{lemma}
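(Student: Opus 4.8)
The plan is to separate the two halves of the statement --- existence and uniqueness of normal forms --- using termination for the first and confluence for the second; this is the classical Church--Rosser pattern, transported into the present setting. Since ``has unique normal forms'' asks that every term possess exactly one normal form, both halves are genuinely needed.

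First I would record existence, which is essentially the fact already noted in the text immediately after the definition of terminating. Given a term $[s] \in \Fr(\R)$, Lemma \ref{lem:irred} tells us that a term is the source of a non-identity reduction precisely when it is the source of a non-identity \emph{singular} reduction; hence a term admitting no outgoing non-identity singular reduction is exactly a normal form. Repeatedly choosing non-identity singular reductions out of $[s]$ produces a composable sequence of singular reductions, and termination forbids any such infinite sequence from containing more than finitely many non-identity reductions. The process must therefore halt, and by the previous observation it can only halt at a normal form. So every term has at least one normal form.

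Second, for uniqueness I would argue directly from confluence. Because confluence is stated here for \emph{arbitrary} reductions rather than only for single singular steps, no Newman-style local-to-global argument is required. Suppose $[t_1]$ and $[t_2]$ are both normal forms of a single term $[s]$, witnessed by reductions $[s]\to[t_1]$ and $[s]\to[t_2]$ in $\Fr(\R)$. Applying confluence to this span yields a term $[u]$ together with reductions $\gamma_1:[t_1]\to[u]$ and $\gamma_2:[t_2]\to[u]$. Since $[t_1]$ is a normal form it is the source of no non-identity reduction, so $\gamma_1$ must be the identity reduction $1_{[t_1]}$, forcing $[u]=[t_1]$; symmetrically $[u]=[t_2]$. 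Hence $[t_1]=[t_2]$, and the normal form is unique.

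I do not expect a genuine obstacle. The only point requiring a moment's care is the observation that a reduction emanating from a normal form is forced to be an identity --- immediate from the definition of normal form --- together with the bridge supplied by Lemma \ref{lem:irred} between the ``no non-identity reductions'' formulation used in the uniqueness step and the ``no non-identity singular reductions'' formulation that is convenient in the termination argument for existence. With confluence phrased globally, the heart of the argument is just this collapsing of any two joinable normal forms onto a common target.
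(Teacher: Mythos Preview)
Your proposal is correct and follows essentially the same approach as the paper's proof: existence from termination, then uniqueness by applying confluence to two putative normal forms and observing that the joining reductions must collapse. The paper's version is terser --- it dispatches existence in a single clause and phrases uniqueness as a contradiction (``contradicting the normality of these terms'') rather than your explicit identification $[t_1]=[u]=[t_2]$ --- but the underlying argument is identical, and your added care with Lemma~\ref{lem:irred} in the existence half is a reasonable expansion of what the paper leaves implicit.
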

\begin{proof}
  Let $\R$ be a complete rewriting $2$-theory and let $t$ be a term in
  $\Fr(\R)$. Since $\R$ is terminating, $t$ has at least one normal
  form. Suppose that $\n_1(t)$ and $\n_2(t)$ are normal forms for
  $t$. If $\n_1(t) \ne \n_2(t)$, then since $\R$ is confluent there
  must be a term $v$ and reductions $\n_i(t) \to v$ for $i \in
  \{1,2\}$ in $\Fr(\R)$, contradicting the normality of these terms.  
\end{proof}

Our investigation of coherence for rewriting $2$-theories splits into
two cases, corresponding to whether the theories are assumed to be
complete or not with the latter case being somewhat more
delicate. 

In the following chapter, we establish a link between algebraic
invariants and coherent categorifications of equational theories. This
link is particularly useful for complete rewriting $2$-theories.
\blanknonumber
\chapter{Structure monoids}\label{ch:structure}

In Theorem \ref{thm:categorification}, we saw that a coherent
categorification of an equational variety is equivalent to the
original variety in the sense that it has an equivalent Lawvere
$2$-theory. The main purpose of this chapter is to highlight how this
phenomenon arises in combinatorial algebra within the realm of
structure monoids. Later, in Chapter \ref{ch:catalan}, we shall
exploit this connection in order to construct new presentations of
some famous algebraic objects. 

Structure monoids were introduced by Dehornoy \cite{Dehornoy:varieties}
as algebraic invariants of a certain class of equational
varieties. Dehornoy subsequently showed that Higman's groups $F$ and
$V$ arise as algebraic invariants of the varieties of semigroups and
of commutative semigroups, respectively \cite{Dehornoy:thompson}. In
particular, he showed how to construct presentations of these groups
using Mac Lane's pentagon and hexagon coherence axioms for
coherently associative and commutative bifunctors.

The relations in Dehornoy's presentations consist of two
parts. First, there are the so-called geometric relations, which arise
purely from the fact that a semigroup is, in the first instance, a
magma. The second class of relations arise from the particular
equational structure of the variety at hand. In the case of
$F$, one additional class of relations are added corresponding
to the Stasheff-Mac Lane pentagon \cite{MacLane_natural} and in the
case of $V$, the presentation further contains a class of relations
corresponding to the Mac Lane hexagon, which encodes the essential
interaction between associativity and commutativity.

The goal of this chapter is to place Dehornoy's constructions in a
more general context. More precisely, we consider coherent
categorifications of equational varieties. Within this setting,
Dehornoy's geometric relations correspond to the functoriality and
naturality of the associated categorical structure with the remaining
relations arising from the coherence axioms.

We recall the definition of structure monoids in Section
\ref{sec:structure} and go on, in Section \ref{sec:presentations} to show
that a coherent categorification of an equational variety gives rise
to a presentation of the associated structure monoid. In certain
favourable situations, the structure monoid can be turned into a group
and we show that the construction of a presentation from a coherent
categorification carries over to this setting.

\section{Structure monoids}\label{sec:structure}

In this section, we recall Dehornoy's construction of an
inverse monoid associated to a balanced equational theory
\cite{Dehornoy:varieties}.  

We begin by briefly recalling and exapnding upon some definitions from
the previous chapter. For a graded set of function symbols $\F$ and a
set $X$, we denote by  
$\Fr_\F(X)$ the absolutely free term algebra generated by $\F$ on
$X$. An equational theory is a tuple $\ETh$, where $\V$ is a set
of variables, $\F$ is a graded set of function symbols and $\E_\F$ is an
equational theory on $\Fr_\F(\V)$. A map $\varphi:\V \to \Fr_\F(\V)$
is called a \emph{substitution} and it extends inductively to an
endomorphism $\Fr_\F(\V) \to \Fr_\F(\V)$. By abuse of notation, we
label this latter map by $\varphi$ as well. We use $[\V,\Fr_\F(\V)]$
to denote the set of all substitutions. For a term $s \in \Fr_\F(\V)$
and a substitution $\varphi \in [\V,\Fr_\F(\V)]$, we use $s^\varphi$
to denote the image of $s$ under $\varphi$. The \emph{support} of a
term $s$ is the set of variables appearing in it. A pair of terms
$(s,t)$ is \emph{balanced} if they have the same support and an
equational theory is balanced if every defining equation is balanced.
\begin{definition}
  Given a balanced pair of terms $(s,t)$ in $\Fr_\F(\V)$, we use
  $\rho_{s,t}$ to denote the partial function $\Fr_\F(\V) \to
  \Fr_\F(\V)$ with graph
  \[
  \{(s^\varphi,t^\varphi) ~|~ \varphi \in [\V,\Fr_\F(\V)]\}.
  \]
\end{definition}
For a balanced pair of terms $(s,t)$, the partial function
$\rho_{s,t}$ is functional since the support of $t$ is a subset of the
support of $s$. The stronger restriction that the pair is balanced is
required since we wish to utilise the inverse partial function
$\rho_{t,s}$ as well.

Given an equational theory $\E := \ETh$, we use $[\E_\F]$ to denote the
congruence generated by $\E_\F$ on $\Fr_\F(\V)$ and we use $\Fr_\E(\V)$
to denote the quotient $\Fr_\F(\V)/[\E_\F]$. Similarly, we use $[s]$ to
denote the congruence class of a term $s$ in $\Fr_\E(\V)$. It is clear
that $[u] = [\rho_{s,t}(u)]$ for any balanced
equation $(s,t) \in \E_\F$ and any term $u \in \dom(\rho_{s,t})$. However,
the collection of all partial maps $\rho_{s,t}$ for 
$(s,t) \in \E_\F$ is not sufficient to generate $[\E_\F]$, since equations
apply to subterms as well. To this end, we introduce translated
versions of the maps $\rho_{s,t}$, that apply to arbitrary subterms.

A subterm $s$ of a term $t$ is naturally specified by the node where its
root lies in the term tree of $t$, which in turn is completely
specified by the unique path from the root of $t$ to the root of $s$
in the term tree. A path in a term tree may be specified by an
alternating sequence of function symbols and numbers, where the
numbers indicate an argument of a function symbol. More formally, we
have the following situation.

For a graded set $\F := \coprod_n \F_n$, we set 
\[
A_\F := \bigcup_n\bigcup_{F \in \F_n} \{(F,1),\dots,(F,n)\}.
\]
The set of \emph{addresses associated to $\F$} is denoted by $A_\F^*$ and
is the free monoid generated by $A_\F$ under concatenation, with the
unit being the empty string $\lambda$. For a term $t \in \Fr_\F(\V)$
and an address $\alpha \in A_\F^*$, we use $\sub(t,\alpha)$ to denote
the subterm of $t$ at the address $\alpha$. Note that $\sub(t,\alpha)$
only exists if the term tree of $t$ contains the path $\alpha$ and
that $\sub(t,\lambda) = t$.

\begin{example}
  Suppose that $\F := \{F,G\}$, where $F$ is a binary function symbol
  and $G$ is a ternary function symbol. Suppose that $\V$ is a set of
  variables. Then, the term $t := F(w,G(x,y,z))$ is in
  $\Fr_\F(\V)$. The term tree of $t$ is given in Figure
  \ref{fig:termtree}.
  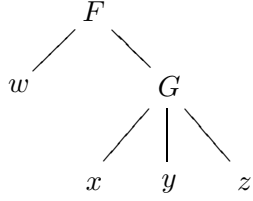
\begin{figure}[ht]
  $
  \begin{xy}
    (0,0)*+{F}="F";
    (-10,-10)*+{w}="w";
    (10,-10)*+{G}="G";
    (0,-23)*+{x}="x";
    (10,-23)*+{y}="y";
    (20,-23)*+{z}="z";
    {\ar@{-} "F"; "w"}
    {\ar@{-} "F"; "G"}
    {\ar@{-} (6,-13); (0,-20)}
    {\ar@{-} (7,-13); (7,-20)}
    {\ar@{-} (8,-13); (14,-20)}
  \end{xy}
  $
  \caption{The term tree of $F(w,G(x,y,z))$}\label{fig:termtree}
  \end{figure}
  The term $t$ has the following subterms:
  \begin{center}
    \begin{tabular}{ll}
      \medbreak
      {$\sub(t,(F,1)) = w$} & \quad {$\sub(t,(F,2)) = G(x,y,z)$} \\\medbreak
      {$\sub(t,(F,1)(G,1) = x$} & \quad {$\sub(t,(F,1)(G,2) = y$}\\
      {$\sub(t,(F,1)(G,3)) = z$}
    \end{tabular}
  \end{center}
\end{example} 

\begin{definition}[Orthogonal]
Given a graded set $\F$ and addresses $\alpha,\beta \in A_\F^*$, we
say that $\alpha$ and $\beta$ are orthogonal and write $\alpha \perp
\beta$ if neither $\alpha$ nor $\beta$ is a prefix of the other. Given
a term $t$, and addresses $\alpha$ and $\beta$, the subterms
$\sub(t,\alpha)$ and $\sub(t,\beta)$ are orthogonal if
$\alpha\perp\beta$. 
\end{definition}

Our current addressing system is sufficient to describe translated
copies of the basic operators.

\begin{definition}
  Given a graded set of function symbols $\F$, a variable set $\V$, a
  balanced pair of terms $(s,t) \in \Fr_\F(\V)$ and an address $\alpha
  \in A_\F^*$, the $\alpha$-translated copy of $\rho_{s,t}$ is denoted
  $\rho_{s,t}^\alpha$ and is the partial map $\Fr_F(\V) \to
  \Fr_\F(\V)$ defined as follows:
  \begin{itemize}
  \item A term $u \in \Fr_\F(\V)$ is in the domain of $\rho_{s,t}^\alpha$ if
    $\sub(u,\alpha)$ is defined and is in the domain of $\rho_{s,t}$.
  \item For $u \in \dom(\rho_{s,t}^\alpha)$, the image $\rho_{s,t}^\alpha(u)$
    is defined by \[\sub(\rho_{s,t}^\alpha(u),\alpha) =
    \rho_{s,t}(\sub(u,\alpha))\] and $\sub(\rho_{s,t}^\alpha(u),\beta) =
    \sub(u,\beta)$ for every address $\beta$ orthogonal to $\alpha$.
  \end{itemize}
  Note that $\rho_{s,t}^\lambda = \rho_{s,t}$.
\end{definition}

We are finally in a position to introduce the structure monoid
generated by an equational theory.

\begin{definition}[Structure Monoid]
  Given a balanced equational theory $\E := \ETh$, the \emph{structure
    monoid of $T$}, denoted $\Struct(T)$, is the monoid of partial
  endomorphisms of $\Fr_\F(\V)$ generated by the following maps under
  composition:
  \[
  \left\{ \rho_{s,t}^\alpha ~|~ (s,t) ~\text{or}~ (t,s) \in \E_\F
    ~\text{and}~ \alpha \in A_\F^* \right\}
  \]
\end{definition}

The structure monoid of an equational theory is readily seen to
completely capture the equational theory.

\begin{lemma}[Dehornoy \cite{Dehornoy:varieties}]\label{lem:capture}
  Let $\E := \ETh$ be a balanced equational theory and let $t,t' \in
  \Fr_\F(\V)$. Then $t =_{\E} t'$ if and only if there is some $\rho
  \in \Struct(\E)$ such that $\rho(t) = t'$. \qed
\end{lemma}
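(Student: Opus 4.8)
The plan is to prove the two implications separately, with the harder (``only if'') direction resting on the standard characterisation of an equational congruence as a finite chain of elementary one-step replacements, and the (``if'') direction resting on the fact that $=_\E$ is a congruence closed under substitution.

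For the (``if'') direction, suppose $\rho \in \Struct(\E)$ satisfies $\rho(t) = t'$. I would write $\rho = \rho_1 \of \cdots \of \rho_k$ as a composite of generators, so each $\rho_i$ has the form $\rho_{s_i,u_i}^{\alpha_i}$ with $(s_i,u_i)$ or $(u_i,s_i)$ in $\E_\F$. First I would show that a single generator preserves the $\E$-class: if $v \in \dom(\rho_{s,u}^\alpha)$ then $\sub(v,\alpha) = s^\varphi$ for some substitution $\varphi$, and $\rho_{s,u}^\alpha(v)$ is obtained from $v$ by replacing that subterm with $u^\varphi$. Since $(s,u)$ (or its reverse) lies in $\E_\F$ we have $s =_\E u$, whence $s^\varphi =_\E u^\varphi$ because $=_\E$ is closed under substitution, and replacing a subterm by an $\E$-equivalent one leaves the class unchanged because $=_\E$ is a congruence. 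Setting $w_0 = t$ and $w_i = \rho_i(w_{i-1})$, each step gives $w_{i-1} =_\E w_i$, and transitivity yields $t =_\E \rho(t) = t'$.

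For the (``only if'') direction, suppose $t =_\E t'$. Since $=_\E$ is the smallest congruence on $\Fr_\F(\V)$ containing $\E_\F$, I would invoke its explicit generation: $t =_\E t'$ holds precisely when there is a finite sequence $t = v_0, v_1, \ldots, v_m = t'$ in which each $v_j$ arises from $v_{j-1}$ by selecting an address $\alpha_j$ with $\sub(v_{j-1},\alpha_j)$ an instance $s^\varphi$ of one side of a defining equation and replacing it by the corresponding instance $u^\varphi$ of the other side. Each such elementary step is, by definition, exactly an application of the generator $\rho_{s,u}^{\alpha_j}$, and $v_{j-1}$ does lie in its domain because $\sub(v_{j-1},\alpha_j)$ is by construction an instance of the pattern $s$. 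Here the balanced hypothesis does the crucial work: it guarantees that this partial map is well-defined and functional no matter which side of the equation plays the role of the left-hand pattern, which is precisely why the generating set of $\Struct(\E)$ is taken to include both $\rho_{s,u}^\alpha$ and $\rho_{u,s}^\alpha$. Setting $\rho := \rho^{\alpha_1} \of \cdots \of \rho^{\alpha_m}$ then gives an element of $\Struct(\E)$ with $\rho(t) = t'$.

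I expect the main obstacle to be expository rather than substantive: one must carefully state and apply the one-step-replacement characterisation of $=_\E$ and match each elementary step with a generator in the presence of partiality. The only genuinely delicate point is the functionality of the generators in both directions, and this is supplied immediately by the balanced condition (as already noted in the discussion following the definition of $\Struct$, where $\supp(u) = \supp(s)$ ensures $u^\varphi$ is determined by the match). No deeper difficulty arises.
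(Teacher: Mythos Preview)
Your argument is correct: both directions are the standard ones, and you have identified the only nontrivial point (balancedness makes each generator $\rho_{s,t}^\alpha$ and its reverse $\rho_{t,s}^\alpha$ functional, so the one-step replacement chain characterising $=_\E$ matches the generator composition in $\Struct(\E)$ exactly). The paper itself gives no proof of this lemma; it is stated with a citation to Dehornoy and closed with \qed, so there is no in-paper argument to compare against.
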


Given an equational theory $\E = \ETh$ and maps $\rho_{s_1,t_1},
\rho_{s_2,t_2} \in \Struct(T)$, the composition
$\rho_{s_1,t_1}\of\rho_{s_2,t_2}$ may be empty. It is nonempty
precisely when there exist substitutions $\varphi,\psi \in
[\V,\Fr_F(\V)]$ such that $t_1^\varphi = s_2^\psi$. In this case, we
say that the pair $(t_1,s_2)$ is \emph{unifiable} and that
$(\varphi,\psi)$ is a \emph{unifier} of the pair. In the case where
$(t_1,s_2)$ is not unifiable, the composition
$\rho_{s_1,t_1}\of\rho_{s_2,t_2}$ results in the empty operator, which
we denote by $\varepsilon$. Note that, for any operator $\rho \in
\Struct(T)$, we have $\rho\of\varepsilon = \varepsilon\of\rho =
\varepsilon$. The existence of the empty operator makes freely
computing with inverses in $\Struct(T)$ impossible. 

\begin{definition}[Composable]
  An equational theory $\ETh$ is \emph{composable} if any pair of terms in 
  $
  \bigcup_{(s,t)\in\E_\F} \{s,t\}
  $
  are unifiable.
\end{definition}

Recall that an inverse monoid $M$ is one in which for each element $x
\in M$, there is an element $y \in M$ such that $xyx = x$ and $yxy =
y$. Dehornoy \cite{Dehornoy:preprint} shows that $\Struct(T)$ always
forms an inverse monoid and contains the empty operator precisely
when $\E$ is not composable. One way in which to transform
$\Struct(G)$ into a group is by passing to the universal group of
$\Struct(\E)$, which we denote by $\Struct_G(\E)$, by collapsing all
idempotents to $1$. In the case where $\E$ is composable, the
idempotent elements of $\Struct(T)$ are precisely those operators that
act as the identity on their domain. A particular class of composable
theories is provided by a certain class of linear theories. Recall
that an equation $s=t$ is \emph{linear} if it is balanced and each
variable appears precisely once in both $s$ and $t$. An equational
theory is linear if each of its defining equations is linear.

\begin{lemma}[Dehornoy \cite{Dehornoy:preprint}]\label{lem:composable}
  A linear equational theory containing precisely one function symbol
  is composable. \qed
\end{lemma}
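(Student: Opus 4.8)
The plan is to reduce the statement to a standard fact about first-order unification — that two linear terms with disjoint variable sets over a single function symbol are always unifiable — and then to prove that fact by a clean structural induction in which linearity is exactly what makes the recursive step succeed.

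First I would unwind the definitions. A linear theory with one function symbol $F$ (of some fixed arity $n$) has the property that every term occurring as a side of a defining equation is a linear term of $\Fr_\F(\V)$, i.e.\ each variable occurs at most once. Given two such terms $u$ and $v$, composability asks for substitutions $\varphi,\psi$ with $u^\varphi = v^\psi$. Since $\varphi$ and $\psi$ act independently, the apparent sharing of the variable set $\V$ is illusory: I would first rename the variables of $v$ to fresh ones disjoint from $\var(u)$, obtaining a term $v'$, and then observe that any single substitution $\theta$ with $u^\theta = v'^\theta$ furnishes the required pair (take $\varphi=\theta$, and let $\psi$ be the substitution that first renames $v$'s variables and then applies $\theta$). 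This reduces the problem to ordinary unifiability of $u$ and $v'$, now linear terms with disjoint variable sets.

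Next I would prove, by induction on the combined size of the two terms, that any linear $u,v$ with $\var(u)\cap\var(v)=\varnothing$ over the single symbol $F$ are unifiable. If either term is a variable, say $u=x$, then $x\mapsto v$ is a unifier since $x\notin\var(v)$ by disjointness. Otherwise both terms are composite; as $F$ is the only function symbol they share the head $F$, so $u=F(u_1,\dots,u_n)$ and $v=F(v_1,\dots,v_n)$, and it suffices to unify the pairs $(u_i,v_i)$ simultaneously. Here linearity does the crucial work: because $u$ is linear the sets $\var(u_1),\dots,\var(u_n)$ are pairwise disjoint, and likewise for $v$, so the sets $\var(u_i)\cup\var(v_i)$ are pairwise disjoint across distinct $i$. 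By induction each pair admits a unifier $\theta_i$ involving only variables in $\var(u_i)\cup\var(v_i)$; since these domains are disjoint, the union $\theta:=\theta_1\cup\dots\cup\theta_n$ is a well-defined substitution that simultaneously unifies every pair, hence unifies $u$ and $v$.

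I expect the main obstacle to be pinpointing precisely where each hypothesis is used and confirming that no occurs-check obstruction can arise. The single-function-symbol hypothesis rules out any head-symbol clash, so in the composite case the roots always agree. Linearity is the genuinely essential ingredient: it forces the pairwise disjointness of the subterm variable sets, which is exactly what allows the independently constructed sub-unifiers to be amalgamated without conflict. The standard obstruction to unifiability — a clash between two incompatible assignments to one variable forced by a repeated occurrence, as in attempting to unify $F(x,x)$ with $F(y,F(y,\dots))$ — is precisely what linearity forbids. Making the amalgamation rigorous, namely verifying that a union of substitutions with disjoint domains is again a substitution unifying each component, is the one point needing care, and it is routine once the disjointness is established.
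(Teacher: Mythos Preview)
Your argument is correct. The reduction to standard unification by renaming apart, followed by structural induction exploiting the single head symbol and the pairwise disjointness of subterm variable sets that linearity guarantees, is exactly the right shape, and the amalgamation of the $\theta_i$ goes through for the reason you state.

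As for comparison: the paper does not actually prove this lemma. It is attributed to Dehornoy, stated with a citation, and closed with a \qed\ box; no argument is given in the text. So there is no approach to compare against --- you have supplied the details the paper chose to omit. Your write-up would stand as a complete proof in place of the bare citation.
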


It follows from the above lemma that each linear equational
theory containing precisely one function symbol gives rise to a
structure \emph{group}.

\begin{example}\label{ex:FV}
 The equational theories for semigroups, $S$, and for commutative
 semigroups, $C$, are both linear. Since these theories involve a single
 binary operator, Lemma \ref{lem:composable} implies that they are
 composable. In this case we have
 that $\Struct_G(S)$ is Thompson's group $F$ and $\Struct_G(C)$ is
 Thompson's group $V$ \cite{Dehornoy:thompson}. 
\end{example}

In the following section, we shall see how structure monoids and
groups relate to coherent categorifications of equational varieties. 

\section{Structure monoids via coherence
  theorems}\label{sec:presentations}

The main goal of this section is to show how coherent
categorifications of equational theories give rise to presentations of
structure monoids. We 
base our analysis at the level of theories, rather than of equational
varieties. While this is seemingly at odds with Dehornoy's result
\cite{Dehornoy:varieties} that structure monoids are independent of
the particular equational presentation of a variety, differing
presentations of the same variety lead to distinct categorifications
and thence to distinct presentations of the structure monoid.

Dehornoy's utilisation of the pentagon and hexagon coherence axioms in
order to obtain presentations of Thompson's groups
\cite{Dehornoy:thompson} is indicative of a more general relationship
between structure monoids and coherent categorifications of
equational theories. The first step on the road to formalising this
relationship is to construct a monoid presentation out of a
categorification of an equational theory. In light of Lemma
\ref{lem:irred}, a good candidate for the generators of the monoid is
provided by the singular morphisms of the categorification. Since we
shall be moving back and forth between the structure monoid and a
categorification, $\R(\E)$, of an equational theory $\E$, there is
some danger of confusion about whether a symbol ``$\rho$'' lies in
$\Struct(\E)$ or in $\R(\E)$. Thus, in this section, we adopt the
convention that an element marked as ``$\widehat{\rho}$'' lies in
$\R(\E)$ and an unmarked element ``$\rho$'' lies in $\Struct(\E)$. A
second notational difficulty arises due to the differing way in which
elements of $\Struct(\E)$ and morphisms in $\Sing(\R(\E))$ are
represented. For this reason, we give a way of rewriting singular
morphisms to more closely resemble elements of $\Struct(\E)$. If
$\R(\E) = \cat$, then a reduction $\widehat{\rho} \in I(\E_\F)$ with
source $s$ and target $t$ is written as $\widehat{\rho}_{s,t}$. 

\begin{definition}[Type/Address]
  Let $\R(\E)$ be a categorification of the equational theory
  $\E$. The \emph{type}, $T(\widehat{\rho}_{s,t})$ of a singular morphism 
  $\widehat{\rho}_{s,t} \in \Sing(\R(\E))$ is defined inductively by:
  \[
  T(\widehat{\rho}_{s.t}) = \begin{cases}
    T(\widehat{\sigma}_{u,v}) & \text{if $\widehat{\rho}_{s,t} =
      F(1,\dots,1,\widehat{\sigma}_{u,v},1,\dots,1)$}\\
    \widehat{\rho}_{s,t} &\text{otherwise.}
  \end{cases}
  \]
  The address,
  $A(\widehat{\rho})$ is the word of $A^*_{\F}$ constructed as follows:
  \[
  A(\widehat{\rho}) = \begin{cases}
    (F,i)A(\widehat{\sigma}) &\text{if
      $\widehat{\rho}= F(\overbrace{1,\dots,1}^{i-1},\widehat{\sigma},
      1,\dots, 1)$}\\
    \lambda &\text{otherwise.}
  \end{cases}
  \]
\end{definition}

Given a categorification $\R(\E)$ of an equational theory $\E$, we can
now construct a monoid 
whose generators are the singular reductions of $\R(\E)$ and whose
relations are generated by the functoriality, naturality and coherence
axioms. 

\begin{definition}\label{defn:monoid}
  Let $\E := \ETh$ be a balanced equational theory and let $\R(\E) := \cat$
  be a categorification of $\E$. The monoid $\S(\R(\E))$ is the monoid
  generated by 
  \[
  \{T(\widehat{\rho})^{A(\widehat{\rho})} ~|~ \widehat{\rho} \in
  \Sing(\R(\E))\} \cup
  \{\hat{\rho}_{s,s}^{\alpha}~|~ \text{$(s,t)$ or $(t,s)$ in
    $\E_\F$ and $\alpha \in A^*_{\F}$}\} 
  \]
  if $\R(\E)$ is composable and by
  \[
  \{T(\widehat{\rho})^{A(\widehat{\rho})} ~|~ \widehat{\rho} \in
  \Sing(\R(\E_\F))\} \cup
  \{\hat{\rho}_{s,s}^\alpha~|~ \text{$(s,t)$ or $(t,s)$ in
    $\E_\F$ and $\alpha \in A^*_{\F}$}\} \cup
  \{\widehat{\varepsilon}\}
  \]
  otherwise, subject to the following relations.
  \begin{itemize}
    \item \textbf{Identity:}
      \begin{eqnarray*}
        \hat{\rho}^{\alpha}_{s,s}\of
        \hat{\rho}^{\alpha}_{s,t}  &=&
        \hat{\rho}^{\alpha}_{s,t}\\
        \hat{\rho}^{\alpha}_{s,t} \of
        \hat{\rho}^{\alpha}_{t,t} &=&
        \hat{\rho}^{\alpha}_{s,t}
      \end{eqnarray*}

      \item \textbf{Composition:} If $t_1$ and $s_2$ are not unifiable
        then 
        \[\hat{\rho}^{\alpha}_{s_1,t_1}\of
        \hat{\rho}^{\alpha}_{s_2,t_2} =
        \widehat{\varepsilon}
        \]
      \item \textbf{Empty operator}:
        \begin{eqnarray*}
          \hat{\rho}^{\alpha}_{s,t} \of
          \widehat{\varepsilon} &=& \widehat{\varepsilon}\\
          \widehat{\varepsilon} \of
          \hat{\rho}^{\alpha}_{s,t}
          &=& \widehat{\varepsilon}
        \end{eqnarray*}
      \item \textbf{Functoriality:} For
        $\alpha\perp\beta$: 
        \[
        \hat{\rho}^{\alpha}_{s,t}\of\hat{\rho}^{\beta}_{u,v}
        = \hat{\rho}^{\beta}_{u,v}\of\hat{\rho}^{\alpha}_{s,t}
        \]
      \item \textbf{Naturality:} Suppose that
        $\hat{\rho}_{s,t} \in I(\E_\F)$ is a generator and that
        some variable $x$ appears at addresses
        $\beta_1,\dots, \beta_p$ in $s$ and
        at addresses $\gamma_1,\dots,\gamma_q$ in
        $t$. Then, for all addresses
        $\alpha,\delta$ and each $\hat{\rho}_{u,v} \in I(\E_\F)$:
        \[
        \hat{\rho}_{s,t}^{\alpha}\of\hat{\rho}_{u,v}^{\alpha
          \gamma_1\delta}\of\dots\of
        \hat{\rho}_{u,v}^{\alpha\gamma_q\delta}
        =
        \hat{\rho}_{u,v}^{\alpha\beta_1\delta}\of\dots 
        \of
        \hat{\rho}_{u,v}^{\alpha\beta_p\delta}
        \of\hat{\rho}_{s,t}^{\alpha}        
        \]
      \item \textbf{Coherence:} For $(\sigma_1\of\dots\of\sigma_p,
        \tau_1\of\dots\of\tau_q) \in \E_{I(\E_\F)}$, where each $\sigma_i$ and
        $\tau_j$ is singular, set:
        \[
        T(\sigma_1)^{A(\sigma_1)}\of\dots\of T(\sigma_p)^{A(\sigma_p)}
        =
        T(\tau_1)^{A(\tau_1)}\of\dots\of T(\tau_q)^{A(\tau_q)}
        \]
  \end{itemize}
\end{definition}

The relations for functoriality and naturality in $\S(\R(\E))$ are
adapted from \cite{Dehornoy:preprint}. The functoriality relation is
precisely the requirement that each operator $F \in \F$ is
a functor. The naturality condition is, in turn, precisely the
requirement that each $\widehat{\rho} \in I(\E_\F)$ is a natural
transformation. The rather involved
addressing system in the naturality condition is due to the fact that
the same variable may appear multiple times in different positions on
either side of an equation. For naturality, one needs to apply a map
to each of these instances of the variable simultaneously. We now set
about relating $\S(\R(\E))$ to $\Struct(\E)$. 

\begin{lemma}
  Let $\E$ be a balanced equational theory and let $\R(\E)$ be a
  categorification of $\E$. Then $\S(\R(\E))$ is an inverse monoid.
\end{lemma}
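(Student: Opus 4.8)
The plan is to equip $\S(\R(\E))$ with an involution $*$ and then invoke the classical characterisation of inverse monoids as exactly the regular monoids whose (projection) idempotents commute, mirroring Dehornoy's analysis of $\Struct(\E)$ \cite{Dehornoy:preprint}. Since $\R(\E)$ categorifies an equational theory, all of its basic reductions lie in $I(\E_\F)$ and are therefore invertible, so for each non-idempotent generator $\widehat{\rho}^\alpha_{s,t}$ the reduction $\widehat{\rho}^\alpha_{t,s}$ is again a singular generator. I would define $*$ on generators by $(\widehat{\rho}^\alpha_{s,t})^* = \widehat{\rho}^\alpha_{t,s}$, by fixing each idempotent generator $\widehat{\rho}^\alpha_{s,s}$ and the empty operator $\widehat{\varepsilon}$, and then extend $*$ to words by the anti-homomorphism rule $(g_1\of\dots\of g_k)^* = g_k^*\of\dots\of g_1^*$.

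First I would check that $*$ descends to a well-defined anti-automorphism, that is, that it carries each defining relation to a consequence of the defining relations. The Identity, Empty operator and Functoriality relations are visibly stable under $*$, since reversing a composition of orthogonally-addressed maps is again such a composition; the Composition relation is stable because unifiability of a pair is symmetric. The delicate cases are Naturality and Coherence. Applying $*$ turns the Naturality relation for $\widehat{\rho}_{s,t}$ into the corresponding relation for its inverse $\widehat{\rho}_{t,s}$, which is exactly the assertion that the inverse of a natural isomorphism is again natural, valid in any categorification. For Coherence, $\E_{I(\E_\F)}$ is closed under inversion: if $\sigma_1\of\dots\of\sigma_p = \tau_1\of\dots\of\tau_q$ lies in $\E_{I(\E_\F)}$ then, all reductions being invertible, so does $\sigma_p^{-1}\of\dots\of\sigma_1^{-1} = \tau_q^{-1}\of\dots\of\tau_1^{-1}$, which is the image relation. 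Hence $*$ is a well-defined involution with $(x^*)^* = x$.

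Next I would establish regularity at the level of generators and commutativity of the relevant idempotents. For a generator $g = \widehat{\rho}^\alpha_{s,t}$, the products $gg^*$ and $g^*g$ are the idempotent generators $\widehat{\rho}^\alpha_{s,s}$ and $\widehat{\rho}^\alpha_{t,t}$, and the two Identity relations give $gg^*g = g$ and $g^*gg^* = g^*$ at once. The basic projections $\widehat{\rho}^\alpha_{s,s}$ pairwise commute: for orthogonal addresses this is Functoriality, and for comparable or incompatible addresses one argues from the Composition and Identity relations (collapsing to $\widehat{\varepsilon}$ when no common instance exists). The Naturality relation moreover shows that conjugating a projection by a generator returns a product of projections, so the submonoid $P$ generated by the basic projections is a commutative band that is stable under conjugation by generators. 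Consequently the range projection $(g_1\of\dots\of g_k)^*\of(g_1\of\dots\of g_k)$ of any word can be pushed inward, one generator at a time, into $P$, and likewise the domain projection; thus every projection $ww^*$ of $\S(\R(\E))$ lies in the commutative band $P$.

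With $P$ commutative and closed under conjugation, regularity propagates from generators to all elements: in any monoid where the ambient idempotents commute, if $a',b'$ invert $a,b$ then $b'a'$ inverts $ab$, since $(ab)(b'a')(ab) = a(bb')(a'a)b = a(a'a)(bb')b = (aa'a)(bb'b) = ab$, and here the idempotents $bb'$ and $a'a$ are precisely the projections lying in $P$. An induction on word length therefore yields $ww^*w = w$ and $w^*ww^* = w^*$ for every $w$, so $\S(\R(\E))$ is a regular $*$-monoid whose projections all lie in the commutative band $P$; by the classical characterisation this makes it an inverse monoid, with $w^*$ the unique inverse of $w$. I expect the main obstacle to be the honest bookkeeping behind these reductions: verifying that $*$ respects the index-heavy Naturality relation, and showing that conjugation by generators really does keep every projection inside the commutative submonoid $P$ — in other words, gaining enough control over the idempotents of an abstractly presented monoid to guarantee that no idempotents outside $P$ can arise.
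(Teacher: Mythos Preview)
Your proposal is correct in spirit but works much harder than the paper does, because the paper is operating under a weaker definition of ``inverse monoid''. Immediately before this lemma the paper recalls that ``an inverse monoid $M$ is one in which for each element $x \in M$, there is an element $y \in M$ such that $xyx = x$ and $yxy = y$'' --- i.e.\ the paper only asks for regularity, not uniqueness of inverses or commuting idempotents. Accordingly its proof is three lines: for a nonempty word $\hat{\rho} = \hat{\rho}_{s_1,t_1}^{\alpha_1}\of\dots\of\hat{\rho}_{s_k,t_k}^{\alpha_k}$ it sets $\hat{\rho}^{-1} = \hat{\rho}_{t_k,s_k}^{\alpha_k}\of\dots\of\hat{\rho}_{t_1,s_1}^{\alpha_1}$, observes that since each $\rho_{s,t}$ is inverse to $\rho_{t,s}$ one has $\hat{\rho}\hat{\rho}^{-1}\hat{\rho} = \hat{\rho}$ and $\hat{\rho}^{-1}\hat{\rho}\hat{\rho}^{-1} = \hat{\rho}^{-1}$, and notes $\widehat{\varepsilon}\,\widehat{\varepsilon}\,\widehat{\varepsilon} = \widehat{\varepsilon}$ for the empty operator.

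Your approach instead targets the standard definition via the regular-plus-commuting-idempotents characterisation. This is more honest, and your involution $*$ and the check that it respects the relations are exactly right; your regularity argument for generators matches the paper's. The substantial extra work you do --- building the commutative band $P$ of basic projections, showing it is closed under conjugation, and pushing all projections of arbitrary words into $P$ --- is precisely what would be needed for the standard result, and you correctly identify this bookkeeping as the main obstacle. The paper simply sidesteps all of it by adopting the weaker definition; none of the idempotent analysis appears there. So: your argument is sound as an outline and strictly stronger in conclusion, but for the paper's stated lemma the first half of your proposal (define $*$, verify $gg^*g = g$ on generators, extend to words) already suffices.
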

\begin{proof}
  For nonempty $\hat{\rho} := \hat{\rho}_{s_1,t_1}^{{\alpha_1}}\of
  \dots \hat{\rho}_{s_k,t_k}^{{\alpha_k}}$, set
  $\hat{\rho}^{-1} := \hat{\rho}_{t_k,s_k}^{{\alpha_k}}\of
  \dots \hat{\rho}_{t_1,s_1}^{{\alpha_k}}$. Since $\rho_{s,t}$
  is the inverse of $\rho_{t,s}$, it follows that
  \begin{eqnarray*}
    \hat{\rho}\of\hat{\rho}^{-1}\of\hat{\rho} &=& \hat{\rho}\\
    \hat{\rho}^{-1}\of\hat{\rho}\of\hat{\rho}^{-1} &=& \hat{\rho}^{-1}
  \end{eqnarray*}
  Since we also have that
  $\widehat{\varepsilon}\of\widehat{\varepsilon}\of\widehat{\varepsilon}
  = \widehat{\varepsilon}$, it follows that $\S(\R(\E))$ forms an
  inverse monoid.
\end{proof}

We now know that both $\S(\R(\E))$ and $\Struct(\E)$ are inverse
monoids. Since there is a clear relationship between the generators of
each, in order to establish that they are in fact isomorphic we need
to focus on the relations. In particular, since $\R(\E)$ is an
arbitrary categorification of $\E$, it might contain inequivalent
reductions with the same source and target. Since the elements of
$\Struct(\E)$ are partial functions completely determined by their
domain and codomain, such a situation cannot occur in
$\Struct(\E)$. These considerations lead one to suspect that if we
require $\R(\E)$ to be a \emph{coherent} categorification of $\E$,
then the two monoids might in fact be isomorphic.

\begin{theorem}\label{thm:monoid}
  Let $\E$ be a balanced equational theory and let $\R(\E)$ be a
  categorification of $\E$. The following map is an 
  epimorphism of inverse monoids and it is an isomorphism if and only
  if $\R(\E)$ is coherent:
  \begin{eqnarray*}
    \S(\R(\E)) &\stackrel{\Theta}{\longrightarrow}& \Struct(\E)\\
    \hat{\rho}_{s_1,t_1}^{{\alpha_1}}\of\dots\of
    \hat{\rho}_{s_k,t_k}^{{\alpha_k}} &\longmapsto&
    \rho^{\alpha_1}_{s_1,t_1}\of\dots\of\rho^{\alpha_k}_{s_k,t_k}
  \end{eqnarray*}
\end{theorem}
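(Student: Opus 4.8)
The plan is to establish that $\Theta$ is a well-defined epimorphism of inverse monoids in full generality, and then to pin down coherence as precisely the condition needed for injectivity. First I would verify that $\Theta$ respects the defining relations of $\S(\R(\E))$, so that it descends to a homomorphism. Each relation in Definition \ref{defn:monoid} must map to a valid identity in $\Struct(\E)$: the \textbf{Identity}, \textbf{Composition}, \textbf{Empty operator} and \textbf{Functoriality} relations translate directly into the corresponding elementary properties of the partial maps $\rho_{s,t}^\alpha$ (idempotent restriction, emptiness of non-unifiable composites, and commutativity of maps at orthogonal addresses, respectively). The \textbf{Naturality} relation maps to an identity of partial functions that can be checked pointwise by tracking where the single variable $x$ is substituted; both sides effect the same substitution on disjoint occurrences. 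The \textbf{Coherence} relation requires that for every $(\sigma_1\of\dots\of\sigma_p,\tau_1\of\dots\of\tau_q)\in\E_{I(\E_\F)}$, the two composites of translated maps agree in $\Struct(\E)$. This holds because, by Lemma \ref{lem:capture}, a composite of maps $\rho_{s,t}^\alpha$ is determined entirely by its effect on terms, and the defining equations of a categorification relate reductions with identical source and target, hence identical induced partial functions.

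Next I would check surjectivity. By Lemma \ref{lem:capture}, $\Struct(\E)$ is generated by the maps $\rho_{s,t}^\alpha$ for $(s,t)$ or $(t,s)$ in $\E_\F$, and each such generator is visibly the $\Theta$-image of the corresponding generator $\widehat{\rho}_{s,t}^\alpha$ of $\S(\R(\E))$; hence $\Theta$ is an epimorphism. That $\Theta$ is a morphism \emph{of inverse monoids} follows from the preceding lemma establishing that $\S(\R(\E))$ is an inverse monoid, together with the fact that $\Theta$ sends the inverse $\widehat{\rho}^{-1}$ built from $\widehat{\rho}_{t_i,s_i}^{\alpha_i}$ to the genuine inverse partial function, since $\rho_{s,t}$ and $\rho_{t,s}$ are mutually inverse.

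The substance of the theorem lies in the biconditional for injectivity. For the forward direction, I would show that if $\Theta$ is an isomorphism then $\R(\E)$ is coherent: given a parallel pair of reductions in general position, Lemma \ref{lem:irred} expresses each as a composite of singular reductions, yielding two words in $\S(\R(\E))$ whose $\Theta$-images coincide (both equal the partial map determined by the common source and target, by Lemma \ref{lem:capture}); injectivity forces the words equal in $\S(\R(\E))$, and I would argue this equality lifts back to equality of the original reductions in $\Fr(\R(\E))$, establishing Mac Lane coherence. For the converse, assuming coherence, suppose two words $\widehat{\rho}=\widehat{\rho}_{s_1,t_1}^{\alpha_1}\of\dots\of\widehat{\rho}_{s_k,t_k}^{\alpha_k}$ and $\widehat{\sigma}$ have $\Theta(\widehat{\rho})=\Theta(\widehat{\sigma})$. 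The equality of partial functions means the corresponding reductions in $\R(\E)$ share a source and target and are therefore parallel reductions in general position; by coherence they are equal in $\Fr(\R(\E))$, and I would then show that this $2$-categorical equality is generated precisely by the functoriality, naturality and coherence relations present in $\S(\R(\E))$, so $\widehat{\rho}=\widehat{\sigma}$.

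The main obstacle I anticipate is the converse direction, specifically the claim that equality of reductions in $\Fr(\R(\E))$ is faithfully reflected by the relations of $\S(\R(\E))$. The delicate point is that $\Fr(\R(\E))$ carries the full standard congruence $S(\R(\E))$ (associativity and units of composition, the interchange law \emph{(Funct)}, and the naturality laws \emph{(Nat 1)}, \emph{(Nat 2)}), and I must confirm that when an equality of singular-composite normal forms holds in $\Fr(\R(\E))$, every application of these structural laws corresponds to a relation already imposed in $\S(\R(\E))$. In particular the translation between the address-based bookkeeping of $\Struct(\E)$ and the term-based $2$-cells of $\Fr(\R(\E))$ must be shown to be relation-preserving in both directions; reconciling the two representations of the singular generators, via the $T$ and $A$ operators of Definition \ref{defn:monoid}, is where the real care is required.
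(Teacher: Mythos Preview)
Your proposal follows the same overall strategy as the paper: verify $\Theta$ is a well-defined surjective homomorphism, then argue the biconditional by observing that equality of images in $\Struct(\E)$ is equivalent to having a common source and target, which coherence then upgrades to equality. The paper's proof is far more compressed than yours: it dispatches well-definedness with the phrase ``by construction'', handles surjectivity by exhibiting the recursive preimage of a generator, and then gives the two directions of the biconditional in one sentence each.

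The obstacle you flag in your final paragraph---that equality in $\Fr(\R(\E))$ must be shown to be generated exactly by the functoriality, naturality and coherence relations of $\S(\R(\E))$---is precisely the point the paper does \emph{not} argue in the proof itself. The paper instead asserts just after the theorem that ``$\S(\R(\E))$ is a monoid encoding of $\Fr(\R(\E))$'' and, in the paragraph preceding the theorem, that the functoriality and naturality relations of Definition~\ref{defn:monoid} are ``precisely'' the categorical functor and natural-transformation axioms. In other words, the paper treats the faithful correspondence between $\S(\R(\E))$ and composites of singular morphisms in $\Fr(\R(\E))$ as holding by design of the presentation, and its proof of the biconditional silently identifies an element $\widehat{\rho}$ of $\S(\R(\E))$ with a morphism in $\Fr(\R(\E))$ having a well-defined source and target. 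Your more careful bookkeeping is an improvement in rigour, not a different method.
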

\begin{proof}
  By construction, $\Theta$ is a homomorphism of inverse monoids. For
  surjectivity, we need only show that every generator
  $\rho_{s,t}^\alpha \in \Struct(\E)$ corresponds to some singular
  morphism $S(\rho^\alpha_{s,t}) \in \Sing(\R(\E))$. This singular
  morphism can be constructed recursively as follows:
  \[
  S(\rho^\alpha_{s,t}) = \begin{cases}
    F(\overbrace{1,\dots,1}^{i-1},S(\rho^\beta_{s,t}),1,\dots,1)
    & \text{if $\alpha = (\widehat{F},i)\beta$}\\
    \hat{\rho}_{s,t} &\text{if $\alpha = \lambda$}
  \end{cases}
  \]
  It remains to show that $\Theta$ is faithful if and only if
  $\R(\E)$ is coherent. 

  Suppose that $\Theta$ is faithful and let
  $\widehat{\rho_1},\widehat{\rho_2}$ be a parallel pair of morphisms
  in $\Fr(\R(\E))$. Then $\Theta(\widehat{\rho_1}) =
  \Theta(\widehat{\rho_2})$, since $\widehat{\rho_1}$ and
  $\widehat{\rho_2}$ have the same source and target. Since $\Theta$ is
  faithful, it follows that $\widehat{\rho_1} = \widehat{\rho_2}$.

  Conversely, suppose that $\R(\E)$ is coherent and that
  $\Theta(\widehat{\rho_1}) = \Theta(\widehat{\rho_2})$. Then,
  $\widehat{\rho_1}$ and $\widehat{\rho_2}$ have the same
  source and target. Since $\R(\E)$ is coherent, it follows that
  $\widehat{\rho_1} = \widehat{\rho_2}$.
\end{proof}

The above theorem is very closely linked with
Theorem \ref{thm:categorification}. The essential insight is that
$\Struct(\E)$ is simply a monoid encoding of $\Fr(\E)$, while
$\S(\R(\E))$ is a monoid encoding of $\Fr(\R(\E))$. In order to extend
this correspondence to structure groups, we need to modify our
presentations slightly.

\begin{definition}
  Let $\E$ be a balanced composable equational theory and let
  $\R(\E)$ be a categorification of $\E$. The group $\S_G(\R(\E))$
  is generated by
  \[
  \{T(\widehat{\rho})^{A(\widehat{\rho})} ~|~ \widehat{\rho} \in
  \Sing(\R(\E))\},
  \]
  subject to the functoriality, naturality and coherence relations
  from Definition \ref{defn:monoid}, together with the following
  relation:
  \[
  (\hat{\rho}^{{\alpha}}_{s,t})^{-1} =   \hat{\rho}^{{\alpha}}_{t,s}.
  \]
\end{definition}

Following the same line of reasoning as in the proof of Theorem
\ref{thm:monoid}, we obtain the following relationship between
$\S_G(\R(\E))$ and $\Struct_G(\E)$.

\begin{theorem}\label{thm:group}
  Let $\E$ be a balanced, composable equational theory and let
  $\R(\E)$ be a categorification of $\E$. The following map is an
  epimorphism of groups and it is an isomorphism if and only if
  $\R(\E)$ is coherent:
  \begin{eqnarray*}
    \S_G(\R(\E)) &\stackrel{\Theta}{\longrightarrow}& \Struct_G(\E)\\
    \hat{\rho}_{s_1,t_1}^{{\alpha_1}}\of\dots\of
    \hat{\rho}_{s_k,t_k}^{{\alpha_k}} &\longmapsto&
    \rho^{\alpha_1}_{s_1,t_1}\of\dots\of\rho^{\alpha_k}_{s_k,t_k}
  \end{eqnarray*} \qed
\end{theorem}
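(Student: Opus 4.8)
The plan is to derive Theorem \ref{thm:group} from the monoid statement of Theorem \ref{thm:monoid} by passing to universal groups, using composability throughout. Since $\E$ is balanced and composable, $\Struct(\E)$ contains no empty operator and its idempotents are exactly the partial identities, so $\Struct_G(\E)$ is simply its universal group, obtained by collapsing those idempotents to $1$. The first thing I would record is that $\S_G(\R(\E))$ is, in the same way, the universal group of the inverse monoid $\S(\R(\E))$: the endpoint-preserving generators $\widehat{\rho}^\alpha_{s,s}$ are idempotent, and each product $\widehat{\rho}^\alpha_{s,t}\of\widehat{\rho}^\alpha_{t,s} = \widehat{\rho}\of\widehat{\rho}^{-1}$ is idempotent in the inverse monoid structure already established for $\S(\R(\E))$, so collapsing idempotents produces precisely the generators and the functoriality, naturality, coherence and inverse relations $(\widehat{\rho}^\alpha_{s,t})^{-1} = \widehat{\rho}^\alpha_{t,s}$ defining $\S_G(\R(\E))$. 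I would then set up the commuting square whose vertical arrows are the two universal-group quotients and whose top arrow is the monoid map $\Theta$ of Theorem \ref{thm:monoid}, so that the group-level $\Theta$ is exactly the map induced on universal groups.

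With this square in place, well-definedness and surjectivity are immediate. That $\Theta$ respects functoriality, naturality and coherence is inherited verbatim from the monoid case; the only genuinely new relation to verify is $(\widehat{\rho}^\alpha_{s,t})^{-1} = \widehat{\rho}^\alpha_{t,s}$, and this maps correctly because $\rho^\alpha_{s,t}\of\rho^\alpha_{t,s}$ is the partial identity on $\dom(\rho^\alpha_{s,t})$, an idempotent that becomes $1$ in $\Struct_G(\E)$, making $\rho^\alpha_{t,s}$ a genuine two-sided inverse of $\rho^\alpha_{s,t}$. For the epimorphism claim, each generator $\rho^\alpha_{s,t}$ of $\Struct_G(\E)$ is the image of the singular morphism $S(\rho^\alpha_{s,t})$ built by the same recursion as in Theorem \ref{thm:monoid}; equivalently, the composite of the surjective monoid $\Theta$ with the surjective universal-group quotient $\Struct(\E) \to \Struct_G(\E)$ factors through the group-level $\Theta$, forcing it to be onto.

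For the equivalence, the forward implication is purely formal: if $\R(\E)$ is coherent then Theorem \ref{thm:monoid} makes the monoid $\Theta$ an isomorphism of inverse monoids, and since the universal-group construction is functorial and preserves isomorphisms, the induced group map is an isomorphism. For the converse I would rerun the parallel-pair argument. Given a parallel pair $\widehat{\rho}_1, \widehat{\rho}_2 : [s] \to [t]$ in general position in $\Fr(\R(\E))$, decompose each into singular reductions by Lemma \ref{lem:irred}; their images in $\Struct(\E)$ are partial endomorphisms carrying each instance $s^\varphi$ to $t^\varphi$, and since an element of $\Struct(\E)$ is completely determined as a partial function by its source and target (Lemma \ref{lem:capture}), these images already coincide in $\Struct(\E)$, hence in $\Struct_G(\E)$. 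Injectivity of the group $\Theta$ then yields equality in $\S_G(\R(\E))$, and because every defining relation of $\S_G(\R(\E))$ also holds between the corresponding reductions in $\Fr(\R(\E))$ (the inverse relation holding there by the very construction of the categorification), this equality transports back to $\widehat{\rho}_1 = \widehat{\rho}_2$ in $\Fr(\R(\E))$, which is precisely coherence.

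The step I expect to demand the most care is the passage to universal groups itself: verifying that $\S_G(\R(\E))$ really is the maximal group image of $\S(\R(\E))$ and that the square genuinely commutes, so that the property ``$\Theta$ is an isomorphism'' transfers cleanly across the quotients in both directions. Here composability is essential, since it guarantees that the idempotents being collapsed are exactly the partial identities and that no empty operator intervenes; without it the universal-group quotient could identify operators that are distinct as spans, and the source-and-target determinacy that drives the converse would break down.
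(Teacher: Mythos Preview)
Your proposal is correct, but it is considerably more elaborate than what the paper does. The paper gives no separate proof at all: the theorem is stated with a \qed, preceded only by the remark ``Following the same line of reasoning as in the proof of Theorem~\ref{thm:monoid}''. In other words, the intended argument is simply to rerun the monoid proof verbatim in the group presentation --- check that $\Theta$ respects the relations of $\S_G(\R(\E))$, build preimages of generators via the same recursion $S(\rho^\alpha_{s,t})$, and repeat the two-line parallel-pair argument for the biconditional.

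Your route is genuinely different in the forward direction: you identify $\S_G(\R(\E))$ as the universal group of the inverse monoid $\S(\R(\E))$ and then transport the monoid isomorphism across the universal-group functor. This is a nice structural observation that the paper never makes explicit, and it explains \emph{why} the group statement follows from the monoid one rather than merely asserting that the same proof works; it also clarifies the role of composability (no empty operator, idempotents are exactly partial identities). For the converse you end up doing essentially what the paper does anyway --- the parallel-pair argument --- so the extra machinery buys you nothing there. The one place you are more careful than the paper is in flagging the ``transport back'' from equality in $\S_G(\R(\E))$ to equality in $\Fr(\R(\E))$; the paper handles this implicitly via its remark that $\S(\R(\E))$ is ``a monoid encoding of $\Fr(\R(\E))$'', and your observation that every defining relation of $\S_G(\R(\E))$ already holds in $\Fr(\R(\E))$ is the right justification.
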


\begin{example}\label{ex:DehornoyPresentations}
  As we saw in Example \ref{ex:FV}, the structure group for semigroups
  is Thompson's group $F$ and the structure group for commutative
  semigroups is Thompson's group $V$, which is the first known
  finitely presented infinite simple group. It follows from Theorem
  \ref{thm:group} and Mac Lane's coherence theorem for monoidal
  categories \cite{MacLane_natural} that we may construct a
  presentation for $F$ using the pentagon coherence diagram displayed
  in Example \ref{ex:AU}. A categorification of the theory of
  commutative semigroups contains an invertible reduction rule $\tau:
  a\tensor b \stackrel{\sim}{\longrightarrow} b \tensor a$. It
  follows from Mac Lane's results 
  \cite{MacLane_natural} that a coherent categorification of the
  theory is provided by requiring $\tau\of\tau = 1$, together with the
  pentagon axiom and the hexagon axiom, which states that the
  following diagram commutes: 
  \[
  \begin{xy}
    (0,0)*+{a\tensor (b\tensor c)}="1";
    (35,0)*+{(b\tensor c)\tensor a}="2";
    (70,0)*+{b\tensor (c\tensor a)}="3";
    (70,-18)*+{b\tensor (a\tensor c)}="4";
    (0,-18)*+{(a\tensor b)\tensor c}="5";
    (35,-18)*+{(b\tensor a)\tensor c}="6";
    {\ar@{->}^{\tau} "1"; "2"}
    {\ar@{->}^{\alpha^{-1}} "2"; "3"}
    {\ar@{->}^{1\tensor\tau} "3"; "4"}
    {\ar@{->}_{\alpha} "1"; "5"}
    {\ar@{->}_{\tau\tensor 1} (7,-18); (20,-18)}
    {\ar@{->}_{\alpha^{-1}} (40,-18); (53,-18)}
  \end{xy}
  \]
  This coherence theorem allows us, as a result of Theorem
  \ref{thm:group}, to construct a presentation of Thompson's group
  $V$. These presentations for $F$ and $V$ are the same as those
  constructed by Dehronoy \cite{Dehornoy:thompson}.
\end{example}

Paraphrasing theorems \ref{thm:monoid} and \ref{thm:group}, whenever
we have a coherent categorification of a balanced equational theory,
we automatically have a presentation of the associated structure
monoid or group. As we shall see in Chapter \ref{ch:catalan}, this is a
reasonably powerful result, allowing us to obtain presentations of
certain important infinite groups. However, before we can embark upon
that investigation, we need a way of constructing coherent
categorifications and proving that a given rewriting $2$-theory is
coherent. The following chapter solves these problems for rewriting
$2$-theories that are terminating and confluent, which is precisely the
situation that arises in Chapter \ref{ch:catalan}.\blanknonumber
\chapter{Coherence for complete theories}\label{ch:unf}

In order to obtain a general coherence theorem for rewriting
$2$-theories, one needs to find distinguishing features of the
underlying rewriting theory that make the investigation tractable. As
a first port of call, one might examine Mac Lane's proof of coherence
for monoidal categories \cite{MacLane_natural}. Looking at this
proof from the angle of rewriting theory, one notices several
things. First, every reduction rule in the structure, presented in
Example \ref{ex:AU}, is invertible. Second, an analysis of the
rewriting system consisting of only the positive maps $\alpha,
\lambda, \rho$ reveals that this subtheory is complete. Finally, one
only needs to show that each term has a unique 
reduction to its unique normal form in order to rapidly conclude
coherence. An approach along these lines is used by Johnson
\cite{Johnson:thesis} in order to develop a general coherence theorem
for pasting diagrams in $n$-categories.  

Similar considerations led Melli\`{e}s to formulate the notion of
``universal confluence'' for a term rewriting theory within his
framework of axiomatic rewriting theory  \cite
{Mel:residual}. In order to formulate this concept within our setting,
we require the notion of a commuting joining.

\begin{definition}[Span]
  A \emph{span}, $S$, in a rewriting $2$-theory $\R$ is a diagram of
  the form
  \[\Span\]
  in $\Fr(\R)$. A \emph{joining} of $S$ is a
  term $t$ of $\Fr(\R)$ together with reductions $\psi_1:u_1 \to t$
  and $\psi_2:u_2 \to t$ in $\Fr(\R)$. Pictorially, a joining is:
  \[
  \vcenter{
    \xymatrix{
      {s} \ar[d]_{\varphi_1} \ar[r]^{\varphi_2} & {u_2}
      \ar[d]^{\psi_2}\\
      {u_1} \ar[r]_{\psi_1} &  {t}
    }
  }
  \]
  We call $S$ \emph{joinable} if a joining of $S$ exists and we call
  $S$ commuting-joinable if a joining exists such that the above
  diagram commutes.
\end{definition}

Universal confluence is intended to capture the strong version of
confluence present within monoidal categories, which coincides with
the presence of pushouts in the free monoidal category on a discrete
category. More specifically, it may be described as follows:

\begin{quote}
For every span
 $\Span$ and for $i \in \{1,2\}$, there is a commuting joining
 $\psi_i: u_i \to t$  such that for any other commuting
 joining $\tau_i: u_i \to t$, there is a unique map $\rho:t \to 
v$ making the following diagram commute: 

  \begin{displaymath}
  \vcenter{
    \def\objectstyle{\scriptstyle}
    \def\labelstyle{\scriptstyle}
    \begin{xy}
      (0,0)*+{s}="s";
      (10,10)*+{u_1}="u1";
      (10,-10)*+{u_2}="u2";
      (20,0)*+{t}="t";
      (44,0)*+{v}="ns";
      {\ar@{->}@/^0.4pc/^{\varphi_1} "s"; "u1"};
      {\ar@{->}@/_0.4pc/_{\varphi_2} "s"; "u2"};
      {\ar@{->}@/^0.4pc/^-{\psi_1} "u1"; "t"};
      {\ar@{->}@/_0.4pc/_-{\psi_2} "u2"; "t"};
      {\ar@{->}@/^1.3pc/^{\tau_1} "u1"; "ns"};
      {\ar@{->}@/_1.3pc/_{\tau_2} "u2"; "ns"};
      {\ar@{-->}^{\rho} "t"; "ns"};
    \end{xy}
    }
  \end{displaymath}
\end{quote}

For a general rewriting $2$-theory, the map $\rho$ in the above
diagram does not necessarily exist. However, whenever every reduction
rule is invertible and the positive subtheory has unique normal forms,
as is the case for monoidal categories, we can construct $\rho$ quite
easily. Indeed, since there are maps $s \to t$ and $s \to v$, both $t$
and $v$ must have the same normal form $\n(s)$. This means that there
is a map $N_t : t \to \n(s)$ and  a map $N_v : v \to \n(s)$ and we may
simply take $\rho$ to be $N_t \of N_v^{-1}$.

When $\R$ contains non-invertible rules, the existence of $\rho$ is no
longer guaranteed. Surprisingly though, the invertibility of the rules
is not crucial for coherence. This was first demonstrated by Laplaza's
coherence theorem for categories with a directed associativity map
$\alpha: a \tensor (b \tensor c) \to (a \tensor b) \tensor c$ that is
not necessarily invertible \cite{Laplaza:associative}. Remarkably,
the only coherence axiom required for this result is Mac Lane's
pentagon --- precisely what is required in the invertible case. 

We are now in the situation of needing to discern conditions on a
confluent and terminating rewriting $2$-theory that ensure Mac Lane
coherence. Our approach needs to be delicate enough to handle both the
invertible and the non-invertible case, since the same coherence
axioms usually suffice for both. Ultimately, we shall end up with a
slightly weaker and more general concept than universal confluence,
essentially not requiring the existence of the map $\rho$. 

Our approach requires some classical tools and lemmas from first order
term rewriting theory and we briefly cover the required material in
Section \ref{sec:classical}. In Section \ref{sec:coherenceunf}, we
develop a practical general coherence theorem for complete rewriting
$2$-theories and extend this result to invertible theories in Section
\ref{sec:coherenceinv}. 

\section{Classical lemmas}\label{sec:classical}

The focus of this section is on several classical lemmas that make the
examination of confluence for finitely presented rewriting theories
tractable. This analysis essentially reduces to enumerating over the
possible ways in which two reductions can diverge within the
theory. In other words, what we seek is some sort of classification of
all possible spans that can arise from the theory. In light of Lemma
\ref{lem:irred}, we can begin by focussing our attention on singular
reductions. 

\begin{definition}
  A span $\Span$ is \emph{singular} if both $\varphi_1$ and
  $\varphi_2$ are singular.
\end{definition}

In the case where the rewriting theory is terminating, Newman's Lemma
reduces confluence to showing that every singular span is joinable. 

\begin{lemma}[Newman's Lemma \cite{Newman:lemma}]
  A terminating rewriting $2$-theory is confluent if every singular
  span is joinable. \qed
\end{lemma}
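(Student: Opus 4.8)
The plan is to transport the classical proof of Newman's Lemma into the present setting, using the termination hypothesis to license a well-founded (Noetherian) induction on terms. First I would reduce an arbitrary span $u_1 \xleftarrow{\varphi_1} s \xrightarrow{\varphi_2} u_2$ to one built from singular reductions: by Lemma \ref{lem:irred} each leg is equal to a composite of finitely many singular reductions, so I may write the two legs as $s \xrightarrow{\sigma} s_1 \to^{*} u_1$ and $s \xrightarrow{\tau} s_2 \to^{*} u_2$ with $\sigma,\tau$ singular, where $\to^{*}$ abbreviates a (possibly empty) composite of singular reductions. If either leg is an identity, the span is trivially joinable since the opposite leg supplies the joining, so I may assume $\sigma$ and $\tau$ are non-identity. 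It is worth flagging at the outset that, because the definition of confluence asks only for a not-necessarily-commutative square, I never need to exhibit an equality of reductions; the congruence $\E_\T$ therefore plays no role, and the whole argument takes place at the level of terms and the reductions between them.

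The termination hypothesis is precisely what makes the reduction relation admit no infinite non-identity chains: an infinite chain $s \to s_1 \to s_2 \to \cdots$ of non-identity singular steps would be an infinite sequence of composable singular reductions with infinitely many non-identities, contradicting the definition of terminating. I would therefore prove, by well-founded induction on the apex $s$ along this relation, the statement ``every span with apex $s$ is joinable.'' The base case is the one where $s$ is a normal form: it then admits no non-identity reductions, both legs are forced to be identities, and the span is trivially joinable.

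For the inductive step I would carry out the standard tiling argument. Applying the hypothesis that every singular span is joinable to the singular span $s_1 \xleftarrow{\sigma} s \xrightarrow{\tau} s_2$ produces a term $w$ together with reductions $s_1 \to^{*} w$ and $s_2 \to^{*} w$. Since $\sigma$ is a non-identity singular reduction, $s_1$ lies strictly below $s$, so the induction hypothesis applies to the span $u_1 \leftarrow s_1 \to^{*} w$ and yields a term $v_1$ joining $u_1$ and $w$, i.e. reductions $u_1 \to^{*} v_1$ and $w \to^{*} v_1$. Likewise $s_2$ lies strictly below $s$, so the induction hypothesis applied to the span $u_2 \leftarrow s_2 \to^{*} w \to^{*} v_1$ yields a term $v$ with $u_2 \to^{*} v$ and $v_1 \to^{*} v$. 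Composing $u_1 \to^{*} v_1 \to^{*} v$ with $u_2 \to^{*} v$ exhibits a joining of the original span, completing the induction and hence establishing confluence.

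The heart of the argument, and the step that cannot be shortcut, is the \emph{double} application of the induction hypothesis at the strictly smaller apexes $s_1$ and $s_2$: joinability of singular spans alone is famously insufficient for confluence without termination, and it is exactly the well-foundedness supplied by the termination hypothesis that lets the two inductive joinings be chained into a single joining of the full span. The only points needing care in this setting, as opposed to the purely abstract one, are checking that Lemma \ref{lem:irred} legitimately reduces arbitrary legs to singular composites, and confirming that the paper's formulation of termination (cofinitely many identities in any infinite composable chain of singular reductions) delivers precisely the well-foundedness on which the induction rests.
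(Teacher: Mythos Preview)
Your proof is correct and is the standard Noetherian-induction argument for Newman's Lemma. Note, however, that the paper does not actually prove this statement: it is stated with a trailing \qed\ and attributed to Newman's original paper, so there is no proof in the text to compare against. Your write-up supplies exactly the classical argument one would expect, correctly adapted to the paper's definitions (the use of Lemma~\ref{lem:irred} to reduce to singular steps, and the observation that the paper's phrasing of termination yields well-foundedness of the one-step reduction relation).
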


A singular span $\Span$ may take one of three forms:

\begin{itemize}
  \item $\varphi_1$ and $\varphi_2$ rewrite disjoint subterms of $s$.
  \item $\varphi_1$ and $\varphi_2$ rewrite nested subterms of $s$.
  \item $\varphi_1$ and $\varphi_2$ rewrite overlaping subterms of $s$.
\end{itemize}

In practice, it is the rewriting of overlapping subterms of $s$ that
can lead to non-confluence. It is, therefore, important to define
precisely what we mean when we say that two reductions overlap. Before
we do this, we need to identify all possible places where a reduction
rule could apply.

\begin{definition}[Redex]
  Let $\L := \ltr_X$ be a labelled rewriting theory and let $\rho:[s]
  \to [t]$ be a reduction rule in $\T$. For a substitution $\sigma: X
  \to \Fr_\F(X)$  and a term $u \in [s]$, the term $u^\sigma$ is
  called a \emph{$\rho$-redex}.
\end{definition}

We are now in a position to define overlapping reduction rules.

\begin{definition}[Overlap]
Let $\L := \ltr_X$ be a labelled term rewriting theory and let $t \in
\Fr_\F(X)$. Two subterms $t_1,t_2$ of $t$ \emph{overlap} if they share
at least one function symbol occurence. Two reduction rules
$\rho_1:[s_1] \to [t_1]$ and $\rho_2:[s_2] \to [t_2]$ in $\T$   
\emph{overlap} if there is a term $t$ containing instances of a
$\rho_1$-redex $r_1$ and a $\rho_2$-redex $r_2$ such that $r_1$ and
$r_2$ overlap. We do not count the trivial overlap between a redex $r$
and itself unless $r$ is a redex of two different reduction rules.
\end{definition}

\begin{example}
In the positive subtheory of the theory for monoidal categories given
in Example \ref{ex:AU}, the reduction rules $\rho$ and $\lambda$
overlap on the term $a \tensor (I \tensor b)$ and the reduction rule
$\alpha$ overlaps nontrivially with itself on the term $a \tensor (b
\tensor (c \tensor d))$.
\end{example}

With Newman's Lemma in mind, we now restrict our focus to singular
reductions that rewrite overlapping terms. Unfortunately there may be
infinitely many such spans, even for finitely presented
theories. However, if we know that a certain span is joinable, then we
automatically know that all substitution instances of it are
joinable. Therefore, we can refocus our investigation on finding a
minimal set of overlapping spans $S$ such that any overlapping span is
a substitution-instance of a member of $S$. 

\begin{definition}
  Let $\F$ be a graded set of function symbols. Given terms $t,u \in
  \Fr_\F(X)$, we say that $u$ is an \emph{instance} of $t$ if there is
  a substitution $\sigma$ such that $u = t^\sigma$.  A term $v \in
  \Fr_\F(X)$ is a \emph{common instance} of the terms $t,u \in
  \Fr_\F(X)$ if it is an instance of both $t$ and $u$. The term $v$ is
  the \emph{most general common instance} of $t$ and $u$ if any other
  common instance of $t$ and $u$ is also an instance of $v$.
\end{definition}

Two terms may not have a common instance but when they do, they are
guaranteed to have a most general common instance. The reader may find
a proof of the following lemma in \cite{Dershowitz:handbook}.

\begin{lemma}\label{lem:mgci}
  Let $\F$ be a graded set of function symbols. If $t,u \in \Fr_\F(X)$
  have at least one common instance, then they have a most general
  common instance.  \qed
\end{lemma}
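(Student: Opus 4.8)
The plan is to recognise this as the classical unification theorem in disguise: the most general common instance of $t$ and $u$ is precisely the term obtained by applying a most general unifier to $t$ (equivalently, to $u$), once the two terms have been renamed so as to share no variables. First I would reduce to the case $\var(t)\cap\var(u)=\varnothing$. Since any renaming substitution is invertible, replacing $u$ by a variable-disjoint copy $u'$ leaves the set of instances of $u$ unchanged, hence leaves both the set of common instances of $t,u$ and the notion of most general common instance unchanged; so nothing is lost in assuming the variables are apart.

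With the variables apart, I would set up a dictionary between common instances and unifiers. If $v=t^{\sigma_1}=u^{\sigma_2}$ is a common instance, then because $\var(t)$ and $\var(u)$ are disjoint the two substitutions amalgamate into a single $\theta$ agreeing with $\sigma_1$ on $\var(t)$ and with $\sigma_2$ on $\var(u)$, and then $t^{\theta}=u^{\theta}=v$; conversely every $\theta$ with $t^{\theta}=u^{\theta}$ yields the common instance $t^{\theta}$. Under this correspondence the instance order on common instances matches the \emph{more general than} preorder on unifiers, where $\theta$ is more general than $\mu$ when $\mu=\theta\of\nu$ for some $\nu$ (so that $t^{\mu}=(t^{\theta})^{\nu}$). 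A most general common instance is then exactly the image of $t$ under a most general unifier, and the statement reduces to showing that whenever $t$ and $u$ admit a unifier they admit a most general one.

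For the existence of a most general unifier I would run the standard recursive unification procedure and verify its properties by induction. The recursion is: if either term is a variable $x$ not occurring in the other, return the binding $\{x\mapsto\cdot\}$; if both terms are headed by function symbols, these heads must agree — otherwise no common instance could exist, contrary to hypothesis — and one unifies the argument lists left to right, composing the partial unifiers $\theta_1,\dots,\theta_n$ and feeding each accumulated substitution forward before tackling the next argument. Termination I would establish via a well-founded measure, for instance the lexicographic pair consisting of the number of distinct variables occurring and the total term size, noting that each nontrivial binding $\{x\mapsto s\}$ with $x$ not in $s$ introduces no new variables and strictly decreases the variable count; the occurs-check in the recursive calls is what guarantees this, and by hypothesis it never forces failure.

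The hard part, and the step deserving the most care, will be proving that the substitution produced is genuinely most general, i.e.\ that every other unifier $\mu$ factors as $\mu=\theta\of\nu$ for some $\nu$. I would prove this in tandem with correctness, by induction on the same measure: in the variable case the factorisation is immediate, while in the function-symbol case I must show that a unifier of the full argument lists restricts to a unifier of the first arguments, factors through $\theta_1$ by the inductive hypothesis, and that the residual substitution then unifies the $\theta_1$-instances of the remaining arguments, so the inductive hypothesis applies again. Splicing these factorisations together yields the desired $\nu$, and translating back through the instance–unifier dictionary of the second paragraph delivers the most general common instance. A self-contained treatment along exactly these lines is the one cited in \cite{Dershowitz:handbook}.
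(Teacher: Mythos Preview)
Your proposal is correct and is precisely the standard unification argument; the paper itself gives no proof at all, simply deferring to \cite{Dershowitz:handbook}, and your sketch is exactly the treatment one finds there. There is nothing to compare: you have supplied the argument the paper chose to omit.
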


Given two overlapping reductions, we can bootstrap the notion of most
general common instance in order to obtain the ``most general'' way in
which the two reduction rules can overlap. Before we do this, however,
we need to know precisely how two reduction rules overlap. This
information is provided by the following lemma, a proof of which may
be found in \cite[Lemma 2.7.7]{KlopdeV:FO}. 

\begin{lemma}\label{lem:overlap}
  Two reduction rules $\rho_1:s_1 \to t_1$ and $\rho_2:s_2 \to t_2$ overlap if
  and only if there is a non-variable subterm of $s_1$ that can be
  matched with a $\rho_2$-redex or a non-variable subterm of $s_2$
  that can be matched with a $\rho_1$-redex. \qed
\end{lemma}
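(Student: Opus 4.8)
The plan is to prove both implications by translating the geometric notion of ``sharing a function symbol occurrence'' into the combinatorics of the addresses in $A_\F^*$, and then to read off matchings of non-variable subterms directly. Throughout I would write each redex as an instance of a left-hand side: a $\rho_1$-redex has the form $r_1 = s_1^\sigma$ and a $\rho_2$-redex has the form $r_2 = s_2^\tau$ for substitutions $\sigma,\tau$ (working, as always, modulo $\E_\F$, so each $s_i$ may be taken to be any representative of its class). The key translation is that two subterms of a term $t$ occurring at addresses $\alpha_1$ and $\alpha_2$ share a function symbol occurrence precisely when $\alpha_1$ and $\alpha_2$ are \emph{not} orthogonal, i.e.\ one is a prefix of the other: if $\alpha_1 \perp \alpha_2$ the two subtrees are disjoint and share nothing, whereas if, say, $\alpha_1$ is a prefix of $\alpha_2$ then the root symbol of the subterm at $\alpha_2$ is one of the symbols of the subterm at $\alpha_1$.

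For the direction ($\Leftarrow$), suppose $w := \sub(s_1,\gamma)$ is a non-variable subterm of $s_1$ that is matched with a $\rho_2$-redex, meaning $w$ and $s_2$ admit a common instance. By Lemma \ref{lem:mgci} they possess a most general common instance $v$, realised by substitutions with $w^{\sigma_0} = v = s_2^{\tau_0}$. I would then extend $\sigma_0$ to all of $s_1$ and set $t := s_1^{\sigma_0}$. By construction $t$ is a $\rho_1$-redex, while $\sub(t,\gamma) = w^{\sigma_0} = v = s_2^{\tau_0}$ is a $\rho_2$-redex sitting at the address $\gamma$. Since $\gamma$ is a prefix of itself and $w$ is non-variable, the root of $\sub(t,\gamma)$ is a genuine function symbol common to both redexes, so the two redexes overlap inside $t$ and hence $\rho_1,\rho_2$ overlap. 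The symmetric argument covers a non-variable subterm of $s_2$ matched with a $\rho_1$-redex.

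For the direction ($\Rightarrow$), suppose $\rho_1$ and $\rho_2$ overlap, witnessed by a term $t$ carrying a $\rho_1$-redex $r_1 = s_1^\sigma$ at address $\alpha_1$ and a $\rho_2$-redex $r_2 = s_2^\tau$ at address $\alpha_2$ that share a function symbol occurrence. By the translation above, $\alpha_1$ and $\alpha_2$ are comparable; without loss of generality $\alpha_1$ is a prefix of $\alpha_2$, say $\alpha_2 = \alpha_1\beta$, so that $r_2 = \sub(r_1,\beta) = \sub(s_1^\sigma,\beta)$. I would now analyse where the address $\beta$ lands in the pattern $s_1$. If $\beta$ addresses a non-variable position of $s_1$, then $w := \sub(s_1,\beta)$ is a non-variable subterm of $s_1$ with $w^\sigma = \sub(s_1^\sigma,\beta) = r_2$, so $w$ and $s_2$ share the common instance $r_2$ and $w$ is matched with a $\rho_2$-redex, as required. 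The root case $\alpha_1 = \alpha_2$ (so $\beta = \lambda$, giving $w = s_1$) forces $r_1 = r_2$; by the convention in the Overlap definition this is a genuine overlap only when $r_1$ is simultaneously a $\rho_1$- and a $\rho_2$-redex, and then $s_1$ itself is the non-variable subterm matched with the $\rho_2$-redex $r_1$.

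The main obstacle is the remaining subcase, in which $\beta$ descends strictly \emph{through} a variable of $s_1$: $\beta = \beta'\beta''$ with $\sub(s_1,\beta')$ a variable $x$ and $\beta'' \neq \lambda$. Here $r_2$ lies entirely inside the substituted subterm $\sigma(x)$, so every function symbol shared by $r_1$ and $r_2$ comes from $\sigma$ rather than from the patterns, and this is exactly the degenerate ``overlap inside a variable'' that the lemma must exclude. I would handle this by observing that such a configuration is merely a substitution instance of the trivial self-overlap of $\rho_2$ with itself at its root, and therefore falls under the clause of the Overlap definition that discounts the trivial overlap of a redex with itself; hence it does not witness a genuine overlap of $\rho_1$ and $\rho_2$, leaving only the configurations already treated. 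Making this ``substitution instance of a trivial overlap'' argument airtight --- in particular verifying that no pattern symbol of $s_1$ is shared once $r_2 \subseteq \sigma(x)$ --- is the delicate point, and I expect it to consume most of the effort.
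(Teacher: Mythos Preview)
The paper does not actually prove this lemma: it states it with a trailing \qed\ and points the reader to Lemma 2.7.7 of the cited Klop--de~Vrijer reference. So there is no in-paper argument to compare against, and your overall strategy --- translate ``sharing a function symbol occurrence'' into comparability of addresses, then case-split on whether the inner address lands at a non-variable or a variable position of the outer pattern --- is exactly the standard textbook proof.

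Your handling of the variable case, however, does not go through. When $r_2$ lies entirely inside $\sigma(x)$ for some variable $x$ of $s_1$, the redexes $r_1$ and $r_2$ are \emph{distinct} subterm occurrences (at different addresses) that genuinely share function symbol occurrences: every symbol of $r_2$ is also a symbol of $r_1$. This is not a ``trivial self-overlap of $\rho_2$ with itself at its root'' --- it involves both rules at different positions --- so the exclusion clause in the paper's Overlap definition does not apply. Concretely, take $\rho_1 : F(x) \to \cdots$ and $\rho_2 : G(y) \to \cdots$ and the term $t = F(G(a))$: the redexes $F(G(a))$ and $G(a)$ share the occurrence of $G$, yet no non-variable subterm of $F(x)$ unifies with $G(y)$ or vice versa. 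Under a literal reading of the paper's definition, the lemma is simply false.

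The fix is not in your argument but in the definition. The paper's phrasing ``share at least one function symbol occurrence'' must be read with the standard term-rewriting convention that a redex sitting wholly inside the substitution part of another is \emph{not} counted as an overlap (equivalently: the shared symbol must come from the pattern of the outer rule, not from $\sigma$). That is precisely how the cited reference defines overlap, and under that reading the variable case never arises; the two cases you handled correctly then exhaust the possibilities.
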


In order to facilitate our definition of the ``most general'' overlap
of two reduction rules, we need a way of specifying a distinguished
subterm of a term. To this end, we use the notation $t\{s\}$ to denote
a term $t$ with a distinguished subterm $s$. We may apply rewrites
directly to the subterm $s$. If $\rho:s \to s'$ is some reduction,
then we may apply $t\{\rho\} : t\{s\} \to t\{s'\}$. 

\begin{definition}[Critical span]
  Consider a pair of overlapping reduction rules $\rho_1:[\ell_1] \to
  [r_1]$ and $\rho_2:[\ell_2] \to [r_2]$. By Lemma \ref{lem:overlap}, we may
  assume that $\ell_1 = 
  t\{u\}$ and that there are substitutions $\sigma,\tau$ such that
  $u^\sigma = \ell_2^\tau$. By Lemma \ref{lem:mgci}, we may assume that
  $u^\sigma = \ell_2^\tau$ is a most general common instance of $u$ and
  $\ell_2$. Then, the following span arising from this overlap is called
  a \emph{critical span}:
  \[
  \xymatrix@1{
    {[r_1^\sigma]} & {[t^\sigma\{u^\sigma\}]} \ar[l]_{\rho_1^\sigma}
    \ar[r]^{t\{\rho_2^\tau\}} & {[t\{r_2^\tau\}]}}
\]
\end{definition}

\begin{example}\label{ex:monoidalcrit}
  Consider the positive theory for monoidal categories given in
  Example \ref{ex:AU}. We then have the following reduction rules:

  \begin{eqnarray*}
    \alpha(t_1,t_2,t_3)&:& t_1\otimes(t_2\otimes t_3) \to (t_1
    \otimes t_2)\otimes t_3\\
    \lambda(t)&:& I \otimes t \to t\\
    \rho(t)&:& t \otimes I \to t
  \end{eqnarray*}
  
  By Lemma \ref{lem:overlap}, in order to find all overlaps between
  the reduction rules, we need only insert redexes of reduction rules
  as subterms of redexes of other reduction rules. 

  The reduction rule $\alpha$ contains two instances of
  $\tensor$. Thus, it overlaps nontrivially with itself and leads to
  the following critical span:

    \begin{equation}\label{monoidal:assoc}
    \vcenter{
      \def\labelstyle{\scriptstyle}
      \begin{xy}
        (0,0)*+{a\tensor(b\tensor(c\tensor d))}="a";
        (-20,-14)*+{(a\tensor b)\tensor (c\tensor d)}="b";
        (20,-14)*+{a\tensor((b\tensor c)\tensor d)}="c";
        {\ar@{->}@/^0.8pc/^-{1 \tensor \alpha} "a";"c"};
        {\ar@{->}@/_0.8pc/_-{\alpha} "a"; "b"};
      \end{xy}
    }
    \end{equation}
    
    Furthermore, $\alpha$ overlaps with $\lambda$ and $\rho$ in three
    possible ways, leading to the following critical spans:

    \begin{eqnarray}
      &\xymatrix@1{
        {(I \tensor b) \tensor c} & {I \tensor (b \tensor c)}
        \ar[l]_{\alpha} \ar[r]^{\lambda} & {a \tensor b}
      }&\label{monoidal:unit1}\\
      &\xymatrix@1{
        {(a \tensor I) \tensor c} & {a \tensor (I \tensor c)}
        \ar[l]_{\alpha} \ar[r]^{1 \tensor \lambda} & {a \tensor b}
      }&\label{monoidal:unit2}\\
      &\xymatrix@1{
        {(a \tensor b) \tensor I} & {a \tensor (b \tensor I)}
        \ar[l]_{\alpha} \ar[r]^{1 \tensor \rho} & {a \tensor b}
      }&\label{monoidal:unit3}
    \end{eqnarray}
    
    Finally, $\lambda$ and $\rho$ overlap with each other, leading to
    the following critical span:
    
    \begin{equation}\label{monoidal:unit4}
      \xymatrix@1{
        {I} & {I \tensor I} \ar[l]_{\lambda} \ar[r]^{\rho} & {I}
      }
    \end{equation}

    This exhausts all of the critical spans arising in the theory.
\end{example}

As mentioned previously, the main utility of critical spans is that
they drastically reduce the number of spans we need to check for
joinability when investigating confluence. This result is embodied in
the critical pairs lemma, so named because critical spans are usually
identified with their pair of reduced terms. The reader may find a
proof of the Lemma in \cite[Lemma 2.7.15]{KlopdeV:FO}.

\begin{lemma}[Critical Pairs Lemma]
  Let $\L$ be a labelled rewriting theory. Every singular span
  in $\L$ is joinable if and only if every critical span in $\L$ is
  joinable. \qed
\end{lemma}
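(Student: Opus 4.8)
The forward direction is immediate: a critical span is in particular a singular span, so if every singular span is joinable then so is every critical span. The substance is the converse, and the plan is to assume that every critical span is joinable and to produce a joining for an arbitrary singular span by analysing the relative positions of the two redexes. The analysis splits into exactly the three cases listed after Newman's Lemma, and only the overlapping case will actually invoke the hypothesis on critical spans.

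The preliminary step is to record two stability properties of joinability that fall out of the inductive generation of $\Fr(\L)$ in Definition \ref{def:genreds}. Given a joinable span from $s$ with joining $\psi_1,\psi_2$ into some $t$, the substitution instance of each reduction in the configuration is again a reduction, so the whole span substitutes to a joinable span over $s^\sigma$ for any $\sigma$; and a single application of the (Structure) rule embeds the configuration beneath one layer of context, so by iteration the span placed beneath any context $t'\{-\}$ remains joinable. Combining the two, any substitution instance of a joinable span, set inside an arbitrary context, is itself joinable. These are precisely the facts that let me transport joinability of a critical span down to its specialisations.

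With these in hand, consider a singular span from $s$ consisting of a rule $\rho_1$ applied at an address $\alpha_1$ and a rule $\rho_2$ applied at an address $\alpha_2$. If $\alpha_1 \perp \alpha_2$ (the disjoint case), the rewrites touch orthogonal subterms, and applying each rule to the output of the other lands in a common reduct, so the span is joinable directly. If one address is a prefix of the other but the inner redex lies entirely inside a subterm substituted for a variable of the outer pattern (the nested case), then firing the outer rule may duplicate, delete or relocate that subterm, and I would join by firing the inner rule on every surviving copy, again reaching a common reduct. In the remaining case one address is a prefix of the other and the inner redex meets the non-variable part of the outer pattern; by Lemma \ref{lem:overlap} this is a genuine overlap, and by Lemma \ref{lem:mgci} the configuration is obtained from the most general common instance defining the associated critical span by applying a substitution and embedding in a context. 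Since that critical span is joinable by hypothesis, the two stability properties of the previous step yield a joining of the span at hand. As the three cases are exhaustive, every singular span is joinable.

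I expect the nested case to be the main obstacle — not because it is deep, but because it demands careful bookkeeping. A variable of the outer rule $\rho_1$ may occur several times, or not at all, on each of its two sides, so one must track every residual of the inner $\rho_2$-redex created or destroyed by $\rho_1$ and verify that firing $\rho_2$ at all of them genuinely meets in a common term. Getting this accounting right is essentially the content of naturality, encoded by (Nat 1) and (Nat 2) in Definition \ref{def:rtt}, and it is the only place in the argument where real care is required.
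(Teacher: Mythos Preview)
Your proposal is correct and follows the standard case analysis for the Critical Pairs Lemma. Note, however, that the paper does not actually prove this lemma: it is stated with a \qed and the preceding text refers the reader to \cite[Lemma 2.7.15]{KlopdeV:FO} for a proof. Your three-case argument (disjoint, nested, overlapping) together with the closure of joinability under substitution and context is precisely the classical proof found in that reference, and it is also the template the paper itself uses when proving the \emph{Strong} Critical Pairs Lemma (Lemma~\ref{lem:cp}), where the same case split is carried out with the additional requirement that the joinings commute.
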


In the following section, we develop a general coherence theorem for
rewriting $2$-theories having unique normal forms. Our basic strategy
is to obtain versions of Newman's Lemma and the Critical Pairs Lemma
that take into account the commutativity of the diagrams
involved. This leads to some additional subtleties, but the basic
strategy remains close to this section.

\section{Coherence for directed theories}\label{sec:coherenceunf}

In this section, we develop a coherence theorem for terminating and
confluent rewriting $2$-theories. For this, we shall need to refine
our notion of confluence.

\begin{definition}
  Let $\R$ be a rewriting $2$-theory. A span $S$ in $\Fr(\R)$ is
  \emph{commuting-joinable} if there is a joining of $S$ that commutes
  in $\Fr(\R)$. We say that $\R$ is commuting-confluent if every span in
  $\Fr(\R)$ is commuting-joinable and we say that $\R$ is
  \emph{locally commuting-confluent} if every singular span in
  $\Fr(\R)$ is commuting-joinable. 
\end{definition}

We are now in a position to obtain a strong form of Newman's Lemma
that includes information on the commutativity of diagrams.

\begin{lemma}[Strong Newman's Lemma]\label{lem:newman}
  Let $\R$ be a terminating, locally commuting-confluent
  finitely presented rewriting $2$-theory. Then: 
  \begin{enumerate}
    \item Every term $s \in \Fr(\R)$ has a unique normal form
      $\n(s)$.
    \item Any two reductions from $s$ to $\n(s)$ in $\Fr(\R)$ are
      equal.
  \end{enumerate}
\end{lemma}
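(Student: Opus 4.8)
The plan is to dispatch the two parts separately, since part (1) is essentially classical while part (2) carries the genuinely two-dimensional content. For part (1), I would first note that local commuting-confluence is strictly stronger than ordinary local confluence: every singular span is commuting-joinable, hence in particular joinable. Newman's Lemma then upgrades this to full confluence, so $\R$ is complete, and existence of normal forms follows from termination while uniqueness follows from the lemma that complete theories have unique normal forms. This produces the well-defined assignment $s \mapsto \n(s)$ and, by confluence, identifies $\n(s)$ as the common normal form of \emph{every} term reachable from $s$ — a fact I will reuse below.

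For part (2), the plan is Noetherian induction on the reduction order. First I would define $s \succ t$ to mean that there is a non-identity singular reduction $s \to t$. Termination forbids infinite descending chains; in particular it rules out a non-identity loop $\rho : s \to s$, since $\rho\of\rho\of\cdots$ would be an infinite sequence of non-identity composable singular reductions, so the transitive closure $\succ^+$ is well-founded and induction is legitimate. The base case is when $s$ is itself a normal form: then $\n(s) = s$ and every reduction with source $s$ equals $1_s$, so any two are trivially equal.

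For the inductive step I would assume $s$ is not normal, whence $\n(s) \neq s$ and every reduction $s \to \n(s)$ is non-identity. Given a parallel pair $\varphi, \psi : s \to \n(s)$, use Lemma \ref{lem:irred} to express each as a composite of singular reductions, discarding identity factors by {\rm (ID 1)} and {\rm (ID 2)}, so that $\varphi = \varphi_1 \of \varphi'$ and $\psi = \psi_1 \of \psi'$ with $\varphi_1 : s \to s_1$ and $\psi_1 : s \to s_2$ non-identity singular, and $\varphi' : s_1 \to \n(s)$, $\psi' : s_2 \to \n(s)$. Applying local commuting-confluence to the singular span $s_1 \leftarrow s \to s_2$ yields a commuting joining $\chi_1 : s_1 \to u$, $\chi_2 : s_2 \to u$ with $\varphi_1 \of \chi_1 = \psi_1 \of \chi_2$. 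By part (1), $\n(u) = \n(s)$, so I fix any reduction $\theta : u \to \n(s)$. Since $s \succ s_1$ and $s \succ s_2$, the induction hypothesis applies at both smaller terms: the parallel pairs $\varphi',\, \chi_1 \of \theta : s_1 \to \n(s)$ and $\psi',\, \chi_2 \of \theta : s_2 \to \n(s)$ are each equal. Substituting and invoking the commuting relation gives
\[
\varphi = \varphi_1 \of \varphi' = \varphi_1 \of \chi_1 \of \theta = \psi_1 \of \chi_2 \of \theta = \psi_1 \of \psi' = \psi.
\]

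The hard part will be the commutativity bookkeeping, and this is precisely where the word ``commuting'' in local commuting-confluence is indispensable. Ordinary Newman only needs the joining of the base span to \emph{exist}, whereas here I need $\varphi_1 \of \chi_1 = \psi_1 \of \chi_2$ as an honest equation in $\Fr(\R)$, and I need the inductive hypothesis to deliver \emph{equalities} of the two reduction tails rather than mere joinability. The care lies in organising the induction so that both tails are routed through the single common term $u$ and then to $\n(s)$ via the same $\theta$, so that the one base-level commuting square propagates, through two applications of the hypothesis at strictly smaller terms, to the full equation $\varphi = \psi$.
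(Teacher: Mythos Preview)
Your proposal is correct and follows essentially the same strategy as the paper's proof: decompose each reduction into a first singular step plus a remainder, apply local commuting-confluence to the resulting singular span, route both tails through the common joining and a fixed reduction to $\n(s)$, and conclude by induction at the two strictly smaller intermediate terms.

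The one notable difference is your choice of induction measure. The paper defines $\mu(s)$ to be the length of the longest identity-free reduction from $s$ to $\n(s)$, and explicitly invokes the \emph{finitely presented} hypothesis to ensure this maximum exists. You instead induct directly on the well-founded order $\succ^+$ supplied by termination alone. This is a genuine, if minor, improvement: your argument shows that the finite-presentation hypothesis is not actually needed for the proof of part (2) to go through, whereas the paper's formulation of $\mu$ depends on it. Both approaches yield the same diagram-chase at the inductive step.
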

\begin{proof}
  Part (1) follows from the classical Newman's Lemma, since $\R$ is
  terminating and confluent. 

  For Part (2), suppose that $s$ is a term in $\Fr(\R)$ and let
  $\varphi$ and $\psi$ be two reductions from $s$ to $\n(s)$ in
  $\Fr(\R)$. Then, it follows from Lemma \ref{lem:irred} that
  $\varphi = \varphi_1 \of \varphi_2$, where $\varphi_1$ is
  singular. Similarly, $\psi = \psi_1\of\psi_2$, where $\psi_1$ is
  singular. Suppose that $\varphi_1:s \to u_1$ and $\psi_1:s \to
  u_2$. Then, these two arrows form a singular span, which by
  assumption has a commuting joining $\tau_i:u_i\to t$, where $t$ is a
  term in $\Fr(\R)$ and $i \in \{1,2\}$. Since there is a reduction
  $s \to t$ in $\Fr(\R)$, it follows from Part (1) that $\n(t) =
  \n(s)$ and there is a reduction $\rho:t \to \n(s)$. For a term $a$,
  let $\mu(a)$ be the length of the longest reduction from $a$ to
  $\n(a)$ in $\Fr(\R)$ that does not contain an identity
  reduction. This is well defined since $\R$ is terminating and
  finitely presented. We proceed by induction on $\mu(s)$ to show that
  $\varphi = \psi$ by showing that the 
  following diagram commutes in $\Fr(\R)$:
  \begin{displaymath}
  \vcenter{
    \def\objectstyle{\scriptstyle}
    \def\labelstyle{\scriptstyle}
    \begin{xy}
      (0,0)*+{s}="s";
      (14,14)*+{u_1}="u1";
      (14,-14)*+{u_2}="u2";
      (28,0)*+{t}="t";
      (60,0)*+{\n(s)}="ns";
      (14,0)*+{(1)};
      (35,7)*+{(2)};
      (35,-7)*+{(3)};
      {\ar@{->}@/^0.5pc/^{\varphi_1} "s"; "u1"};
      {\ar@{->}@/_0.5pc/_{\psi_1} "s"; "u2"};
      {\ar@{->}@/^0.5pc/^{\tau_1} "u1"; "t"};
      {\ar@{->}@/_0.5pc/_{\tau_2} "u2"; "t"};
      {\ar@{->}@/^1.8pc/^{\varphi_2} "u1"; "ns"};
      {\ar@{->}@/_1.8pc/_{\psi_2} "u2"; "ns"};
      {\ar@{->}^{\rho} "t"; "ns"};
    \end{xy}
    }
  \end{displaymath}
  
  If $\mu(s) = 0$, then $s = \n(s)$ and $\varphi=\psi=1_s$. Suppose that
  $\mu(s) > 0$. Without loss of generality, we may assume that neither
  $\varphi_1$ nor $\psi_1$ is $1_s$. Then, since there is a reduction from
  $s$ to $u_1$ and one from $s$ to $u_2$, it follows from Part (1) that
  $\n(u_1) = \n(u_2) = \n(s)$. Hence, $\mu(u_1) < \mu(s)$ and
  $\mu(u_2) < \mu(s)$ and it follows from induction that the subdiagrams
  labelled $(2)$ and $(3)$ in the diagram
  above commute. Since the diagram labelled $(1)$
  commutes by assumption, we have that $\varphi = \psi$. 
\end{proof}

By the preceding lemma, we know that each term in a terminating,
locally commuting-confluent and finitely presented rewriting
$2$-theory has a unique reduction to a unique normal form. In order to
pass from this fact to a general coherence theorem, we need a way of
extending this result to arbitrary parallel pairs of reductions. The
property that turns out to be most useful for achieving this is for
every reduction to be \emph{monic}. 

\begin{definition}
  A rewriting $2$-theory $\rtt$ is \emph{monic} if whenever
  $(\varphi_1\of\psi,\varphi_2\of\psi) \in \E_\T$ modulo the basic
  congruence, we have $(\varphi_1,\varphi_2)\in \E_\T$. 
\end{definition}

Recall that an arrow $g:b \to c$ in a category $\C$ is called ``monic'' if
for every pair of arrows $f_1,f_2:a \to b$ in $\C$, if $f_1\of g =
f_2\of g$ then $f_1 = f_2$. The following lemma follows immediately
from the construction of $\Fr(\R)$ for a rewriting $2$-theory
$\R$.

\begin{lemma}\label{lem:monic}
  If $\R$ is a monic rewriting $2$-theory, then every arrow in
  $\Fr(\R)$ is monic. \qed
\end{lemma}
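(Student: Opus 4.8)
The plan is to unwind the definition of $\Fr(\R)$ as a quotient and observe that the \emph{monic} condition on $\R$ is, essentially verbatim, the statement that each congruence class is a monomorphism. Recall that $\Fr(\R)$ is obtained from $\Fr(\ltr)$ by quotienting out the congruence $[\E_\T + S(\R)]$, so the arrows of $\Fr(\R)$ are congruence classes $[\psi]$ of reductions, and composition is computed on representatives, $[\varphi]\of[\psi] = [\varphi\of\psi]$. This last point is the one thing that genuinely has to be checked: that composing representatives is well defined, which is exactly the content of the (Transitivity) closure rule in the generation of $[\E_\T + S(\R)]$, guaranteeing that the congruence is compatible with $\of$. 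Here I read ``the basic congruence'' as the standard congruence $S(\R)$, and ``$(\varphi_1\of\psi,\varphi_2\of\psi) \in \E_\T$ modulo the basic congruence'' as the assertion that this pair lies in $[\E_\T + S(\R)]$, i.e. that $\varphi_1\of\psi$ and $\varphi_2\of\psi$ represent the same arrow of $\Fr(\R)$.

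Given this dictionary, the proof is a direct translation. Fix an arrow $[\psi]:[b]\to[c]$ of $\Fr(\R)$ and a parallel pair $[\varphi_1],[\varphi_2]:[a]\to[b]$ with $[\varphi_1]\of[\psi] = [\varphi_2]\of[\psi]$. By well-definedness of composition this says $[\varphi_1\of\psi] = [\varphi_2\of\psi]$, i.e. the pair $(\varphi_1\of\psi,\varphi_2\of\psi)$ lies in $[\E_\T + S(\R)]$. The monicity hypothesis on $\R$ then yields that $(\varphi_1,\varphi_2)\in[\E_\T + S(\R)]$, that is $[\varphi_1]=[\varphi_2]$. Since $[\psi]$ was an arbitrary arrow, every arrow of $\Fr(\R)$ is monic, which is the claim.

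I would then note that no structural induction is required, precisely because the defining condition for a monic theory already quantifies over \emph{all} reductions $\varphi_1,\varphi_2,\psi$ rather than over generators alone. Should one instead adopt a reading in which the cancellation property is imposed only for singular $\psi$, the argument still goes through with one extra step: monomorphisms are closed under composition in any category, so by Lemma \ref{lem:irred} --- which expresses every non-identity reduction as a finite composite of singular reductions, with identities being trivially monic --- each arrow of $\Fr(\R)$ is a composite of monic arrows and hence monic. The only genuinely delicate point in either route is the bookkeeping identifying the congruence $[\E_\T + S(\R)]$ with the phrase ``$\E_\T$ modulo the basic congruence'' and confirming that $\of$ descends to the quotient; everything after that is formal.
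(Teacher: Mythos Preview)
Your proposal is correct and is precisely the unwinding that the paper has in mind: the paper gives no proof beyond the remark that the lemma ``follows immediately from the construction of $\Fr(\R)$,'' and your argument spells out exactly that immediacy, including the appropriate reading of ``$\E_\T$ modulo the basic congruence'' as membership in $[\E_\T + S(\R)]$. Your supplementary remark covering the singular-$\psi$ reading via Lemma~\ref{lem:irred} is a nice safeguard but is not needed under the intended interpretation.
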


We now have all the necessary ingredients for a general coherence
theorem.

\begin{theorem}[Coherence]\label{thm:coherence}
  A finitely presented rewriting $2$-theory is Mac Lane coherent if it
  is  monic, terminating and locally commuting-confluent. 
\end{theorem}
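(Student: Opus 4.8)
The plan is to deduce Mac Lane coherence from the two main ingredients assembled just above the statement: the Strong Newman's Lemma (Lemma \ref{lem:newman}), which already settles the special case of reductions into a common normal form, and the monicity of every arrow of $\Fr(\R)$ (Lemma \ref{lem:monic}), which lets me reduce the general parallel-pair problem to that special case. The governing idea is that once two parallel reductions have been extended along a common reduction to the normal form, Strong Newman's Lemma forces the two extensions to coincide, and monicity then allows me to strip off the common tail and conclude that the original reductions were already equal.

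Concretely, I would start with an arbitrary parallel pair $\varphi,\psi:s \to t$ in $\Fr(\R)$. Since $\R$ is finitely presented, terminating and locally commuting-confluent, part (1) of Lemma \ref{lem:newman} supplies a unique normal form $\n(s)$, and the existence of the reduction $\varphi:s \to t$ forces $\n(t)=\n(s)$; by termination there is a reduction $\rho:t \to \n(s)$. I would then form the two composites $\varphi\of\rho$ and $\psi\of\rho$, each a reduction from $s$ to $\n(s)$. Part (2) of Lemma \ref{lem:newman} applies directly and yields $\varphi\of\rho = \psi\of\rho$. Finally, because $\R$ is monic, Lemma \ref{lem:monic} makes $\rho$ monic, so this equality cancels to give $\varphi = \psi$. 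As the pair was arbitrary, $\R$ is Mac Lane coherent.

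The step I expect to be doing the real organisational work is the cancellation: the whole purpose of the monic hypothesis is to make it legitimate to append an arbitrary reduction to the normal form and then remove it again, which is exactly what converts ``equal after extension to the normal form'' into ``equal''. The genuinely substantive content — that any two reductions sharing a source and ending at the normal form must agree — is already packaged inside Strong Newman's Lemma, whose proof combines termination with local commuting-confluence via an induction on the distance to the normal form, so here it is simply invoked rather than re-derived. It is worth remarking that this argument never appeals to the general-position hypothesis that is built into the definition of Mac Lane coherence: the conclusion $\varphi=\psi$ in fact holds for \emph{every} parallel pair in $\Fr(\R)$, and coherence for pairs in general position drops out as a special case.
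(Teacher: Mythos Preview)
Your proposal is correct and follows essentially the same route as the paper: extend both reductions by a reduction $\rho:t\to\n(s)$, apply the Strong Newman's Lemma to identify the composites, and then cancel $\rho$ using monicity. Your closing observation that the general-position hypothesis plays no role in the argument is also accurate.
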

\begin{proof}
  Let $\R$ be a rewriting $2$-theory satisfying the
  hypotheses. Suppose that $\tau_1$ and $\tau_2$ are two reductions $s
  \to t$ in general position in  $\Fr(\R)$. By Lemma
  \ref{lem:newman}, there is a unique arrow 
  $\rho:t \to \n(t)$ and $\n(s) = \n(t)$. So, $\tau_1 \of \rho$ and
  $\tau_2 \of \rho$ are two arrows $s \to \n(s)$ in
  $\Fr(\R)$. Lemma \ref{lem:newman} implies that $\varphi_1\of \rho =
  \varphi_2 \of \rho$. Since $\R$ is monic, it follows from Lemma
  \ref{lem:monic} that  $\varphi_1 = \varphi_2$. 
\end{proof}

Theorem \ref{thm:coherence} effectively reduces the problem of showing
that a rewriting $2$-theory is coherent to showing that the underlying
term rewriting system is terminating and confluent. In order to make
effective use of this coherence theorem, we establish a strong form of
the Critical Pairs Lemma.

\begin{lemma}[Strong Critical Pairs Lemma]\label{lem:cp}
  A rewriting $2$-theory is locally commuting-confluent if and only
  if every critical span is commuting-joinable.
\end{lemma}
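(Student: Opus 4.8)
The plan is to mirror the classical Critical Pairs Lemma, tracking the commutativity of the joinings throughout. One direction is immediate: each leg of a critical span applies a single reduction rule at a single redex, so a critical span is in particular a singular span. Hence if $\R$ is locally commuting-confluent then every critical span is commuting-joinable. For the converse, I would assume that every critical span is commuting-joinable and let $\Span$ be an arbitrary singular span in $\Fr(\R)$. Following the classical case analysis, the redexes rewritten by $\varphi_1$ and $\varphi_2$ occupy positions that are either disjoint, nested within a variable occurrence of the other rule's pattern, or genuinely overlapping; I would exhibit an explicit commuting joining in each case.

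When the two redexes are disjoint, the joining is obtained by performing each reduction after the other, and the resulting square commutes by the (Funct) equation of Definition \ref{def:rtt}: writing the two singular steps as $F(\rho_1,1)$ and $F(1,\rho_2)$ at their least common ancestor, both composites reduce to $F(\rho_1,\rho_2)$. When one redex lies inside a variable instance of the other, say $\varphi_1$ applies a rule $\tau:\ell\to r$ at the top of $s=\ell^\theta$ and $\varphi_2$ rewrites the subterm $\theta(x)$ for some variable $x$, the joining rewrites every copy of that variable instance in the target; that the two resulting paths agree is precisely the naturality of $\tau$, i.e. the coincidence of (Nat 1) and (Nat 2). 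Thus the disjoint and nested cases are settled by the standard congruence $S(\R)$ alone, with no appeal to the hypothesis.

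The substantive case, and the main obstacle, is the genuinely overlapping one. Here I would invoke Lemma \ref{lem:overlap} to locate a non-variable subterm of one rule's left-hand side that matches a redex of the other, and Lemma \ref{lem:mgci} to factor this overlap through a most general common instance. This presents the given span as a context-embedded substitution instance $t\{S^\sigma\}$ of a critical span $S$. By hypothesis $S$ possesses a commuting joining, so it remains to verify that commuting-joinability is stable under substitution and under embedding in a context. Stability under substitution follows from the (Substitution) rule generating $[\E_\T + S(\R)]$, which carries any derivable equality of reductions to its $\sigma$-image; stability under context follows from the (Structure) rule, which propagates a derivable equality through an enclosing function symbol. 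The commuting joining of $S$ therefore transports to one of the original span, completing the converse.

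The delicate bookkeeping lies in the nested and overlapping cases when a variable occurs several times on one side of a rule: the joining reduction must act simultaneously on all occurrences, so the residual of a single singular step is in general a non-singular composite, and one must confirm that naturality and functoriality genuinely close the congruence under these simultaneous operations rather than only under single-redex rewriting. Granting the stability of $[\E_\T + S(\R)]$ under the (Substitution) and (Structure) rules, this bookkeeping is routine, and the equivalence follows.
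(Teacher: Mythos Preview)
Your proposal is correct and follows essentially the same route as the paper: the trivial forward direction, then the three-way case split into disjoint, nested, and overlapping redexes, handling the first two via (Funct) and (Nat 1)/(Nat 2) respectively and reducing the third to a substitution instance of a critical span. You are somewhat more explicit than the paper about invoking the (Substitution) and (Structure) rules to transport the commuting joining through the context and substitution in the overlapping case, and about the non-left-linear bookkeeping in the nested case, but this is elaboration rather than a different argument.
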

\begin{proof}
  By definition, every critical span in a locally commuting-confluent
  rewriting $2$-theory is commuting-joinable. For the converse
  direction, let $\R$ be a rewriting $2$-theory in which every
  critical span is commuting-joinable. Let $S := \xymatrix@1{{u} & {s} 
    \ar[l]_{\varphi}\ar[r]^{\psi} & {v}}$ be a singular span in
  $\Fr(\R)$. We can distinguish three possibilities for this span: 
  \begin{enumerate}
    \item $\varphi$ and $\psi$ rewrite \emph{disjoint subterms} of
      $s$. Without loss generality, we may assume that $s =
      F(t_1,t_2)$, that $\varphi =  F(\varphi',1_{t_2})$ and that $\psi =
      F(1_{t_1},\psi')$; where $\varphi':t_1 \to
      t_1'$ and $\psi':t_2 \to t_2'$. Then, we have 
      $$F(\varphi',\psi'): F(t_1,t_2) \to F(t_1',t_2').$$
      By the functoriality of $F$, we
      have 
      \begin{eqnarray*}
       F(\varphi',1)\of F(1,\psi') &=& F(\varphi'\of 1, 1\of \psi')\\
       &=& F(1 \of \varphi', \psi'\of 1)\\
       &=& F(1,\psi')\of F(\varphi',1)
      \end{eqnarray*}
      So, $S$ is commuting-joinable to $F(t_1',t_2')$.
    \item $\varphi$ and $\psi$ rewrite \emph{nested subterms} of
      $s$. Without loss of generality, we may assume that $s = p\{q\}$,
      that $\varphi = \varphi'\{1_q\}$ and that $\psi = 1_p\{\psi'\}$; where
      $\varphi':p \to p'$ and $\psi: q \to q'$. Then we have
      $$ \varphi'\{\psi'\}:p\{q\} \to p'\{q'\}.$$
      If $\varphi'$ is an instance of a left-linear reduction rule, then
      using (Nat 1) and (Nat 2) we get that $S$ is commuting-joinable 
      to $p'\{q'\}$ via the following chain of equalities:
      \begin{equation*}
        \varphi'\{1_q\}\of {p'}\{\psi\} = \varphi'\{\psi'\} = {p}\{\psi'\}\of
      \varphi\{1_{q'}\}.
      \end{equation*}
      A similar argument works when $\varphi'$ is an instance of a
      non left-linear rule, using step (1) to rewrite the residuals of
      $q$ in parallel.
    \item $\varphi$ and $\psi$ rewrite \emph{overlapping subterms} of
      $s$. Without loss of generality, we may assume that $\varphi:s \to
      s_1$ and $\varphi:s \to s_2$. By the definition of a critical span,
      $S$ is then a substitution instance of a critical span, which is
      commuting-joinable by assumption. 
  \end{enumerate}
  By the constuction of $\Fr(\R)$, it follows that $S$ is
  commuting-joinable. 
\end{proof}

\begin{example}
  In this example we utilise Theorem \ref{thm:coherence} to obtain a
  straightforward proof of the coherence theorem for categories with a
  directed associativity map. This coherence theorem is the main
  result of \cite{Laplaza:associative}. 

  Let $\R$ be the rewriting $2$-theory consisting of a single binary
  function symbol $\tensor$, the reduction rule
  \[
  \alpha(x,y,z): x \tensor (y\tensor z) \to (x\tensor y)\tensor z
  \]
  and the left-hand diagram from Example \ref{ex:AU} as an equation on
  reductions. Then, $\R$ is terminating by induction with the ranking
  function
  \[
  \rho(t) = \begin{cases}
    \rho(a) + 2\rho(b) - 1 & \text{if $t = a\tensor b$}\\
    1 &\text{otherwise}.
  \end{cases}
  \]
  The only critical span in this system arises as

  \[
  \vcenter{
    \def\labelstyle{\scriptstyle}
    \begin{xy}
      (0,0)*+{a\tensor(b\tensor(c\tensor d))}="a";
      (-20,-14)*+{(a\tensor b)\tensor (c\tensor d)}="b";
      (20,-14)*+{a\tensor((b\tensor c)\tensor d)}="c";
      {\ar@{->}@/^0.8pc/^-{1_a \tensor \alpha(1_b,1_c,1_d)} "a";"c"};
      {\ar@{->}@/_0.8pc/_-{\alpha(1_a,1_b,1_a\tensor 1_b)} "a"; "b"};
    \end{xy}
  }
  \]
  By the equation we placed on reductions, this critical pair is
  commuting-joinable to $((a\tensor b)\tensor c)\tensor d$. By Lemma
  \ref{lem:cp}, $\R$ is locally commuting-confluent. By Lemma
  \ref{lem:monic}, $\R$ is also monic. So, we may apply Theorem
  \ref{thm:coherence} and conclude that $\R$ is coherent.
\end{example}

Considering the above results, one may be tempted to massage an
incomplete rewriting $2$-theory into a complete one and thus apply the
coherence theorems. Indeed, the famous Knuth-Bendix completion
algorithm \cite{KnuthBendix} achieves precisely this. Unfortunately, such a procedure
typically adds additional reduction rules to the rewriting theory and
this is certainly the case with the Knuth-Bendix algorithm. Rather
than simplifying the coherence problem, this additional structure
results in a new rewriting $2$-theory with a completely independent
coherence problem whose solution sheds very little light on the
coherence problem for the original theory.

In the following section, we tackle the problem of coherence for
invertible rewriting $2$-theories.

\section{Coherence for invertible theories}\label{sec:coherenceinv}

It is not immediately obvious whether Theorem \ref{thm:coherence} can
be extended in any meaningful way to invertible rewriting
$2$-theories. The reason for this is that such systems are necessarily
non-terminating. We can, however, sidestep this problem by restricting
our attention to an orientation of a rewriting $2$-theory.

\begin{definition}[Orientation]
  Let $\R := \rtt$ be a rewriting $2$-theory. An \emph{orientation}
  of $\R$ is a function $\mathcal{O}:\T \to \{1,-1\}$ such that:
  \begin{enumerate}
    \item $\mathcal{O}(\alpha) = 1$ for any non-invertible rule $\alpha$.
    \item For an invertible pair  of rules $(\alpha,\beta)$, either:
      \begin{itemize}
        \item $\mathcal{O}(\alpha) = 1$ and $\mathcal{O}(\beta) = -1$, or
        \item $\mathcal{O}(\alpha) = -1$ and $\mathcal{O}(\beta) = 1$.
      \end{itemize}
  \end{enumerate}
\end{definition}

Given an orientation on a rewriting $2$-theory $\R$, we can restrict our
attention to a directed subtheory of $\R$.

\begin{definition}
  Given a rewriting $2$-theory $\R := \rtt$ with orientation
  $\mathcal{O}$, a reduction rule $\alpha \in \T$ is \emph{positive}
  if $\mathcal{O}(\alpha) = 1$ and \emph{negative} otherwise. The
  positive subtheory of $\R$ relative to $\mathcal{O}$ arises from
  $\R$ by discarding all negative reduction rules from $\T$ and
  discarding all equations from $\E_\T$ that contain an instance
  of a negative reduction rule.
\end{definition}

Working relative to an orientation, we can now extend Theorem
\ref{thm:coherence} to invertible theories.

\begin{theorem}[Coherence]\label{thm:coherenceinv}
A finitely presented invertible rewriting $2$-theory is Mac Lane
coherent if it has an orientation whose positive subtheory is
terminating and locally commuting-confluent. 
\end{theorem}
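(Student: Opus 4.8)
The plan is to bootstrap coherence for the full (non-terminating) theory $\R$ from the good behaviour of its positive subtheory $\R^+$, which by hypothesis is terminating and locally commuting-confluent, and which is finitely presented since it is obtained from $\R$ by discarding rules and equations. Lemma \ref{lem:newman} then applies to $\R^+$: every term $s$ has a unique normal form $\n(s)$ and a reduction $N_s : s \to \n(s)$ built from positive rules, and any two positive reductions $s \to \n(s)$ are equal. Because $\R$ is invertible, each positive rule has an inverse, so (using (Funct) to invert structure reductions) each $N_s$ has a two-sided inverse $N_s^{-1} : \n(s) \to s$ in $\Fr(\R)$. The heart of the argument will be to show that every reduction $\varphi : s \to t$ in $\Fr(\R)$ factors canonically as $\varphi = N_s \of N_t^{-1}$; granting this, any parallel pair $\varphi_1,\varphi_2 : s \to t$ both equal $N_s \of N_t^{-1}$ and so are equal, yielding Mac Lane coherence (in fact for every parallel pair, not merely those in general position, the termination of $\R^+$ being exactly what excludes pathologies such as an orientation of commutativity).

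First I would record the auxiliary fact that reductions preserve normal forms: if there is a reduction $s \to t$ in $\Fr(\R)$, then $\n(s) = \n(t)$. This follows by induction on the construction of the reduction. A positive rule $a \to b$ has $\n(a) = \n(b)$ by uniqueness of normal forms in $\R^+$; a negative rule is the inverse of such a rule; and the (Structure), (Replacement) and (Transitivity) clauses preserve the relation ``has the same normal form'', the structure clause using that $F(\n(s_1),\dots,\n(s_n))$ is a common positive reduct of $F(s_1,\dots,s_n)$ and $F(t_1,\dots,t_n)$ once $\n(s_i)=\n(t_i)$.

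The central claim $\varphi \of N_t = N_s$ I would prove by induction on the construction of $\varphi$ in $\Fr(\R)$. The (Identity) and (Transitivity) cases are immediate from (ID 1), (ID 2) and associativity. The essential atomic computation is for a singular reduction $\tau(1_{t_1},\dots,1_{t_n})$ arising from a rule $\tau$: if $\tau$ is positive then both $N_s$ and $\tau(1_{t_1},\dots,1_{t_n})\of N_t$ are positive reductions $s \to \n(s)=\n(t)$, hence equal by Lemma \ref{lem:newman}; if $\tau$ is negative it is the inverse of a positive rule $\rho$, and from the positive case $\rho\of N_s = N_t$ one gets $\tau(1_{t_1},\dots,1_{t_n})\of N_t = \rho^{-1}\of\rho\of N_s = N_s$. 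The general (Replacement) case $\varphi = \tau(\varphi_1,\dots,\varphi_n)$ then reduces to the previous ones: by (Nat 1) it equals $F(\varphi_1,\dots,\varphi_n)\of \tau(1_{t_1},\dots,1_{t_n})$, a composite of a structure reduction with a singular reduction.

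The delicate step, and the one I expect to be the main obstacle, is the (Structure) case $\varphi = F(\varphi_1,\dots,\varphi_n)$, since $N_{F(s_1,\dots,s_n)}$ is generally not $F(N_{s_1},\dots,N_{s_n})$: once the arguments are normal the term $F(\n(s_1),\dots,\n(s_n))$ may still admit positive rewrites. Here I would use uniqueness of positive normalisation to factor $N_s = F(N_{s_1},\dots,N_{s_n})\of N_{s'}$ with $s' = F(\n(s_1),\dots,\n(s_n))$, and likewise $N_t = F(N_{t_1},\dots,N_{t_n})\of N_{t'}$ with $t' = F(\n(t_1),\dots,\n(t_n))$. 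The inductive hypotheses give $\varphi_i\of N_{t_i} = N_{s_i}$, so (Funct) collapses $F(\varphi_1,\dots,\varphi_n)\of F(N_{t_1},\dots,N_{t_n}) = F(N_{s_1},\dots,N_{s_n})$; and since each $\varphi_i : s_i \to t_i$ is a reduction, the auxiliary fact gives $\n(s_i)=\n(t_i)$, whence $s' = t'$ and $N_{s'} = N_{t'}$, making the two sides agree. Finally, cancelling the invertible $N_t$ on the right of $\varphi_1 \of N_t = N_s = \varphi_2 \of N_t$ yields $\varphi_1 = \varphi_2$, completing the proof.
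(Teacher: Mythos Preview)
Your proposal is correct and follows the same strategy as the paper: show that every $\varphi:s\to t$ satisfies $\varphi\of N_t = N_s$, so that $\varphi = N_s\of N_t^{-1}$ is forced and any parallel pair collapses. The difference is purely in how that central claim is established. The paper invokes Lemma~\ref{lem:irred} to write $\varphi$ as a composite of singular reductions $\varphi_i$, and then checks $\varphi_i\of N_{s_i} = N_{s_{i-1}}$ for each singular $\varphi_i$ (positive: directly from Lemma~\ref{lem:newman}; negative: invert the positive case). This bypasses your Structure and Replacement cases entirely, since Lemma~\ref{lem:irred} has already absorbed them. Your structural induction is sound, and the Structure case you flag as ``delicate'' is handled correctly via the factorisation $N_{F(\bar s)} = F(N_{s_1},\dots,N_{s_n})\of N_{F(\n(s_1),\dots,\n(s_n))}$ together with $\n(s_i)=\n(t_i)$; but this work is avoidable. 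Your parenthetical observation that the argument in fact equates all parallel pairs, not merely those in general position, is also correct and applies equally to the paper's proof.
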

\begin{proof}
Let $\R$ be an oriented rewriting $2$-theory satisfying the hypotheses
and let $\R^+$ be its positive subtheory. For a reduction $\psi \in
\Fr(\R)$, we write $\psi^{-1}$ for its inverse. Suppose that
$\varphi:A \to B$ is a reduction in $\Fr(\R)$. By Lemma
\ref{lem:irred}, 
\[
\varphi = A \stackrel{\varphi_1}{\to} s_1 \stackrel{\varphi_2}{\to}
s_2 \to \dots \stackrel{\varphi_{n-1}}{\to} s_{n-1}
\stackrel{\varphi_n}{\to} B 
\]
where each $\varphi_i$ is singular. Say that $\varphi_i$ is
\emph{positive} if it contains an instance of a positive reduction
rule and \emph{negative} otherwise. Since $\R^+$ is terminating and
locally commuting-confluent, Lemma \ref{lem:newman} implies that each
term  $t \in \Fr(\R)$ has a unique positive map $N_t:t \to \n(t)$ to a
unique normal form $\n(t)$. We claim that each rectangle in the
following diagram commutes: 

\[
\vcenter{
  \xymatrix{
    {A} \ar[r]^{\varphi_1} \ar[dd]_{N_A} & {s_1} \ar[r]^{\varphi_2}
    \ar[dd]_{N_{s_1}} & {s_2} \ar[r]^{\varphi_3} \ar[dd]_{N_{s_2}} &
    {\dots} \ar[r]^{\varphi_{n-1}} & {s_{n-1}} \ar[r]^{\varphi_n}
    \ar[dd]_{N_{s_{n-1}}} & {B} \ar[dd]_{N_B}\\
    \\
    {\n(A)} \ar@{=}[r] & {\n(s_1)} \ar@{=}[r] & {\n(s_2)} \ar@{=}[r] &
    {\dots} \ar@{=}[r] & {\n(s_{n-1})} \ar@{=}[r] & {\n(B)} 
  }
}
\]
If $\varphi_i$ is positive, then it follows immediately from Lemma
\ref{lem:newman} that $\varphi_i\of N_{s_i} = N_{s_{i-1}}$. If
$\varphi_i$ is negative, then Lemma \ref{lem:newman} implies that
$\varphi_i^{-1}\of N_{s_{i-1}} = N_{s_i}$, which implies that
$\varphi_i\of N_{s_i} = N_{s_{i-1}}$. Since each rectangle commutes,
we have $\varphi\of N_B = N_A$, which implies that $\varphi = N_A\of
N_B^{-1}$. Since $N_A$ and $N_B$ are unique and we did not rely on a
particular choice of $\varphi$, we conclude that $\R$ is coherent.
\end{proof}

\begin{example}
  In this example, we sketch a proof of Mac Lane coherence for
  monoidal categories. From Example \ref{ex:monoidalcrit}, we know a
  set of critical spans for a certain positive subtheory of monoidal
  categories. This subtheory is terminating, as is readily verified by
  the following ranking function:
  \[
  \rho(t) = \begin{cases}
    \rho(a) + 2\rho(b) - 1& \text{if $t = a\tensor b$}\\
    1 &\text{otherwise.}
  \end{cases}
  \]
  In order to conclude coherence, we need only show that every
  critical span is commuting-confluent. From the definition of
  monoidal categories, we know that critical spans
  (\ref{monoidal:assoc}) and (\ref{monoidal:unit2}) are
  commuting-joinable. In his original definition of monoidal
  categories \cite{MacLane_natural}, Mac Lane included additional
  axioms providing commuting joinings for the remaining critical
  spans. However, Kelly later showed \cite{Kelly:monoidal} that these
  critical spans are commuting-joinable as a consequence of the
  pentagon and triangle axioms for monoidal categories. It follows
  from Lemma \ref{lem:cp} that the positive subtheory for monoidal
  categories is locally commuting-confluent. We may then apply Theorem
  \ref{thm:coherenceinv} and conclude that the theory for monoidal
  categories is Mac Lane coherent. 
\end{example}

The basic approach to Mac Lane coherence outlined in this chapter of
proving termination and then analysing the critical spans can be
successfully used to obtain coherence theorems for various other structures
arising in the literature, such as distributive categories
\cite{Laplaza:distributive} and weakly distributive categories
\cite{CockettSeely:wdc}. In light of the results of Chapter
\ref{ch:structure}, this approach may potentially be used to construct 
presentations of structure monoids and groups. The following chapter
details a successful application of this strategy to constructing
presentations of the Higman-Thompson groups. 

\blanknonumber
\chapter{Catalan categories}\label{ch:catalan}

Thompson's groups $F$ and $V$ \cite{Thompson:F} are
important objects arising within combinatorial group
theory. The group $F$ was originally introduced by Thompson in his
investigation of word problems in finitely generated simple
groups. This group was later rediscovered by homotopy theorists as the
automorphism group of a free homotopy-idempotent
\cite{Dydak:a,Dydak:b,FreydHeller}. The group $F$ has several
interesting properties. For instance, it is finitely presentable, 
has a simple commutator subgroup, has only abelian quotients, does not
contain a nonabelian free group, is totally orderable and has
exponential growth \cite{Thompsonintro}.

In unpublished notes, Thompson showed that the group $V$ is a finitely
presented infinite simple group --- the first known group of this
type. McKenzie and Thompson \cite{McKenzieThompson:V} later described
$F$ as a group generated by the variety of semigroups. As we saw in
Chapter \ref{ch:structure}, the relation of $F$ with associativity is
again reflected by the fact that it is the structure group for the
variety of semigroups. We also saw that the group $V$ is the structure
group for commutative semigroups and sketched how Dehornoy's
presentations for these groups \cite{Dehornoy:thompson} arise from the
coherence theorems for coherently associative and commutative
bifunctors. 

As shown by Higman \cite{Higman:thompson}, $V$ is in fact
a member of the infinite family of groups $G_{n,r}$; where $n > 1$ and
$r > 0$ are integers. In particular, $V \cong G_{2,1}$. These groups
share many of the properties of $V$. For instance, they are infinite,
finitely presentable and are either simple or have a simple subgroup of
index $2$. Brown \cite{Brown:finiteness} subsequently showed that Thompson's
group $F$ fits into a similar infinite family $F_{n,r}$, where $F
\cong F_{2,1}$. 

In Section \ref{sec:groups}, we recall Brown's definitions of $F_{n,1}$
and $G_{n,1}$. These groups are defined in a very similar way to $F$
and $V$, which may lead one to wonder whether they too are structure
groups of certain equational varieties.

In Section \ref{sec:structureFG} we introduce $n$-catalan algebras,
which encode a notion of associativity
for an $n$-ary function symbol and prove that $F_{n,1}$ is the structure
group of the variety of $n$-catalan algebras, directly generalising
the relation between $F$ and associativity. We follow this with a
definition of symmetric $n$-catalan algebras, which encode a notion of
associativity and commutativity for an $n$-ary function symbol and we
show that $G_{n,1}$ is the structure group of the variety of symmetric
$n$-catalan algebras. 

We know from Chapter \ref{ch:structure} that a coherent
categorification of a balanced composable equational variety yields a
presentation for the associated structure group. In Section
\ref{sec:catalancat}, we construct a coherent categorification of the
variety of $n$-catalan algebras, thus obtaining a presentation for
$F_{n,1}$. Finally, in Section \ref{sec:scatalancat}, we construct a
coherent categorification of the variety of symmetric $n$-catalan
categories, thus obtaining a presentation for $G_{n,1}$. These
presentations are closely linked to Dehornoy's presentations for $F$
and $V$, which we sketched in Example \ref{ex:DehornoyPresentations}. 

The rewriting $2$-theories that we construct in sections
\ref{sec:catalancat} and \ref{sec:scatalancat} are also interesting from a
purely categorical point of view as they directly generalise the Mac
Lane pentagon and hexagon coherence axioms for commutative and
associative bifunctors. For functors of arity greater than $2$, new
coherence phenomena appear, which are not present in the classical
binary case. 

\section{The groups $F_{n,1}$ and $G_{n,1}$}\label{sec:groups}

In Chapter \ref{ch:structure}, we saw that Thompson's groups $F$ and
$V$ arise as structure groups of certain balanced equational theories
and we subsequently obtained presentations for these groups via
coherent presentations of their associated categorical theories. In
this section, we introduce generalisations of these groups due to
Brown \cite{Brown:finiteness} and Higman \cite{Higman:thompson}, which
we call $F_{n,1}$ and $G_{n,1}$, respectively. In
the following sections, we shall see how the aforementioned process of
constructing presentations for $F$ and $V$ generalises to this broader
class of groups.

There are several paths to defining the groups $F_{n,1}$ and
$G_{n,1}$, all of which relate to the fact that each of these groups
arises as a subgroup of the automorphism group of a Cantor set. Of
the myriad of definitions available, we choose to follow the
description of Brown \cite{Brown:finiteness}, which utilises certain
equivalence classes of pairs of finite rooted trees.

\begin{definition}[Tree]
  The set of $n$-ary trees is defined inductively as follows:
  \begin{itemize}
    \item The graph consisting solely of a single vertex is an $n$-ary
      tree.
    \item If $T_1,\dots,T_n$ are $n$-ary trees then the
      following is also an $n$-ary tree:
      \[
      \begin{xy}
        (0,0)*+{\cdot}="a";
        (-15,-14)*+{T_1}="t1";
        (-7,-14)*+{T_2}="t2";
        (2,-14)*+{\dots}="dots";
        (12,-14)*+{T_n}="tn";
        {\ar@{-} "a"; "t1"};
        {\ar@{-} "a"; "t2"};
        {\ar@{-} "a"; "tn"};
      \end{xy}
      \]
  \end{itemize}
\end{definition}
The \emph{root} of an $n$-ary tree is the unique vertex of valence $0$
or $n-1$. The \emph{leaves} of a rooted tree $T$ are the vertices of
valence $0$ or $1$ and we denote the set of leaves by $\ell(T)$. 

\begin{definition}[Expansion]
  A \emph{simple expansion} of an $n$-ary tree $T$ is the tree obtained by
  replacing a leaf $v$ of $T$ with the following:
  \[
  \begin{xy}
    (0,0)*+{v}="a";
    (-18,-14)*+{\alpha_1(v)}="t1";
    (-7,-14)*+{\alpha_2(v)}="t2";
    (2,-14)*+{\dots}="dots";
    (12,-14)*+{\alpha_n(v)}="tn";
    {\ar@{-} "a"; "t1"};
    {\ar@{-} "a"; "t2"};
    {\ar@{-} "a"; "tn"};
  \end{xy}
  \]
  In the above diagram, each $\alpha_i$ is simply a label for the
  relevant leaf. An \emph{expansion} of an $n$-ary tree is a tree
  obtained by making finitely many succesive simple expansions.    
\end{definition}

Given two trees $T_1$ and $T_2$ having a common expansion $S$, we say
that $S$ is a \emph{minimal} common expansion if any other expansion $S'$ of
$T_1$ and $T_2$ is an expansion of $S$.

\begin{lemma}[Higman \cite{Higman:thompson}]\label{lem:expansion}
  Any two finite $n$-ary trees have a minimal common expansion. \qed
\end{lemma}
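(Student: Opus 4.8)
The plan is to replace the geometric picture of trees and expansions by a combinatorial one in which both the existence of a common expansion and its minimality become transparent. I would encode a finite $n$-ary tree $T$ by the set $I(T) \subseteq \{1,\dots,n\}^*$ of addresses of its internal (non-leaf) vertices, where the root has address $\lambda$ and the $i$-th child of the vertex at address $w$ has address $wi$. Since the parent of any internal vertex is again internal, $I(T)$ is \emph{prefix-closed}: if $wi \in I(T)$ then $w \in I(T)$. Conversely, every finite prefix-closed subset of $\{1,\dots,n\}^*$ arises as $I(T)$ for a unique $T$, its leaves being exactly the addresses $wi$ with $w \in I(T)$ but $wi \notin I(T)$, together with $\lambda$ when $I(T) = \varnothing$. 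Thus $T \mapsto I(T)$ is a bijection between $n$-ary trees and finite prefix-closed address sets.

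The central step is to show that $S$ is an expansion of $T$ if and only if $I(T) \subseteq I(S)$. A single simple expansion replaces a leaf $w$ of $T$ by an internal vertex, which is exactly the operation $I \mapsto I \cup \{w\}$ for a leaf address $w$; so every expansion only enlarges $I$, giving the forward direction. For the converse — the crux of the argument — suppose $I(T) \subseteq I(S)$ and enumerate the addresses of $I(S) \setminus I(T)$ in order of non-decreasing length. Adding them to $I(T)$ one at a time in this order realises $S$ as an iterated simple expansion of $T$: when an address $w$ is to be added, its parent lies in $I(S)$ by prefix-closedness and, being strictly shorter, either lay in $I(T)$ from the start or has already been added, so $w$ is a current leaf and the step is a legitimate simple expansion. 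Hence the expansion order on trees is carried isomorphically onto the inclusion order on finite prefix-closed sets.

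Once this correspondence is in place the lemma is immediate. Given trees $T_1, T_2$, the union $I(T_1) \cup I(T_2)$ is finite and prefix-closed, since a union of prefix-closed sets is prefix-closed, and hence equals $I(S)$ for a unique tree $S$. As $I(T_i) \subseteq I(S)$, the tree $S$ is a common expansion of $T_1$ and $T_2$, establishing existence; and any common expansion $S'$ satisfies $I(T_1) \cup I(T_2) \subseteq I(S')$, that is $I(S) \subseteq I(S')$, so $S'$ is an expansion of $S$. Thus $S$ is the minimal common expansion, and it is moreover unique. I expect the only genuine obstacle to be the careful verification of the converse in the central step — that inclusion of address sets can always be realised by a sequence of \emph{simple} expansions — for which the non-decreasing-length enumeration is the essential device; everything else is bookkeeping. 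As an alternative I note that the same result follows by structural induction on $T_1$ and $T_2$: if either is the single vertex then the other is the minimal common expansion, and otherwise one applies the inductive hypothesis to the $n$ pairs of corresponding subtrees and reassembles, though that route requires tracking minimality through the recursion rather than reading it off from a union.
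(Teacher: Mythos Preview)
Your argument is correct. The paper does not prove this lemma at all: it is quoted from Higman with a terminal \emph{qed}, so there is nothing to compare against. Your encoding of an $n$-ary tree by the prefix-closed set of addresses of its internal vertices, together with the identification of the expansion order with set inclusion, is the standard and cleanest way to establish the result; the non-decreasing-length enumeration is exactly the right device to verify that an inclusion $I(T)\subseteq I(S)$ can be realised as a chain of simple expansions, and the minimal common expansion then drops out as the union. The proof is complete as written.
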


The underlying sets of the groups $F_{n,1}$ and $G_{n,1}$ consist of
certain formal expressions called \emph{tree diagrams}.

\begin{definition}[Tree diagram]
  An \emph{$n$-ary tree diagram} is a triple $(T_1,T_2,\sigma)$, where
  $T_1$ and $T_2$ are $n$-ary trees having the same number of leaves
  and $\sigma$ is a bijection $\ell(T_1) \to \ell(T_2)$. 
\end{definition}

As in the case of trees, we may talk about expansions of tree
diagrams.

\begin{definition}
  A \emph{simple expansion} of an $n$-ary tree diagram
  $(T_1,T_2,\sigma)$ is an $n$-ary tree diagram $(T_1',T_2',\sigma')$
  obtained by the following procedure:
  \begin{itemize}
    \item $T_1'$ is the simple expansion of $T_1$ along the leaf $l$. 
    \item $T_2'$ is the simple expansion of $T_2$ along the leaf
      $\sigma(l)$. 
    \item $\sigma'$ is the bijection $\ell(T_1') \to \ell(T_2')$ defined
      by setting $\sigma'(k) = \sigma(k)$ for $k \in
      \ell(T_1)\setminus\{l\}$ and $\sigma'(\alpha_i(l)) =
      \alpha_i(\sigma(l))$.
  \end{itemize}
  An \emph{expansion} of an $n$-ary tree diagram $(T_1,T_2,\sigma)$ is
  any $n$-ary tree diagram obtained by making finitely many succesive
  simple expansions of $(T_1,T_2,\sigma)$.
\end{definition}

Let $\sim$ be the equivalence relation on the set of $n$-ary tree
diagrams obtained by setting $(T_1,T_2,\sigma) \sim (T_1',T_2',\sigma')$
whenever  $(T_1,T_2,\sigma)$ and $(T_1',T_2',\sigma')$ possess a
common expansion. Let $[(T_1,T_2,\sigma)]$ denote the equivalence
class of $(T_1,T_2,\sigma)$ modulo $\sim$. We call
$[(T_1,T_2,\sigma)]$ an \emph{$n$-ary tree symbol}. 

\begin{definition}
For $n\ge 2$, we set $G_{n,1}$ to be the group whose underlying set is
the collection of 
$n$-ary tree symbols, together with the following group structure:
\begin{itemize}
    \item Given two $n$-ary tree symbols $[(T_1,T,\sigma)]$ and
      $[(T',T_2,\sigma')]$, it follows from 

\noindent Lemma~\ref{lem:expansion}
      that we may assume that $T = T'$. We 
      define their product to be
      \[
      [(T_1,T,\sigma)][(T,T_2,\sigma')] = [(T_1,T_2,\sigma\of\sigma')].
      \]
    \item The inverse of $[(T_1,T_2,\sigma)]$ is
      $[(T_2,T_1,\sigma^{-1})]$.
    \item The unit element is $[(T,T,id)]$.
  \end{itemize}
\end{definition}

It follows from the definitions that any $n$-ary tree is an expansion
of the tree consisting solely of a single vertex. Thus, the leaves of
an $n$-ary tree may be seen as a subset of the free monoid on
$\{1,\dots,n\}$. Therefore, we may order the leaves of the tree
lexicographically, which is equivalent to ordering the leaves 
left-to-right when drawn on a page. We say that an $n$-ary tree
symbol $[(T_1,T_2,\sigma)]$ is \emph{order-preserving} if $\sigma$ is
an isomorphism of ordered sets; that is, if $\sigma$ preserves this 
ordering. 

\begin{definition}
  For $n \ge 2$, we set $F_{n,1}$ to be the subgroup of $G_{n,1}$
  consisting of the order-preserving $n$-ary tree symbols.
\end{definition}

The groups $F_{n,1}$ and $G_{n,1}$ generalise Thompson's original
groups $F$ and $V$, since we have $F_{2,1} \cong F$ and $G_{2,1} \cong
V$. They also share several of the interesting properties of $F$ and
$V$ as surveyed in \cite{Scott:tour}. In the following section, we
shall realise $F_{n,1}$ as the structure group of higher-order
associativity and $G_{n,1}$ as the structure group of higher order
associativity and commutativity.

\section{$F_{n,1}$ and $G_{n,1}$ as structure groups}\label{sec:structureFG}

Our goal in this section is to realise $F_{n,1}$ and $G_{n,1}$ as
structure groups. Since both of these groups are built using maps
between $n$-ary trees, we take our set of function symbols to be $\F
:= \{\tensor\}$, where $\tensor$ is an $n$-ary function symbol. For a
set of variables $\V$, there is an obvious bijection between
$\Fr_\F(\V)$ and the set of $n$-ary trees whose leaves are labelled by
members of $\V$. We denote the absolutely free term algebra generated
by $\{\tensor\}$ on the set $\V$ by $\Fr_\tensor(\V)$ and we denote
the free monoid generated by $\V$ under concatenation by $\V^*$.   

Our basic strategy is to first realise $F_{n,1}$ as a
structure group by constructing an equational theory $\E$ such that
$[\E]$ equates any two terms $t_1,t_2 \in \Fr_\F(\V)$ that contain
precisely the same variables in the same order and such that no
variable appears more  
than once in either $t_1$ or $t_2$. In the binary case, this is
achieved by imposing associativity. So, $\E$ ought to be
an analogue of associativity for $n > 2$. Once we have this
realisation of $F_{n,1}$ we need only add the ability to arbitrarily
permute variables in order to obtain a realisation of $G_{n,1}$ as a
structure group.

\subsection{Catalan Algebras and $F_{n,1}$}

Associativity of a binary function symbol is sufficient to establish
that any two bracketings of the same string are equal. The way
in which one establishes this fact is to show that any bracketing of
a string is equal to the left-most bracketing. So, for an $n$-ary
function symbol to be associative, we need equations which imply that
any bracketing of a term is equivalent to the left-most one. In order
to simplify notation, for integers $i \le j$, we use the symbol $x_i^j$
to denote the list $x_i,x_{i+1},\dots,x_j$. If $i > j$, then $x_i^j$
is the empty list.

\begin{definition}[$n$-Catalan algebras]
  For $n \ge 2$, the theory of $n$-Catalan algebras consists of an
  $n$-ary function symbol $\tensor$ together with the following
  equations, where $0 < i < n$:
  \[
  \tensor(x_1^i,\tensor(x_{i+1}^{i+n}),x_{i+n+1}^{2n-1}) =
  \tensor(x_1^{i-1},\tensor(x_{i}^{i+n-1}),x_{i+n}^{2n-1})
  \]
  We denote the theory of $n$-Catalan algebras by $C_n$. 
\end{definition}
The reason for the name of $n$-catalan algebras is that the set of all
terms having $k$ occurrences of the symbol $\tensor$ and containing
precisely one variable is in bijective 
correspondence with the set of $n$-ary trees having $k$ internal
nodes, which has cardinality equal to the generalised Catalan number 
$\frac{1}{(n-1)k+1}{nk \choose k}$, \cite{Stanley:enumerative}.
The rather opaque equational theory of $n$-Catalan algebras is
rendered somewhat more understandable by viewing the induced equations
on the term trees, which for $n=3$, yields the following:

\begin{center}
  \begin{tabular}{clcrc}
    $
    \begin{xy}
      (0,0)*+{\cdot}="a";
      (-10,-12)*+{x_1}="x1";
      (0,-12)*+{x_2}="x2";
      (10,-12)*+{\cdot}="pivot";
      (0,-24)*+{x_3}="x3";
      (10,-24)*+{x_4}="x4";
      (20,-24)*+{x_5}="x5";
      {\ar@{-} "a"; "x1"};
      {\ar@{-} "a"; "x2"};
      {\ar@{-} "a"; "pivot"};
      {\ar@{-} "pivot"; "x3"};
      {\ar@{-} "pivot"; "x4"};
      {\ar@{-} "pivot"; "x5"};
    \end{xy}
    $
    &
    $
    \begin{xy}
      (0,0)*+{};
      (0,-9)*+{=};
    \end{xy}
    $
    &
    $
    \begin{xy}
      (0,0)*+{\cdot}="a";
      (-10,-12)*+{x_1}="x1";
      (0,-12)*+{\cdot}="pivot";
      (10,-12)*+{x_5}="x5";
      (-10,-24)*+{x_2}="x2";
      (0,-24)*+{x_3}="x3";
      (10,-24)*+{x_4}="x4";
      {\ar@{-} "a"; "x1"};
      {\ar@{-} "a"; "pivot"};
      {\ar@{-} "a"; "x5"};
      {\ar@{-} "pivot"; "x2"};
      {\ar@{-} "pivot"; "x3"};
      {\ar@{-} "pivot"; "x4"};
    \end{xy}
    $
    &
    $
    \begin{xy}
      (0,0)*+{};
      (0,-9)*+{=};
    \end{xy}
    $
    &
      $
    \begin{xy}
      (0,0)*+{\cdot}="a";
      (-10,-12)*+{\cdot}="pivot";
      (0,-12)*+{x_4}="x4";
      (10,-12)*+{x_5}="x5";
      (-20,-24)*+{x_1}="x1";
      (-10,-24)*+{x_2}="x2";
      (0,-24)*+{x_3}="x3";
      {\ar@{-} "a"; "x4"};
      {\ar@{-} "a"; "x5"};
      {\ar@{-} "a"; "pivot"};
      {\ar@{-} "pivot"; "x1"};
      {\ar@{-} "pivot"; "x2"};
      {\ar@{-} "pivot"; "x3"};
    \end{xy}
    $
  \end{tabular}
\end{center}

In order to apply the strategy from the binary case to the $n > 2$
case, we need to define what we mean by the left-most bracketing of a
term $t$. Intuitively, this is the term having the same variables as
$t$ in the same order, with all instances of $\tensor$ appearing at
the left.

\begin{definition}[Underlying list]
  Let $t \in \Fr_\tensor(\V)$. The \emph{underlying list} of $t$ is
  the word of $\V^*$ defined inductively by
  \[
  U(t) = \begin{cases}
    U(t_1)\of\dots\of U(t_n) & \text{if $t =
      \tensor(t_1,\dots,t_n)$}\\
    t & \text{otherwise}
  \end{cases}
  \]
\end{definition}

Given the underlying list of a term, we can  define the
left-most bracketing by recursively adding all instances of $\tensor$.

\begin{definition}[Left-most bracketing]
  Let $t \in \Fr_\tensor(\V)$. If $U(t) = t_1\of\dots\of t_{n +
    k(n-1)}$, then the left-most bracketing of $t$ is defined
  recursively by
  \[
  \lmb(t_1^{n+k(n-1)}) = \lmb(\tensor(t_1^n),t_{n+1}^{n+ k(n-1)}).
  \]
\end{definition}

\begin{example}
  In the table below, the right-hand term is the left-most bracketing
  of the left-hand term.\\

  \begin{tabular}{|l|c|c|}
    \hline
    & $t$ & $\lmb(t)$\\
    \hline
    $n = 2$: & $\tensor(a,\tensor(\tensor(b,c),d))$ &
    $\tensor(\tensor(\tensor(a,b),c),d)$\\
    \hline
    $n = 3$: & $\tensor(a,\tensor(b,c,d),\tensor(e,f,g))$ &
    $\tensor(\tensor(\tensor(a,b,c),d,e),f,g)$\\
    \hline
    $n = 4$: & $\tensor(a,b,c,\tensor(d,e,\tensor(f,g,h,i),j))$ &
    $\tensor(\tensor(\tensor(a,b,c,d),e,f,g),h,i,j)$\\
    \hline
  \end{tabular}
\end{example}

We wish to establish that any term  $\Fr_\tensor(\V)$ is equal, in
$\Fr_{C_n}(\V)$, to its left-most bracketing. To this end, we define a
rewriting theory based on $C_n$.

\begin{definition}
  $C_n^\rightarrow$ is the labelled rewriting theory consisting of an
  $n$-ary function symbol $\tensor$, together with the following
  reductions, where $0 < i < n$:
  \[
  \alpha_i: \tensor(x_1^i,\tensor(x_{i+1}^{i+n}),x_{i+n+1}^{2n-1}) \longrightarrow
  \tensor(x_1^{i-1},\tensor(x_{i}^{i+n-1}),x_{i+n}^{2n-1})
  \]
\end{definition}

The reduction rules of $C_n^\rightarrow$ always move a term
``closer'' to its left-most bracketing. This observation is formalised
in the following lemma.

\begin{proposition}\label{prop:cnterminating}
  $C_n^\rightarrow$ is terminating and confluent. Given a term $t \in
  \Fr_\tensor(\V)$, its unique normal form in
  $\Fr_{C_n^\rightarrow}(\V)$ is given by $\lmb(t)$.
\end{proposition}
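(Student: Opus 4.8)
The plan is to treat $C_n^\rightarrow$ as an ordinary first-order term rewriting system and to establish the three assertions in turn. For termination I would use a monotone polynomial interpretation: define $I$ on terms, valued in the positive integers, by $I(x)=1$ for $x\in\V$ and $I(\tensor(t_1,\dots,t_n))=\sum_{l=1}^{n}2^{\,l-1}I(t_l)$. Each coefficient $2^{\,l-1}$ is positive, so $I$ is strictly increasing in every argument and hence strictly monotone under arbitrary contexts. Computing $I$ on the two sides of $\alpha_i$ and writing $v_k$ for the value of $x_k$, one finds that the untouched leading variables $x_1,\dots,x_{i-1}$ and trailing variables $x_{i+n+1},\dots,x_{2n-1}$ appear with identical weights on both sides, while the geometric weighting makes the coefficient of each of $x_i,x_{i+1},\dots,x_{i+n-1}$ cancel, leaving the single net difference $I(\mathrm{LHS})-I(\mathrm{RHS})=2^{\,i}(2^{\,n-1}-1)\,v_{i+n}>0$ since $n\ge 2$. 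Thus every rewrite step strictly decreases $I$, and as $I$ takes positive integer values there is no infinite reduction sequence, so $C_n^\rightarrow$ is terminating.

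For the remaining assertions I would first characterise the normal forms. An $\alpha_i$-redex is exactly a subterm $\tensor(s_1,\dots,s_n)$ whose child $s_{i+1}$ is again a $\tensor$-application, and as $i$ runs over $1,\dots,n-1$ the index $i+1$ runs over $2,\dots,n$; hence a term is a normal form precisely when, at every internal node, all children other than possibly the first are variables, i.e. the normal forms are exactly the \emph{left combs}. Next I would record that the underlying list $U$ is a rewriting invariant: reading the leaves of either side of $\alpha_i$ left-to-right yields $x_1,\dots,x_{2n-1}$ in both cases, so $U(t)=U(t')$ whenever $t\to t'$, and this propagates through contexts since $U$ merely concatenates. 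A straightforward induction on leaf count shows a left comb is uniquely determined by its underlying list, and comparing the recursion defining $\lmb$ with the comb structure shows that $\lmb(t)$ is exactly the left comb with underlying list $U(t)$; in particular $\lmb(t)$ is itself a normal form and $U(\lmb(t))=U(t)$.

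Combining these, by termination every term $t$ reduces to some normal form; that normal form is a left comb, has underlying list $U(t)$ by invariance, and is therefore equal to $\lmb(t)$ by determinacy of left combs, which simultaneously identifies the normal form as $\lmb(t)$ and shows it is unique. Confluence then follows from the standard fact that a terminating system with unique normal forms is confluent: given a span $t_1\leftarrow s\to t_2$, both $t_1$ and $t_2$ reduce to normal forms, each of which is a normal form of $s$ and hence equals $\lmb(s)$, so the span is joinable. (One could instead invoke Newman's Lemma with a critical-pair analysis, but the adjacent overlaps $\alpha_i,\alpha_{i+1}$ do not close up in a single step, so the direct route via underlying lists is cleaner.) I expect the one genuinely delicate point to be the termination argument, since these associativity-type rules admit no obvious structural decrease; the crucial idea is the geometric weight $2^{\,l-1}$, which is precisely what forces every interior variable to cancel and leaves a single positive surviving term, after which the normal-form bookkeeping is routine.
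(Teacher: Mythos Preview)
Your proof is correct. Both you and the paper establish termination via a weight function, but the specific functions and the downstream arguments differ in interesting ways.

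The paper defines a length $L(t)$ (leaf count) and a rank
\[
R(\tensor(t_1^n)) = \sum_{i=1}^n R(t_i) + \sum_{i=2}^n (i-1)L(t_i) - \tfrac{n(n-1)}{2},
\]
i.e.\ linear weights $(i-1)$ rather than your geometric $2^{i-1}$. It then runs a double induction on $R$ and $L$, showing directly that $R(t)=0$ iff $t=\lmb(t)$, and computing that a specific outermost rewrite drops $R$ by $(n-1)L(u_n)>0$. Confluence and the normal-form identification are extracted from this single induction. Your interpretation has the advantage that strict monotonicity in every argument is immediate (all coefficients are positive), so closure under contexts and substitutions requires no side argument about $L$ being preserved; the paper's rank works because $L$ is rewrite-invariant, which makes the $(i-1)L(t_i)$ terms behave well under contexts, but this is left implicit. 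On the other hand, the paper's rank has the pleasant feature that it vanishes exactly on left-most bracketings, so the characterisation of normal forms falls out of the termination measure itself; you instead characterise normal forms syntactically as left combs and use the invariance of $U$, which is more modular and arguably more transparent. Your final derivation of confluence from termination plus uniqueness of normal forms is the standard clean route; the paper effectively does the same but bundles it into the induction. Your aside about adjacent overlaps not closing in one step is accurate, though Newman's Lemma only needs joinability (not one-step joinability), so that alternative would still work---it just requires the longer completions that later become the adjacent-associativity coherence axiom.
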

\begin{proof}
  We construct a ranking function on $\Fr_\tensor(\V)$, which
  establishes that $\Fr_{C_n^\rightarrow}(\V)$ is terminating, that
  for every term $t \in \Fr_{\tensor}(\V)$ there is a reduction $t \to
  \lmb(t)$ and that $\lmb(t)$ is a normal form for $t$. We begin by
  defining the length of $t$.
  \[
  L(t) = \begin{cases}
    \sum_{i=1}^n L(t_i) & \text{if $t = \tensor(t_1^n)$}\\
    1 & \text{otherwise}.
  \end{cases}
  \]
  
  Define the rank, $R(t)$, of $t$ inductively by setting $R(t) = 0$ if
  $t \in \V$ and 
  \[
  R(\tensor(t_1^n)) = \sum_{i=1}^n R(t_i) + \sum_{i=2}^n (i-1)L(t_i) -
  \frac{n(n-1)}{2}.
  \]
 We proceed by double induction on $R(t)$ and $L(t)$.  If $L(t) = 1$ then
 the statement is trivial. If $t = \tensor(t_1^n)$ and $t = \lmb(t)$,
 then $R(t) = R(t_1) + \sum_{i=2}^n (i-1) - \frac{n(n-1)}{2} = R(t_1)$
 and it follows inductively that $R(t) = 0$. Conversely, if $R(t) =
 0$, then $t = \lmb(t)$, since otherwise we would have $L(t_i) > 0$
 for some $2 \le i \le n$, from which it would follow that $R(t) >
 0$. 

  Suppose that $L(t) > 1$ and $R(t) > 0$, so that $t =
 \tensor(t_1^n)$. Let $i$ be the greatest integer with the property that
 $t_i \notin \V$. If $i = 1$, then $t = \lmb(t)$ by induction on
 $L(t)$. If $i > 1$, then $t_i = \tensor(u_1^n)$ and $\alpha_i : t \to
 t'$, where 
\[
 t' = \tensor(t_1^{i-2}, \tensor(t_{i-1}, u_1^{n-1}),u_n,t_{i+1}^n).
 \]
 We then have:
 \begin{eqnarray*}
   R(t)-R(t') &=& R(t_{i-1}) + R(\tensor(u_1^n)) + (i-2)L(t_{i-1}) +
   (i-1)L(\tensor(u_1^n)) 
   \\ && - R(\tensor(t_{i-1},u_1^{n-1}))  - R(u_n) -
   (i-2)L(\tensor(t_{i-1},u_1^{n-1}))\\
   && - (i-1)L(u_n)\\ 
   &=& \sum_{j=2}^n(j-1)L(u_j) + \sum_{j=1}^{n-1}L(u_j) -
   \sum_{j=2}^n(j-1)L(u_{j-1}) \\
   &=& (n-1)L(u_n).
 \end{eqnarray*}
Since $L(u_n) \ge 1$, we have $R(t') < R(t)$ and the proposition follows by
induction on  $R(t)$.
\end{proof}

Since each reduction rule in $C_n^\rightarrow$ is a directed version
of an equation in $C_n$, we immediately have the following corollary.

\begin{corollary}\label{corollary:lmb}
  For any $t \in \Fr_\tensor(\V)$, we have $t =_{C_n} \lmb(t)$. \qed
\end{corollary}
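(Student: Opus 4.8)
The plan is to read the corollary off directly from the reduction $t \to \lmb(t)$ supplied by Proposition \ref{prop:cnterminating}, the key observation being that each rewrite step of $C_n^\rightarrow$ is merely an \emph{oriented} application of a defining equation of $C_n$, and that the congruence $=_{C_n}$, being symmetric, is blind to this orientation.

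Concretely, I would first invoke Proposition \ref{prop:cnterminating} to obtain a reduction $\varphi : t \to \lmb(t)$ in $\Fr_{C_n^\rightarrow}(\V)$. If $t = \lmb(t)$ the claim holds by reflexivity, so assume $\varphi$ is not an identity. By Lemma \ref{lem:irred}, $\varphi$ factors as a finite composite $\varphi = \varphi_1 \of \cdots \of \varphi_k$ of singular reductions $\varphi_j : s_{j-1} \to s_j$, with $s_0 = t$ and $s_k = \lmb(t)$. Each singular $\varphi_j$ is a single instance of one of the rules $\alpha_i$ applied inside a context built from $\tensor$; that is, $s_j$ is obtained from $s_{j-1}$ by replacing a subterm matching the left-hand side of the $i$-th defining equation of $C_n$ under some substitution by the corresponding instance of its right-hand side.

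The heart of the argument is then that a single such step witnesses an instance of $=_{C_n}$. Indeed, $=_{C_n}$ is by definition the smallest congruence on $\Fr_\tensor(\V)$ generated by the equations of $C_n$; hence it contains every substitution instance of a defining equation, is closed under the formation of contexts by $\tensor$, and is symmetric. Since $\alpha_i$ is exactly the $i$-th equation of $C_n$ read from left to right, we obtain $s_{j-1} =_{C_n} s_j$ for every $j$. Chaining these by transitivity of $=_{C_n}$ yields $t = s_0 =_{C_n} s_k = \lmb(t)$, as required.

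There is no genuine obstacle here; the statement really is a corollary. The only point deserving care is the bookkeeping of the second paragraph, namely that a singular reduction of $C_n^\rightarrow$ corresponds precisely to a one-step equational rewrite under $C_n$ in context, and that the symmetry of the generated congruence renders the orientation of the rules immaterial. Everything beyond this is the standard closure of a congruence under contexts, substitution and transitivity.
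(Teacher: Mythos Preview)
Your proposal is correct and matches the paper's approach exactly: the paper states only that ``each reduction rule in $C_n^\rightarrow$ is a directed version of an equation in $C_n$'' and marks the corollary with a \qed, and your argument simply unpacks this one-line justification into its constituent congruence-closure steps.
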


In order to manipulate elements of $\Struct(C_n)$ effectively, we
introduce the notion of a seed.

\begin{definition}[Seed]
  Let $\F$ be a graded set of function symbols on some set $\V$ and let
  $\rho$ be a partial function $\Fr_\F(\V) \to \Fr_\F(\V)$. A
  \emph{seed} for $\rho$ is a pair of terms $s,t \in \Fr_\F(\V)$ such
  that the graph of $\rho$ is equal to
  $\{(s^\varphi,t^\varphi)~|~\varphi \in [\V,\Fr_\F(\V)]\}$.
\end{definition}

In particularly nice cases, we can construct seeds for any operator in
a structure monoid. 

\begin{lemma}[Dehornoy \cite{Dehornoy:braids}]\label{lem:seeds}
  Let $\T$ be a balanced equational theory that contains precisely one
  function symbol. Then, each operator $\rho \in \Struct(\T)$ admits a
  seed. 
\end{lemma}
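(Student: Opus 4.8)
The plan is to induct on the length of a factorisation of $\rho$ into the generators $\rho_{s,t}^\alpha$ of $\Struct(\T)$. Before starting I would flag one caveat: the empty operator $\varepsilon$ has empty graph and hence admits no seed, so the statement can only concern the nonempty operators of $\Struct(\T)$ (this is all of them when $\T$ is composable, which by Lemma~\ref{lem:composable} covers the linear single-symbol theories that the sequel applies this to). The observation that powers the induction is that if $\rho = g_1 \of \dots \of g_k$ with each $g_i$ a generator and $\rho \neq \varepsilon$, then every prefix $g_1 \of \dots \of g_j$ is nonempty as well, since $\varepsilon$ absorbs everything under composition.

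For the base case I would exhibit a seed for a single generator $\rho_{s,t}^\alpha$. Writing the address as $\alpha = (F,i_1)\cdots(F,i_m)$, I build a context by nesting copies of the function symbol along the path it names, filling the sibling slots with fresh variables disjoint from $\var(s)\cup\var(t)$: set $C_\lambda[z] = z$ and $C_{(F,i)\beta}[z] = F(w_1,\dots,w_{i-1},C_\beta[z],w_{i+1},\dots,w_n)$. Because substitution never alters function symbols, a term is an instance of $C_\alpha[s]$ exactly when the path $\alpha$ is present and $\sub(\cdot,\alpha)$ is an instance of $s$, which is precisely $\dom(\rho_{s,t}^\alpha)$; so $(C_\alpha[s], C_\alpha[t])$ is a seed. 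The balance hypothesis enters here: $(s,t)$ balanced means $\var(s)=\var(t)$, and as the context adds the same fresh variables to both sides, the seed satisfies $\var(C_\alpha[s]) = \var(C_\alpha[t])$. I would carry this equal-support property along as an invariant.

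For the inductive step, write $\rho = \rho_1 \of \rho_2$ with $\rho_1,\rho_2$ nonempty and, by induction, carrying seeds $(S_1,T_1)$ and $(S_2,T_2)$ with $\var(S_j)=\var(T_j)$, chosen on disjoint variable sets. Reading $\of$ as ``followed by'', the graph of $\rho_1\of\rho_2$ is $\{(S_1^\varphi, T_2^\psi)\mid T_1^\varphi = S_2^\psi\}$, so membership amounts to a combined substitution $\chi$ unifying $T_1$ and $S_2$. Nonemptiness says $T_1$ and $S_2$ have a common instance, so Lemma~\ref{lem:mgci} supplies a most general one $w = T_1^\theta = S_2^\theta$. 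I claim $(S_1^\theta, T_2^\theta)$ is a seed for $\rho_1\of\rho_2$. Given any unifier $\chi$, its common instance factors as $T_1^\chi = w^\xi$ for some $\xi$, so $\chi$ and $\theta\xi$ agree on $\var(T_1)=\var(S_1)$ and hence $S_1^\chi = (S_1^\theta)^\xi$; symmetrically $S_2^\chi = w^\xi$ forces $T_2^\chi = (T_2^\theta)^\xi$, using $\var(S_2)=\var(T_2)$. Conversely each $\xi$ yields a genuine unifier $\theta\xi$, and thus a graph element. Finally $\var(S_1^\theta) = \var(T_1^\theta) = \var(w) = \var(S_2^\theta)=\var(T_2^\theta)$, so the composite seed again has equal support and the invariant persists.

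The step I expect to be the real obstacle is establishing that the composite is seeded \emph{at all}: a priori the domain of $\rho_1\of\rho_2$ is cut out by the solvability of a unification problem and need not be the instance set of any single term. What rescues this is the existence of a most general common instance (Lemma~\ref{lem:mgci}) together with the balance hypothesis, which keeps each operator's source and target supports equal and so lets the most general unifier --- which only directly constrains the unified subterms --- control the source side $S_1$ and the target side $T_2$ as well. Without balance, variables occurring on a source side but absent from the unified terms would be left unconstrained by $\theta$, and the naive candidate would fail to be a seed; it is exactly the balance hypothesis, in the single-function-symbol setting at hand, that makes the bookkeeping go through.
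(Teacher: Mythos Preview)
The paper does not supply its own proof of this lemma: it is quoted as a result of Dehornoy with a bare citation and no argument. So there is nothing in the paper to compare your proposal against line by line.

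Assessing your argument on its own merits, it is correct. The base case is right: with a single function symbol the context $C_\alpha[\,\cdot\,]$ you build along the address, padded with fresh variables at every sibling slot, has as its instance set exactly those terms whose tree contains the path $\alpha$ with an instance of $s$ at its end, which is precisely $\dom(\rho_{s,t}^\alpha)$. The inductive step is also sound. The key move --- unifying $T_1$ and $S_2$ via the most general common instance of Lemma~\ref{lem:mgci}, and then pulling the resulting substitution $\theta$ back to $S_1$ and forward to $T_2$ --- works exactly because of the balance invariant you carry: from $T_1^{\chi}=T_1^{\theta\xi}$ one reads off $\chi|_{\var(T_1)}=(\theta\xi)|_{\var(T_1)}$ by comparing subterms at each variable position, and $\var(S_1)=\var(T_1)$ then transports this to $S_1$. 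Your caveat about the empty operator is well taken and worth recording: $\varepsilon$ has empty graph while any seeded operator has the identity substitution in its graph, so the statement should be read as applying to the nonempty elements of $\Struct(\T)$. Since the paper only invokes the lemma for the linear single-symbol theories $C_n$ and $SC_n$, which are composable by Lemma~\ref{lem:composable}, this is harmless for the applications.

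One incidental observation: your argument never actually uses the ``precisely one function symbol'' hypothesis. The address already records which symbol sits at each node, so the context construction and the MGCI step go through unchanged for arbitrary $\F$. The single-symbol restriction is presumably inherited from Dehornoy's original setting rather than being essential to the proof.
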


It follows from Lemma \ref{lem:expansion} that $C_n$ is composable
and we may, therefore, form the group $\Struct_G(C_n)$. In order to
facilitate the passage from members of $\Struct_G(C_n)$, to members of
$F_{n,1}$, we introduce the tree generated by a term. 

\begin{definition}
  For a term $t \in \Fr_\tensor(\V)$, let $T(t)$ denote the $n$-ary
  tree obtained via the following construction:
  
  \begin{itemize}
    \item If $t = \tensor(t_1,\dots,t_n)$, then $T(t)$ is equal to:
      \[
      \begin{xy}
        (0,0)*+{\cdot}="a";
        (-18,-14)*+{T(t_1)}="t1";
        (-7,-14)*+{T(t_2)}="t2";
        (2,-14)*+{\dots}="dots";
        (12,-14)*+{T(t_n)}="tn";
        {\ar@{-} "a"; "t1"};
        {\ar@{-} "a"; "t2"};
        {\ar@{-} "a"; "tn"};
      \end{xy}
      \]
    \item Otherwise, $T(t)$ is the single vertex $\cdot$
\end{itemize}
\end{definition}

We now have all the tools required to show that $F_{n,1}$ is the
structure group of $n$-catalan algebras.

\begin{theorem}\label{thm:Fn1}
  $\Struct_G(C_n) \cong F_{n,1}$.
\end{theorem}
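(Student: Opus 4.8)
The plan is to exhibit a concrete isomorphism that reads an element of $\Struct_G(C_n)$ off a pair of $n$-ary trees. Since $C_n$ is linear with a single function symbol, Lemma~\ref{lem:composable} shows it is composable, so $\Struct_G(C_n)$ is genuinely a group, and Lemma~\ref{lem:seeds} guarantees that every operator $\rho \in \Struct(C_n)$ has a seed $(s,t)$. Each defining equation of $C_n$ has equal underlying lists on its two sides, so every operator of $\Struct(C_n)$ preserves the underlying list; hence any seed satisfies $U(s) = U(t)$. Writing this common list as $x_1 \of \dots \of x_k$, the trees $T(s)$ and $T(t)$ each have $k$ leaves labelled in the same order, so there is a canonical order-preserving bijection $\sigma : \ell(T(s)) \to \ell(T(t))$. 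This data defines a candidate map $\Phi$ sending the image of $\rho$ in $\Struct_G(C_n)$ to the $n$-ary tree symbol $[(T(s), T(t), \sigma)]$; since $\sigma$ is order-preserving, the target lies in $F_{n,1}$.

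The crux of the proof is the equivalence, for operators $\rho_1, \rho_2$ with seeds $(s_1,t_1)$ and $(s_2,t_2)$,
\[
\rho_1 = \rho_2 \text{ in } \Struct_G(C_n) \quad\Longleftrightarrow\quad (T(s_1),T(t_1),\sigma_1) \sim (T(s_2),T(t_2),\sigma_2),
\]
which yields well-definedness and injectivity of $\Phi$ at once. Two operators become equal in the maximal group image of the inverse monoid $\Struct(C_n)$ exactly when they agree after restriction by some idempotent; since an idempotent acts as the identity on its domain, restricting an operator amounts to substituting leaf-variables of its seed by terms of the form $\tensor(\dots)$, that is, to passing to an expansion of the tree pair $(T(s),T(t))$. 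Two operators therefore agree in the group precisely when their tree pairs admit a common expansion, which is the definition of $\sim$; reconciling the two seeds uses Higman's minimal common expansion, Lemma~\ref{lem:expansion}. Injectivity is the case $\rho_2 = \mathrm{id}$: then $(T(s_1),T(t_1)) \sim (\bullet,\bullet)$ forces $T(s_1) = T(t_1)$, whence $s_1 = t_1$ and $\rho_1$ is idempotent, i.e. trivial in $\Struct_G(C_n)$.

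It then remains to check that $\Phi$ is a surjective homomorphism. For the homomorphism property, I compose two operators with seeds $(s,u)$ and $(u',t)$; Lemma~\ref{lem:expansion} lets me expand both seeds so that the middle terms coincide, after which the composite has seed $(s,t)$ and $\Phi$ carries it to $[(T(s),T(u))]\,[(T(u),T(t))] = [(T(s),T(t))]$, matching the product rule of $F_{n,1}$; inverses are handled by $\rho_{s,t}^{-1} = \rho_{t,s}$, which $\Phi$ sends to $[(T(t),T(s),\sigma^{-1})]$. For surjectivity, given an order-preserving symbol $[(T_1,T_2,\sigma)]$ I label the commonly-ordered leaves by distinct variables $x_1,\dots,x_k$ to obtain terms $s,t$ with $T(s)=T_1$, $T(t)=T_2$ and $U(s)=U(t)$; by Corollary~\ref{corollary:lmb} these share a left-most bracketing, so $s =_{C_n} t$, and Lemma~\ref{lem:capture} produces an operator with seed $(s,t)$ lying over the given symbol. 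The main obstacle, and the step deserving the most care, is the central equivalence: correctly matching the idempotent-collapse defining the group image of $\Struct(C_n)$ with the common-expansion relation defining $F_{n,1}$, which hinges on recognising restrictions of structure operators as expansions of their seeds.
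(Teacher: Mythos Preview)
Your proof is correct and follows essentially the same route as the paper: both define the map $\rho \mapsto [(T(s_\rho),T(t_\rho),\mathrm{id})]$ via seeds (Lemma~\ref{lem:seeds}) and establish surjectivity using Corollary~\ref{corollary:lmb} together with Lemma~\ref{lem:capture}. The paper's argument is considerably terser, dismissing the homomorphism check as ``routine'' and arguing injectivity in one line by asserting that equal tree symbols force equal seeds; your treatment of well-definedness and injectivity---identifying the minimum group congruence on $\Struct(C_n)$ with the common-expansion relation on tree pairs via restriction by idempotents---is more careful and makes explicit a step the paper leaves implicit.
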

\begin{proof}
We denote the seed of $\rho \in \Struct_G(C_n)$, which exists by Lemma
\ref{lem:seeds}, by $(s_\rho,t_\rho)$. We claim that the following map
is an isomorphism:
\begin{eqnarray*}
    \Struct_G(C_n) &\stackrel{\Theta}{\longrightarrow}& F_{n,1}\\
    \rho &\longmapsto& [(T(s_\rho),T(t_\rho),id)]
\end{eqnarray*}

It is routine to see that $\Theta$ is a homomorphism. Suppose that
$\rho,\rho' \in \Struct_G(C_n)$ and that $\Theta(\rho) =
\Theta(\rho')$. It follows that $\rho$ and $\rho'$ have the same seed,
so $\rho = \rho'$ and $\Theta$ is faithful.

By Lemma \ref{lem:capture}, in order to establish
that $\Theta$ is surjective, we need only show that $t_1 =_{C_n} t_2$
whenever $t_1,t_2 \in \Fr_\tensor(\V)$ and $U(t_1) = U(t_2)$. By
Corollary \ref{corollary:lmb}, we have $t_1 =_{C_n} \lmb(t_1) =_{C_n} \lmb(t_2)
=_{C_n} t_2$, so $\Theta$ is surjective and, hence, an isomorphism.
\end{proof}

\subsection{Symmetric Catalan Algebras and $G_{n,1}$} We saw in
Section \ref{sec:groups} that the leaves of a tree may be ordered by
the lexicographic ordering on their addresses. An $n$-ary tree symbol
$[(T_1,T_2,\sigma)]$ may thereby be viewed as a pair of tree diagrams,
together with a permutation of the leaves of $T_1$. Thus, in order to
obtain an equational theory whose structure group is $G_{n,1}$ we need
to add the ability to arbitrarily permute variables in Catalan
algebras. Recalling that the symmetric group is generated by
transpositions of adjacent elements, we are led to the following
definition.

\begin{definition}[Symmetric $n$-Catalan Algberas]
  The theory of symmetric $n$-catalan algebras extends that of
  $n$-catalan algebras with the following equations, where $1 \le i <
  n$:
  \[
  \tensor(x_1^{i-1},x_i,x_{i+1},x_{i+2}^n) =
  \tensor(x_1^{i-1},x_{i+1},x_i,x_{i+2}^n).         
  \]
  We denote the theory of symmetric $n$-catalan algebras by $SC_n$.
\end{definition}

Symmetric $n$-catalan algebras essentially add an action of the
symmetric group on the indices of $\tensor$. In general, this is
sufficient to induce an action of a symmetric group on the variables
of any term in $\Fr_\tensor(\V)$. In the binary case, we
recover the definition of commutative semigroups.

\begin{theorem}\label{thm:Gn1}
  $\Struct_G(SC_n) \cong G_{n,1}$. 
\end{theorem}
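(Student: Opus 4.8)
The plan is to follow the proof of Theorem~\ref{thm:Fn1} step for step, the one genuinely new ingredient being the bookkeeping of the leaf bijection $\sigma$. First I would observe that $SC_n$ is linear and contains the single function symbol $\tensor$, so by Lemma~\ref{lem:composable} it is composable and the group $\Struct_G(SC_n)$ is well-defined; Lemma~\ref{lem:seeds} then furnishes, for each $\rho \in \Struct_G(SC_n)$, a seed $(s_\rho, t_\rho)$. Because $(s_\rho, t_\rho)$ is a seed we have $\rho(s_\rho) = t_\rho$, whence $s_\rho =_{SC_n} t_\rho$ by Lemma~\ref{lem:capture}; as $SC_n$ is balanced, $s_\rho$ and $t_\rho$ carry exactly the same variables, each occurring once in general position. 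I would then define the candidate isomorphism
\[
\Theta: \Struct_G(SC_n) \to G_{n,1}, \qquad \rho \mapsto [(T(s_\rho), T(t_\rho), \sigma_\rho)],
\]
where $\sigma_\rho : \ell(T(s_\rho)) \to \ell(T(t_\rho))$ is the bijection sending the leaf carrying a variable $x$ in $s_\rho$ to the leaf carrying $x$ in $t_\rho$.

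The key technical lemma, and the step I expect to be the main obstacle, is the analogue of Corollary~\ref{corollary:lmb}: any two terms $t_1, t_2 \in \Fr_\tensor(\V)$ in general position carrying the same set of variables satisfy $t_1 =_{SC_n} t_2$. By Corollary~\ref{corollary:lmb} each $t_i$ is $C_n$-equal, hence $SC_n$-equal, to $\lmb(t_i)$, so it suffices to transform $\lmb(t_1)$ into $\lmb(t_2)$ using only the symmetric equations of $SC_n$. The delicate point is that these equations transpose only \emph{adjacent arguments of a single} $\tensor$-node, whereas two leaves one wishes to swap in a deeply nested left-most bracketing generally sit at different depths. I would resolve this by showing that associativity can always be used to bring any two adjacent leaves under a common $\tensor$-node, where the transposition equation then applies; since adjacent transpositions generate the full symmetric group, one obtains every permutation of the leaves. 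This is precisely the assertion, made informally after the definition of $SC_n$, that the theory induces an action of the symmetric group on the variables of an arbitrary term.

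With this lemma established, surjectivity proceeds as in Theorem~\ref{thm:Fn1}: given a tree symbol $[(T_1, T_2, \sigma)]$, label the leaves of $T_1$ in order by distinct variables $x_1, \dots, x_k$ and label leaf $\sigma(j)$ of $T_2$ by $x_j$, obtaining terms $s, t$. These carry the same variable set, so $s =_{SC_n} t$ by the lemma, and Lemma~\ref{lem:capture} yields an operator with seed $(s,t)$; by construction $\Theta$ sends it to $[(T_1, T_2, \sigma)]$. Faithfulness mirrors the argument of Theorem~\ref{thm:Fn1}, now additionally tracking the leaf labels: if $\Theta(\rho) = \Theta(\rho')$, the two tree diagrams admit, by Lemma~\ref{lem:expansion}, a common expansion respecting the recorded bijections, so the seeds $(s_\rho, t_\rho)$ and $(s_{\rho'}, t_{\rho'})$ have a common substitution instance defining a single partial map; since expanding a seed leaves the underlying operator unchanged, $\rho = \rho'$. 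That $\Theta$ is a homomorphism is routine once one passes, via Lemma~\ref{lem:expansion}, to a common expansion of the intermediate trees when composing two operators, whereupon the bijections compose exactly as in the product of $G_{n,1}$.
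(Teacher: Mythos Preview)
Your proposal is correct and follows essentially the same approach as the paper, which itself merely sketches the argument by defining $\pi(\rho)$ as the permutation of $\supp(s_\rho)$ induced by $U(s_\rho) \to U(t_\rho)$ (your $\sigma_\rho$ in different notation) and then declaring that ``a similar argument to the proof of Theorem~\ref{thm:Fn1}'' establishes the isomorphism. You have in fact supplied more detail than the paper does, correctly identifying the analogue of Corollary~\ref{corollary:lmb} as the crux and giving a more careful faithfulness argument via common expansions than the paper's terse ``same seed'' claim.
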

\begin{proof}
  For $\rho \in \Struct_G(SC_n)$, let $(s_\rho,t_\rho)$ represent its
  seed, which exists by Lemma \ref{lem:seeds}. Since $SC_n$ is linear,
  $s_\rho$ and $t_\rho$ are linear and $\supp(s_\rho) =
  \supp(t_\rho)$. Let $\pi(\rho)$ be the permutation of
  $\supp(s_\rho)$ induced by the permutation $U(s_\rho) \to
  U(t_\rho)$.    A similar argument to the proof of Theorem \ref{thm:Fn1}
  establishes that the following map is an isomorphism:
  \begin{eqnarray*}
    \Struct_G(C_n) &\stackrel{\theta}{\longrightarrow}& G_{n,1}\\
    \rho &\longmapsto& [(T(s_\rho),T(t_\rho),\pi(\rho))]
  \end{eqnarray*}
\end{proof}

We now know that $F_{n,1}$ and $G_{n,1}$ are the structure groups of
catalan algebras and of symmetric catalan algebras, respectively. We also
know that if we can construct coherent categorifications of these
algebras, then we can apply Theorem \ref{thm:group} to obtain
presentations of these groups. In the following section, we set about
the task of constructing a coherent categorification of catalan
algebras.

\section{Catalan categories and $F_{n,1}$}\label{sec:catalancat}

In order to obtain a presentation for $\Struct_G(C_n)$ and, hence, for
$F_{n,1}$ along the lines of that provided by Dehornoy for $F$
\cite{Dehornoy:thompson}, we need to obtain a coherent
categorification of $C_n$. The immediate problem is discerning a set
of diagrams whose commutativity imply the commutativity of all
diagrams generated by the categorification. As we shall see in this
section, the following definition suffices for this purpose. While the
coherence axioms that we have chosen may seem slightly cryptic, the
reason for their choice will become apparent in the proof that the
resulting categorification is coherent. We shall make frequent use of
the following useful shorthand: For $1 \le i \le n$ and a morphism
$\rho:t_i \to t_i'$, we set   
  \[
  \tensor^i(\rho) = \tensor(1_{t_1},\dots,
  1_{t_{i-1}},\rho,1_{t_{i+1}},\dots,1_{t_n}).
  \]

\begin{definition}
  The rewriting $2$-theory for $n$-catalan categories is denoted
  $\Cn$ and consists of:
  \begin{itemize}
  \item An $n$-ary function symbol $\tensor$.
  \item For $1 \le i < n$, an invertible reduction rule $\alpha_i$
    of the following form:
    \[
    \alpha_i(x_1^{2n-1}) :
    \tensor(x_1^i,\tensor(x_{i+1}^{i+n}),x_{i+n+1}^{2n-1}) 
    \stackrel{\sim}{\longrightarrow}
    \tensor(x_1^{i-1},\tensor(x_i^{i+ n-1}), x_{i+n}^{2n-1})
    \]
  \end{itemize}

  \noindent \textbf{Pentagon axiom:} For $1 \le i \le n-1$, the following 
    diagram commutes, where $X = x_1^{i-1}$ and  $Z = z_1^{n-i-1}$:  
    \[
    \begin{xy}
      (0,4)*+{{\tensor(X,y_1,\tensor(y_2^n,\tensor(y_{n+1}^{2n})),Z)}}="1";
      (-33,-15)*+{\tensor(X,\tensor(y_1^n),\tensor(y_{n+1}^{2n}),Z)}="2";
      (30,-15)*+{\tensor(X,y_1,\tensor(y_i^{n-1},\tensor(y_n^{2n-1}),y_{2n}),Z)}
      ="3"; 
      (-33,-35)*+{\tensor(X,\tensor(\tensor(y_1^n),y_{n+1}^{2n-1}),y_{2n},Z)}
      ="4";
      (33,-35)*+{\tensor(X,\tensor(y_1^{n-1},\tensor(y_n^{2n-1})),y_{2n},Z)}
      ="5";
      {\ar@{->}_<<<<<<<{\alpha_i}@/_/ "1"; "2"}
      {\ar@{->}^<<<<<<<{\tensor^{i+1}(\alpha_{n-1})}@/^/ "1"; "3"}
      {\ar@{->}^{\alpha_i} (26,-19); (26,-32)}
      {\ar@{->}_{\alpha_i} "2"; "4"}
      {\ar@{->}^{\tensor^i(\alpha_{n-1}\of\dots\of\alpha_1)} @/^1.8pc/
        (22,-38); (-33,-39)} 
    \end{xy}
    \]
    \medskip\\
    \bigbreak
    \noindent\textbf{Adjacent associativity axiom:} For $1 \le i \le
    n-2$ , the following 
    diagram commutes, where $X = x_1^{i-1}$ and  $Z = z_1^{n-i-2}$: 
    
    \[
    \begin{xy}
      (10,4)*+{\tensor(X,y_1,\tensor(y_2^{n+1}),\tensor(y_{n+2}^{2n+1}),Z)}="a";
      (-28,-12)*+{\tensor(X,\tensor(y_1^n),y_{n+1},\tensor(y_{n+2}^{2n+1}),
        Z)}="b"; 
      (28,-24)*+{\tensor(X,y_1,\tensor(\tensor(y_2^{n+1}),y_{n+2}^{2n}),
        y_{2n+1},Z)}="c";
      (-32,-35)*+{\tensor(X,\tensor(y_1^n),\tensor(y_{n+1}^{2n-1}),y_{2n}^{2n+1},
      Z)}="d";
      (28,-46)*+{\tensor(X,\tensor(y_1,\tensor(y_2^{n+1}),y_{n+2}^{2n-1}),
        y_{2n}^{2n-1},Z)}="e";
      (-22,-59)*+{\tensor(X,\tensor(\tensor(y_1^n),y_{n+1}^{2n-1}),y_{2n}^{2n+1},
      Z)}="f";
    {\ar@{->}_<<<<<<<{\alpha_i}@/_1pc/ "a"; (-29,-11)}
    {\ar@{->}^{\alpha_{i+1}}@/^/ "a"; (28,-20)}
    {\ar@{->}_{\alpha_{i+1}} (-32,-18); "d"}
    {\ar@{->}^{\alpha_i} (27,-28); (27,-41)}
    {\ar@{->}_{\alpha_i}  (-35,-38);(-35,-55)}
    {\ar@{->}^{\tensor^i(\alpha_1)}@/^/ (23,-49); (1,-59)}
  \end{xy}
  \]
\end{definition} 

In the case where $n=2$, the pentagon axiom reduces to Mac Lane's
pentagon axiom for monoidal categories from Example \ref{ex:AU}
and the adjacent associativity axiom is empty, so we recover the usual
definition of a coherently associative bifunctor. 

In the special case where $n=3$, the adjacent associativity axiom
leads to a single coherence axiom, illustrated by the following
diagram. The other axioms given in this chapter may be unpacked in
this special case in a similar manner.

 \[
    \begin{xy}
      (10,4)*+{\tensor(y_1,\tensor(y_2,y_3,y_4), \tensor(y_5,y_6,y_7))}="a";
      (-28,-12)*+{\tensor(\tensor(y_1,y_2,y_3),y_4, \tensor(y_5,y_6,y_7))}="b"; 
      (28,-24)*+{\tensor(y_1,\tensor(\tensor(y_2,y_3,y_4),y_5,y_6),y_7)}="c";
      (-32,-35)*+{\tensor(\tensor(y_1,y_2,y_3),\tensor(y_4,y_5,y_6),y_7)}="d";
      (28,-46)*+{\tensor(\tensor(y_1,\tensor(y_2,y_3,y_4),y_5),y_6,y_7)}="e";
      (-22,-59)*+{\tensor(\tensor(\tensor(y_1,y_2,y_3),y_4,y_5),y_6,y_7)}="f";
    {\ar@{->}_<<<<<<<{\alpha_1}@/_1pc/ "a"; (-29,-11)}
    {\ar@{->}^{\alpha_2}@/^/ "a"; (28,-20)}
    {\ar@{->}_{\alpha_2} (-32,-18); "d"}
    {\ar@{->}^{\alpha_1} (27,-28); (27,-41)}
    {\ar@{->}_{\alpha_1}  (-35,-38);(-35,-55)}
    {\ar@{->}^{\tensor^1(\alpha_1)}@/^/ (23,-49); (1,-59)}
  \end{xy}
  \]
\\

We wish to apply Theorem \ref{thm:coherenceinv} to $\Cn$ in order to show
that it is a coherent categorification of $C_n$. In order to do this,
we need to find a positive orientation of $\Cn$ that is terminating
and locally commuting-confluent. Let $\mathbb{C}_n^\rightarrow$ be the positive
subtheory of $\C_n$ that contains $\alpha_i$ for $0 < i < n$. This is
equivalent, as a rewriting theory, to the system $C_n^\rightarrow$
introduced in the last section. Therefore, we know from Proposition
\ref{prop:cnterminating} that $\mathbb{C}_n^\rightarrow$ is terminating. So, it
remains to show that $\mathbb{C}_n^\rightarrow$ is locally
commuting-confluent. 

\begin{lemma}
  $\mathbb{C}_n^\rightarrow$ is locally commuting-confluent.
\end{lemma}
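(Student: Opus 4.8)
The plan is to invoke the Strong Critical Pairs Lemma (Lemma~\ref{lem:cp}): since commuting-joinability of every critical span is equivalent to local commuting-confluence, it suffices to enumerate the critical spans of $\mathbb{C}_n^\rightarrow$ and to exhibit a commuting joining for each. The only function symbol is $\tensor$ and the only reduction rules are the $\alpha_i$ for $0 < i < n$, each of which rewrites a full outer $n$-ary redex. A direct computation shows that $\alpha_i$ alters only the $i$-th and $(i+1)$-th arguments of the outer $\tensor$ --- pulling the $i$-th argument into the inner tensor and ejecting the inner tensor's last entry --- while leaving all other arguments fixed. I would record this fact first, since it governs precisely which pairs of rewrites interfere.

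By Lemma~\ref{lem:overlap}, an overlap of two rules $\alpha_i,\alpha_j$ arises by matching a non-variable subterm of one redex against a redex of the other. Since the only non-variable subterms of the left-hand side of $\alpha_i$ are the whole term and its single inner tensor, the critical spans fall into two families. In the \emph{top-node} family, both rules act at the same outer $\tensor$, which forces inner tensors to sit at outer positions $i+1$ and $j+1$. In the \emph{nested} family, the inner tensor of the $\alpha_i$-redex is itself an $\alpha_k$-redex, i.e.\ it carries a sub-tensor at one of its own argument positions. I would enumerate these configurations explicitly, parametrised by $n$ and the indices involved.

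The resolution then splits according to whether the two rewrites interfere. For a top-node overlap with $|i-j|\ge 2$ the two rules touch disjoint pairs of outer arguments, so the square already commutes in the standard congruence by functoriality (Funct); the genuinely interacting case $|i-j|=1$ is exactly a substitution instance of the \emph{adjacent associativity axiom}. For a nested overlap in which the sub-tensor sits at an interior position of the inner tensor, moving the inner tensor via $\alpha_i$ does not disturb that sub-tensor, so the span closes by naturality (Nat~1) and (Nat~2), which are available since the rules are linear and hence left-linear; the one remaining case, where the sub-tensor sits at the \emph{last} position of the inner tensor and is therefore ejected by $\alpha_i$, is precisely a substitution instance of the \emph{pentagon axiom}, whose inner edge is $\tensor^{i+1}(\alpha_{n-1})$. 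Matching the contexts $X$, $Z$ and the $y$-variables of each axiom diagram to the data of the corresponding critical span then completes the argument.

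The main obstacle I anticipate is combinatorial rather than conceptual: with an $n$-ary operation the left-hand sides carry long runs of indexed variables, and verifying that each critical span is literally a substitution instance of the stated (and admittedly opaque) pentagon or adjacent-associativity diagram --- with the indices, the contexts $X,Z$, and the ejected versus retained variables all lining up --- requires careful bookkeeping. Once that bookkeeping is in place, confirming that every remaining overlap is absorbed by the standard congruence is routine, and the lemma follows.
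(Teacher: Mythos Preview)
Your proposal mirrors the paper's proof: invoke the Strong Critical Pairs Lemma, classify the overlaps according to whether the whole $\alpha_i$-redex or only its inner tensor is unified with an $\alpha_j$-redex, and dispatch the genuinely interacting cases to the adjacent-associativity axiom (root overlap with $|i-j|=1$) and to the pentagon axiom (sub-tensor at the last slot of the inner tensor, i.e.\ the $\alpha_{n-1}$ case). The only discrepancy is your handling of the top-node overlap with $|i-j|\ge 2$: both $\alpha_i$ and $\alpha_j$ rewrite the root $\tensor$, so neither is of the form $\tensor(\varphi_1,\ldots,\varphi_n)$ and the axiom (Funct) does not literally apply; the paper attributes this case to naturality rather than functoriality, which is the more apt label for how the standard congruence absorbs it.
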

\begin{proof}
  By Lemma \ref{lem:cp}, we only need to show that every critical span
  in $\mathbb{C}_n^\rightarrow$ is commuting-joinable. Suppose that
  $\alpha_i$ and $\alpha_j$ overlap, where
  \[
  \alpha_i :
  \tensor(t_1^i,\tensor(t_{i+1}^{i+n}),t_{i+n+1}^{2n-1}) 
  \to
  \tensor(t_1^{i-1},\tensor(t_i^{i+ n-1}), t_{i+n}^{2n-1}).
  \]
 Suppose first that
 $\tensor(t_1^i,\tensor(t_{i+1}^{i+n}),t_{i+n+1}^{2n-1})$ is an
 $\alpha_j$-reduct. For the overlap to be nontrivial, we must have $j
 \ne i$. If $j \ne  i \pm 1$, then the critical span arising from the
 overlap is  commuting-joinable by naturality. If $j = i \pm 1$, then
 the critical span arising from the overlap is commuting-joinable by
 the adjacent associativity axiom.  
 If the overlap arises because $\tensor(t_{i+1}^{i+n})$ is an
 $\alpha_j$-reduct, then there are two possibilities. If $j \ne n$,
 then the critical span arising from the overlap is commuting-joinable
 by naturality. Otherwise, the critical span arising from the overlap
 is commuting-joinable by the pentagon axiom. 
 The only other possible overlap arises when some $t_k$ is an
 $\alpha_j$-reduct. In this case, the critical span arising from the
 overlap is commuting-joinable by naturality. 
\end{proof}

Since $\mathbb{C}_n^\rightarrow$ is terminating and locally
commuting-confluent, we may apply Theorem \ref{thm:coherenceinv}.

\begin{theorem}\label{thm:coherencecatalan}
  $\Cn$ is a coherent categorification of $C_n$. \qed
\end{theorem}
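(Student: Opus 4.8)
The plan is to deduce the theorem as an essentially immediate instantiation of the general coherence machinery, specifically Theorem~\ref{thm:coherenceinv}, using the termination and local commuting-confluence facts already established. Two things must be verified: first, that $\Cn$ genuinely is a \emph{categorification} of $C_n$; and second, that it is Mac Lane coherent, which is exactly what the word ``coherent'' contributes to the statement.

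For the first point, I would check directly against the definition of a categorification. The theory $C_n$ has, for each $0 < i < n$, the defining equation $\tensor(x_1^i,\tensor(x_{i+1}^{i+n}),x_{i+n+1}^{2n-1}) = \tensor(x_1^{i-1},\tensor(x_i^{i+n-1}),x_{i+n}^{2n-1})$. The reduction rules $\alpha_i$ of $\Cn$ are precisely directed, invertible copies of these equations, so with $I(\E_\F)$ taken to be the family $\{\alpha_i,\alpha_i^{-1}\}$ and with $\E_\T$ containing the invertibility relations together with the pentagon and adjacent associativity diagrams, the pair $(\F,\E_\F) = (\{\tensor\}, C_n)$ gives rise to exactly $\Cn$ under the categorification construction. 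Thus $\Cn$ is a categorification of $C_n$, and it remains only to show it is Mac Lane coherent.

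For coherence I would apply Theorem~\ref{thm:coherenceinv}. The theory $\Cn$ is finitely presented (one function symbol, the $n-1$ rules $\alpha_i$, and finitely many coherence axioms) and invertible by construction, so it suffices to exhibit an orientation whose positive subtheory is terminating and locally commuting-confluent. I would take the orientation $\mathcal{O}$ sending each $\alpha_i$ to $1$ and each $\alpha_i^{-1}$ to $-1$; the associated positive subtheory is $\mathbb{C}_n^\rightarrow$. Termination of $\mathbb{C}_n^\rightarrow$ follows from Proposition~\ref{prop:cnterminating}, since $\mathbb{C}_n^\rightarrow$ coincides as a rewriting theory with $C_n^\rightarrow$, and local commuting-confluence is exactly the content of the preceding lemma. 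Theorem~\ref{thm:coherenceinv} then yields that $\Cn$ is Mac Lane coherent, completing the proof.

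Because the genuinely delicate work, namely the critical-span analysis establishing commuting-joinability via naturality, the pentagon axiom, and the adjacent associativity axiom, has already been discharged in the preceding lemma, I do not expect any real obstacle at this stage: the theorem is a direct application of Theorem~\ref{thm:coherenceinv}. The only point deserving a moment's attention is the bookkeeping verification that $\Cn$ satisfies the formal definition of a categorification, i.e.\ that its reduction rules and invertibility equations are in bijection with the equations of $C_n$; this is routine but should be stated explicitly so that ``coherent categorification'' is justified in full and the hypothesis of Theorem~\ref{thm:group} is met downstream.
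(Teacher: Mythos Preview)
Your proposal is correct and mirrors the paper's approach exactly: the paper also assembles the theorem directly from Theorem~\ref{thm:coherenceinv} using the orientation with positive subtheory $\mathbb{C}_n^\rightarrow$, invoking Proposition~\ref{prop:cnterminating} for termination and the immediately preceding lemma for local commuting-confluence. Your explicit check that $\Cn$ meets the formal definition of a categorification is slightly more detailed than what the paper writes out, but the logic is identical.
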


With Theorem \ref{thm:coherencecatalan} in hand, we can obtain a
presentation for $F_{n,1}$, which generalises the presentation for $F$
given in \cite{Dehornoy:thompson}. 

\begin{corollary}
  $\S_G(\Cn) \cong F_{n,1}$
\end{corollary}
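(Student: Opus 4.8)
The plan is to assemble this isomorphism by chaining together the two principal results that the corollary sits immediately beneath, namely Theorem \ref{thm:coherencecatalan} (which says $\Cn$ is a coherent categorification of $C_n$) and Theorem \ref{thm:Fn1} (which identifies $\Struct_G(C_n)$ with $F_{n,1}$). The connective tissue is Theorem \ref{thm:group}, which states that for a balanced, composable equational theory $\E$ and any categorification $\R(\E)$, the comparison map $\Theta \colon \S_G(\R(\E)) \to \Struct_G(\E)$ is a group epimorphism that is an isomorphism precisely when $\R(\E)$ is coherent. Taking $\E := C_n$ and $\R(\E) := \Cn$, the whole argument reduces to verifying the hypotheses of Theorem \ref{thm:group} and then composing the resulting isomorphisms.

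First I would check that $C_n$ satisfies the standing hypotheses of Theorem \ref{thm:group}, that is, that it is balanced and composable. Balancedness is immediate by inspection of the defining equations of $C_n$: in each equation $\tensor(x_1^i,\tensor(x_{i+1}^{i+n}),x_{i+n+1}^{2n-1}) = \tensor(x_1^{i-1},\tensor(x_{i}^{i+n-1}),x_{i+n}^{2n-1})$ the same variables $x_1,\dots,x_{2n-1}$ occur, each exactly once, on both sides, so every defining equation is in fact linear (hence balanced). Composability then follows directly from Lemma \ref{lem:composable}, since $C_n$ is a linear equational theory containing the single function symbol $\tensor$; alternatively one can read it off from the minimal common expansion property of Lemma \ref{lem:expansion}, which is exactly what underwrites composability for tree-based theories. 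It is this composability that legitimises passing to the structure \emph{group} $\Struct_G(C_n)$ and to its categorified counterpart $\S_G(\Cn)$ in the first place.

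With the hypotheses in place, the remaining steps are purely formal. By Theorem \ref{thm:coherencecatalan} the categorification $\Cn$ is coherent, so the ``if and only if'' in Theorem \ref{thm:group} yields an isomorphism of groups $\S_G(\Cn) \cong \Struct_G(C_n)$. Composing this with the isomorphism $\Struct_G(C_n) \cong F_{n,1}$ furnished by Theorem \ref{thm:Fn1} gives the desired $\S_G(\Cn) \cong F_{n,1}$. Honestly there is no serious obstacle here: the genuine mathematical work has already been discharged in establishing coherence of $\Cn$ (via the termination and local commuting-confluence of $\mathbb{C}_n^\rightarrow$ feeding Theorem \ref{thm:coherenceinv}) and in the seed-based identification of Theorem \ref{thm:Fn1}. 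The only point demanding any care is ensuring that the composability hypothesis of Theorem \ref{thm:group} is discharged for $C_n$, which as noted is handled by Lemma \ref{lem:composable}; everything else is a one-line composition of established isomorphisms.
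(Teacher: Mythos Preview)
Your proposal is correct and follows essentially the same approach as the paper: invoke Theorem \ref{thm:coherencecatalan} together with Theorem \ref{thm:group} to obtain $\S_G(\Cn) \cong \Struct_G(C_n)$, and then apply Theorem \ref{thm:Fn1}. Your additional explicit verification that $C_n$ is balanced and composable (via Lemma \ref{lem:composable}) is a welcome clarification that the paper leaves implicit.
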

\begin{proof}
  By Theorem \ref{thm:coherencecatalan} and Theorem \ref{thm:group},
  we have $\S_G(\Cn) \cong \Struct_G(C_n)$. It follows then from Theorem
  \ref{thm:Fn1} that $\S_G(\Cn) \cong F_{n,1}$.
\end{proof}

In the following section, we shall obtain a coherent categorification
of $SC_n$ and, thereby, a presentation of $G_{n,1}$.

\section{Symmetric Catalan categories and $G_{n,1}$ }\label{sec:scatalancat}

Our goal in this section is to construct a coherent categorification
of symmetric catalan algebras. The coherence theorem for catalan
categories, Theorem \ref{thm:coherencecatalan}, reduces this problem
to ensuring that any two sequences of transpositions of the objects
appearing in a term realise the same permutation. In other words, our
categorification needs to somehow encode a presentation of the
symmetric group whose generators correspond to transpositions of
adjacent variables. Such a presentation is well known, having been
constructed by Moore \cite{Moore:presentation}. This presentation has
generators $T_1,\dots, T_{n-1}$ and the following relations:
\begin{eqnarray*}
  T_i^2 = 1 & \text{for $1 \le i \le n-1$}\\
  (T_iT_{i+1})^3 = 1 & \text{for $1 \le i \le n-2$}\\
  (T_iT_k)^2 = 1 &\text{for $1 \le i \le k-2$}
\end{eqnarray*}
With this presentation in mind, we may now construct a reasonable
categorification of $SC_n$. Recall our shorthand that for $1 \le i \le
n$ and a morphism $\rho:t_i \to t_i'$, we have
  \[
  \tensor^i(\rho) = \tensor(1_{t_1},\dots,
  1_{t_{i-1}},\rho,1_{t_{i+1}},\dots,1_{t_n}).
  \]

\begin{definition}
  For $n \ge 2$, the rewriting $2$-theory for symmetric $n$-catalan
  categories, denoted $\SCn$, is
  the extension of the theory for $n$-catalan categories
  with an invertible reduction rule $\tau_i$ for $1 \le i \le n-1$
  such that  \[
  \tau_i(t_1^n) : \tensor(t_1^{i-1},t_i,t_{i+1},t_{i+2}^n)
  \stackrel{\sim}{\longrightarrow}
  \tensor(t_1^{i-1},t_{i+1},t_{i},t_{i+2}^n),
  \]
  satisfying the following axioms:\\
  
  \noindent\textbf{Involution axiom:} For $1 \le i \le n-1$, the
  following diagram commutes:
  \[
  \begin{xy}
    (0,0)*+{\tensor(t_1^n)}="1";
    (0,-20)*+{\tensor(t_1^n)}="2";
    (45,-20)*+{\tensor(t_1^{i-1},t_{i+1},t_i,t_{i+2}^n)}="3";
    {\ar@{->}_{1} "1"; "2"}
    {\ar@{->}^{\tau_i} "1"; "3"}
    {\ar@{->}^>>>>>>>>{\tau_i} "3"; "2"}
  \end{xy}
  \]
   \medskip
  \noindent\textbf{Compatibility axiom:} For $2 \le i \le n$ and $1
  \le j \le n-2$, the following diagram commutes, where $W = w_1^i$ and $Z = z_1^{n-i}$: 
  \[
  \begin{xy}
    (0,0)*+{\tensor(W,x,\tensor(y_1^n),Z)}="1";
    (-30,-20)*+{\tensor(W,\tensor(x,y_1^{n-1}),y_n,Z)}="2";
    (33,-20)*+{\tensor(W,x,\tensor(y_1^{j-1},y_{j+1},y_j,y_{j+2}^n),Z)}
    ="3";
    (0,-42)*+{\tensor(W,\tensor(x,y_1^{j-1},y_{j+1},y_j,y_{j+2}^{n-1}),
      y_n,Z)}="4";
    {\ar@{->}_{\alpha_{i-1}}@/_/ "1"; "2"}
    {\ar@{->}^{\tensor^i(\tau_j)}@/^/ "1"; "3"}
    {\ar@{->}_<<<<<<{\tensor^{i-1}(\tau_{j+1})}@/_/ "2"; "4"}
    {\ar@{->}^<<<<<<{\alpha_{i-1}}@/^/ (27,-23); "4"}
  \end{xy}
  \]

  \noindent\textbf{$3$-cycle axiom:} For $1 \le i \le n-2$, the
  following diagram commutes:
  \[
  \begin{xy}
    (0,0)*+{\tensor(t_1^n)}="1";
    (-30,-20)*+{\tensor(t_1^{i-1},t_{i+1},t_i,t_{i+2}^n)}="2";
    (30,-20)*+{\tensor(t_1^i,t_{i+2},t_{i+1},t_{i+3}^n)}="3";
    (-30,-40)*+{\tensor(t_1^{i-1},t_{i+1},t_{i+2},t_{i},t_{i+3}^n)}="4";
    (30,-40)*+{\tensor(t_1^{i-1},t_{i+2},t_{i},t_{i+1},t_{i+3}^n)}="5";
    (0,-60)*+{\tensor(t_1^{i-1},t_{i+2},t_{i+1},t_{i},t_{i+3}^n)}="6";
    {\ar@{->}_{\tau_i}@/_/ "1"; "2"}
    {\ar@{->}^{\tau_{i+1}}@/^/ "1"; "3"}
    {\ar@{->}_{\tau_{i+1}} "2"; "4"}
    {\ar@{->}^{\tau_i} (26,-24); (26,-37)}
    {\ar@{->}_<<<<<<<{\tau_{i}}@/_/ (-32,-44); (-15,-56)}
    {\ar@{->}^>>>>>>>{\tau_{i+1}}@/^/ (22,-45); "6"}
  \end{xy}
  \]

  \noindent\textbf{Hexagon axiom:} For $1 \le i \le n-1$, the
  following diagram commutes, where $W = w_1^{i-1}$ and $Z =
  z_1^{n-i-1}$:
  \[
  \begin{xy}
    (0,0)*+{\tensor(W,\tensor(x_1^n),y,Z)}="1";
    (-30,-20)*+{\tensor(W,y,\tensor(x_1^n),Z)}="2";
    (30,-20)*+{\tensor(W,x_1,\tensor(x_2^n,y),Z)}="3";
    (-30,-40)*+{\tensor(W,\tensor(y,x_1^{n-1}),x_n,Z)}="4";
    (30,-40)*+{\tensor(W,x_1,\tensor(y,x_2^n),Z)}="5";
    (0,-60)*+{\tensor(W,\tensor(x_1,y,x_2^{n-1}),x_n,Z)}="6";
    {\ar@{->}_{\tau_i}@/_/ "1"; "2"}
    {\ar@{->}^{\alpha_i^{-1}}@/^/ "1"; "3"}
    {\ar@{->}_{\alpha_i} "2"; "4"}
    {\ar@{->}^{\tensor^{i+1}(\tau_{n-1}\of\dots\of\tau_1)} (26,-24);
      (26,-37)} 
    {\ar@{->}_<<<<<<<{\tensor^i(\tau_1)}@/_/ (-32,-44); (-15,-56)}
    {\ar@{->}^>>>>>>>{\alpha_i}@/^/ (22,-45); "6"}
  \end{xy}
  \]
\end{definition}

The hexagon axiom ensures that we may replace a transposition of the
form $\tau_i(t_1^{i-1},\tensor(u_1^n),t_{i}^{n-1})$ with a sequence of
transpositions involving only the terms $t_1^{n-1}$ and
$u_1^n$. One might posit the commutativity of a diagram that serves the
same purpose for a morphism of the form
$\tau_i(t_1^{i},\tensor(u_1^n),t_{i+1}^{n-1})$. Doing so leads to the
\emph{dual hexagon diagram}, which has the following form, for $2
\le i \le n$ and $W = w_1^{i-2}$ and $Z = z_1^{n-i}$: 
\[
\begin{xy}
  (0,0)*+{\tensor(W,x,\tensor(y_1^n),Z)}="1";
  (-30,-20)*+{\tensor(W,\tensor(y_1^n),x,Z)}="2";
  (30,-20)*+{\tensor(W,\tensor(x,y_1^{n-1}),y_n,Z)}="3";
  (-30,-40)*+{\tensor(W,y_1,\tensor(y_2^n,x),Z)}="4";
  (30,-40)*+{\tensor(W,\tensor(y_1^{n-1},x),y_n,Z)}="5";
  (0,-60)*+{\tensor(W,y_1,\tensor(y_2^{n-1},x,y_n),Z)}="6";
    {\ar@{->}_{\tau_i}@/_/ "1"; "2"}
    {\ar@{->}^{\alpha_i}@/^/ "1"; "3"}
    {\ar@{->}_{\alpha_i^{-1}} "2"; "4"}
    {\ar@{->}^{\tensor^{i}(\tau_{1}\of\dots\of\tau_{n-1})} (26,-24);
      (26,-37)} 
    {\ar@{->}_<<<<<<<{\tensor^{i+1}(\tau_{n-1})}@/_/ (-32,-44); (-15,-56)}
    {\ar@{->}^>>>>>>>{\alpha_i^{-1}}@/^/ (22,-45); "6"}
\end{xy}
\]

\begin{lemma}\label{lem:dualhex}
  The dual hexagon diagram commutes in $\Fr(\SCn)$.
\end{lemma}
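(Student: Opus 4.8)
The plan is to derive the dual hexagon purely equationally from the axioms already imposed on $\SCn$ — chiefly the hexagon, involution, compatibility and $3$-cycle axioms, together with the functoriality and naturality relations (Funct), (Nat 1), (Nat 2) of the standard congruence. Since the dual hexagon is wanted as a tool \emph{before} coherence of $\SCn$ has been established, I would argue from the axioms directly rather than invoking any coherence theorem (which would be circular). Write $D_1,\dots,D_6$ for the six vertices of the dual hexagon, with $D_1 = \tensor(W,x,\tensor(y_1^n),Z)$ at the top, so that the left leg is $\tau_i\of\alpha_i^{-1}\of\tensor^{i+1}(\tau_{n-1})$ and the right leg is $\alpha_i\of\tensor^{i}(\tau_1\of\dots\of\tau_{n-1})\of\alpha_i^{-1}$.

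The first step is to exploit the involution axiom: every transposition rule is its own two-sided inverse, hence an isomorphism in $\Fr(\SCn)$, and precomposition with an isomorphism is injective on parallel pairs. It therefore suffices to prove the two legs agree after precomposing with the inverse of the top map $\tau_i\colon D_1\to D_2$, which by involution is again $\tau_i$, now read as $D_2\to D_1$. Cancelling $\tau_i\of\tau_i = 1_{D_2}$, the precomposed left leg collapses to $\alpha_i^{-1}\of\tensor^{i+1}(\tau_{n-1})\colon D_2\to D_4\to D_6$, while the precomposed right leg becomes $\tau_i\of\alpha_i\of\tensor^{i}(\tau_1\of\dots\of\tau_{n-1})\of\alpha_i^{-1}\colon D_2\to D_1\to D_3\to D_5\to D_6$.

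The central observation is that $D_2 = \tensor(W,\tensor(y_1^n),x,Z)$ places the block $\tensor(y_1^n)$ to the \emph{left} of the lone variable $x$ — exactly the configuration governed by the ordinary hexagon axiom. I would therefore instantiate the hexagon axiom at $D_2$, taking its block to be $\tensor(y_1^n)$ and its free variable to be $x$. A direct check of sources and targets shows that this hexagon has $D_1,D_2,D_3,D_4$ among its vertices (its top map is the cancelled $\tau_i\colon D_2\to D_1$, and its two associativity maps are exactly the $\alpha_i$ and $\alpha_i^{-1}$ linking $D_1\to D_3$ and $D_2\to D_4$). This one hexagonal face fills the part of the diagram spanning $D_1,D_2,D_3,D_4$ and reduces the problem to accounting for the residual transposition data: the hexagon contributes only the \emph{leading} transposition of each length-$(n-1)$ word (namely $\tensor(\tau_{n-1})$ on one side and $\tensor(\tau_1)$ on the other), so the trailing factors $\tensor(\tau_{n-2}\of\dots\of\tau_1)$ and $\tensor(\tau_2\of\dots\of\tau_{n-1})$, together with the mismatch between the hexagon's terminal vertex and $D_5,D_6$, must be handled separately.

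What remains is an identity internal to the tensored block, asserting that conjugating the residual transposition word by the associativity isomorphism reproduces the missing factor. This is precisely the content of the compatibility axiom, which slides an inner transposition $\tau_{j+1}$ across $\alpha_{i-1}$ at the cost of the index shift $\tau_{j+1}\mapsto\tau_j$; iterating it conjugates $\tensor(\tau_2\of\dots\of\tau_{n-1})$ into the reindexed word on the reassociated block, and symmetrically on the other leg. I expect the main obstacle to be exactly this bookkeeping: one must track how the two length-$(n-1)$ transposition words are threaded through the associativity maps and reconcile their opposite orders, which forces repeated appeals to the $3$-cycle (braid) axiom and the involution axiom to rewrite one word into the other. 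Once the residual words are matched face-by-face, naturality and functoriality assemble the individual commuting squares into a planar subdivision of the dual hexagon whose every face commutes, and equality of the two legs — hence commutativity of the dual hexagon in $\Fr(\SCn)$ — follows.
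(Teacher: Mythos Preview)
Your plan is essentially correct and uses the same ingredients as the paper --- hexagon, compatibility, involution, functoriality --- but you have over-anticipated the difficulty, and your ordering of the steps makes the bookkeeping look harder than it is.

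The paper's argument is purely algebraic and very short. It writes the loop around the dual hexagon as
\[
\alpha_i^{-1}\of\tensor^{i+1}(\tau_{n-1})\of\alpha_i\of\tensor^i(\tau_1\of\dots\of\tau_{n-1})^{-1}\of\alpha_i^{-1}
\]
and must show this equals $\tau_i$. It applies compatibility \emph{first}: each $\tensor^i(\tau_j)$ for $j\ge 2$ is rewritten as $\alpha_i^{-1}\of\tensor^{i+1}(\tau_{j-1})\of\alpha_i$, the intermediate $\alpha_i\of\alpha_i^{-1}$'s telescope away, and the expression collapses directly to $\alpha_i^{-1}\of\tensor^{i+1}(\tau_{n-1}\of\dots\of\tau_1)\of\alpha_i\of\tensor^i(\tau_1)^{-1}\of\alpha_i^{-1}$, which is literally the hexagon axiom rearranged. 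Done.

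Your route --- hexagon first, then compatibility on the residual $\tensor^{i}(\tau_2\of\dots\of\tau_{n-1})$ --- also works, but the last step then reduces to showing $\tau_{n-1}\of\dots\of\tau_1\of\tau_1\of\dots\of\tau_{n-2}=\tau_{n-1}$ inside the block. This is \emph{not} a braid identity: the word is palindromic except for the leading $\tau_{n-1}$, so the involution axiom alone cancels it pair by pair from the middle outward. The $3$-cycle axiom is never needed for this lemma, and your expectation of ``repeated appeals to the $3$-cycle (braid) axiom'' to ``reconcile their opposite orders'' is the one genuinely misleading claim in your proposal. Applying compatibility before the hexagon, as the paper does, avoids even this final cancellation and makes the whole thing a five-line calculation.
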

\begin{proof}
  Tracing around the dual hexagon diagram, we obtain the following
  morphism:
  \begin{equation}\label{eq:hex1}
    \alpha_i^{-1}\of\tensor^{i+1}(\tau_{n-1})\of\alpha_i\of
    \tensor^i(\tau_1\of\dots\of\tau_{n-1})^{-1}\of\alpha_i^{-1}.
  \end{equation}
  In order to show that the dual hexagon diagram commutes, we need to
  show that $(\ref{eq:hex1}) = \tau_i$. By functoriality, we have:
  \[\tensor^i(\tau_1\of\dots\tau_{n-1})^{-1} =
  \tensor^i(\tau_1)^{-1}\of\dots\of\tensor^i(\tau_{n-1})^{-1}.
  \]
  By functoriality and the involution axiom, we have
  $\tensor^i(\tau_j)^{-1} = \tensor^i(\tau_j)$. From the compatibility
  axiom, we also know that $\tensor^i(\tau_j) =
  \alpha_i^{-1}\of\tensor^{i+1}(\tau_{j-1})\of\alpha_i$. It follows
  from these observations that:
  \begin{eqnarray}
    (\ref{eq:hex1}) &=& \alpha_i^{-1}\of \tensor^{i+1}(\tau_{n-1})\of
    \dots\of\tensor^{i+1}(\tau_1)\of\alpha\of\tensor^i(\tau_1)
    \of\alpha^{-1}\\
    &=& \alpha_i^{-1}\of\tensor^{i+1}(\tau_{n-1}\of\dots\of\tau_1)
    \of\alpha_i \of\tensor^i(\tau_1)\of\alpha_i^{-1}\\
    &=& \alpha_i^{-1}\of\tensor^{i+1}(\tau_{n-1}\of\dots\of\tau_1)
    \of\alpha_i \of\tensor^i(\tau_1)^{-1}\of\alpha_i^{-1}.\label{eq:hex2}
  \end{eqnarray}
  It follows from the hexagon axiom that $(\ref{eq:hex2}) =
  \tau_i$. By the involution axiom, we then have:
  \[
  \tau_i\of\alpha_i^{-1}\of\tensor^{i+1}(\tau_{n-1})\of\alpha_i
  \of\tensor^i(\tau_1\of\dots\of\tau_{n-1})^{-1}\of\alpha_{i}^{-1} = 1.
  \]
  Therefore, the dual hexagon diagram commutes in $\Fr(\SCn)$.
\end{proof}

In the $n=2$ case, the axiomatisation of $\SCn$ reduces to the theory
of a coherently associative and commutative bifunctor given in Example
\ref{ex:DehornoyPresentations}. The main
result of this section establishes that $\SCn$ is a suitable
generalisation of this case. 

\begin{theorem}\label{thm:coherencescatalan}
  $\SCn$ is a coherent categorification of $SC_n$. 
\end{theorem}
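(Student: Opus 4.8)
The plan is to apply Theorem~\ref{thm:coherenceinv} to $\SCn$, exactly as was done for $\Cn$ in Theorem~\ref{thm:coherencecatalan}. First I would fix the positive orientation in which every $\alpha_i$ is positive and in which each involution $\tau_i$ is split into a positive and a negative copy via the involution axiom; concretely, I orient each adjacent transposition $\tau_i$ so that it moves variables toward a chosen canonical order (say, so that the underlying list of the source is lexicographically greater than that of the target). Call the resulting positive subtheory $\SCn^\rightarrow$. Since Theorem~\ref{thm:coherenceinv} requires the positive subtheory to be terminating and locally commuting-confluent, the first task is termination. For this I would combine the ranking function $R$ from Proposition~\ref{prop:cnterminating}, which strictly decreases under each $\alpha_i$, with a secondary measure counting the number of inversions in the underlying list of variables, which strictly decreases under each oriented $\tau_i$ while leaving $R$ unchanged; a lexicographic combination of these two well-founded measures establishes termination of $\SCn^\rightarrow$.

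The bulk of the work is then local commuting-confluence, which by Lemma~\ref{lem:cp} reduces to checking that every critical span is commuting-joinable. The critical spans split into three families according to which rules overlap. Overlaps between two associativity rules $\alpha_i,\alpha_j$ are already handled by the proof of the previous lemma (naturality, the pentagon axiom, and the adjacent associativity axiom). Overlaps between two transposition rules $\tau_i,\tau_j$ are the content of the symmetric-group presentation of Moore: when $|i-j|\ge 2$ the span closes by the $(T_iT_k)^2=1$ relation, encoded here by naturality together with the involution axiom, and when $|i-j|=1$ it closes by the $3$-cycle axiom, which is precisely the categorified braid relation $(T_iT_{i+1})^3=1$. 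The genuinely new family consists of the mixed overlaps between an $\alpha_i$ and a $\tau_j$, where a transposition meets the two-fold occurrence of $\tensor$ introduced by an associativity move; these are designed to be joined by the compatibility axiom and the hexagon axiom, with the dual hexagon (already shown to commute in Lemma~\ref{lem:dualhex}) covering the symmetric configuration where the transposed variable sits to the left rather than the right of the associated subterm.

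The hard part will be the bookkeeping for the mixed critical spans: one must enumerate, up to most general common instance, every way an instance of $\tau_j$ can overlap a redex of $\alpha_i$, keeping careful track of addresses and of the shift in the index $j \mapsto j\pm 1$ that occurs when a transposition is pushed across an associativity rebracketing. The compatibility axiom is exactly the identity $\tensor^i(\tau_j)=\alpha_{i-1}^{-1}\of\tensor^{i-1}(\tau_{j+1})\of\alpha_{i-1}$ that lets one transport a transposition through an adjacent $\tensor$, and the hexagon axiom lets one replace a transposition of a whole $\tensor$-subterm by a sequence of transpositions of its individual arguments; the main risk is an overlap whose index configuration is not literally an instance of one of the stated axioms but requires first rewriting via functoriality, the involution axiom, or a composite of hexagon and compatibility, as illustrated by the derivation in Lemma~\ref{lem:dualhex}. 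I would organize this enumeration by the relative position of the transposition with respect to the distinguished subterm of the associativity rule, mirroring the case analysis used in the previous lemma.

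Once every critical span is shown commuting-joinable, Lemma~\ref{lem:cp} gives that $\SCn^\rightarrow$ is locally commuting-confluent, and Lemma~\ref{lem:monic} gives that $\SCn$ is monic. Theorem~\ref{thm:coherenceinv} then applies to conclude that $\SCn$ is Mac Lane coherent. Since by construction $\SCn$ is a categorification of $SC_n$ in the sense of the definition in Section~\ref{sect:categorification}, a Mac Lane coherent such categorification is precisely a coherent one, which yields the statement that $\SCn$ is a coherent categorification of $SC_n$.
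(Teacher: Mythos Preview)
Your approach via Theorem~\ref{thm:coherenceinv} has a genuine gap at the orientation step. In the paper's framework an orientation is a function $\mathcal{O}:\T\to\{1,-1\}$ assigning a sign to each reduction \emph{rule}, not to each instance of a rule. The transposition $\tau_i$ is self-inverse up to variable renaming: its formal inverse $\tau_i^{-1}$ has exactly the same source and target shapes as $\tau_i$ itself. Consequently, whichever sign you assign to $\tau_i$, the positive subtheory still admits arbitrarily long chains $\tau_i\of\tau_i\of\cdots$, so it is not terminating. Your proposal to orient $\tau_i$ ``so that the underlying list moves toward lexicographic order'' is an instance-level orientation and falls outside the definition; Theorem~\ref{thm:coherenceinv} does not apply to such a scheme. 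Even granting an extension of the framework, your termination argument would fail: the rank $R$ of Proposition~\ref{prop:cnterminating} is \emph{not} invariant under $\tau_i$ when the swapped subterms have different lengths (one computes $R(t)-R(\tau_i(t))=L(t_i)-L(t_{i+1})$), so the lexicographic combination with inversion count need not decrease.

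The paper avoids this obstacle by a different route. Rather than applying Theorem~\ref{thm:coherenceinv} to $\SCn$ directly, it first invokes the already-established coherence of $\Cn$ (Theorem~\ref{thm:coherencecatalan}) together with Corollary~\ref{cor:categorification} to replace the associativity isomorphisms by strict equalities. After strictification every object is a flat list $\tensor(t_1^m)$, and the hexagon axiom together with the dual hexagon (Lemma~\ref{lem:dualhex}) reduce all morphisms to composites of adjacent-variable transpositions $T_1,\dots,T_{m-1}$. Coherence then amounts to verifying that these satisfy Moore's presentation of the symmetric group $S_m$: uniqueness of each $T_i$ comes from the compatibility axiom, $T_i^2=1$ from the involution axiom, commutation of distant transpositions from naturality, and $(T_iT_{i+1})^3=1$ from the $3$-cycle axiom (using, for $n\ge 3$, that one may rebracket so that $t_i,t_{i+1},t_{i+2}$ lie in a single $\tensor$). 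This two-stage argument is precisely what sidesteps the non-termination obstruction that blocks a direct critical-pair analysis of $\SCn$.
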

\begin{proof}
  By Theorem \ref{thm:coherencecatalan} and Corollary
  \ref{cor:categorification}, we may assume that all of the
  associativity maps are strict equalities. Thus, an object of
  $\Fr(\SCn)$ may be represented as $\tensor(t_1^m)$, where each
  $t_i$ is a variable and $m = n + k(n-1)$, for some $k \ge
  0$. Lemma   \ref{lem:dualhex} and the hexagon axiom imply that it
  suffices to   consider transpositions of adjacent variables. So, for
  a given object $t := \tensor(t_1^m)$, we  
  need only   consider the $m-1$ induced transposition natural
  isomorphisms  
  \[
    T_i(t_1^m) : \tensor(t_1^{i-1},t_i,t_{i+1},t_{i+2}^m) \to
  \tensor(t_1^{i-1},t_{i+1},t_{i},t_{i+2}^m).
  \]
  In order to establish coherence, we have to show that every
  permutation of $t_1^m$ is unique. That is, we have to show that the
  induced transposition maps satisfy the defining relations for the
  symmetric group of order $m$. 

  The compatibility axiom implies that each $T_i$ is unique.  By the
  naturality of the maps $T_i$, we have $T_i\of T_k = 
  T_k\of T_i$ for all $1 \le i \le k-2$. The involution axiom implies
  that $T_i^2 = 1$. Thus, it only remains to establish that $(T_i\of
  T_{i+1})^3 = 1$. For $n=2$, we may use the proof from Mac Lane
  \cite{MacLane_natural}. Suppose that $n \ge 3$. Since the
  associativity maps are taken to be strict equalities, we may assume
  that $t$ has  the form
  $\tensor(R,\tensor(S,t_i,t_{i+1},t_{i+2},U),V)$, where $R,S,U$ and
  $V$ are sequences of variables. The result then follows from
  the $3$-cycle axiom. 
\end{proof}

We can now construct a presentation of $\Struct_G(SC_n)$ and,
therefore, of $G_{n,1}$, which generalises the presentation for $V$
given in \cite{Dehornoy:thompson}.  

\begin{corollary}
  $\S_G(\SCn) \cong G_{n,1}$
\end{corollary}
\begin{proof}
  By Theorem \ref{thm:coherencescatalan} and Theorem \ref{thm:group},
  we have $\S_G(\SCn) \cong \Struct_G(SC_n)$. It follows then
  from Theorem 
  \ref{thm:Gn1} that $\S_G(\Cn) \cong G_{n,1}$.
\end{proof}

In this chapter, we have seen how the seemingly abstract general
coherence theorems developed in Chapter \ref{ch:unf} can have very
powerful applications. Indeed, the proof that $\Cn$ is a coherent
categorification of $C_n$ was relatively routine. Unfortunately, not
all rewriting $2$-theories are of the form required for the theorems
from Chapter \ref{ch:unf} to be applicable. In the following chapter,
we develop more general coherence theorems that relax those
assumptions somewhat.\blanknonumber
\chapter{Coherence for incomplete theories}\label{ch:nunf}

In Chapter \ref{ch:unf}, we developed a general Mac Lane coherence
theorem for terminating and confluent rewriting $2$-theories. This
result has wide applicability, including the investigation of
catalan categories presented in Chapter \ref{ch:catalan}. 

Unfortunately, it is simply not the case that every coherent rewriting
$2$-theory has unique normal forms. For instance, the theory
consisting of a unary function symbol $F$ and the single reduction
rule $F(x) \to F(F(x))$ is non-terminating, but easily seen 
to be coherent. A stronger counterexample to the hope that
coherent structures have unique normal forms is provided by the theory
of iterated monoidal categories \cite{iterated} whose coherence
problem we investigate in the following chapter. These structures
arise as a  categorical model of iterated loop spaces and fail to be
confluent, so the tools of Chapter \ref{ch:unf} do not apply.

We are thus faced with the problem of determining sufficient conditions
for coherence in terms of the underlying rewriting system of a
$2$-theory that do not rely on either termination or
confluence. This leads to the related problem of determining whether,
for any finitely presented labelled rewriting theory,
there is always a finite set of diagrams whose commutativity implies
the commutativity of all diagrams built from the theory. 

This chapter sets out to solve several related coherence questions by
vigourously pursuing the idea that two morphisms with the same
source and target in a free covariant structure on a discrete category
commute precisely when they admit a planar subdivision such that each
face is an instance of naturality, or of functoriality or of one of
the coherence axioms. The guiding intuition behind this approach is
that a span that cannot be completed into a square can never appear in
such a  subdivision. 

Section \ref{sec:subdivisions} lays the foundations for this chapter
by providing precise definitions of the various concepts related to
subdivisions of parallel pairs of arrows and determining conditions
that ensure that each parallel pair of arrows has only finitely many
subdivisions. This quickly leads, in Section
\ref{sec:lambekcoherence}, to a general Lambek coherence
theorem. Section \ref{sec:maclanecoherence} provides a more refined
analysis of the possible subdivisions of a parallel pair of reductions
in a finitely presented labelled rewriting theory and exploits this
analysis to obtain a general Mac Lane coherence theorem. Finally,
Section \ref{sec:finiteness} constructs examples of labelled rewriting
theories that cannot be made coherent via only finitely many
coherence axioms.

\section{Subdivisions}\label{sec:subdivisions}

When one is working with rewriting $2$-theories or categorical
algebraic structures more generally, one typically draws diagrams
representing morphisms in the free structure. The purpose of this
section is to formalise these diagrams as ambient isotopy classes of
planar directed graphs. This provides a mathematical setting for the
manner in which one typically shows that a parallel pair of morphisms
is equal: by finding a subdivision of the pair whose faces commute by
virtue of functoriality, naturality and the coherence axioms. Within
this setting, we examine properties that the underlying rewriting
theory of a $2$-theory must satisfy in order to ensure that each
parallel pair of morphisms admits only finitely many such
subdivisions. This forms the basis for the coherence theorems
developed in the remainder of the chapter.

A subdivision of a parellel pair of reductions is, in the first
instance, a collection of reductions having the same source and
target. This collection forms a graphical structure.

\begin{definition}
  An \emph{st-graph} is a labelled directed graph $G$ (possibly with
  loops and multiple edges) together with
  two distinguished vertices $u$ and $v$, called the source and target
  of $G$ respectively, such that for any other
  vertex $w \in G$, there exist paths $u \to w$ and $w \to v$ in $G$.
\end{definition}

By Lemma \ref{lem:irred}, we know that every reduction
generated by a rewriting $2$-theory is a composite of singular
reductions. Before we introduce the graph associated to a labelled
rewriting theory, we need to deal with a subtlety that arises due to
the presence of an equational theory on terms. Let $\R := \rtt$ be a
labelled rewriting theory. By the functoriality of the function
symbols $F \in \F$, every equation in $[\E_\F]$ induces an equation on
reductions. Thus, we may form the quotient $\Sing(\R)/[\E_\F]$. We call
a member of $\Sing(\R)/\E_\F$ an \emph{absolutely singular
  reduction}. 

\begin{definition}[Reduction graph]
  Let $\L := \ltr_X$ be a labelled rewriting theory.  The
  expression   $\Red(\L)$ denotes the \emph{reduction graph} of
  $\L$. This graph has 
  \begin{itemize}
    \item Vertices: The set $\Fr_{\langle \F \,|\, \E_\F\rangle}(X)$.
    \item Edges: Absolutely singular reductions in $\Fr_\L(X)$. 
  \end{itemize}
  The reduction graph of a rewriting $2$-theory is the reduction graph
  of its underlying labelled rewriting theory.
\end{definition}

A subdivision corresponds to a particular way of embedding an st-graph
in the oriented plane. Given a graph $G$, we use $|G|$ to denote its
geometric realisation. We write $\Rr^2$ for the plane with the
clockwise orientation. We use $G(s,t)$ to denote the set of paths from
$s$ to $t$ in $G$.

\begin{definition}
  Let $G$ be a graph and $\alpha,\beta \in G(s,t)$. A
  \emph{pre-subdivision} of $\langle\alpha,\beta\rangle$ is a pair
  $(S,\varphi)$ such that:
  \begin{enumerate}
    \item $S$ is an st-graph with source $s$ and target $t$.
    \item $\{\alpha,\beta\} \subseteq S \subseteq G$.
    \item $\varphi:|S| \hookrightarrow \Rr^2$ is a planar embedding.
    \item For every edge $\gamma \in S$, the image $\varphi(|\gamma|)$
      is contained in the region of $\Rr^2$ bounded by
      $\varphi(|\alpha|)$ and $\varphi(|\beta|)$.
  \end{enumerate}
  We use $\mathrm{PSub}_G(\alpha,\beta)$ to denote the set of all
  pre-subdivisions of $\langle\alpha,\beta\rangle$ in $G$.
\end{definition}

The definition of pre-subdivisions admits too many different embeddings
of the same graph. To this end, we define a useful equivalence
relation on pre-subdivisions.  In the present context, we say that two
embeddings $f,g:G \hookrightarrow \Rr^2$ are \emph{ambiently isotopic}
if there is an isotopy $h$ of the identity map of $\Rr^2$ such that
$h|_f = g$. In other words, $f$ and $g$ are ambiently isotopic if they 
differ only by a continuous deformation of $\Rr^2$. Intuitively, $f$
and $g$ are ambiently isotopic when they differ only by the size and
shape of their faces. 

Given a graph $G$ and $\alpha,\beta \in G(s,t)$, let
$\langle S_1,\varphi\rangle$ and $\langle S_2,\psi\rangle$ be
pre-subdivisions of $\langle\alpha,\beta\rangle$. Define $\sim$ to be
the equivalence relation on $\mathrm{PSub}_G(\alpha,\beta)$ generated
by setting $\langle S_1,\varphi\rangle \sim \langle S_2,\psi\rangle$
if:
\begin{enumerate}
  \item $S_1 = S_2$.
  \item $\varphi$ and $\psi$ are ambiently isotopic.
\end{enumerate}

Ambient isotopy is still not quite enough to identify all subdivisions
representing the same categorical diagram. The reason for this is that
reflecting the plane about some axis maps a subdivision to an
equivalent categorical diagram. Let $E(2)$ be the Euclidean group of
the plane --- the group of all rotations, translations and
reflections of the plane. 

We write $\Sub_G(s,t)$ for the quotient
$(\mathrm{PSub}_G(s,t)/\!\sim)/\!E(2)$ .

\begin{definition}
  For a directed graph $G$ and $\alpha,\beta \in G(s,t)$, a
  \emph{subdivision} of $\langle\alpha,\beta\rangle$ is a member of
  $\Sub_G(s,t)$. For a labelled rewriting theory $\L$,
  a subdivision of a parallel pair of reductions $\alpha,\beta \in
  \Fr(\L)$ is a subdivision of $\langle\alpha,\beta\rangle$ in
  $\Red(\L)$. The set of all such subdivisions is denoted
  $\Sub_{\L}(\alpha,\beta)$.
\end{definition}

Recall that a directed graph $G$ is \emph{locally finite} if $G(s,t)$
is finite for all vertices $s,t \in G$. The following sequence of
lemmas establishes a correspondence between local finiteness and
finitely many subdivisions. 

\begin{lemma}\label{lem:fin_image}
 For a directed graph $G$ and a finite planar $st$-subgraph $S \le
 G(s,t)$ with source $s$ and target $t$, there are only finitely many
 subdivisions of  $\alpha,\beta \in G(s,t)$ having graph $S$. 
\end{lemma}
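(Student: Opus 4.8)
The plan is to reduce the statement to the classical fact that a finite graph admits only finitely many inequivalent embeddings in the plane. Fix the finite planar $st$-subgraph $S$. By definition, a subdivision of $\langle\alpha,\beta\rangle$ with underlying graph $S$ is an $E(2)$-orbit of a $\sim$-class of pre-subdivisions $(S,\varphi)$; and since $\sim$ already identifies any two pre-subdivisions with the same graph whose embeddings are ambiently isotopic, such a subdivision is completely determined by the ambient-isotopy class of the planar embedding $\varphi:|S|\hookrightarrow\Rr^2$ (subject to the containment condition (4) in the definition of a pre-subdivision), taken up to the action of $E(2)$. It therefore suffices to show that $S$ admits only finitely many planar embeddings up to ambient isotopy.

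First I would record that $S$ is connected: being an $st$-graph, each of its vertices lies on a path $s\to w\to t$, so $S$ is joined throughout. I would then invoke the standard correspondence from topological graph theory between ambient-isotopy classes of embeddings of a finite connected graph in the oriented plane and their combinatorial data. Concretely, every orientation-preserving self-homeomorphism of $\Rr^2$ is isotopic to the identity, so ambient isotopy coincides with orientation-preserving equivalence; and an embedding of a finite connected graph is determined up to such equivalence by its \emph{rotation system} --- the cyclic ordering of the edge-ends around each vertex induced by the clockwise orientation of $\Rr^2$ --- together with the choice of unbounded (outer) face.

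Since $S$ is finite it has finitely many vertices, each of finite degree, so there are only finitely many cyclic orderings of edge-ends at each vertex and hence only finitely many rotation systems. For each such system the number of faces is finite (bounded via Euler's formula by the vertex and edge counts of $S$), so there are only finitely many admissible choices of outer face. Consequently $S$ has only finitely many embeddings up to ambient isotopy. Passing to the $E(2)$-quotient can only merge some of these finitely many classes, and the containment requirement (4) merely discards those embeddings in which some edge escapes the region bounded by $\varphi(|\alpha|)$ and $\varphi(|\beta|)$; neither operation can create new classes. Hence only finitely many members of $\Sub_G(s,t)$ have underlying graph $S$, as claimed.

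I expect the main obstacle to be pinning down the precise statement of the topological-to-combinatorial correspondence in the ambient-isotopy setting, and in particular separating the two layers of identification cleanly: ambient isotopies of $\Rr^2$ are orientation-preserving, so reflection-related embeddings are \emph{not} ambiently isotopic and become identified only after passing to the $E(2)$-quotient, whereas rotation-system-and-outer-face data is exactly what $\sim$ collapses. The bookkeeping needed to apply the finiteness count at the $\sim$ stage (rotation systems and outer faces) and the reflection identification at the $E(2)$ stage is the delicate point; once it is made explicit, the remainder of the argument is routine.
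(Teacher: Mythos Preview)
Your argument is correct and takes a genuinely different route from the paper. The paper does not invoke rotation systems or the classical embedding classification at all; instead it argues directly that, up to ambient isotopy and the $E(2)$-action, an embedding of $S$ is completely determined by a combinatorial datum internal to $S$: for each parallel pair of paths $\gamma_1,\gamma_2$ in $S$, the set of edges of $S$ that land in the region bounded by $\varphi(|\gamma_1|)$ and $\varphi(|\gamma_2|)$. Since $S$ is finite there are only finitely many such parallel pairs and only finitely many subsets of edges, hence only finitely many possible assignments, and the result follows. In effect the paper encodes the face structure via the nesting of these ``lenses'' rather than via cyclic orders at vertices plus an outer face. Your approach has the advantage of appealing to a standard and clearly stated theorem from topological graph theory, and you are careful about which identifications happen at the $\sim$ stage versus the $E(2)$ stage; the paper's approach is shorter and entirely self-contained, but its claim that the lens data determines the embedding class is asserted rather than justified, so your version is arguably more rigorous.
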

\begin{proof}
  Since we only consider embeddings of $S$ up to ambient isotopy and
  Euclidean group action, a
  subdivision with graph $S$ is completely determined by the set of
  edges mapped to the region bounded by $\varphi(|\gamma_1|)$ and
  $\varphi(|\gamma_2|)$ for every 
  parallel pair of paths $\gamma_1,\gamma_2 \in S$. Since $S$ is
  finite, there are only finitely many possibilities for this.
\end{proof}

\begin{lemma}\label{lem:subgraph}
  An st-graph with source $s$ and target $t$ is finite if and only if
  it has finitely many planar st-subgraphs with source $s$ and target $t$.
\end{lemma}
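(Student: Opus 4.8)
The plan is to prove the two implications separately; the forward implication is immediate, and the reverse one, phrased contrapositively, carries all the content.

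First I would dispatch the forward direction. If $G$ is finite, then it has finitely many vertices and finitely many edges, and hence only finitely many subgraphs at all, since a subgraph is determined by a choice of a subset of vertices together with a subset of edges. In particular only finitely many of these subgraphs are planar st-subgraphs with source $s$ and target $t$. Nothing is needed here beyond the finiteness of the power set of a finite set.

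For the reverse direction I would argue the contrapositive: if $G$ is infinite then it has infinitely many planar st-subgraphs. The cleanest supply of planar st-subgraphs is the set of simple directed paths from $s$ to $t$: each such path is planar (it is homeomorphic to an interval) and is an st-graph, since every one of its vertices lies between $s$ and $t$. So the easy case is that $G$ contains infinitely many distinct simple $s$-$t$ paths, in which case we are done at once. The remaining case is that $G$ has only finitely many simple $s$-$t$ paths; letting $U$ be their (finite) union, infinitude of $G$ forces $G\setminus U$ to be infinite, so either $G$ has infinitely many vertices or, with finitely many vertices, infinitely many edges. In the edge case the pigeonhole principle yields a co-reachable pair of vertices $a,b$ (possibly $a=b$) joined by infinitely many parallel edges, or a single vertex $a$ carrying infinitely many loops. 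Fixing one simple $s$-$t$ path $P$ meeting $a$ (and $b$), I adjoin to $P$ a single extra parallel edge (drawn as a second arc bounding a $2$-gon) or a single loop (drawn inside a face incident to $a$); each such augmentation visibly preserves both planarity and the st-property, and distinct edges or loops give distinct subgraphs, so infinitely many arise. In the vertex case I attach to a fixed simple path a single \emph{ear}, that is, one extra simple path between two of its vertices, internally disjoint from it, routed through the excess vertices; a path together with a single ear is always planar (draw the base path on a line and the ear as one arc on one side). A finite-fibre counting argument then forces infinitude: each finite gadget contains only finitely many excess vertices, so the assignment sending an excess vertex to a gadget capturing it has finite fibres, whence infinitely many excess vertices produce infinitely many distinct planar st-subgraphs.

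The hard part, and the reason the argument must be organised around single-feature augmentations, is planarity itself. The naive move is to capture an arbitrary excess edge $e=(a,b)$ by the minimal st-subgraph $\sigma\cup\{e\}\cup\rho$ built from a path $\sigma\colon s\to a$ and a path $\rho\colon b\to t$; but the union of two simple paths need \emph{not} be planar. For example, one can choose $\sigma$ and $\rho$ to run through the six vertices of $K_{3,3}$ so that together they traverse all nine of its edges, producing a nonplanar union. Consequently the delicate point is to use only augmentations whose planarity is manifest (a single loop, a single parallel edge, a single internally disjoint ear, or a single directed ``bubble'' cycle meeting the spine at one vertex), and then to verify that an infinite st-graph always supplies infinitely many such gadgets. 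I expect this last verification, via the finite-fibre and pigeonhole counting together with the structural observation that every excess vertex lies on such a bubble or ear relative to some simple $s$-$t$ path, to be where the real work concentrates.
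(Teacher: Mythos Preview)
Your forward direction matches the paper. For the reverse direction the paper's argument is a single sentence: in an infinite st-graph each $s$--$t$ path is itself a planar st-subgraph, and there are infinitely many of them. No case split, no ears, no bubbles, no discussion of whether unions of paths are planar.

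Your decomposition is therefore a genuinely different route, and it is addressing a real soft spot in that one-liner. Read literally, the paper's sentence needs either that every $s$--$t$ walk has planar image (false in general: an Eulerian walk can trace out a nonplanar graph) or that an infinite st-graph has infinitely many \emph{simple} $s$--$t$ paths (also false: take the single edge $s\to t$ together with $2$-cycles $s\to a_n\to s$ for $n\in\mathbb{N}$). Your parallel-edge/loop/ear/bubble analysis is essentially what a fully rigorous proof requires. One point still to tighten: in both the parallel-edge case and the excess-vertex case you fix a simple $s$--$t$ path $P$ meeting the excess element, but the example just given shows no such $P$ need exist. The repair is exactly the bubble gadget you relegate to your closing sentence: a simple $s$--$t$ path with a single directed cycle attached at one of its vertices is manifestly planar and is an st-subgraph, and in an st-graph every vertex lies on such a cycle through some spine vertex (take simple paths $s\to w$ and $w\to t$, locate the last vertex they share, and hang the cycle there). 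Promote the bubble from an afterthought to a full case and your argument goes through, and is strictly more careful than the paper's.
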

\begin{proof}
($\Rightarrow$) A finite graph has finitely many subgraphs, so it
certainly has finitely many planar subgraphs.

($\Leftarrow$) Suppose that $G$ is an infinite st-graph with source $s$
and target $t$. Each path from $s$ to $t$ in $G$ determines a planar
subgraph of $G$, hence $G$ has infinitely many planar subgraphs with
source $s$ and target $t$.
\end{proof}

Combining Lemma  \ref{lem:fin_image} and Lemma \ref{lem:subgraph},
we obtain the desired correspondence.

\begin{lemma}\label{lem:fin_sub}
If $G$ is a directed graph containing vertices $s$ and $t$, then
$G(s,t)$ is finite if and only if $\Sub_G(\alpha,\beta)$ is finite for
all $\alpha,\beta \in G(s,t)$\qed
\end{lemma}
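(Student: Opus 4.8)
The plan is to deduce the equivalence from Lemmas~\ref{lem:fin_image} and~\ref{lem:subgraph} by routing both directions through the subgraph $G'\subseteq G$ spanned by all directed paths from $s$ to $t$; that is, $G'$ consists of every vertex and edge lying on some member of $G(s,t)$. By construction $G'$ is an st-graph with source $s$ and target $t$, and $G'(s,t)=G(s,t)$. The first fact I would record is that $G(s,t)$ is finite if and only if $G'$ is finite: each (simple) path from $s$ to $t$ traverses finitely many edges, so if $G(s,t)$ is finite then $G'$ is a finite union of finite paths, while conversely a finite graph admits only finitely many simple $s$--$t$ paths.

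For the direction $G(s,t)$ finite $\Rightarrow$ each $\Sub_G(\alpha,\beta)$ finite, fix $\alpha,\beta\in G(s,t)$. Any subdivision in $\Sub_G(\alpha,\beta)$ has an underlying planar st-graph $S$ with $\{\alpha,\beta\}\subseteq S\subseteq G$ and source $s$, target $t$; since every edge of $S$ lies on a path $s\to t$ inside $S$, it lies in $G'$, whence $S\subseteq G'$. As $G'$ is finite, Lemma~\ref{lem:subgraph} supplies only finitely many planar st-subgraphs of $G'$ with source $s$ and target $t$, hence only finitely many candidate graphs $S$. Lemma~\ref{lem:fin_image} then bounds the number of subdivisions with any fixed graph $S$, and summing over the finitely many admissible $S$ shows $\Sub_G(\alpha,\beta)$ is finite.

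The converse I would prove by contraposition: assuming $G(s,t)$ infinite, I must exhibit a \emph{single} pair $\alpha,\beta$ with $\Sub_G(\alpha,\beta)$ infinite. By the fact above, $G'$ is infinite, so Lemma~\ref{lem:subgraph} furnishes infinitely many planar st-subgraphs; the difficulty is that these a priori spread across infinitely many distinct boundary pairs, so their abundance does not by itself inflate any one $\Sub_G(\alpha,\beta)$. Pinning the subdivisions down to one boundary pair is the main obstacle. The plan is to extract from $G'$ an infinite family of $s$--$t$ paths realizable as \emph{nested parallel strands} inside a common region: fixing two of them as the boundary $\alpha,\beta$, each finite subfamily drawn into the region bounded by $\varphi(|\alpha|)$ and $\varphi(|\beta|)$ yields a distinct underlying graph and hence a distinct subdivision of $\langle\alpha,\beta\rangle$, producing infinitely many.

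To obtain such a family I would first attempt to extract infinitely many internally disjoint $s$--$t$ paths, which embed directly as parallel strands; if no infinite internally disjoint family exists, then a finite vertex cut separates $s$ from $t$, and since $G'$ is infinite, infinitely many paths must crowd between a single pair of cut vertices (or pass through infinitely many parallel edges), a configuration that can again be nested within a fixed bigon. The delicate point, which I expect to be the crux, is verifying that these strands genuinely fit inside the bounded region and represent distinct classes in $\Sub_G(\alpha,\beta)$ after quotienting by ambient isotopy and the $E(2)$-action. Once that embedding argument is secured, combining it with the two finiteness directions establishes the stated equivalence.
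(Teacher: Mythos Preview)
Your forward direction is exactly what the paper intends: pass to the st-subgraph $G'$ spanned by all $s$--$t$ paths, observe that $G(s,t)$ finite forces $G'$ finite, then invoke Lemma~\ref{lem:subgraph} to bound the number of admissible underlying graphs $S$ and Lemma~\ref{lem:fin_image} to bound the number of subdivisions for each such $S$. That is precisely the ``combining'' the paper has in mind, and your passage through $G'$ is the right way to make it rigorous.

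For the converse you have correctly put your finger on a real subtlety that the paper's one-line justification glosses over. Lemma~\ref{lem:subgraph}, read literally, only tells you that an infinite $G'$ has infinitely many planar st-subgraphs---and in the proof of that lemma, those subgraphs are just the individual paths, each of which is a subdivision of a \emph{different} boundary pair. So the two lemmas alone do not obviously produce a single pair $\langle\alpha,\beta\rangle$ with $\Sub_G(\alpha,\beta)$ infinite. You are right to flag this.

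Your proposed fix via a Menger-type dichotomy is a reasonable line of attack, but as written it is not a proof: the case where an infinite internally-disjoint family exists is clean (a book of parallel strands is planar and nests inside any fixed bigon), but the finite-cut case is left as an intention rather than an argument, and the ``delicate point'' you name---that the nested strands really do yield pairwise distinct classes in $\Sub_G(\alpha,\beta)$ after the quotient---is not addressed. Note also that three $s$--$t$ paths can already have a non-planar union (one can realise an oriented $K_{3,3}$ as such a union), so the embedding step is not automatic. In short: the forward direction is complete and matches the paper; the converse remains a sketch with a genuine gap at the embedding/nesting step, though the paper itself offers no more detail here and only ever uses the forward implication.
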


\subsection{Ensuring local finiteness}

By Lemma \ref{lem:fin_sub}, in order to ensure that every parallel
pair of paths in a directed graph has finitely many subdivisions, we
need only establish that the graph is locally finite. To this end, we
make the following definition.

\begin{definition}
  Let $G$ be a directed graph. A \emph{quasicycle} in $G$ is a pair
  $(T,t)$ such that:
  \begin{enumerate}
    \item $T$ is an infinite chain $t_0 \to t_1 \to \dots$ in $G$ 
    \item $t$ is a vertex in $G$.
    \item $G$ contains a path $t_i \to t$ for all $i \in \mathbb{N}$. 
  \end{enumerate}
\end{definition}

\begin{figure}[h]
\[
\vcenter{
\xymatrix{
  {t_0} \ar[r] \ar@/_1pc/[ddrr] & {t_1} \ar[r] \ar@/_/[ddr] & 
  {t_2} \ar[r] \ar[dd] & {t_3} \ar[r] \ar@/^/[ddl] &
  {t_4} \ar[r] \ar@/^1pc/[ddll]  & {\dots}\\
  \\
  && {t}
}
}
\]
\caption{A quasicycle}\label{fig:quasicycle}
\end{figure}
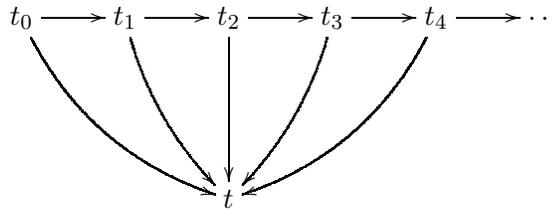

Quasicycles earn their name by being a slightly weaker notion than a
cycle. Figure \ref{fig:quasicycle} gives an example of a quasicycle
that is not a cycle. On the other hand, we have the following easy
result.

\begin{lemma}\label{lem:cycle}
  Let $C$ be a directed cycle and $c$ be a vertex in $C$. Then,
  $(C,c)$ is a quasicycle. \qed
\end{lemma}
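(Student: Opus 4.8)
The statement to prove is Lemma \ref{lem:cycle}: a directed cycle $C$ with a chosen vertex $c$ forms a quasicycle $(C,c)$.

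\textbf{Proof proposal.}

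The plan is to verify directly that the pair $(C,c)$ satisfies the three conditions in the definition of a quasicycle. First I would fix notation: write the directed cycle as $c_0 \to c_1 \to \dots \to c_{k-1} \to c_0$ with $c = c_0$, say, where the edges are directed consistently around the cycle. The key observation is that a directed cycle can be traversed indefinitely, so it naturally produces an infinite chain even though it has only finitely many vertices.

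For condition (1), I would construct the required infinite chain $T$ by walking around the cycle repeatedly: define $t_i := c_{i \bmod k}$ for all $i \in \N$. Since each consecutive pair $c_j \to c_{j+1 \bmod k}$ is an edge of $C$, the sequence $t_0 \to t_1 \to t_2 \to \dots$ is an infinite chain in $C$, establishing (1). Condition (2) is immediate: $t = c$ is a vertex of $C$. For condition (3), I must show that $C$ contains a path $t_i \to t$ for every $i$; since $t_i = c_{i \bmod k}$ and $t = c_0$, I would exhibit the path that continues around the cycle from $c_{i \bmod k}$ back to $c_0$, namely $c_{i \bmod k} \to c_{(i+1) \bmod k} \to \dots \to c_0$, which uses only edges of the cycle and terminates at $t$.

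This is an essentially routine verification with no real obstacle; the only point requiring any care is recognizing that the definition of a quasicycle permits vertices of the infinite chain $T$ to repeat (the chain need not consist of distinct vertices), which is exactly what allows a finite cycle to give rise to an infinite chain. Once that is noted, all three conditions follow immediately from traversing the cycle, and I would conclude that $(C,c)$ is a quasicycle.
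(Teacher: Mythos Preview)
Your proposal is correct and is precisely the routine verification one would expect; the paper in fact marks this lemma with \qed\ and provides no proof at all, treating it as immediate from the definitions. Your observation that the infinite chain may repeat vertices is the only point worth making explicit, and you have done so.
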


For a directed graph $G$ and a vertex $s \in G$, we use $\Out_G(s)$ to
denote the set $\{t \in V(G) : G \textrm{ contains an edge } s\to
t\}$. We say that $G$ is \emph{finitely branching} if $\Out_G(s)$ is
finite for all vertices $s \in G$. One of our main technical tools is
the following graphical version of K\"{o}nig's Tree Lemma.

\begin{lemma}\label{lem:quasi}
  A finitely branching directed graph is locally finite if and only if
  it contains no quasicycles.
\end{lemma}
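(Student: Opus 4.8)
The plan is to prove both implications by contraposition, with the substantive content residing in the direction that uses the finite-branching hypothesis, where I would invoke a König-style argument. I would first dispose of the easy direction (a quasicycle obstructs local finiteness) and then handle the genuine direction (failure of local finiteness manufactures a quasicycle).

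For the direction that a quasicycle forces failure of local finiteness, suppose $(T,t)$ is a quasicycle, where $T$ is the infinite chain $t_0 \to t_1 \to \cdots$ and, for each $i$, there is a path $p_i : t_i \to t$ in $G$. For every $n$ I would form the walk $w_n$ obtained by following the chain $T$ from $t_0$ to $t_n$ and then appending $p_n$; this is a path from $t_0$ to $t$ whose length is at least $n$. Since the $w_n$ realise arbitrarily large lengths, they comprise infinitely many distinct paths, so $G(t_0,t)$ is infinite and $G$ is not locally finite. This direction requires no hypothesis on branching, and it is consistent with Lemma \ref{lem:cycle}, since a directed cycle through a vertex $c$ manifestly produces paths of every length from $c$ to $c$.

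For the converse I would assume $G$ is finitely branching but not locally finite, fix vertices $s,t$ with $G(s,t)$ infinite, and build a quasicycle terminating at $t$. The key device is the tree $P$ whose nodes are the finite initial segments (prefixes) of paths from $s$ to $t$, where an edge of $P$ records extension by a single graph edge and the root is the trivial path at $s$. Because each path in $G(s,t)$ is itself a node of $P$ and there are infinitely many such paths, $P$ is infinite; because $G$ is finitely branching, each vertex of $G$ has only finitely many outgoing edges, so each node of $P$ has only finitely many children. König's Tree Lemma then yields an infinite branch, which reads off as an infinite chain $s = v_0 \to v_1 \to v_2 \to \cdots$ in $G$. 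By construction every prefix $v_0 \to \cdots \to v_i$ is an initial segment of some path from $s$ to $t$, and the remainder of that path supplies a path $v_i \to t$ for each $i$. Taking $T$ to be this chain exhibits a quasicycle $(T,t)$, completing the contrapositive.

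I expect the main obstacle to be the bookkeeping in the König step: one must ensure that ``finitely branching'' delivers finitely many outgoing \emph{edges}, not merely finitely many out-neighbours, so that $P$ is genuinely finitely branching. In the reduction graph of a finitely presented theory this holds, since a singular reduction out of a fixed term is determined by a choice of rule together with an address in a finite term tree, of which there are only finitely many. A secondary point to verify is that $P$ is correctly finite-to-one, namely that distinct $s$--$t$ paths yield distinct nodes; this is immediate once paths are recorded as sequences of edges rather than of vertices, so that no two distinct paths collapse to the same branch.
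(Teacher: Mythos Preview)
Your proof is correct and follows essentially the same strategy as the paper, which explicitly bills the lemma as a ``graphical version of K\"onig's Tree Lemma.'' The only minor differences are stylistic: for the easy direction the paper does a small case split on whether some $t_i$ equals $t$, whereas your length argument is cleaner; for the substantive direction the paper inlines the K\"onig argument via repeated pigeonhole on $\Out_G(s)$ rather than invoking it on the prefix tree, but the content is identical.
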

\begin{proof}
  Let $G$ be a labelled finitely branching directed graph.

  ($\Rightarrow$) Suppose that $G$
  contains a quasicycle $(T,t)$, where $T = t_0
  \stackrel{\alpha_0}{\to} t_1 \stackrel{\alpha_1}\to \dots$. If $t_i
  = t$ for some $i \in \N$ then $G(t_i,t_j)$ is infinite for all $j >
  i$. So, suppose that $t_i \ne t$ for all $i \in \N$. Since $t_i \to
  t$ for all $i \in \N$, there must be infinitely many pairs
  $(i,\beta_i)$, where $i \in \N$ and $\beta_i:t_i\to t$ is a path that
  does not factor through $t_j$ for any $j > i$. So, $G(t_0,t)$ is
  infinite.

  ($\Leftarrow$) Suppose that
  $G(s,t)$ is infinite. Since $\Out_G(s)$ is finite, it follows from the
  pigeon hole principle that there must exist some $s_0 \in \Out_G(s)$
  and an edge $\alpha_0:s \to s_0$ such that $G(s_0,t)$ is
  infinite. Continuing recursively, we obtain an infinite chain $s
  \stackrel{\alpha_0}{\to} s_0 \stackrel{\alpha_1}{\to} s_1
  \stackrel{\alpha_2}{\to} \dots$ such that $G$ contains  a path $s_i
  \to t$ for all $i \in \N$. So, $G$ contains a quasicycle. 
\end{proof}

By making use of the reduction graph of a labelled rewriting theory,
we can shift our terminology for directed graphs to labelled rewriting
theories. 

\begin{definition}
  A labelled rewriting theory $\L$ is \emph{quasicycle-free} if
  every quasicycle in  $\Red(\L)$ contains cofinitely many
  identity reductions. It is locally finite if   $\Red(\L)$ is
  locally finite and it is finitely branching if $\Red(\L)$ is
  finitely branching.
\end{definition}

Recall that an equation $s = t$ is called \emph{balanced} if $s$ and
$t$ contain precisely the same variables and it is called
\emph{linear} if it is balanced and each variable appears precisely
once in each of $s$ and $t$.

\begin{definition}
  A labelled rewriting theory $\L := \ltr$ is \emph{term-linear} if
  $\E_\F$ contains only linear equations.
\end{definition}

A reduction rule $\alpha:[s]\to[t]$ is called \emph{non-increasing} if
$\var(t) \subseteq \var(s)$. A labelled rewriting
theory $\L$ is \emph{non-increasing} if every reduction rule in $\L$
is non-increasing. 

\begin{proposition}\label{prop:linearfinite}
A finitely presented labelled rewriting theory is finitely branching
if  it is term-linear and non-increasing. 
\end{proposition}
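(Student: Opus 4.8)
The plan is to show directly that the out-set $\Out_{\Red(\L)}([s])$ is finite for every vertex $[s]$ of the reduction graph $\Red(\L)$, which is exactly the assertion that $\L$ is finitely branching. An edge leaving $[s]$ is an absolutely singular reduction with source $[s]$, so by the definition of a singular reduction it is obtained by choosing a representative term $s' \in [s]$, an address $\alpha \in A_\F^*$ at which some reduction rule $\rho \colon \ell \to r$ in $\T$ applies, and a matching substitution $\sigma$ with $\sub(s',\alpha) =_{\E_\F} \ell^\sigma$; the target of the edge is then the class of the term obtained by replacing the redex $\ell^\sigma$ in $s'$ by $r^\sigma$. Thus the task reduces to bounding, up to $\E_\F$-congruence, the number of triples $(\alpha,\rho,\sigma)$ that can arise and the number of targets they determine.

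First I would use the non-increasing hypothesis. Since $\var(r) \subseteq \var(\ell)$ for every rule, the replacement term $r^\sigma$ depends only on the restriction of $\sigma$ to $\var(\ell)$, and this restriction is in turn completely determined by the matched redex $\ell^\sigma$. Consequently each redex occurrence determines its replacement uniquely: there is no fresh variable in $r$ that could be instantiated by an arbitrary term, so a given redex cannot fan out to infinitely many targets. This is the step that rules out the obvious source of infinite branching.

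Next I would use term-linearity to tame the congruence. Because the finitely many rules live in a finite signature and the presentation is finite, for a single representative $s'$ there are only finitely many addresses and finitely many rules, and — since matching a subterm of $s'$ against a fixed left-hand side is deterministic (syntactic matching yields at most one $\sigma$) — each $(\alpha,\rho)$ gives rise to at most one determinate target. The remaining difficulty is that the class $[s]$ may contain many representatives, so I would argue that linearity of the equations in $\E_\F$ keeps the collection of redex occurrences modulo $\E_\F$ finite: a linear equation only rearranges variable occurrences (each appearing exactly once on each side), so it can neither create nor destroy the data needed to locate a redex, and hence passing to congruence classes collapses the a priori infinite family of representatives to finitely many genuinely distinct rewriting possibilities. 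Assembling these finitely many possibilities with the determinacy of the targets from the previous paragraph yields that $\Out_{\Red(\L)}([s])$ is finite.

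The main obstacle is precisely this last step: controlling the behaviour of the congruence $[\E_\F]$ under rewriting. The clean statement I would aim to isolate and prove first is that, for a linear and non-increasing finitely presented theory, the set of $\E_\F$-classes reachable from $[s]$ by a single singular reduction is finite; both hypotheses are genuinely needed here, since dropping the non-increasing condition allows a rule to introduce an unconstrained variable (yielding infinitely many targets), while dropping linearity allows a redex to be matched modulo $\E_\F$ in infinitely many inequivalent ways. Once this lemma is established, finite branching of $\L$ follows immediately.
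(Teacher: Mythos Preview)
Your proposal is correct and follows essentially the same approach as the paper: both use non-increasing to ensure each redex occurrence determines its target uniquely, and then invoke term-linearity to bound the number of redex occurrences modulo $\E_\F$. The only cosmetic difference is that the paper frames the argument by contradiction (assume $[s]$ is infinitely branching, deduce infinitely many inequivalent redex-carrying representatives, contradict linearity) whereas you count directly; you are also more explicit than the paper in flagging that the linearity step is where the real work lies.
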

\begin{proof}
  Let $\L := \ltr$ be a finitely  presented non-increasing term-linear
  labelled rewriting theory. Without loss of generality, we may assume
  that $\T = \{\rho\}$. Suppose that the vertex $[s]$ in $\Red(\L)$ is
  infinitely branching. Since $\rho$ is non-increasing, there must be
  infinitely  many terms $s_1,s_2,\dots \in [s]$ containing the same
  number of unary and binary function symbols as $\rho$, such that
  each $s_i$ contains a $\rho$-redex as a subterm. But this is
  impossible, since $\E_\F$ is linear.
\end{proof}

A labelled rewriting theory that is not term-linear may be infinitely
branching, even if it is finitely presented and non-increasing.

\begin{example}
  Let $\L$ be the labelled rewriting theory consisting of the binary
  function symbol $\F$, the equation $s =
  F(s,s)$ and the reduction rule $\rho: t \to t'$. Then, in
  $\Fr(\L)$, we have: 
  \[ 
  t = F(t,t) = F(F(t,t),t) = F(F(F(t,t),t),t) = \dots
  \]
  The reduction rule $\rho$ induces maps from $[t]$ to:
  \begin{equation}\label{eq:infterm}
  t', F(t',t), F(F(t',t),t), F(F(F(t',t),t),t),\dots
  \end{equation}
  Since the terms in (\ref{eq:infterm}) are pairwise unequal, $\L$ is
  infinitely branching.  
\end{example}

Lemmas \ref{lem:fin_sub} and \ref{lem:quasi} imply that a finitely
branching quasicycle-free labelled rewriting theory has only finitely  
many subdivisions for every parallel pair of reductions. A ready
supply of such theories is provided by the following observation,
which follows immediately from the definitions. 

\begin{lemma}
  A terminating labelled rewriting theory is quasicycle-free.\qed  
\end{lemma}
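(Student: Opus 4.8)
The plan is to observe that the infinite chain distinguished by a quasicycle is precisely the kind of object that the termination hypothesis is designed to control, so that the result drops out of unwinding the two definitions, with only a small amount of bookkeeping to pass between the reduction graph and $\Fr(\L)$.

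First I would fix a terminating labelled rewriting theory $\L := \ltr$ and an arbitrary quasicycle $(T,t)$ in $\Red(\L)$, where $T$ is the infinite chain
\[
t_0 \stackrel{\alpha_0}{\to} t_1 \stackrel{\alpha_1}{\to} t_2 \stackrel{\alpha_2}{\to} \cdots .
\]
By the definition of $\Red(\L)$, each vertex $t_i$ is a congruence class in $\Fr_{\langle\F\,|\,\E_\F\rangle}(X)$ and each $\alpha_i$ is an edge, i.e.\ an absolutely singular reduction (a class in $\Sing(\L)/[\E_\F]$) with source $t_i$ and target $t_{i+1}$. For each $i$ I would select a representative singular reduction $\widetilde{\alpha}_i \in \Sing(\L)$ of $\alpha_i$. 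Since the source and target of every reduction in $\Fr(\L)$ are already congruence classes, and since $\alpha_i$ composes with $\alpha_{i+1}$ in $\Red(\L)$ (the target class $t_{i+1}$ of $\alpha_i$ equals the source class of $\alpha_{i+1}$), the lifts compose as well, so $\widetilde{\alpha}_0,\widetilde{\alpha}_1,\dots$ is an infinite sequence of composable singular reductions in $\Fr(\L)$.

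Next I would invoke termination directly: a terminating theory requires any infinite sequence of composable singular reductions to contain cofinitely many identity reductions, so all but finitely many of the $\widetilde{\alpha}_i$ equal an identity $1_{t_i}$. Because the image in $\Red(\L)$ of an identity reduction is an identity edge (an identity reduction's $[\E_\F]$-class is always the identity loop at its vertex), cofinitely many of the edges $\alpha_i$ of $T$ are identity reductions. Hence the chain $T$, and with it the quasicycle $(T,t)$, contains cofinitely many identity reductions; as $(T,t)$ was arbitrary, every quasicycle in $\Red(\L)$ has this property, which is exactly the assertion that $\L$ is quasicycle-free.

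The argument is essentially definitional, so I do not anticipate a genuine obstacle. The only point meriting explicit comment is the translation between the edges of $\Red(\L)$, which are singular reductions taken modulo $[\E_\F]$, and the honest singular reductions in $\Fr(\L)$ to which the termination condition refers: I must check that a chain of edges lifts to a composable chain of singular reductions and that identity edges correspond to identity reductions. This is routine once one notes that $[\E_\F]$ acts on terms rather than on reductions, so it can only ever send an identity reduction to an identity edge, which is the direction the conclusion needs. \qed
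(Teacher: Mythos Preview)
Your proof is correct and matches the paper's approach: the paper gives no explicit argument (the lemma is stated with an immediate \qed), treating the result as a direct consequence of comparing the two definitions, which is exactly what you do. Your extra care in lifting edges of $\Red(\L)$ to singular reductions in $\Fr(\L)$ and noting that identity edges correspond to identity reductions is sound, if slightly more detailed than the paper evidently deemed necessary.
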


By Lemma \ref{lem:cycle}, a quasicycle-free directed graph is
acyclic. The following theorem establishes that every face of a
subdivision in an acyclic graph is itself a parallel pair of paths. It
was originally discovered by Power \cite{Power:pasting2} in his
investigation of pasting diagrams in $2$-categories.

\begin{theorem}[Power \cite{Power:pasting2}]\label{thm:power}
  A planar $st$-graph is acyclic if and only if every face has a
  unique source and target.\qed
\end{theorem}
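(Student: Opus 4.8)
The plan is to reduce both implications to a combinatorial analysis of the boundary walk of each face. After fixing the embedding $\varphi$, for a bounded face $F$ I would traverse $\partial F$ as a closed walk and classify its vertices: a vertex on $\partial F$ is a \emph{source of $F$} when both boundary edges incident to it point away from it, and a \emph{target of $F$} when both point toward it. Traversing $\partial F$ once, sources and targets must alternate, so every face carries equally many sources as targets; this common number is $0$ exactly when $\partial F$ is a directed cycle, and otherwise equals half the number of maximal directed paths into which $\partial F$ decomposes. This splits the theorem into two claims: (i) if $G$ is acyclic then no face boundary is a directed cycle and none has two or more sources, and (ii) if $G$ contains a directed cycle then some face fails to have exactly one source and one target.

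For the acyclic direction, the first half of (i) is immediate, since a directed cycle inside $\partial F \subseteq G$ would be a directed cycle of $G$. For the second half, suppose a face $F$ had distinct sources $a_1,a_2$, necessarily separated along $\partial F$ by distinct targets $b_1,b_2$, so that $\partial F$ consists of four monotone directed paths of the form $a_i \to b_j$. Here I would invoke the $st$-condition: each $a_i$ receives a directed path from the global source $s$, and each $b_j$ emits a directed path to the global target $t$. Grafting these auxiliary paths onto the boundary arcs and using the Jordan curve theorem for the simple closed curve $\varphi(|\partial F|)$ — which the auxiliary paths must cross — I would assemble a closed directed walk, hence a directed cycle, contradicting acyclicity. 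The subtle point is to route the auxiliary paths so that the assembled walk is genuinely directed and closed; this is exactly where planarity and the orientation of $\Rr^2$ carry the argument.

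For the converse I would argue contrapositively. Given a directed cycle, among all directed cycles of $G$ I would choose one, $C$, bounding a planar region that is minimal with respect to inclusion (such a minimal region exists since $G$ is finite). If nothing of $G$ lies strictly inside $C$, the enclosed region is a single face whose boundary is the directed cycle $C$, a face with no source and no target, establishing (ii). If the interior is nonempty, the $st$-reachability of the interior forces a directed path through the interior joining two vertices $x,y$ of $C$; that chord, together with the directed arc of $C$ running from $y$ back to $x$, is a directed cycle bounding a strictly smaller region, contradicting minimality. The main obstacle throughout is topological bookkeeping rather than combinatorics: making precise, via the Jordan curve theorem, the passage between the geometric partition of $\Rr^2$ into faces and the combinatorial in/out pattern of the edges, and in particular pinning down where $s$ and $t$ sit relative to $C$ when extracting the chord. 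A convenient global cross-check is the corner-counting identity $\sum_F \src(F) = E - B$, where $B = \sum_v b_v$ and $b_v$ is the number of alternating in/out blocks around $v$, which combined with Euler's formula on the sphere confirms that the two local conditions force exactly one source per face.
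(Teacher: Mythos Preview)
The paper does not supply its own proof of this result: note the \qed immediately following the statement and the attribution to Power. The theorem is quoted from Power's work on pasting diagrams and used as a black box, so there is no in-paper argument to compare your proposal against.

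Assessed on its own terms, your sketch is in the right spirit and the converse direction (minimal directed cycle bounds a face with no source/target, or a chord contradicts minimality) is essentially complete. For the forward direction your instinct is correct, but the phrase ``which the auxiliary paths must cross'' is not justified as written: the directed paths $s \to a_i$ and $b_j \to t$ live entirely in the complement of the open face $F$ and terminate on $\partial F$, so there is no crossing of $\varphi(|\partial F|)$ to invoke. What actually produces the contradiction is a different Jordan curve: one concatenates an $s \to a_1$ path, a boundary arc $a_1 \to b_1$, and a $b_1 \to t$ path to obtain a directed $s$--$t$ walk whose image, together with a second such walk through $a_2$ and $b_2$, separates the plane; planarity then forces one of the boundary arcs $a_i \to b_j$ to be trapped on the wrong side, and the $st$-reachability of its endpoints manufactures the directed cycle. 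You have flagged exactly this step as the delicate one, which is fair for a sketch, but as stated the Jordan-curve application is aimed at the wrong curve. The corner-counting identity you mention is a legitimate alternative route and in fact gives a cleaner global argument than the local Jordan-curve chase.
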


Theorem \ref{thm:power} readily leads to the following result by
induction over the number of faces in a subdivision.

\begin{proposition}\label{prop:faces}
  Let $\R := \rtt$ be an acyclic rewriting $2$-theory and let
  $\alpha,\beta \in \Red(\R)(s,t)$. Then, the following statements are
  equivalent: 
  \begin{enumerate}
    \item $\alpha = \beta$ in $\Fr(\R)$. 
    \item There is a subdivision of $\langle\alpha,\beta\rangle$ in
      $\Red(\R)(s,t)$ such that each face commutes in
      $\Fr(\R)$.
    \item There is a subdivision of $\langle\alpha,\beta\rangle$ in
      $\Red(\R)(s,t)$ such that each face is either an instance of
      functoriality, or an instance of naturality or an instance of
      one of the equations in $\E_\T$. \qed
  \end{enumerate}
\end{proposition}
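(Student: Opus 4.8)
The plan is to establish the cycle of implications $(3)\Rightarrow(2)\Rightarrow(1)\Rightarrow(3)$, with essentially all of the substance concentrated in $(2)\Rightarrow(1)$, where Theorem~\ref{thm:power} does the heavy lifting. The implication $(3)\Rightarrow(2)$ is immediate: by the construction of $\Fr(\R)$ and the standard congruence $S(\R)$, every instance of functoriality, of naturality, or of an equation in $\E_\T$ is an equation that already holds in $\Fr(\R)$, so a subdivision of the type described in $(3)$ is in particular one in which each face commutes.

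For $(2)\Rightarrow(1)$ I would induct on the number of faces of the subdivision. Since $\R$ is acyclic, so is $\Red(\R)$, and Theorem~\ref{thm:power} guarantees that each face has a unique source and target; the same theorem forces the boundary of each face to split into exactly two directed paths sharing those endpoints, so the phrase ``the face commutes'' is meaningful and asserts precisely the equality of these two paths in $\Fr(\R)$. In the base case the subdivision has a single face bounded by $\alpha$ and $\beta$, and the hypothesis gives $\alpha=\beta$ at once. For the inductive step I would select a face $f$ adjacent to the outer boundary $\alpha$, bounded above by a sub-path $\alpha_0$ of $\alpha$ and below by an internal path $\gamma$ running between the unique source and target of $f$. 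Commutativity of $f$ gives $\alpha_0=\gamma$ in $\Fr(\R)$; substituting $\gamma$ for $\alpha_0$---legitimate because equal reductions may be composed into and substituted within larger reductions, by the (Transitivity), (Structure) and (Replacement) rules generating $\Fr(\R)$---produces a path $\alpha'$ with $\alpha=\alpha'$ in $\Fr(\R)$ and a subdivision of $\langle\alpha',\beta\rangle$ with one fewer face. Induction then yields $\alpha'=\beta$, and hence $\alpha=\beta$.

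Finally, for $(1)\Rightarrow(3)$ I would unwind the inductive definition of the congruence $[\E_\T+S(\R)]$: an equality $\alpha=\beta$ in $\Fr(\R)$ is witnessed by a finite derivation in which each elementary step rewrites a single subterm using (Funct), (Nat~1), (Nat~2), or an equation of $\E_\T$. Reading this derivation as a chain of reductions $\alpha=\gamma_0,\gamma_1,\dots,\gamma_k=\beta$ in which consecutive entries differ by one elementary rewrite, I would stack the $\gamma_i$ as horizontal levels and insert between $\gamma_{i-1}$ and $\gamma_i$ a single face recording that rewrite. By construction each such face is an instance of functoriality, of naturality, or of an $\E_\T$-equation, and the resulting planar diagram is a subdivision of $\langle\alpha,\beta\rangle$ of exactly the form demanded by $(3)$.

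The main obstacle I anticipate lies in the bookkeeping of the inductive step of $(2)\Rightarrow(1)$: one must check that a face adjacent to the outer boundary always exists, that replacing $\alpha_0$ by $\gamma$ genuinely lowers the face count while preserving planarity and the $st$-structure, and---most delicately---that Theorem~\ref{thm:power} really does license treating each face as a commuting parallel pair rather than some more intricate region with several sources or sinks. Acyclicity is precisely the hypothesis that rules out the latter pathology, and one should also confirm that the subdivision has finitely many faces so that the induction terminates.
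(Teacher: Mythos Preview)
Your proposal is correct and follows precisely the approach the paper itself indicates: the paper omits a detailed proof, remarking only that the result ``readily'' follows from Theorem~\ref{thm:power} by induction over the number of faces in a subdivision, which is exactly the argument you have fleshed out for $(2)\Rightarrow(1)$. Your handling of the remaining implications and of the bookkeeping concerns (existence of a boundary-adjacent face, finiteness of faces, the role of acyclicity via Theorem~\ref{thm:power}) is appropriate and in line with the paper's intent.
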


In the following section, we use the tools developed so far to tackle
the Lambek coherence problem.

\section{Lambek coherence}\label{sec:lambekcoherence}

With Proposition \ref{prop:faces} and Lemma \ref{lem:fin_sub}, one may
be inclined to think that a Lambek coherence theorem should be
immediately forthcoming, since we know that every quasicycle-free
finitely branching rewriting $2$-theory 
has only finitely many subdivisions for each parallel pair of
reductions and we can just check every face to see whether it is an
instance of functoriality, naturality or a cohenrence axiom. There is,
however, one catch --- we may not be able to decide whether a given face
is an instance of an axiom.

\begin{definition}[Unification]
  Let $\F$ be a ranked set of function symbols on a set $X$ and
  $\E_\F$ be an equational theory on $\Fr_\F(X)$. An
  $\E_\F$-unification problem is a finite set:
  \[
  \Gamma = \{(s_1,t_1),\dots, (s_n,t_n)\},
  \]
  where for $1 \le i \le n$, we have that $s_i$ and $t_i$ are in
  $\Fr_\F(X)$. A \emph{unifier} for  $\Gamma$ is a homomorphism
  $\sigma: X \to \Fr_\F(X)$ such 
  that $\sigma(s_i) =_{\E_\F} \sigma(t_i)$ for all $1 \le i \le
  n$. The set $\Gamma$ is \emph{unifiable} if it admits at least one
  unifier. 
\end{definition}

Unification theory is an important technical component of automated
reasoning and logic programming, as it provides a means for testing
whether two sequences of terms are syntactic variants of each other. A
good survey of the field is provided by
\cite{Baader:unification}. In the case where the theory $\E_\F$ is
empty, the unification problem is readily shown to be
decidable (see \cite{Baader:unification} for details). Unfortunately,
the equational unification problem is in general undecidable. 

\begin{definition}
  A labelled rewriting theory $\rtt$ has \emph{decidable term
    unification} if $\langle \F \,|\, \E_\F\rangle$ has a decidable
  unification problem. 
\end{definition}

We can finally establish a general Lambek coherence theorem.

\begin{theorem}[Lambek Coherence]\label{thm:lambek}
  A finitely branching quasicycle-free rewriting $2$-theory with
  decidable term unification is Lambek Coherent.  
\end{theorem}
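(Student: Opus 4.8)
The plan is to exhibit an explicit decision procedure for the commutativity of parallel pairs in general position, by enumerating their subdivisions and examining each face one at a time. Let $\R$ be a finitely branching, quasicycle-free rewriting $2$-theory with decidable term unification, and let $\alpha,\beta \in \Red(\R)(s,t)$ be a parallel pair in general position. First I would note that quasicycle-freeness forces $\R$ to be acyclic: by Lemma~\ref{lem:cycle} a directed cycle built from non-identity reductions would constitute a quasicycle containing no identity reductions, which is excluded by hypothesis. This places us within the scope of Proposition~\ref{prop:faces}, whose third clause reduces the question ``is $\alpha = \beta$ in $\Fr(\R)$?'' to the existence of a subdivision of $\langle\alpha,\beta\rangle$ each of whose faces is an instance of functoriality, of naturality, or of one of the equations in $\E_\T$.

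Next I would establish that only finitely many candidate subdivisions need be examined, and that they can be generated effectively. Since $\R$ is finitely branching and quasicycle-free, Lemma~\ref{lem:quasi} shows that $\Red(\R)$ is locally finite, so $\Red(\R)(s,t)$ is a finite set; Lemma~\ref{lem:fin_sub} then guarantees that the set of subdivisions $\Sub_\R(\alpha,\beta)$ is finite. To turn this finiteness into an \emph{effective} enumeration, I would build the relevant finite portion of $\Red(\R)$ between $[s]$ and $[t]$ by repeatedly computing out-neighbourhoods $\Out_{\Red(\R)}([u])$, starting from the source $[s]$. Each such computation amounts to finding, up to $[\E_\F]$-equivalence, every way of matching a subterm of a representative of $[u]$ against the left-hand side of a reduction rule. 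From the resulting finite acyclic graph one reads off all of its subdivisions and tests each in turn.

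The procedure then accepts exactly when some enumerated subdivision has every face an axiom-instance, and by Proposition~\ref{prop:faces} this occurs if and only if $\alpha = \beta$; termination is immediate from the finiteness of $\Sub_\R(\alpha,\beta)$. The hard part, and the precise point at which decidable term unification becomes indispensable, is the per-face test. Deciding whether a given face, itself a parallel pair of paths, is an instance of a specified coherence equation or of naturality requires checking whether its two sides can be simultaneously matched, modulo $[\E_\F]$, against the two sides of the relevant axiom; this is exactly an $\E_\F$-unification problem, and the same matching underlies the out-neighbourhood computations of the enumeration step. The genuinely delicate part of writing up the argument is therefore to package both the graph-construction and the face-recognition tasks cleanly as instances of the $\E_\F$-unification problem, so that the hypothesis of decidable term unification delivers decidability of each; with that in place, correctness follows from Proposition~\ref{prop:faces} and termination from the finiteness guaranteed by Lemmas~\ref{lem:quasi} and \ref{lem:fin_sub}.
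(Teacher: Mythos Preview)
Your proposal is correct and follows essentially the same route as the paper: enumerate the finitely many subdivisions of $\langle\alpha,\beta\rangle$ (via Lemmas~\ref{lem:quasi} and~\ref{lem:fin_sub}), then use decidable term unification to test each face against the axiom schemata, invoking Proposition~\ref{prop:faces} for correctness. If anything you are more careful than the paper's terse proof, which cites Lemma~\ref{lem:fin_sub} directly without making the intermediate appeal to Lemma~\ref{lem:quasi} explicit, and which does not spell out the acyclicity check needed to invoke Proposition~\ref{prop:faces}.
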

\begin{proof}
  Let $\R$ be a rewriting $2$-theory satisfying
  the hypotheses and let $\alpha,\beta \in \Red(\R)(s,t)$. By
   Lemma \ref{lem:fin_sub}, we can enumerate the subdivisions of
   $\langle\alpha,\beta\rangle$. Since 
  each subdivision has only finitely many faces and $\R$ has decidable
  term unification, we may apply Proposition \ref{prop:faces}  to determine
  whether every face of a subdivision commutes in $\Fr(\R)$.
\end{proof}

Unfortunately, we may not be able to determine whether a labelled
rewriting theory is quasicycle-free.

\begin{corollary}
  It is undecidable whether a finitely branching rewriting $2$-theory
  theory with decidable term unification is quasicycle-free.
\end{corollary}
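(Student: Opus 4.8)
The plan is to prove undecidability by reducing the halting problem to the complement of quasicycle-freeness, while arranging that every theory produced by the reduction automatically satisfies the two standing hypotheses. To secure those hypotheses cheaply, I would only ever build theories $\R$ with $\E_\F = \varnothing$: then term unification is ordinary syntactic unification, which is decidable, so $\R$ has decidable term unification; and if in addition every reduction rule is non-increasing and term-linear (automatic once $\E_\F$ is empty), then Proposition \ref{prop:linearfinite} shows $\R$ is finitely branching. Thus the entire content of the corollary is carried by the third property, quasicycle-freeness.

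First I would fix a deterministic machine model whose halting problem on a fixed input is undecidable, namely a deterministic two-counter (Minsky) machine augmented with a monotone step counter, so that no configuration is ever revisited along a run. I would encode a configuration as a term built from unary function symbols (one per control state, per counter/tape letter, and one for the step counter) applied to a trailing variable $x$, so that a configuration is a string $q(\dots(x))$. Each machine transition becomes a reduction rule matching the state symbol at the root and incrementing the step-counter letter; since the state symbol occurs only at the root of a well-formed configuration and the machine is deterministic, each configuration has a unique one-step reduct. Finally I would add a single \emph{reset} rule firing on any configuration in the halting state $q_h$ and returning the fixed initial configuration $c_0$. Every rule carries the trailing variable on both sides, so all rules are non-increasing and linear and $\E_\F$ remains empty; hence $\R_M$ is finitely branching with decidable term unification.

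The correctness claim is that $M$ halts if and only if $\R_M$ is \emph{not} quasicycle-free. If $M$ halts then $c_0$ reaches a halting configuration by forward steps and the reset rule sends it back to $c_0$, producing a non-trivial directed cycle through $c_0$ in $\Red(\R_M)$; by Lemma \ref{lem:cycle} this yields a quasicycle, and since its reductions are non-identity it witnesses failure of quasicycle-freeness. For the converse I would use the step counter as an invariant: forward rules strictly increase it, while the only decrease is the reset to $c_0$, so any directed cycle must pass through $c_0$ and hence through a halting configuration, which is impossible when $M$ does not halt. The same monotonicity rules out the existence of a common sink reachable from an infinite forward chain, so $\Red(\R_M)$ is locally finite and, by Lemma \ref{lem:quasi}, quasicycle-free. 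Combining the two directions, a decision procedure for quasicycle-freeness of finitely branching theories with decidable term unification would decide halting, which is absurd.

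The hard part will be the non-halting direction: a non-halting run gives an infinite reduction chain $c_0 \to c_1 \to \cdots$, so $\R_M$ is visibly non-terminating and the convenient implication ``terminating implies quasicycle-free'' does not apply. I would instead argue directly that this infinite chain is not part of any quasicycle, i.e. that no single term is reachable from infinitely many of its members: since a fixed target has bounded counter value while the $c_i$ have unbounded counter, reaching it from all but finitely many $c_i$ would force a counter decrease, hence a reset, hence a prior passage through $q_h$, which never happens on a non-halting run. Some care is also needed to ensure that ``garbage'' terms (those not coding genuine configurations, of which $\Red(\R_M)$ has many) cannot conspire to form a cycle or quasicycle; again the step-counter monotonicity, together with the fact that the reset always returns the single term $c_0$, confines every potential cycle to pass through $c_0$, reducing the analysis to the forward simulation from $c_0$ and so to the halting of $M$.
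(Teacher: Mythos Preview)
Your strategy—a direct many-one reduction from the halting problem to non-quasicycle-freeness—is quite different from the paper's. The paper gives a two-line indirect argument: it reuses the monoid encoding $\R(M)$ from the proof of Theorem~\ref{thm:undecidable} (which already has empty $\E_\F$, hence decidable term unification and finite branching) and then invokes Theorem~\ref{thm:lambek} to argue that a decision procedure for quasicycle-freeness would decide whether a finitely presented monoid has a solvable word problem. Your route is self-contained and avoids Theorem~\ref{thm:lambek} entirely, which is a genuine advantage.

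There is, however, a real gap in your construction. You assert both that ``every rule carries the trailing variable on both sides'' and that ``the reset always returns the single term $c_0$''; these are incompatible, and the first one breaks the forward direction of your reduction. If reset has the form $q_h(x)\to c_0[x]$ with $x$ preserved, then in the halting case the intended cycle
\[
c_0[x]\;\to^+\; q_h(w(x))\;\to\; c_0[w(x)]
\]
does not close up: you land at the strictly longer term $c_0[w(x)]$, and iterating produces a chain $c_0[x]\to^+ c_0[w(x)]\to^+ c_0[w^2(x)]\to^+\cdots$ of unboundedly growing terms. Your own step-counter argument then shows no fixed target is reachable from cofinitely many of them, so this is \emph{not} a quasicycle and ``$M$ halts $\Rightarrow$ $\R_M$ not quasicycle-free'' fails.

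The obvious repair is to let reset discard the variable, taking $q_h(x)\to c_0$ with $c_0$ a closed term built over a nullary end-marker. This is still non-increasing, so Proposition~\ref{prop:linearfinite} continues to give finite branching, and now halting yields an honest directed cycle through $c_0$. But once reset can erase an arbitrary subterm, the global step-counter count is no longer monotone along reductions, and your sketch for the non-halting direction (``any directed cycle must pass through $c_0$'') needs strengthening: a reset applied at an interior position produces a term \emph{containing} $c_0$ as a subterm, not the term $c_0$ itself, so a putative cycle among garbage terms need not visit $c_0$. You will need a finer invariant—e.g.\ analysing the behaviour at the outermost state symbol, or tracking the multiset of step-counter values across the several ``machine threads'' that can coexist inside one term—to rule these out.
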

\begin{proof}
  The rewriting $2$-theory constructed in the proof of Theorem
  \ref{thm:undecidable}  has an empty equational theory on
  terms and so has decidable term unification. It
  follows from Theorem \ref{thm:lambek} that, were we able to
  determine whether the theory is quasicycle-free, then we would
  be able to decide whether a finite monoid presentation has a
  decidable word problem. 
\end{proof}

As a particular application of Theorem \ref{thm:lambek}, any
terminating rewriting $2$-theory with an empty equational theory on
terms is Lambek coherent. This includes, amongst others, categories
with a directed associativity  \cite{Laplaza:associative}. The
unification problem for an associative binary symbol $\tensor$
together with an identity $I$ for $\tensor$ is decidable 
\cite{Baader:unification}. It follows then, from Theorem
\ref{thm:lambek} that the following rewriting $2$-theories are Lambek
coherent (in each case we need only check that the $2$-theory is
terminating): 
\begin{itemize}
  \item Distributive categories with strict associativities and strict
    units \cite {Laplaza:distributive}.
  \item Weakly distributive categories with strict associativity and
    strict units \cite{CockettSeely:wdc}.
\end{itemize}

An example of a non-terminating theory that is Lambek-coherent
is provided by the system $F(x) \to F(F(x))$, since this is easily
seen to be quasicycle-free. 

In the following section, we continue our investigation of quasicycle
free theories and derive sufficient conditions for such a system
to be Mac Lane coherent.

\section{Mac Lane coherence}\label{sec:maclanecoherence}

The last section was concerned with deciding whether a given parallel
pair of morphisms is equal or, equivalently, whether a given diagram 
in general position commutes. Our rough goal in this section is to
find a minimal set of diagrams in general position whose commutativity
implies the commutativity of all other such diagrams in $\Fr(\R)$
for some rewriting $2$-theory $\R$. To this end,
we define what it means for one subdivision to be finer than
another. The driving idea is that we only wish to consider those
subdivisions that do not embed into a finer subdivision.

\begin{definition}
  Let $G$ be a directed graph and
  $\alpha,\beta \in G(s,t)$ and $(S_1,\varphi),(S_2,\psi)
  \in \Sub_G(\alpha,\beta)$. We say that $(S_1,\varphi)$ is
  \emph{coarser} than $(S_2,\psi)$ if there is a graph embedding
  $\Lambda:S_1\to S_2$ making the following diagram
  commute. In this case, we also say that $(S_2,\psi)$ is \emph{finer}
  than $(S_1,\varphi)$ and we write $(S_1,\varphi) \preceq
  (S_2,\psi).$ 
  \[
  \xymatrix@R=-0.1pc{
    {S_1} \ar[r]^{|\cdot |} \ar[dddd]_{\Lambda}& {|S_1|} \ar@/^/[ddr]^{\varphi}
    \ar[dddd]_{|\Lambda|}\\
    \\
    &&{\R^2}\\
    \\
    {S_2} \ar[r]_{|\cdot |} &{|S_2|} \ar@/_/[uur]_{\psi}
  }
  \]
  We define the refinement order to be the antisymmetric closure of
  $\preceq$. 
\end{definition}

It is immediate from the definitions that the set of subdivisions of a
parallel pair of morphisms forms a poset under refinement. We shall
abuse notation slightly in the following definition and write
$\preceq$ for the refinement order.   

\begin{definition}
  Let $G$ be a directed graph and 
  $\alpha,\beta \in G(s,t)$. A \emph{maximal subdivision} of
  $\langle\alpha,\beta\rangle$ is a maximal element of
  $(\Sub_G(\alpha,\beta),\preceq)$. 
\end{definition}

The idea behind the definition of a maximal subdivision is that these
are precisely the ones which cannot be further subdivided. This leads
to the following lemma. 

\begin{lemma}\label{lem:maximalface}
  A finitely branching quasicycle-free rewriting $2$-theory is Mac
  Lane coherent if and only if every parallel pair of 
  reductions in general position admits a maximal subdivision, each
  face of which commutes. 
\end{lemma}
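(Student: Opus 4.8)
The plan is to reduce everything to Proposition~\ref{prop:faces} together with the finiteness machinery already in place, so that the only genuine content becomes a statement about general position of faces. First I would record the standing consequences of the hypotheses. Since $\R$ is quasicycle-free, it is acyclic (by Lemma~\ref{lem:cycle} a directed cycle would furnish a quasicycle with no identity reductions), so both Proposition~\ref{prop:faces} and Theorem~\ref{thm:power} apply to $\R$. Being finitely branching and quasicycle-free, $\Red(\R)$ is locally finite by Lemma~\ref{lem:quasi}, and hence by Lemma~\ref{lem:fin_sub} every parallel pair $\alpha,\beta \in \Red(\R)(s,t)$ has only finitely many subdivisions. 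Thus $(\Sub_\R(\alpha,\beta),\preceq)$ is a finite, nonempty poset — nonempty because the pair $\{\alpha,\beta\}$ is itself a subdivision — so every parallel pair in general position does admit a maximal subdivision. The existence clause is therefore automatic, and the real issue is whether the faces commute.

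The reverse implication is then immediate. Assuming every parallel pair in general position admits a maximal subdivision whose faces all commute, let $\alpha,\beta$ be any parallel pair in general position. The guaranteed maximal subdivision is in particular a subdivision of $\langle\alpha,\beta\rangle$ each of whose faces commutes in $\Fr(\R)$, so the implication $(2)\Rightarrow(1)$ of Proposition~\ref{prop:faces} gives $\alpha=\beta$ in $\Fr(\R)$. Hence $\R$ is Mac Lane coherent.

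For the forward implication I would assume $\R$ is Mac Lane coherent and let $\alpha,\beta$ be a parallel pair in general position. By the first paragraph, choose any maximal subdivision $S$ of $\langle\alpha,\beta\rangle$. Since $\R$ is acyclic, Theorem~\ref{thm:power} tells us that each face of $S$ has a unique source and target, so each face is a parallel pair of paths $q_1,q_2$ in $\Red(\R)$, i.e.\ a parallel pair of reductions in $\Fr(\R)$. The claim I would then establish is that each such face is in general position; granting this, Mac Lane coherence forces $q_1=q_2$ for every face, so $S$ is the required maximal subdivision with commuting faces, completing the direction.

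The main obstacle is exactly this claim that faces of a subdivision of a general-position pair are again in general position. The idea I would pursue is that general position is a genericity condition that is inherited by sub-reductions: after replacing $\alpha,\beta$ by a representative in which $|\var(\alpha)|$ and $|\var(\beta)|$ attain the maximum allowed by $\Shape(\alpha)$ and $\Shape(\beta)$, no variable identifications are forced anywhere along the reductions of $S$, which are composites of absolutely singular reductions between terms occurring downstream of $s$. One then argues, by induction on the face structure together with the recursive definitions of $\Shape$ and $\var$, that each boundary path $q_i$ of a face already carries the maximal number of distinct variables for its shape, so that the face is a general-position parallel pair; the substitution rule lets one transport this back to the original, possibly non-generic, instance. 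Making this preservation argument precise — in particular controlling how $\Shape$ and $\var$ behave as one passes from the ambient pair to an individual face — is the delicate step, and it is where I would expect to spend most of the effort.
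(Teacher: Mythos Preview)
Your proposal is correct and follows the same route as the paper: use finiteness of $\Sub_\R(\alpha,\beta)$ to produce a maximal subdivision, invoke Theorem~\ref{thm:power} so that each face is a parallel pair, and then appeal to Mac Lane coherence for the forward direction and to Proposition~\ref{prop:faces} (equivalently, induction on the number of faces) for the reverse. The one difference is emphasis: you flag the inheritance of general position by faces as the ``main obstacle'' and sketch an inductive argument for it, whereas the paper simply writes ``Since $\R$ is Mac Lane coherent, each of these faces commutes'' and moves on, treating this as immediate rather than as the crux of the proof.
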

\begin{proof}
  The direction ($\Leftarrow$) follows from induction over the number
  of faces. For the other direction,
  let $\R$ be a finitely branching quasicycle-free rewriting
  $2$-theory. Let $\alpha,\beta \in \Red_\R(s,t)$. Since $\R$ 
  is quasicycle-free and finitely branching, it follows from 
  Lemma \ref{lem:fin_sub} that $\Sub_{\R}(\alpha,\beta)$ is
  finite. Therefore, $\langle\alpha,\beta\rangle$ admits a maximal
  subdivision $(S,\varphi)$. By Theorem \ref{thm:power}, every face of
  $(S,\varphi)$ has a unique source and target. Since $\R$ is Mac Lane
  coherent, each of these faces commutes.
\end{proof}

In order to make Lemma \ref{lem:maximalface} effective, we need to
characterise those parallel pairs of morphisms that can occur as faces
of a maximal subdivision.

\begin{definition}[Zig-zag subdivision]
Let $G$ be a directed graph and $\alpha,\beta \in G(s,t)$. Suppose
that  
\begin{align*}
  \alpha &= s \stackrel{\alpha_0}{\to}a_0\stackrel{\alpha_1}{\to}\dots
  \stackrel{\alpha_{n-1}}{\to}a_{n-1}\stackrel{\alpha_n}{\to}t\\ 
  \beta &=   s \stackrel{\beta_0}{\to}b_0\stackrel{\beta_1}{\to}\dots
  \stackrel{\beta_{m-1}}{\to}b_{m-1}\stackrel{\beta_m}{\to}t
\end{align*}
and that each $\alpha_i$ and $\beta_i$ is singular. Let $U$ be the
forgetful functor from directed graphs to graphs that forgets the
direction of edges. A
\emph{zig-zag subdivision} of $\langle\alpha,\beta\rangle$ is a
subdivision $(S,\varphi)$ of $\langle\alpha,\beta\rangle$ such that
$U(S)$ contains a path from $U(a_i)$ to $U(b_j)$ for some pair
$(i,j)$, with $0 \le i \le n-1$ and $0 \le j \le m-1$. We call the
preimage of this path \emph{the zig-zag of $S$}.
\end{definition}

\begin{figure}[ht]
    \begin{center}
      \begin{tabular}{ll}
        $
        \def\objectstyle{\scriptstyle}
        \def\labelstyle{\scriptstyle}
        \begin{xy}
          (0,0)*+{\cdot}="a";
          (15,10)*+{\cdot}="b";
          (15,-10)*+{\cdot}="c";
          (33,0)*+{\cdot}="d";
          {\ar@{->}@/_/ "c";"d"};
          {\ar@{->}@/^/ "a"; "b"};
          {\ar@{->}@/_/ "a"; "c"};
          {\ar@{->}@/^/ "b";"d"};
          {\ar@{->} "c";"b"}
        \end{xy}
        $
        & \quad
        $
        \def\objectstyle{\scriptstyle}
        \def\labelstyle{\scriptstyle}
        \begin{xy}
          (0,0)*+{\cdot}="a";
          (12,10)*+{\cdot}="b";
          (12,-10)*+{\cdot}="c";
          (33,0)*+{\cdot}="d";
          (17,0)*+{\cdot}="e";
          {\ar@{->}@/_/ "c";"d"};
          {\ar@{->}@/^/ "a"; "b"};
          {\ar@{->}@/_/ "a"; "c"};
          {\ar@{->}@/^/ "b";"d"};
          {\ar@{->} "e"; "d"};
          {\ar@{->} "b";"e"};
          {\ar@{->} "c";"e"};
        \end{xy}
        $
        \\ \\
        $
        \def\objectstyle{\scriptstyle}
        \def\labelstyle{\scriptstyle}
        \begin{xy}
          (0,0)*+{\cdot}="a";
          (21,10)*+{\cdot}="b";
          (21,-10)*+{\cdot}="c";
          (33,0)*+{\cdot}="d";
          (17,0)*+{\cdot}="e";
          {\ar@{->}@/_/ "c";"d"};
          {\ar@{->}@/^/ "a"; "b"};
          {\ar@{->}@/_/ "a"; "c"};
          {\ar@{->}@/^/ "b";"d"};
          {\ar@{->} "a"; "e"};
          {\ar@{->} "e";"b"};
          {\ar@{->} "e";"c"};
        \end{xy}
        $

        & \qquad

        $
        \def\objectstyle{\scriptstyle}
        \def\labelstyle{\scriptstyle}
        \begin{xy}
          (0,0)*+{\cdot}="a";
          (16,10)*+{\cdot}="b";
          (16,-10)*+{\cdot}="c";
          (33,0)*+{\cdot}="d";
          (18,3)*+{\cdot}="e";
          (14,-3)*+{\cdot}="f";
          {\ar@{->}@/_/ "c";"d"};
          {\ar@{->}@/^/ "a"; "b"};
          {\ar@{->}@/_/ "a"; "c"};
          {\ar@{->}@/^/ "b";"d"};
          {\ar@{->} "a"; "f"};
          {\ar@{->} "b";"e"};
          {\ar@{->} "f";"c"};
          {\ar@{->} "f";"e"};
          {\ar@{->} "e";"d"};
        \end{xy}
        $

      \end{tabular}
    \end{center}
\caption{A few zig-zag subdivisions.}\label{fig:zigzag}
\end{figure}
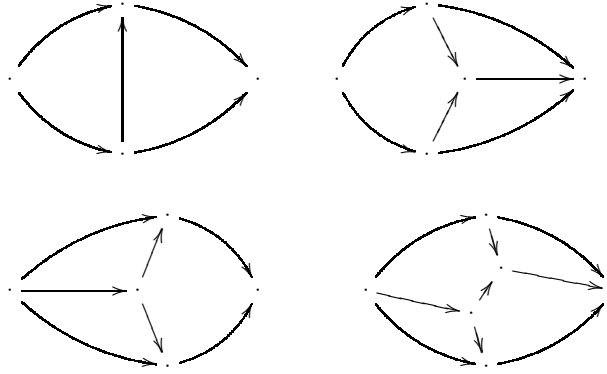

\begin{definition}[Diamond]
  Let $G$ be a directed graph. A pair $\alpha,\beta \in G(s,t)$ is
  called a \emph{diamond} if it does not admit a zig-zag subdivision.
\end{definition}

The idea behind the definition of a diamond is that any subdivision
containing a face that admits a zig-zag subdivision cannot be a
maximal subdivision. This is made precise in the following
proposition.

\begin{proposition}\label{prop:diamond}
  Let $G$ be an acyclic directed graph and $\alpha,\beta \in
  G(s,t)$. Every face of a maximal subdivision of
  $\langle\alpha,\beta\rangle$ is a diamond.  
\end{proposition}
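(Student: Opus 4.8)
The plan is to argue by contraposition: I would show that if some face of a subdivision $(S,\varphi)$ of $\langle\alpha,\beta\rangle$ admits a zig-zag subdivision, then $(S,\varphi)$ can be refined strictly and so cannot be maximal. So suppose $(S,\varphi)$ is any subdivision and let $F$ be one of its faces. Because $G$ is acyclic, the planar $st$-graph carrying $(S,\varphi)$ is acyclic, so Theorem \ref{thm:power} applies and $F$ has a unique source $s'$ and a unique target $t'$; its boundary is therefore a parallel pair $\langle\alpha',\beta'\rangle$ with $\alpha',\beta'\in G(s',t')$. This is exactly the data to which the diamond and zig-zag notions apply, the interior vertices of $\alpha'$ and $\beta'$ being the vertices strictly between $s'$ and $t'$ along each path.

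Now suppose, for contradiction, that $F$ is \emph{not} a diamond. Then $\langle\alpha',\beta'\rangle$ admits a zig-zag subdivision $(S_F,\varphi_F)$, so $U(S_F)$ contains a path joining an interior vertex of $\alpha'$ to an interior vertex of $\beta'$ through the interior of the region bounded by $\varphi_F(|\alpha'|)$ and $\varphi_F(|\beta'|)$. Such a path must use at least one edge off the boundary $\alpha'\cup\beta'$; in particular $S_F$ properly contains $\alpha'\cup\beta'$ and so is a strict refinement of the trivial subdivision of $\langle\alpha',\beta'\rangle$.

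The heart of the argument is then to glue $(S_F,\varphi_F)$ into the face $F$. Since $\varphi_F$ embeds $|S_F|$ into the planar region bounded by $\alpha'$ and $\beta'$, and that region is --- up to ambient isotopy and the action of $E(2)$ --- exactly the face $F$ of $\varphi$, I would form $S'' := S\cup S_F$, identifying the common boundary paths $\alpha'$ and $\beta'$, and extend $\varphi$ to a planar embedding $\varphi''$ that agrees with $\varphi$ on $S$ and places the interior of $S_F$ inside $F$. No crossings arise, since the added edges live in the interior of a single face of $\varphi$. I would then verify that $(S'',\varphi'')$ is again a subdivision of $\langle\alpha,\beta\rangle$: every new vertex $w$ belongs to $S_F$ and hence admits paths $s'\to w$ and $w\to t'$; composing with the paths $s\to s'$ and $t'\to t$ that already exist in $S$ shows that $S''$ is an $st$-graph with source $s$ and target $t$, we still have $\{\alpha,\beta\}\subseteq S''\subseteq G$, and all edges of $S''$ remain in the region bounded by $\alpha$ and $\beta$. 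The inclusion $S\hookrightarrow S''$ furnishes a graph embedding compatible with the two planar embeddings, so $(S,\varphi)\preceq(S'',\varphi'')$; and the refinement is strict because $S''$ carries the zig-zag, i.e.\ interior structure absent from the face $F$ of $S$. Hence $(S,\varphi)$ is not maximal, completing the contrapositive.

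The step I expect to be the main obstacle is precisely this gluing and its attendant planarity bookkeeping: one must justify that the region bounded by $\varphi_F(|\alpha'|),\varphi_F(|\beta'|)$ may legitimately be identified with the face $F$ of $\varphi$, so that the union embeds without crossings, and that the resulting object is well-defined as an element of $\Sub_G(\alpha,\beta)$ modulo the ambient-isotopy and $E(2)$ quotients. This is geometrically evident but is the one place where the formal definitions of pre-subdivision, ambient isotopy and the Euclidean-group quotient all have to be handled with care; once it is in place, the strict refinement and the contradiction with maximality follow immediately.
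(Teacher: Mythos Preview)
Your gluing strategy is the natural approach and is, in effect, what the paper invokes in the line ``By maximality, $\gamma$ must be contained in $S$''. But there is a genuine gap, and it is not the planarity bookkeeping you flag. The real difficulty is this: the interior edges of $S_F$ --- in particular the zig-zag $\gamma$ itself --- may already lie in $S$, embedded by $\varphi$ \emph{outside} the face $F$. In that case the set-theoretic union $S''=S\cup S_F$ may add nothing at all, and even when it does, the refinement condition $\varphi''\!\restriction_S=\varphi$ forbids you from relocating those existing edges into the interior of $F$. So you cannot conclude that the embedding extends, let alone that the refinement is strict. Your discussion treats the gluing as ``geometrically evident'', but this case shows it is not.

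The paper's proof addresses precisely this scenario. Having observed that maximality forces $\gamma\subseteq S$, it notes that $\varphi(|\gamma|)$ must then lie outside the face (since $F$ has empty interior), and then uses planarity together with the $st$-graph property and acyclicity to derive a contradiction: any path from the global source $s$ to the face-source $u$ must, by planarity of the embedding, pass through a vertex of $\gamma$ or through one of the later segments $\eta_2,\psi_2$; either way one obtains a directed cycle in $G$. This is where the acyclicity hypothesis does substantive work beyond its role in Theorem~\ref{thm:power}, and it is exactly the argument your proposal is missing.
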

\begin{proof}
  Let $G$ be an acyclic directed graph and let $(S,\varphi)$ be a maximal 
  subdivision of $\alpha,\beta \in G(s,t)$. By Theorem
  \ref{thm:power}, every face of $S$ has a unique source and
  target. That is, every face consists of a parallel pair of
  reductions $\eta,\psi:u \to v$. Suppose that $\langle
 \eta,\psi\rangle$ is a face of $S$ that is not a diamond. That
  is, it admits a zig-zag subdivision. So, we have
  \begin{align*}
    \eta &= u \stackrel{\eta_1}{\to} w \stackrel{\eta_2}{\to}
    v\\ 
    \psi &= u \stackrel{\psi_1}{\to} x \stackrel{\psi_2}{\to} v,
  \end{align*}
  and a zig-zag $\gamma$ between $w$ and $x$ that is a part of a
  subdivision of $\langle\eta, \psi\rangle$. By maximality, $\gamma$
  must be contained in $S$. Since $\langle\eta, \psi\rangle$ is a
  face, $\varphi(|\gamma|)$ cannot lie in the region bounded by
  $\varphi(|\eta|)$ and $\varphi(|\psi|)$.  So, we are in one of the
  situations depicted in Figure \ref{fig:cantembed}.

  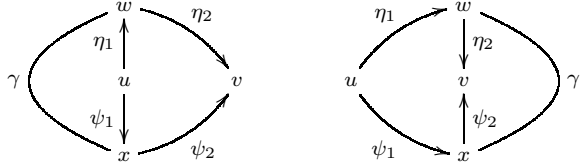
\begin{figure}[h]
    \begin{tabular}{ll}
      $
      \def\objectstyle{\scriptstyle}
      \def\labelstyle{\scriptstyle}
      \begin{xy}
        (0,0)*+{u}="u";
        (0,10)*+{w}="w";
        (0,-10)*+{x}="x";
        (15,0)*+{v}="v";
        {\ar@{-}@/^3pc/^{\gamma} "x";"w"};
        {\ar@{->}^{\eta_1} "u";"w"};
        {\ar@{->}_{\psi_1} "u"; "x"};
        {\ar@{->}@/_/_{\psi_2} "x"; "v"};
        {\ar@{->}@/^/^{\eta_2} "w";"v"};
      \end{xy}
      $

      &\qquad

      $
      \def\objectstyle{\scriptstyle}
      \def\labelstyle{\scriptstyle}
      \begin{xy}
        (0,0)*+{v}="v";
        (0,10)*+{w}="w";
        (0,-10)*+{x}="x";
        (-15,0)*+{u}="u";
        {\ar@{->}_{\psi_2} "x";"v"};
        {\ar@{->}^{\eta_2} "w";"v"};
        {\ar@{->}@/^/^{\eta_1} "u";"w"};
        {\ar@{->}@/_/_{\psi_1} "u";"x"};
        {\ar@{-}@/^3pc/^{\gamma} "w"; "x"};
      \end{xy}
      $
    \end{tabular}
    \caption{Possible embeddings of $\gamma$.}\label{fig:cantembed} 
  \end{figure}
  
  Suppose that we are in the situation depicted in the left hand
  diagram of Figure \ref{fig:cantembed}. Since $\gamma$ is contained
  in $S$ and since $S$ is an st-graph, there is a path $s
  \stackrel{\delta}{\to} u$. By planarity, $\delta$ must factor
  through a vertex in $\gamma$ or $\eta_2$ or $\psi_2$. If $\delta$
  factors through a vertex in $\eta_2$ or $\psi_2$ then it is clear
  that $G$ contains a cycle, contradicting the fact that $G$ is
  acyclic. So, we must have $s \stackrel{\delta_1}{\to} z
  \stackrel{\delta_2}{\to} u$ for some vertex $z$ in
  $\gamma$. However, since $\gamma$ appears in a subdivision of
  $\langle \eta,\psi\rangle$, there is a path $u \stackrel{\zeta}{\to}
  z$ in $G$. Then, $\delta_2\cdot\zeta$ forms a cycle in $G$,
  contradicting the fact that $G$ is acyclic. So, $\gamma$ cannot be
  embedded as in the left hand picture of Figure
  \ref{fig:cantembed}. Dually, it cannot be embedded as in the right
  hand picture of Figure \ref{fig:cantembed}.
  
  Therefore, the zig-zag $\gamma$ must be embedded within the face
  bounded by $\langle\eta,\psi\rangle$, contradicting the maximality
  of $(S,\varphi)$. So, $\langle \eta,\psi\rangle$ must be a diamond.  
\end{proof}

Combining Lemma \ref{lem:maximalface} and Proposition
\ref{prop:diamond}, we obtain our general version of the Mac Lane
Coherence theorem. 

\begin{theorem}[Coherence]\label{thm:maclane}
  A finitely branching quasicycle-free rewriting $2$-theory
  $\R$ is Mac Lane coherent if and only if every diamond in 
  $\Red(\R)$ commutes in $\Fr(\R)$. \qed
\end{theorem}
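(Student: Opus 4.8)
The plan is to treat this theorem as the packaging of the two immediately preceding results, so that the proof is a short biconditional argument with the genuine work already discharged by Lemma~\ref{lem:maximalface} and Proposition~\ref{prop:diamond}. I would dispose of the forward implication ($\Rightarrow$) first: if $\R$ is Mac Lane coherent then any two parallel reductions in general position in $\Fr(\R)$ are equal by definition, and since a diamond in $\Red(\R)$ is in particular such a parallel pair, it commutes. The only point needing care is the general-position clause built into the definition of Mac Lane coherence, so I would restrict the diamonds under consideration to those in general position --- precisely the ones produced as faces in the other direction --- and argue that commutativity of the general-position case transfers to substitution instances via the Substitution rule.

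For the substantive implication ($\Leftarrow$) I would assume every diamond commutes and reduce, via Lemma~\ref{lem:maximalface}, to exhibiting for each parallel pair $\alpha,\beta \in \Red(\R)(s,t)$ in general position a maximal subdivision all of whose faces commute. First I would establish that a maximal subdivision exists at all: since $\R$ is finitely branching and quasicycle-free, Lemma~\ref{lem:quasi} makes $\Red(\R)$ locally finite and then Lemma~\ref{lem:fin_sub} makes the poset $\Sub_{\R}(\alpha,\beta)$ finite, so it has a maximal element $(S,\varphi)$. Next I would use that quasicycle-free implies acyclic (Lemma~\ref{lem:cycle}), which is exactly the hypothesis that Proposition~\ref{prop:diamond} needs in order to conclude that every face of $(S,\varphi)$ is a diamond. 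By assumption each such face commutes, so $(S,\varphi)$ is a maximal subdivision whose faces all commute, and Lemma~\ref{lem:maximalface} delivers Mac Lane coherence.

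The hard part is not a calculation but the bookkeeping of which hypothesis is doing what: \emph{finitely branching} together with \emph{quasicycle-free} is what forces finiteness of the subdivision poset and hence the existence of a maximal subdivision, whereas \emph{quasicycle-free} alone, through acyclicity, is what licenses Proposition~\ref{prop:diamond} and the underlying theorem of Power (Theorem~\ref{thm:power}) guaranteeing that each face is a genuine parallel pair with a single source and target. I expect the subtlest point to be the forward direction's interaction with general position: one must check that the diamonds forced to commute are exactly the general-position faces arising in the backward construction, so that the biconditional is not silently weakened on one side.
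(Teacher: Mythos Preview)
Your proposal is correct and takes the same approach as the paper, which simply declares the theorem an immediate consequence of combining Lemma~\ref{lem:maximalface} and Proposition~\ref{prop:diamond} and marks it with a \qed. Your unpacking of the argument---existence of a maximal subdivision via Lemmas~\ref{lem:quasi} and~\ref{lem:fin_sub}, faces being diamonds via Proposition~\ref{prop:diamond}, then invoking Lemma~\ref{lem:maximalface}---is exactly the intended reading, and the general-position subtlety you flag is a fair observation that the paper itself leaves implicit.
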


Theorem \ref{thm:maclane} says that in order to show that a
finitely branching rewriting $2$-theory is Mac Lane coherent, we need to
do two things:
\begin{enumerate}
  \item Show that $\Fr(\R)$ is quasicycle-free.
  \item Show that every diamond commutes.
\end{enumerate}

At the outset, showing that every diamond commutes can be a daunting
task. We can guide our investigations by  exploiting the properties of
critical spans.

\begin{definition}
Let $\R$ be a rewriting $2$-theory and let $\varphi_1$ and
$\varphi_2$ be singular morphisms in $\Fr(\R)$. We call
$\langle\varphi_1,\psi_1\rangle$ the \emph{source span} in a diagram of the
following form:
\[
    \def\objectstyle{\scriptstyle}
    \def\labelstyle{\scriptstyle}
    \vcenter{
      \xymatrix{
        {\cdot} \ar[r]^{\varphi_1} \ar[d]_{\psi_1} &  {\cdot} \ar[d]^{\varphi_2} \\
        {\cdot} \ar[r]_{\psi_2} & {\cdot},
      }
    }
\]
\end{definition}
 
If $\varphi_1$ and $\psi_1$ are singular, then there are three
possibilities for a diamond with source span $\langle \varphi_1,
\psi_1\rangle$:
\begin{enumerate}
  \item $\varphi_1$ and $\psi_1$ rewrite disjoint subterms. 
  \item $\varphi_1$ and $\psi_1$ rewrite nested subterms.
  \item $\varphi_1$ and $\psi_1$ rewrite overlapping
    subterms. Without loss of generality, we may assume that
    $\langle\varphi_1,\psi_1\rangle$ forms a critical span. 
\end{enumerate}

By analogy with Lemma \ref{lem:cp}, one
may hope to reduce the problem to only examining diamonds whose source
span is critical. Unfortunately, as the following two 
examples show, there may be more than one diamond whose source span
performs a given pair of nested or disjoint rewrites.

\begin{example}\label{ex:nested}
In this example we construct a terminating rewriting $2$-theory that
has more than one diamond with the same source span performing a
nested pair of rewrites. Let $\R$ be the $2$-theory consisting of unary
functor  symbols $I,J$ and $H$, together with the following reduction
rules: 

\begin{align*}
  I(x) &\to J(x)\\
  I(J(x)) &\to H(x)\\
  J(I(x)) &\to H(x)\\
\end{align*}
Then, $\Fr(\R)$ contains the following diagram:

\[
\vcenter{
  \def\objectstyle{\scriptstyle}
  \def\labelstyle{\scriptstyle}
  \begin{xy}
    (0,0)*+{I(I(a))}="a";
    (12,10)*+{J(I(a))}="b";
    (12,-10)*+{I(J(a))}="c";
    (40,0)*+{H(a)}="d";
    (24,0)*+{J(J(a))}="e";
    {\ar@/^0.2pc/ "b";"e"};
    {\ar@/_0.2pc/ "c";"e"};
    {\ar@{->}@/_/ "c";"d"};
    {\ar@{->}@/^/ "a"; "b"};
    {\ar@{->}@/_/ "a"; "c"};
    {\ar@{->}@/^/ "b";"d"};
  \end{xy}
}
\]
 Since there is no reduction $J(J(a)) \to H(a)$, both parallel
 reductions form diamonds. 
\end{example}

\begin{example}\label{ex:disjoint}
  In this example we construct a terminating rewriting $2$-theory that
  has more   than one diamond with the same source span performing a
  disjoint pair of rewrites. Let $\R$ be the rewriting $2$-theory consisting of
  unary functor  symbols $I$ and $J$, the binary functor symbol
  $\tensor$ and the   following reduction rules:  
  \begin{align*}
    I(x) &\to J(x)\\
    J(x)\tensor I(x) &\to H(x)\\
    I(x)\tensor J(x) &\to H(x)\\
  \end{align*}
  Then, $\Fr(\R)$ contains the following diagram:

  \[
  \vcenter{
    \def\objectstyle{\scriptstyle}
    \def\labelstyle{\scriptstyle}
    \begin{xy}
      (0,0)*+{I(A)\tensor I(A)}="a";
      (12,10)*+{I(A) \tensor J(A)}="b";
      (12,-10)*+{J(A) \tensor I(A)}="c";
      (40,0)*+{H(A)}="d";
      (24,0)*+{J(A) \tensor J(A)}="e";
      {\ar@/^0.2pc/ "b";"e"};
      {\ar@/_0.2pc/ "c";"e"};
      {\ar@{->}@/_/ "c";"d"};
      {\ar@{->}@/^/ "a"; "b"};
      {\ar@{->}@/_/ "a"; "c"};
      {\ar@{->}@/^/ "b";"d"};
    \end{xy}
  }
  \]
   Since there is no reduction $J(A)\tensor J(A) \to H(A)$, both parallel
   reductions form diamonds. 
\end{example}

Examples \ref{ex:nested} and \ref{ex:disjoint} serve to warn us that
the collection of diamonds behaves a lot more subtly than the
collection of spans, which are the typical objects of study in
traditional term rewriting theory. In the next section, we look at
when a labelled rewriting theory cannot be made into a Mac Lane coherent
rewriting $2$-theory by only finitely many coherence axioms.

\section{Finite Mac Lane coherence}\label{sec:finiteness}

In light of Theorem \ref{thm:maclane}, we have a reasonable strategy
for determining whether a given rewriting $2$-theory is Mac Lane
coherent. However, we are still left with the problem of determining
whether a given finitely presented labelled rewriting theory can be
extended to a finitely presented Mac Lane coherent rewriting
theory. Bearing in mind the results of the previous section, we have
two reasonable candidates for ensuring this property: quasicycle
freeness and termination. In this section, we show that neither of
these conditions suffice in general.
 
Given a rewriting $2$-theory $\R := \rtt$, we say that the labelled
rewriting theory $\ltr$ is the \emph{reduct} of $\R$.

\begin{definition}[Finitely Mac Lane coherent]
  A finitely presented labelled rewriting theory is \emph{finitely Mac
    Lane coherent} if it is the reduct of a finitely presented Mac
  Lane coherent rewriting $2$-theory.
\end{definition}

It is not  a priori obvious whether there exist theories that are
\emph{not} finitely Mac Lane coherent.  We can simplify our
investigation of this point somewhat by focusing on the most basic
diamonds.  

\begin{definition}[Basic diamond]
  A diamond $\Delta_1$ appearing in the reduction graph of a labelled
  rewriting theory $\L$ is \emph{basic} if it satisfies the following
  properties:
  \begin{enumerate} 
  \item  For every substitution $\sigma$ and diamond $\Delta_2$, if
    $\Delta_1 = \Delta_2^\sigma$, then $\sigma$ is a variable
    renaming.  
  \item For every unary functor $F \in \Fr(\L)$ and every diamond
    $\Delta_2$, if $F(\Delta_2) = \Delta_1$, then $F = 1$. 
  \end{enumerate}
\end{definition}

The following theorem is immediate from Theorem \ref{thm:maclane}.

\begin{theorem}\label{lem:reduced}
A finitely presented, finitely branching labelled rewriting theory
$\L$ is finitely Mac Lane coherent if and only if $\Red(\L)$ contains
finitely many basic diamonds, up to variable renaming. \qed
\end{theorem}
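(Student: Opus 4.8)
The plan is to read this statement as the finitary shadow of Theorem \ref{thm:maclane}: that result identifies Mac Lane coherence with the commutativity of every diamond, so finite Mac Lane coherence ought to correspond to the finiteness of a generating set of diamonds, and I claim the basic diamonds are exactly such a set. Throughout I would invoke Theorem \ref{thm:maclane} in the form ``a finitely branching (quasicycle-free) rewriting $2$-theory is Mac Lane coherent if and only if every diamond in its reduction graph commutes,'' which applies since $\L$ is finitely branching. A preliminary normalisation I would carry out is that any finite equational theory $\E_\T$ witnessing finite Mac Lane coherence may be taken to consist of diamonds: if an axiom $(\alpha,\beta)$ is not a diamond, Proposition \ref{prop:diamond} gives it a maximal subdivision all of whose faces are diamonds, and Proposition \ref{prop:faces} shows that requiring those finitely many faces to commute is equivalent to requiring $(\alpha,\beta)$ to commute. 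Thus finite Mac Lane coherence is equivalent to the existence of a finite set of diamonds whose imposed commutativity forces every diamond to commute in $\Fr(\R)$.

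The technical core is a generation lemma: every diamond in $\Red(\L)$ is, up to variable renaming, a substitution instance and/or a unary-functor wrapping of a basic diamond. I would prove this by defining a reduction on diamonds that strips an outer unary functor (replacing $F(\Delta_2)$ by $\Delta_2$ when $F\ne 1$) or pulls a diamond back along a non-renaming substitution (replacing $\Delta_2^\sigma$ by $\Delta_2$), checking in each case that the result is again a diamond, i.e.\ that these operations reflect the non-existence of zig-zag subdivisions. A standard well-founded argument on the size of the diamond shows the process terminates, and by the two clauses of the definition of basic diamond it terminates precisely at basic diamonds. This exhibits the basic diamonds as generators of all diamonds under exactly the two operations that the Structure and Substitution rules of the congruence $[\E_\T + S(\R)]$ propagate.

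For the direction ($\Leftarrow$), suppose $\Red(\L)$ has only finitely many basic diamonds $\Delta_1,\dots,\Delta_k$ up to renaming. I would set $\E_\T := \{(\alpha_i,\beta_i) : \Delta_i = \langle\alpha_i,\beta_i\rangle\}$, yielding a finite presentation with reduct $\L$. Given any diamond $\Delta$, the generation lemma writes it (after renaming) as a wrapping and/or substitution instance of some $\Delta_i$; since the congruence $[\E_\T + S(\R)]$ is closed under the Structure and Substitution rules, applying these to the axiom $(\alpha_i,\beta_i)$ places $\Delta$ in the congruence, so $\Delta$ commutes in $\Fr(\R)$. By Theorem \ref{thm:maclane}, $\R$ is Mac Lane coherent, hence $\L$ is finitely Mac Lane coherent.

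The direction ($\Rightarrow$) is where I expect the main obstacle. Assuming a finite diamond-axiom set $\E_\T$ makes $\R$ coherent, every basic diamond $\Delta$ commutes in $\Fr(\R)$, and I must show $\Delta$ is a variable renaming of one of the finitely many axioms, which bounds the number of basic diamonds by $|\E_\T|$. The difficulty is to rule out a basic diamond commuting \emph{indirectly}, through a derivation in the congruence that routes via intermediate reductions rather than through a single instance of an axiom. The key point I would establish is that, because a diamond admits no zig-zag subdivision, any witness to its commutativity cannot pass through a proper subdivision into smaller faces; combined with Proposition \ref{prop:faces} and the acyclicity supplied by quasicycle-freeness, this forces the commuting derivation to reduce to a single application of the Structure, Substitution, and Transitivity rules to one axiom, so that $\Delta$ is a substitution instance and/or wrapping of some $\Delta_j$. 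Since $\Delta$ is basic, clauses (1) and (2) of the definition then make that instance and wrapping trivial, so $\Delta$ is a variable renaming of $\Delta_j$. Making this ``no indirect commutation of a diamond'' step precise, via a careful induction on the generation of $[\E_\T + S(\R)]$, is the crux; the remaining bookkeeping is routine.
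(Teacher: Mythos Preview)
The paper treats this result as immediate from Theorem \ref{thm:maclane} and supplies no argument, so you are filling in what the paper elides. Your generation lemma and the $(\Leftarrow)$ direction are sound and are presumably what the paper has in mind: take the finitely many basic diamonds as $\E_\T$, observe that every diamond is a substitution instance or unary wrapping of one of them, and invoke Theorem \ref{thm:maclane}.

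The $(\Rightarrow)$ direction, however, has a real gap at precisely the step you flag as ``the crux.'' You argue that because a basic diamond $\Delta$ admits no zig-zag subdivision, any witness to its commutativity via Proposition \ref{prop:faces} must be the trivial subdivision, so that $\Delta$ coincides (up to renaming) with a single axiom. But ``diamond'' excludes only \emph{zig-zag} subdivisions, not all proper ones. For instance, if $\Red(\L)$ contains a third path $\gamma : s \to t$ meeting $\alpha\cup\beta$ only at $s$ and $t$, then the subdivision with faces $\langle\alpha,\gamma\rangle$ and $\langle\gamma,\beta\rangle$ is proper but not a zig-zag (there is no undirected path between an interior vertex of $\alpha$ and one of $\beta$ avoiding $s$ and $t$). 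If both faces happen to commute --- say each is an instance of a different axiom --- then $\Delta$ commutes without being a substitution or wrapping instance of any single axiom. Your proposed induction on the generation of $[\E_\T + S(\R)]$ therefore cannot rule out such transitive witnesses using only the diamond property, and the conclusion ``number of basic diamonds $\le |\E_\T|$'' does not follow from your argument.

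A secondary issue affecting both directions: you invoke Theorem \ref{thm:maclane}, Proposition \ref{prop:diamond}, and the existence of maximal subdivisions, all of which rely on quasicycle-freeness (via Lemmas \ref{lem:quasi} and \ref{lem:fin_sub}), yet the statement being proved does not carry that hypothesis. The paper's subsequent applications all verify quasicycle-freeness separately, so this is plausibly an omitted standing assumption rather than a flaw in your reasoning, but it should be noted.
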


In the following example, we construct an example of a quasicycle-free
labelled rewriting theory that is not finitely Mac Lane coherent.

\begin{example}\label{ex:infiniteqf}
  Let $\L$ be the labelled rewriting theory containing unary function
  symbols   $F,G,I$ and $H$, together with the following reduction
  rules: 
  \begin{align*}
    I(x) &\to G(I(x))\\
    I(x) &\to F(I(x))\\
    F(x) &\to F(F(x))\\
    G(x) &\to G(G(x))\\
    F(x) &\to H(x)\\
    G(x) &\to H(x)
  \end{align*}
  In order to show that $\L$ is quasicycle-free, it suffices to show
  that there is no term $t \in \Fr(\L)$ such that there are
  infinitely many reductions with target $t$ in $\Fr(\L)$. Let
  $\L^{-1}$ be the labelled rewriting theory with the same function
  symbols as $\L$ and a reduction rule $t \to s$ for every reduction
  rule $s \to t$ in $\L$. By Proposition \ref{prop:linearfinite},
  $\L^{-1}$ is finitely branching, so $\L$ is quasicycle-free. 

  However, $\Fr(\L)$ contains the following diagram:  
  \[
  \xymatrix@-0.2pc{
    & {G(I(a))} \ar[r]\ar[d] &{G^2(I(a))} \ar[r]\ar[d] & {G^3(I(a))}
    \ar[r]\ar[d]& {\dots}\\ 
    I(a) \ar@/^/[ur] \ar@/_/[dr] & {H(I(a))} & {H^2(I(a))} & {H^3(I(a))} &
    {\dots}\\ 
    & F(I(a)) \ar[r]\ar[u] & {F^2(I(a))}\ar[r]\ar[u] & {F^3(I(a))}
    \ar[r]\ar[u] & 
    {\dots} 
  }
  \]
  Since there are no reductions $H^i(a) \to H^j(a)$ for $i\ne j$, no
  finite collection of diamonds with source $I(a)$ implies the
  commutativity   of all others. So, $\L$ contains infinitely many
  substitution-reduced diamonds and it follows from Theorem
  \ref{lem:reduced} that it is not finitely Mac Lane coherent.
\end{example}

Example \ref{ex:infiniteqf} works by constructing infinitely many
substitution-reduced diamonds sharing a common source
span. Terminating rewriting theories are far better behaved.

\begin{lemma}\label{lem:finiteout}
  Let $\L$ be a finitely branching terminating labelled rewriting
  theory. Then, for every term $t \in \Fr(\L)$, the set 
  \[\Out(t) = \{t' \in \Fr(\L)  :  \exists \text{ a reduction } t
  \to t' \text{ in $\L$}\}\]
  is finite.
\end{lemma}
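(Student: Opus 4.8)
The plan is to prove this by König's Lemma, using termination to supply well-foundedness and finite branching to supply finite degree. First I would recast the statement inside the reduction graph $\Red(\L)$: by Lemma \ref{lem:irred}, every non-identity reduction $t \to t'$ in $\Fr(\L)$ is equal to a composite of finitely many singular reductions, so $t' \in \Out(t)$ precisely when $t'$ is reachable from $t$ along a directed path of edges (absolutely singular reductions) in $\Red(\L)$. Thus $\Out(t)$ is exactly the set of vertices reachable from $t$, and the goal becomes the graph-theoretic statement that a finitely branching, terminating reduction graph has a finite reachable set from every vertex.

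For the core argument I would introduce, for each vertex $v$, the set $\mathrm{succ}(v) := \{w \neq v : \Red(\L)\text{ has an edge }v \to w\}$. This is finite, since it is contained in $\Out_{\Red(\L)}(v)$, which is finite by the finite-branching hypothesis; crucially it is a set of \emph{vertices}, so parallel edges and non-identity loops $v \to v$ contribute nothing to it. I would then establish the decomposition
\[
\Out(t) = \{t\} \cup \bigcup_{v \in \mathrm{succ}(t)} \Out(v),
\]
where $\supseteq$ is immediate from the composability of reductions, and $\subseteq$ follows by taking any non-identity reduction $t \to u$, writing it via Lemma \ref{lem:irred} as a singular composite $t = w_0 \to \dots \to w_k = u$, and letting $w_j$ be the first term distinct from $t$; then $w_j \in \mathrm{succ}(t)$ and $u \in \Out(w_j)$.

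From here I would argue by contradiction. If $\Out(t)$ were infinite, then since $\mathrm{succ}(t)$ is finite, the displayed decomposition forces $\Out(v_1)$ to be infinite for some $v_1 \in \mathrm{succ}(t)$. Iterating this selection produces an infinite chain $t \to v_1 \to v_2 \to \dots$ with each $v_{i+1} \in \mathrm{succ}(v_i)$ (this is exactly the infinite branch that König's Lemma extracts from the finitely branching tree of loop-free paths out of $t$). Because $v_{i+1} \neq v_i$ at every step, each edge is represented by a \emph{non-identity} singular reduction, so the chain is an infinite sequence of composable singular reductions none of which is an identity — contradicting the definition of terminating, which demands cofinitely many identity reductions. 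Hence $\Out(t)$ must be finite.

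I expect the only genuine subtlety — a matter of careful bookkeeping rather than real difficulty — to be the treatment of identity and loop edges: I must measure branching on \emph{vertices}, so that a rule such as $F(x) \to F(x)$ (which produces a non-identity loop) neither inflates $\mathrm{succ}$ nor breaks finiteness, while simultaneously guaranteeing that the extracted infinite chain uses only genuinely non-identity reductions, so that it actually contradicts termination. A secondary routine point is passing between the edges of $\Red(\L)$, which are absolutely singular reductions (singular reductions modulo $[\E_\F]$), and the singular reductions named in the definition of terminating; choosing a representative for each class along the infinite chain dispatches this.
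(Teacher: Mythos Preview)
Your proposal is correct and follows essentially the same K\"onig-style contradiction as the paper: assume $\Out(t)$ infinite, use finite branching to keep passing to a successor with infinite $\Out$, and obtain an infinite non-identity chain contradicting termination. The paper phrases this via the stratification $\Out(t)_n$ by minimal path length rather than your recursive decomposition $\Out(t)=\{t\}\cup\bigcup_{v\in\mathrm{succ}(t)}\Out(v)$, but these are two presentations of the same argument; if anything, your version is more explicit about the loop/identity bookkeeping than the paper's one-paragraph proof.
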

\begin{proof}
  Let $\L$ be a finitely branching terminating labelled rewriting
  theory and let $t \in \Fr(\L)$. Suppose that $\Out(t)$ is
  infinite. Let $\Out(t)_n$ be the set of terms $t' \in \Out(t)$
  such that a path of minimal length $t \to t'$ in $\Fr(\L)$ contains $n$
  edges. Since $\L$ is finitely branching and $\Out(t)$ is infinite,
  $\Out(t)_n$ is finite and nonempty for all $n \in \N$. This implies
  that $\L$ is not terminating, contradicting our assumptions. Thus,
  $\Out(t)$ is finite.
\end{proof}

It follows from the above lemma that there are only finitely many
substitution-reduced diamonds with a given source span in a finitely
branching, terminating labelled rewriting theory. One may be led by
this observation to posit that such a theory is necessarily finitely
Mac Lane coherent. However, there is still the possibility that there
are infinitely many distinct substitution-reduced diamonds, since the
diamonds may possess different source spans. This problem proves
to be insurmountable, as demonstrated in the following example.

\begin{example}
  In this example, we construct a finitely branching, terminating
  labelled rewriting theory that is not finitely Mac Lane
  coherent. Let $\L$ be the labelled rewriting theory consisting of
  the following function symbols:
  \begin{itemize}
    \item Nullary: $W$
    \item Unary: $S,S',T,T'$
    \item Binary: $F$
  \end{itemize}
  together with the following reduction rules:
  \begin{align*}
    F(F(a,b),c) &\stackrel{\pi}{\longrightarrow} F(a,b)\\
    F(S(a),b) &\stackrel{\alpha}{\longrightarrow} W\\
    F(a,T(b)) &\stackrel{\beta}{\longrightarrow} W\\
    S(a) &\stackrel{\sigma}{\longrightarrow} S'(a)\\
    T(a) &\stackrel{\tau}{\longrightarrow} T'(a)
  \end{align*}
  
  Then, $\Fr(\L)$ contains the following infinite sequence of
  diamonds:

  \[
  \vcenter{
    \begin{xy}
      (-5,0)*+{F(S(a),T(a))}="a";
      (15,15)*+{F(S'(a),T(a))}="b";
      (15,-15)*+{F(S(a),T'(a))}="c";
      (40,0)*+{W}="d";
      {\ar@{->}@/^/ "a"; "b" };
      {\ar@{->}@/_/ "a";"c"};
      {\ar@{->}@/^/^<<<<<{\beta} "b"; "d" };
      {\ar@{->}@/_/_<<<<<{\alpha} "c";"d"};%
    \end{xy}
  }
  \]

  \[
  \vcenter{
    \begin{xy}
      (-5,0)*+{F(F(S(a),b),T(c))}="a";
      (30,20)*+{F(F(S'(a),b),T(c))}="b";
      (15,-20)*+{F(F(S(a),b),T'(c))}="c";
      (55,-20)*+{F(S(a),b)}="d";
      (70,0)*+{W}="e";
      {\ar@{->}@/^/ "a"; "b"};
      {\ar@{->}@/_/ "a";"c"};
      {\ar@{->}_<<<<<{\pi} "c"; "d"};
      {\ar@{->}@/^/^<<<<<{\beta} "b"; "e" };
      {\ar@{->}@/_/_<<<<<{\alpha} "d";"e"};%
    \end{xy}
  }
  \]

  \[
  \vcenter{
    \begin{xy}
      (0,0)*+{F(F(F(S(a),b),c,),T(d))}="a";
      (30,20)*+{F(F(F(S'(a),b),c,),T(d))}="b";
      (0,-20)*+{F(F(F(S(a),b),c,),T'(d))}="c";
      (25,-40)*+{F(F(S(a),b),c)}="d";
      (60,-25)*+{F(S(a),b)}="e";
      (70,0)*+{W}="f";
      {\ar@{->}@/^/ "a"; "b"};
      {\ar@{->}@/_/ "a";"c"};
      {\ar@{->}@/_/_<<<<<{\pi} "c"; "d"};
      {\ar@{->}@/_/_<<<<<{\pi} "d"; "e"};
      {\ar@{->}@/^/^<<<<<{\beta} "b"; "f" };
      {\ar@{->}@/_/_<<<<<{\alpha} "e";"f"};%
    \end{xy}
  }
  \]
  \[
  \begin{xy}
    (0,0)*+{};
    (10,-10)*+{\vdots};
  \end{xy}
  \]
  Since no diamond in the above sequence is a substitution-instance of
  another, it follows from Theorem \ref{lem:reduced} that $\L$ is not
  finitely Mac Lane coherent.
\end{example}

In this chapter, we have developed very general tools for
investigating coherence problems in non-confluent and non-terminating
rewriting $2$-theories. In the following chapter, we apply these
tools to a concrete theory arising in algebraic topology. 

\blanknonumber
\chapter{Iterated monoidal  categories}\label{ch:iterated} 

The coherence theorems developed in Chapter \ref{ch:nunf} are
primarily useful for investigating non-confluent and/or
non-terminating categorical structures. As we have seen previously in
Chapter \ref{ch:unf} a vast array of categorical structures suffer
from neither of these deficiencies. This might lead one to suspect
that any ``natural'' categorical structure is both confluent and
terminating. Unfortunately this is not the case. In this chapter, we
investigate the theory of iterated monoidal categories
\cite{iterated}, which arise naturally as a categorical model of
iterated loop spaces. This theory posesses two features making
its coherence problem difficult: it has a non-trivial equational
theory at the term level and it is non-confluent. A coherence theorem
is developed in \cite{iterated}, which says that there is a unique map
in an $n$-fold monoidal category between two terms without repeated
variables. The proof proceeds via an intricate double induction on the
number of variables and the dimension of the outermost tensor product
in the target of a morphism. In this chapter, we exploit Theorem
\ref{thm:maclane} to provide a more conceptually straightforward proof
of this theorem. 

\section{Definitions and basic properties}

An $n$-fold monoidal category contains $n$ monoidal structures linked
via ``interchange'' maps. The presentation given in \cite{iterated}
endows each tensor product with \emph{strict} associativity and unit
constraints. One of the interesting features of this structure is that
the $n$ tensor products all have \emph{the same} unit. This fact,
coupled with the equational theory on terms, allows for some
unexpected interplay between the interchange maps. For instance, it is
not immediately obvious that the structure is non-confluent. This
section introduces $n$-fold monoidal categories and explores some of
the subtleties that arise. 

\begin{definition}
  The rewriting $2$-theory for $n$-fold monoidal categories is denoted $\M_n$
  and consists of the following.
  \begin{enumerate}
    \item $n$ binary functor symbols: $\tensor_1,\dots,\tensor_n$
    \item A nullary functor symbol $I$
    \item For $1 \le i \le n$:
          \begin{align*}
            a \,\tensor_i\,(b\,\tensor_i\,c) &= (a\,\tensor_i\, b)\,\tensor_i\, c\\
            a \,\tensor_i\, I = a\\
            I \,\tensor_i\, a = a   
          \end{align*}
    \item For each pair $(i,j)$ such that $1\le i < j \le n$, there is
      a reduction rule, called interchange:
      $$\eta^{ij}_{a,b,c,d}: (a\,\tensor_j\,b)\,\tensor_i\,(c\,\tensor_j\,d)\to
      (a\,\tensor_i\, c)\,\tensor_j\, (b\,\tensor_i\,d)$$
  \end{enumerate}
  The interchange rules are subject to the following
  conditions: 
  \begin{enumerate}
    \item Internal unit condition: $\eta^{ij}_{a,b,I,I} =
      \eta^{ij}_{I,I,a,b} = id_{a\,\tensor_j\,b}$
    \item External unit condition: $\eta^{ij}_{a,I,b,I} =
      \eta^{ij}_{I,a,I,b} = id_{a\,\tensor_i\, b}$
    \item Internal associativity condition: The following diagram commutes:
      \[
      \def\objectstyle{\scriptstyle}
      \def\labelstyle{\scriptstyle}
      \vcenter{
        \xymatrix{
          {(a\,\tensor_j\,b)\,\tensor_i\,(c\,\tensor_j\,d)\,\tensor_i\,
            (e\,\tensor_j\,f)} 
          \ar[rrr]^{\eta^{ij}_{a,b,c,d}\,\tensor_i\, id_{e\,\tensor_i\,f}}
          \ar[dd]^{id_{a\,\tensor_j\,b}\,\tensor_i\,\eta^{ij}_{c,d,e,f}}
          &&& 
          {((a\,\tensor_i\,c)\,\tensor_j\,(b\,\tensor_i\,d))\,\tensor_i\,
            (e\,\tensor_j\,f)}
          \ar[dd]^{\eta^{ij}_{a\,\tensor_i\,c,b\,\tensor_i\,d,e,f}}
          \\\\
          {(a\,\tensor_j\,b)\,\tensor_i\,((c\,\tensor_i\,e)\,\tensor_j\,
            (d\,\tensor_i\,f))} 
          \ar[rrr]_{\eta^{ij}_{a,b,c\,\tensor_i\,e,d\,\tensor_i\,f}}
          &&& 
          {(a\,\tensor_i\,c\,\tensor_i\,e)\,\tensor_j\,(b\,\tensor_i\,d\,
            \tensor_i\,f)}
        }
      }
      \]
    \item External associativity condition: The following diagram
      commutes:
      \[
      \def\objectstyle{\scriptstyle}
      \def\labelstyle{\scriptstyle}
      \vcenter{
        \xymatrix{
          {(a\,\tensor_j\,b\,\tensor_j\,c)\,\tensor_i\,(d\,\tensor_j\,e\,
            \tensor_j\,f)}
          \ar[rrr]^{\eta^{ij}_{a\,\tensor_j\,b,c,d\,\tensor_j\,e,f}}
          \ar[dd]^{\eta^{ij}_{a,b\,\tensor_j\,c,d,e\,\tensor_j\,f}}
          &&&
          {((a\,\tensor_j\,b)\,\tensor_i\,(d\,\tensor_j\,c))\,\tensor_j\,
            (c\,\tensor_i\,f)}
          \ar[dd]^{\eta^{ij}_{a,b,d,c}\,\tensor_j\,id_{c\,\tensor_i\,f}}
          \\\\
          {(a\,\tensor_i\,d)\,\tensor_j\,((b\,\tensor_j\,c)\,\tensor_i\,
            (e\,\tensor_j\,f))} 
          \ar[rrr]^{id_{a\,\tensor_i\,d}\,\tensor_j\,\eta^{ij}_{b,c,e,f}}
          &&&
          {(a\,\tensor_i\,d)\,\tensor_j\,(b\,\tensor_i\,e)\,\tensor_j\,
            (c\,\tensor_i\,f)}
        }
      }
      \]
    \item Giant hexagon condition: The following diagram commutes:
      \[
      \def\objectstyle{\scriptstyle}
      \def\labelstyle{\scriptstyle}
      \vcenter{
        \begin{xy}
           (35,65)*+{((a\,\tensor_k\,b)\,\tensor_j\,(c\,\tensor_k\,d))\,
             \tensor_i\,  
             ((e\,\tensor_k\,f)\,\tensor_j\,(g\,\tensor_k\,h))}="a";
          (0,45)*+{((a\,\tensor_j\,c)\,\tensor_k\,(b\,\tensor_j\,d))\,
            \tensor_i\,
            ((e\,\tensor_j\,g)\,\tensor_k\,(f\,\tensor_j\,h))}="b";
          (70,45)*+{((a\,\tensor_k\,b)\,\tensor_i\,(e\,\tensor_k\,f))\,
            \tensor_j\,
            ((c\,\tensor_k\,d)\,\tensor_i\,(g\,\tensor_k\,h))}="c";
          (0,20)*+{((a\,\tensor_j\,c)\,\tensor_i\,(e\,\tensor_j\,g))\,
            \tensor_k\,
            ((b\,\tensor_j\,d)\,\tensor_i\,(f\,\tensor_j\,h))}="d";
          (70,20)*+{((a\,\tensor_i\,e)\,\tensor_k\,(b\,\tensor_i\,f))\,
            \tensor_j\,
            ((c\,\tensor_i\,g)\,\tensor_k\,(d\,\tensor_i\,h))}="e";
          (35,0)*+{((a\,\tensor_i\,e)\,\tensor_j\,(c\,\tensor_i\,g))\,
            \tensor_k\,
            ((b\,\tensor_i\,f)\,\tensor_j\,(d\,\tensor_i\,K)}="f";
          {\ar@{->}^{\eta^{ik}\,\tensor_j\,\eta^{ik}} "c";"e"}
          {\ar@{->}@/_/_<<<<<<{\eta^{ij}\,\tensor_k\,\eta^{ij}} (5,17);"f"}
          {\ar@{->}@/^/^>>>>>>{\eta^{ij}} "a";"c"}
          {\ar@{->}@/_/_>>>>>>{\eta^{jk}\,\tensor_k\,\eta^{jk}} "a";"b"}
          {\ar@{->}_{\eta^{ik}} "b";"d"}
          {\ar@{->}@/^/^<<<<<<{\eta^{jk}} (64,17);"f"}
        \end{xy}
      }
      \]
      In the giant hexagon, $(i,j,k)$ is such that $1 \le i < j < k
      \le n$ and the labels have the evident components.
      \end{enumerate}
\end{definition} 

Since the terms appearing in each reduction rule of $\M_n$ are linear,
we immediately obtain the following lemma. 

\begin{lemma}
  A reduction $[s] \to [t]$ in $\Fr(\M_n)$ is in general position if
  and only if $s$ and $t$ contain no repeated variables. \qed 
\end{lemma}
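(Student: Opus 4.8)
The plan is to let linearity of the interchange rules do all the work, by determining for each shape the largest possible number of distinct variables and then recognising general position as the case where that maximum is realised. Fix a reduction $\alpha : [s] \to [t]$ and write $\Sigma := \Shape(\alpha)$. The shape records the entire syntactic skeleton---every function symbol (including each occurrence of the unit $I$), every interchange rule, and the full composition structure---and replaces only the identity-reduction leaves $1_x$ by $\circ$. Thus $\Sigma$ fixes a number $N$ of free variable slots, and the reductions $\tau$ with $\Shape(\tau) = \Sigma$ are exactly the ways of filling those $N$ slots. First I would record that every interchange rule $\eta^{ij}$ is linear (each of $a,b,c,d$ occurs once on each side) and that the associativity and unit equations are balanced in the variables; hence no rule or equation deletes, duplicates, or merges a variable occurrence. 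It follows that the $N$ slots of the source thread bijectively to those of the target, so $s$ and $t$ carry the same multiset of variables and ``$s$ has no repeated variable'' is equivalent to ``$t$ has no repeated variable''.

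Next I would bound the variable count. Since $\var(\tau)$ is precisely the set of distinct variables occupying the $N$ slots prescribed by $\Sigma$, we always have $|\var(\tau)| \le N$, with equality exactly when the $N$ slots carry pairwise distinct variables, that is, when the source (equivalently, the target) of $\tau$ has no repeated variable. The step I expect to be the crux is showing this bound is attained, so that $\max\{|\var(\tau)| : \Shape(\tau) = \Sigma\} = N$. Here I would fill the $N$ slots of $\Sigma$ with $N$ pairwise distinct fresh variables and check that the result is a genuine reduction in $\Fr(\M_n)$. The only way an all-distinct filling could fail is if some rule or defining equation forced two slots to share a variable; but such a forced identification is exactly a repeated variable in a left- or right-hand side, and every rule and equation of $\M_n$ is linear. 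The care required is in the composite and replacement cases: along a composite the intermediate term is shared, so I must assign one variable per thread running from $s$ to $t$ rather than per occurrence, and for a replacement $\tau(\varphi_1,\dots,\varphi_n)$ I must check that substituting distinct variables into the linear slots of an interchange rule is always well-formed. Granting this, any all-distinct filling $\tau_0$ satisfies $\Shape(\tau_0) = \Sigma$ and $|\var(\tau_0)| = N$.

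Finally I would assemble the equivalence. By the previous paragraph the maximum over the shape equals $N$, so $\alpha$ is in general position iff $|\var(\alpha)| = N$, and by the bound this holds iff the $N$ variable slots of $\alpha$ are pairwise distinct, i.e. iff $s$ and $t$ contain no repeated variables. This is exactly the claim, and it is ``immediate'' precisely in the sense that it rests only on the linearity already noted for the interchange rules and the term equations.
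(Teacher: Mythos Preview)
Your proposal is correct and takes essentially the same approach as the paper: the paper states only that the lemma follows immediately from the linearity of the reduction rules of $\M_n$ and marks it \qed, and your argument is a careful unpacking of exactly that implication (shape fixes the number of slots, linearity lets you fill them with pairwise distinct variables to realise the maximum, and linearity forces source and target to share the same multiset of variables). Nothing you do diverges from the paper's reasoning; you have simply made explicit what the paper leaves implicit.
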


Because of the fact that an $n$-fold monoidal category is strictly
associative and has a strict unit, we can derive various maps
via Eckmann-Hilton style arguments. Two of these maps will be of
particular use to us. In the following, we assume that $(i,j)$ is such
that $1 \le i < j \le n$. The derived maps are as follows:

\begin{enumerate}
 \item Dimension raising: $
   \xymatrix@1{a\,\tensor_i\, b \ar[r]^{\iota^{ij}_{a,b}} & a \,\tensor_j\, 
     b.}$ This represents the following composition:
   \[
   \def\objectstyle{\scriptstyle}
   \def\labelstyle{\scriptstyle}
   \vcenter{
     \xymatrix@1{
       {a\,\tensor_i\, b}\ar[r]^-{=} & {(a\,\tensor_j\, 
         I)\,\tensor_i\, (I\,\tensor_j\, b)}
       \ar[r]^-{\eta^{ij}} & {(a\,\tensor_i\,  I)\,\tensor_j\, 
         (I\,\tensor_i\,  b)} \ar[r]^-{=} & {a\,\tensor_j\,  b}
     }
   }
   \]
 \item Twisted dimension raising: $
   \xymatrix@1{a\,\tensor_i\, b \ar[r]^{\tau^{ij}_{a,b}} & b \,\tensor_j\, 
     a.}$ This represents the following composition:
   \[
   \def\objectstyle{\scriptstyle}
   \def\labelstyle{\scriptstyle}
   \vcenter{
     \xymatrix@1{
       {a\,\tensor_i\, b}\ar[r]^-{=} & {(I\,\tensor_j\, 
         a)\,\tensor_i\, (I\,\tensor_j\, b)}
       \ar[r]^-{\eta^{ij}} & {(I\,\tensor_i\,  b)\,\tensor_j\, 
         (a\,\tensor_i\,  I)} \ar[r]^-{=} & {b\,\tensor_j\,  a}
     }
   }
   \]
\end{enumerate}

With the above maps, it is easy to see that iterated monoidal
categories do not have unique normal forms.

\begin{lemma}
 If $n \ge 2$, then $\Fr(\M_n)$ is not confluent.
\end{lemma}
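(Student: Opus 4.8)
The plan is to exhibit a single span that admits no joining, which directly contradicts confluence. The natural candidate is built from the two derived maps introduced just above the statement: since both the dimension-raising map $\iota^{1n}_{a,b}\colon a\tensor_1 b\to a\tensor_n b$ and the twisted dimension-raising map $\tau^{1n}_{a,b}\colon a\tensor_1 b\to b\tensor_n a$ have the common source $a\tensor_1 b$ (here $1<n$, using $n\ge 2$), they assemble into the span
\[
a\tensor_n b \xleftarrow{\;\iota^{1n}_{a,b}\;} a\tensor_1 b \xrightarrow{\;\tau^{1n}_{a,b}\;} b\tensor_n a .
\]
Both legs are non-identity reductions, since their sources and targets carry different outermost operations ($\tensor_1$ versus $\tensor_n$, and $1\ne n$), so this is a genuine span in $\Fr(\M_n)$. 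I would first record that, unwinding the definitions of $\iota$ and $\tau$, these legs are in fact the single interchange applications $\eta^{1n}_{a,I,I,b}$ and $\eta^{1n}_{I,a,b,I}$, so the span is even singular.

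The heart of the argument is to show that its two feet, $a\tensor_n b$ and $b\tensor_n a$, are \emph{distinct normal forms}. Distinctness is immediate: the term-level equations of $\M_n$ are only associativity and unit laws for the various $\tensor_i$, none of which can reorder two distinct variables, so $[a\tensor_n b]\ne[b\tensor_n a]$. For the normal-form claim I would classify all terms in general position on the two variables $a,b$: modulo the term congruence every such term collapses (absorbing units, and with only a single binary node present) to $x\tensor_k y$ with $\{x,y\}=\{a,b\}$ and $1\le k\le n$, giving the finite vertex set $\{a\tensor_k b,\ b\tensor_k a: 1\le k\le n\}$ of the relevant part of $\Red(\M_n)$. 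I would then enumerate the non-identity singular reductions between these vertices by placing $a$, $b$, and two copies of $I$ into the four argument slots of an interchange redex $\eta^{ij}$, noting that any interchange applied to a proper subterm must, after unit absorption, either be top-level or act as an identity.

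The main obstacle is precisely this enumeration, and it is exactly where the paper's remark that non-confluence is ``not immediately obvious'' bites: because the $\tensor_i$ share a single unit $I$, there are many unit-padded ways to present a two-variable term as an interchange redex, and one must verify that none yields a genuine reduction out of $a\tensor_n b$. The internal and external unit conditions are designed for this. I expect the twelve placements to split cleanly: the eight ``degenerate'' ones are each forced to be identities by one of the equations $\eta^{ij}_{a,b,I,I}=\eta^{ij}_{I,I,a,b}=\mathrm{id}$ and $\eta^{ij}_{a,I,b,I}=\eta^{ij}_{I,a,I,b}=\mathrm{id}$, while the four non-degenerate placements reproduce exactly $\iota^{ij}_{a,b},\ \iota^{ij}_{b,a},\ \tau^{ij}_{a,b},\ \tau^{ij}_{b,a}$, each of which sends a dimension-$i$ term to a dimension-$j$ term with $i<j$. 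Consequently every non-identity reduction among two-variable terms \emph{strictly raises} the dimension $k$, so the maximal-dimension vertices are sinks, giving $\Out(a\tensor_n b)=\{a\tensor_n b\}$ and $\Out(b\tensor_n a)=\{b\tensor_n a\}$.

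Finally I would conclude. A joining of the span would supply a term $u$ with reductions $a\tensor_n b\to u$ and $b\tensor_n a\to u$; but since both feet are sinks this forces $u=a\tensor_n b=b\tensor_n a$, contradicting their distinctness. Hence the span is not joinable, and therefore $\Fr(\M_n)$ is not confluent.
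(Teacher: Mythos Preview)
Your proposal is correct and takes essentially the same approach as the paper: the paper exhibits precisely the span
\[
a\tensor_n b \xleftarrow{\iota^{in}_{a,b}} a\tensor_i b \xrightarrow{\tau^{in}_{a,b}} b\tensor_n a
\]
(you specialise to $i=1$, which is fine) and asserts without further comment that it is not joinable. Your careful enumeration of the two-variable terms and verification, via the unit conditions, that $a\tensor_n b$ and $b\tensor_n a$ are distinct sinks is exactly the detail the paper suppresses; it is correct and is the natural way to justify the paper's one-line claim.
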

\begin{proof}
  The following span is not joinable:
  \[
  \def\objectstyle{\scriptstyle}
  \def\labelstyle{\scriptstyle}
  \xymatrix{
    {a\,\tensor_i\, b} \ar[r]^{\iota^{in}_{a,b}} \ar[d]_{\tau^{in}_{a,b}}
    & {a\, \tensor_n\, b}\\
    {b\, \tensor_n\, a}
  }
  \]
\end{proof}

In the following section, we tackle the coherence problem for $\M_n$. 

\section{Proving coherence}

Our first step in investigatng the coherence problem for iterated
monoidal categories is to bring them into the realm of applicability of
Theorem \ref{thm:maclane}.

\begin{proposition}\label{prop:imcterm}
  $\M_n$ is quasicycle-free.
\end{proposition}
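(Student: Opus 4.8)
The plan is to prove that $\M_n$ is terminating in the sense of the earlier definition and then invoke the lemma that every terminating labelled rewriting theory is quasicycle-free. Since the only reduction rules of $\M_n$ are the interchanges $\eta^{ij}$ with $i<j$, and these neither create nor destroy variable leaves, termination should follow from a complexity measure recording how the interchanges rearrange the bracketing. First I would assign to each congruence class $[t] \in \Fr_{\langle\F|\E_\F\rangle}(X)$ the quantity
\[
R(t) = \sum_{\{x,y\}} \bigl(n - \mathrm{lca}_t(x,y)\bigr),
\]
the sum ranging over unordered pairs of distinct variable-leaf occurrences of $t$, where $\mathrm{lca}_t(x,y)$ denotes the index $k$ of the innermost $\tensor_k$ whose subtree contains both $x$ and $y$. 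Because each such index lies in $\{1,\dots,n\}$, the value $R(t)$ is a non-negative integer.

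The first thing to check is that $R$ descends to congruence classes modulo $\E_\F$. Strict associativity of each $\tensor_i$ leaves every pairwise $\mathrm{lca}$ unchanged, and since $I$ carries no variable leaf, the strict unit laws $a\tensor_i I = a$ and $I\tensor_i a = a$ delete only the offending $\tensor_i$-node together with a leafless branch, again preserving every $\mathrm{lca}_t(x,y)$; hence $R$ is well defined on terms. I would then compute the effect of a single interchange. Writing a redex as $(A\tensor_j B)\tensor_i(C\tensor_j D)$ with reduct $(A\tensor_i C)\tensor_j(B\tensor_i D)$, direct inspection of the two trees shows that the joining index is unchanged for every pair except those with one leaf in $A$ and one in $D$, or one in $B$ and one in $C$, for which it rises from $i$ to $j$; pairs having a leaf outside the redex are joined strictly above its root and are untouched. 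Consequently
\[
R(\text{source}) - R(\text{reduct}) = (j-i)\bigl(|A|\,|D| + |B|\,|C|\bigr),
\]
where $|\cdot|$ counts variable leaves, and this is a strictly positive integer whenever the right-hand factor is nonzero.

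The main obstacle is the degenerate interchanges, for which the factor vanishes and $R$ fails to drop. The point to verify is that these are precisely the reductions forced to be identities in $\Fr(\M_n)$: the factor $|A|\,|D| + |B|\,|C|$ is zero exactly when one of $A,D$ and one of $B,C$ is $\E_\F$-equivalent to $I$, and a short case analysis shows that in every such configuration the source and reduct already coincide modulo $\E_\F$, matching one of the internal or external unit conditions $\eta^{ij}_{a,b,I,I}$, $\eta^{ij}_{I,I,a,b}$, $\eta^{ij}_{a,I,b,I}$, $\eta^{ij}_{I,a,I,b}$, each of which is set to an identity in $\E_\T$. By contrast, the genuinely non-trivial unit-collapsed interchanges, such as the dimension-raising $\iota^{ij}$ and twisted dimension-raising $\tau^{ij}$, have a nonzero factor and do strictly decrease $R$. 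Thus every non-identity singular reduction of $\Fr(\M_n)$ strictly decreases the non-negative integer $R$, so any infinite sequence of composable singular reductions can contain only finitely many non-identity terms. This establishes that $\M_n$ is terminating, and quasicycle-freeness then follows immediately from the quoted lemma.
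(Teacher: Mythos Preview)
Your proof is correct and takes a genuinely different route from the paper's. The paper argues via a finiteness observation on the reversed theory: it introduces $\M_n^{-1}$ (with each interchange rule inverted), notes that from any $I$-free representative $[s]$ every reachable term has the same multiset of variable occurrences and the same number of function symbols, and concludes from the finiteness of this set that $\M_n$ is quasicycle-free. Your argument instead shows the stronger statement that $\M_n$ is \emph{terminating}, via the explicit ranking function $R(t)=\sum_{\{x,y\}}(n-\mathrm{lca}_t(x,y))$, and computes exactly its drop $(j-i)\bigl(|A|\,|D|+|B|\,|C|\bigr)$ across each interchange.

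Each approach buys something different. The paper's is shorter and purely qualitative, but is terse about why a finite reachable set rules out non-trivial cycling in the reduction graph; your ranking function supplies precisely that missing ingredient, since $R$ strictly decreases along every edge that is not a loop, and hence no cycle of length $\ge 2$ can occur. Your treatment also makes the degenerate cases fully explicit: you correctly identify that $|A|\,|D|+|B|\,|C|=0$ occurs exactly in the four configurations $\eta^{ij}_{I,I,c,d}$, $\eta^{ij}_{a,b,I,I}$, $\eta^{ij}_{I,b,I,d}$, $\eta^{ij}_{a,I,c,I}$, each of which has source and reduct equal modulo $\E_\F$ and is declared an identity by the internal/external unit conditions in $\E_\T$. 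As a bonus, your argument yields a concrete bound on the length of any reduction sequence out of $[t]$, namely $R(t)$, which the paper's counting argument does not directly provide.
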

\begin{proof}
 Let $\M_n^{-1}$ be the rewriting $2$-theory that arises by replacing
 the reduction rules $\eta^{ij}$ in $\M_n$ with the following
 reduction rules, where $1 \le i < j \le n$:
 $$\xi^{ij}_{a,b,c,d}:  (a\,\tensor_i\, c)\,\tensor_j\,
 (b\,\tensor_i\,d) \to (a\,\tensor_j\,b)\,\tensor_i\,(c\,\tensor_j\,d).
 $$
 Given an object $[s] \in \Fr(\M_n^{-1})$, we may assume that $s$
 contains no instances of $I$. It follows that if there is a reduction $[s]
 \to [t]$ in $\Fr(\M_n^{-1})$, then $t$ contains the same variables as $s$,
 as well as the same number of function symbols. Since there are only
 finitely many such possibilities, $\M_n$ is quasicycle-free.
\end{proof}

Let $t$ be a term in $\Fr(\M_n)$. For a set $X \subseteq \var(t)$, we
write $t - X$ to denote the term resulting from substituting
$I$ for each variable in $X$. For instance $(a\tensor_i
b)\tensor_j(c \tensor_i d) - \{b,d\} = a\tensor_j c$. We say that a
term $u$ is \emph{in} a term $t$ and write $u \in t$ if there is some
$X \subseteq \var(t)$ such that $t - X = u$. Of crucial importance to
us is the following result of \cite{iterated}.

\begin{theorem}[\cite{iterated}]\label{thm:maps}
 Let $t$ and $u$ be terms in $\Fr(\M_n)$. A  necessary and sufficient
 condition for the 
  existence of a reduction $t \to u$ in $\Fr(\M_n)$ is that, for each $a,b
  \in \var(t)$, if $a\tensor_i b \in t$, then one of the following holds:
  \begin{itemize}
    \item There is some $j \ge i$ such that $a \tensor_j b \in u$
    \item There is some $j > i$ such that $b \tensor_j a \in u$
  \end{itemize}
\end{theorem}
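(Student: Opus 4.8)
The plan is to prove Theorem~\ref{thm:maps}, the combinatorial characterization of when a reduction $t \to u$ exists in $\Fr(\M_n)$. Since this is a biconditional, I would prove the two directions separately, and I expect the necessity direction to be the conceptually easier half and the sufficiency direction to contain the main work.

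First, for \emph{necessity}, I would argue that each generating reduction preserves the stated property, and then lift to composites. The key observation is that the condition ``for each $a \tensor_i b \in t$, there is some $j \ge i$ with $a \tensor_j b \in u$, or some $j > i$ with $b \tensor_j a \in u$'' is really a statement about the relative position and nesting-dimension of each pair of variables. I would first make precise the notion $u \in t$ (using the operation $t - X$ of substituting $I$ for a set $X$ of variables) and verify the basic fact that $u \in t$ is transitive and compatible with the strict associativity and unit equations, so that it is well-defined on congruence classes. Then I would check that the single interchange rule $\eta^{ij}_{a,b,c,d}$, together with the derived dimension-raising map $\iota^{ij}$ and twisted dimension-raising map $\tau^{ij}$, each respect the condition: applying $\eta^{ij}$ can only raise the dimension at which a pair of variables is joined (from $i$ to $j>i$) or leave it fixed, possibly swapping the order of the pair in the $j>i$ case --- which is exactly what the two bullet points allow. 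By the inductive construction of $\Fr(\M_n)$ (Structure, Replacement, Transitivity) and transitivity of the relation, the property is preserved along any reduction, giving necessity.

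For \emph{sufficiency}, I would exploit Theorem~\ref{thm:maplane}: since $\M_n$ is quasicycle-free by Proposition~\ref{prop:imcterm}, and since every reduction rule of $\M_n$ is term-linear and non-increasing so that $\M_n$ is finitely branching by Proposition~\ref{prop:linearfinite}, Theorem~\ref{thm:maplane} reduces Mac Lane coherence to checking that every diamond commutes --- but more relevantly here, the graph-theoretic machinery guarantees that existence of a reduction is controlled by finitely many local moves. The strategy is to show that whenever the pairwise condition holds for all $a \tensor_i b \in t$, I can actually construct an explicit reduction $t \to u$ as a composite of interchanges and derived maps. I would proceed by induction on a suitable complexity measure --- most naturally the number of variables in $t$ together with the total nesting dimension, mirroring the double induction used in the original proof in \cite{iterated}, but replacing their intricate bookkeeping with an appeal to the fact that the required moves are forced. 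Concretely, I would locate a variable (or an outermost tensor factor) whose target position and dimension differ from its source, use the condition to certify that an appropriate $\eta^{ij}$, $\iota^{ij}$, or $\tau^{ij}$ move is available to bring it closer to its position in $u$, apply that move, and invoke the induction hypothesis on the resulting smaller or simpler term.

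The hard part will be the sufficiency direction: making the inductive construction of the reduction genuinely well-founded and verifying that the local move I select actually strictly decreases the complexity measure while preserving the pairwise condition for the \emph{remaining} pairs. The delicate point is that raising one pair's dimension via $\eta^{ij}$ necessarily repositions \emph{other} variables (the interchange acts on four arguments at once), so I must confirm that these side effects never violate the condition for pairs already in their correct target configuration --- this is precisely the interaction that the internal/external associativity and giant hexagon axioms are designed to control. I would handle this by choosing the move at the outermost position of maximal discrepancy, so that inner structure is untouched, and then arguing that the condition restricted to the inner subterms is inherited unchanged. If the direct induction proves too unwieldy, the fallback is to combine the pairwise condition with Theorem~\ref{thm:maplane} and Proposition~\ref{prop:faces}, using acyclicity to guarantee that any candidate subdivision assembled from the certified local moves closes up into a genuine reduction in $\Fr(\M_n)$.
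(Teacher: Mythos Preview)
The paper does not give its own proof of this theorem: it is imported wholesale from \cite{iterated} and then used as a black box in Lemmas~\ref{lem:itdisjoint} and~\ref{itnested} to analyse the diamonds of $\M_n$. So there is nothing in the thesis to compare your argument against; any correct proof you supply would be genuinely new content relative to this document.

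On the substance of your proposal: the necessity direction is fine, and your core plan for sufficiency --- build the reduction explicitly by an induction on the number of variables and the outermost tensor dimension, choosing an interchange or derived map that strictly improves some discrepancy measure --- is exactly the shape of the argument in \cite{iterated}. That part is sound, and your identification of the delicate point (that applying $\eta^{ij}$ moves four variables at once and one must check the side effects) is accurate.

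Where you go astray is in the framing and the fallback. Theorem~\ref{thm:maclane} and Proposition~\ref{prop:faces} are statements about \emph{equality of parallel reductions}, not about \emph{existence} of reductions: they tell you that a finitely branching quasicycle-free theory is Mac Lane coherent iff every diamond commutes, and that a parallel pair is equal iff it admits a commuting subdivision. Neither says anything about whether a reduction $t\to u$ exists in the first place. Your sentence ``the graph-theoretic machinery guarantees that existence of a reduction is controlled by finitely many local moves'' is not something those results provide, and the fallback of ``using acyclicity to guarantee that any candidate subdivision \dots\ closes up into a genuine reduction'' conflates subdivisions of parallel pairs with paths in $\Red(\M_n)$. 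Worse, in the logical flow of Chapter~7 the coherence of $\M_n$ is proved \emph{using} Theorem~\ref{thm:maps}, so even if the machinery were applicable you would be running in a circle. Drop those appeals entirely and commit to the direct inductive construction; that is where the actual content lives.
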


Theorem \ref{thm:maps} gives us the technical tool that we need in
order to show that various parallel pairs of maps are not diamonds. We
begin our analysis of the collection of diamonds of $\Fr(\M_n)$
with diamonds whose source span rewrites disjoint subterms.

\begin{lemma}\label{lem:itdisjoint}
  Let $a \tensor_i b \in \Fr(\M_n)$ and suppose that there are maps
  $\varphi : a \to a'$ and $\psi:b \to b'$. Then, in the following
  diagram, the square labelled (d) is a commutative diamond and there
  is a map $a'\tensor_i b' \to c:$
  \[
  \vcenter{
    \def\objectstyle{\scriptstyle}
    \def\labelstyle{\scriptstyle}
    \begin{xy}
      (0,0)*+{a\tensor_i b}="a";
      (12,15)*+{a \tensor_i b'}="b";
      (12,-15)*+{a' \tensor_i b}="c";
      (55,0)*+{c}="d";
      (25,0)*+{a'\tensor_i b'}="e";
      (12,0)*+{(d)};
      {\ar@/^0.2pc/^{\varphi \tensor_i 1_{b'}} "b";"e"};
      {\ar@/_0.2pc/_{1_{a'} \tensor_i \psi} "c";"e"};
      {\ar@{->}@/_1.5pc/_{\alpha} "c";"d"};
      {\ar@{->}@/^/^{1_a\tensor_i \psi} "a"; "b"};
      {\ar@{->}@/_/_{\varphi \tensor_i 1_b} "a"; "c"};
      {\ar@{->}@/^1.5pc/^{\beta} "b";"d"};
    \end{xy}
  }
  \]
\end{lemma}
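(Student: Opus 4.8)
The plan is to treat the two assertions separately, since the commutativity of the square (d) is a formality while the existence of the reduction $a' \tensor_i b' \to c$ is where the structure of $\M_n$ does the real work. First some bookkeeping: every reduction rule of $\M_n$ is linear, so reductions preserve variable sets. Writing $A := \var(a) = \var(a')$ and $B := \var(b) = \var(b')$, these are disjoint because $a \tensor_i b$ is in general position, and $\var(c) = A \cup B$ since $\alpha$ preserves variables. Commutativity of (d) is then immediate from the functoriality of $\tensor_i$: the axiom (Funct), together with (ID1) and (ID2), gives $(1_a \tensor_i \psi)\of(\varphi \tensor_i 1_{b'}) = \varphi \tensor_i \psi = (\varphi \tensor_i 1_b)\of(1_{a'}\tensor_i \psi)$. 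That (d) is moreover a diamond (admits no zig-zag subdivision) I would obtain by the same acyclicity argument used in the proof of Proposition~\ref{prop:diamond}: by Proposition~\ref{prop:imcterm} and Lemma~\ref{lem:cycle} the theory $\M_n$ is acyclic, so $a \tensor_i b'$ and $a' \tensor_i b$ are incomparable (a directed path either way would reverse $\psi$ or $\varphi$ and create a cycle), and Theorem~\ref{thm:power} then forces any interior zig-zag joining the two corners to produce a cycle, which is impossible.

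The substantive step is to produce the reduction $a' \tensor_i b' \to c$, for which I would invoke the characterization of maps in $\Fr(\M_n)$ given by Theorem~\ref{thm:maps}. It suffices to check, for every ordered pair of variables $(x,y)$ with $x \tensor_k y \in a' \tensor_i b'$, that $c$ contains either $x \tensor_j y$ for some $j \ge k$ or $y \tensor_j x$ for some $j > k$. I would split into three cases according to where $x$ and $y$ sit. If $x,y \in A$, the pattern $x \tensor_k y$ is determined entirely by $a'$ and hence occurs in the source $a' \tensor_i b$ of $\alpha$, so the condition on $c$ follows from Theorem~\ref{thm:maps} applied to $\alpha$. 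If $x,y \in B$, the pattern is determined by $b'$ and occurs in the source $a \tensor_i b'$ of $\beta$, so the condition follows from $\beta$. Finally, if $x \in A$ and $y \in B$, the two variables are separated by the top-level $\tensor_i$, so $k = i$ and the very same pattern $x \tensor_i y$ occurs in $a' \tensor_i b$ (where $y \in \var(b)$); applying Theorem~\ref{thm:maps} to $\alpha$ gives exactly the disjunction required. Having verified the condition in all cases, the sufficiency half of Theorem~\ref{thm:maps} yields the map.

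The main obstacle is the cross-variable case together with the realization that neither $\alpha$ nor $\beta$ alone suffices: because $\psi: b \to b'$ generally alters the internal dimension-pattern of the $B$-variables, pairs lying inside $b'$ must be controlled by $\beta$, whereas pairs inside $a'$ and all cross pairs are controlled by $\alpha$. Getting the orientation bookkeeping right—in particular tracking the clause of Theorem~\ref{thm:maps} that permits a swapped occurrence $y \tensor_j x$ only when the dimension \emph{strictly} increases—is the delicate point. It is precisely this rigidity of $\M_n$ that rules out the pathology of Example~\ref{ex:disjoint}, where a competing target $c$ could be reached without factoring through the functorial pushout $a' \tensor_i b'$; the map $a' \tensor_i b' \to c$ is exactly what exhibits the outer pair through $c$ as a non-diamond, leaving (d) as the unique diamond with the given disjoint source span.
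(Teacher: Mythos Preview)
Your proof is correct and follows essentially the same approach as the paper: commutativity of (d) by functoriality, the existence of $a'\tensor_i b' \to c$ via Theorem~\ref{thm:maps} with the identical three-case split (both variables in $a'$ via $\alpha$, both in $b'$ via $\beta$, one on each side via $\alpha$ with $k=i$). The paper dispatches the ``(d) is a diamond'' claim with ``it is easy to see that it does not admit a zig-zag subdivision,'' whereas you give a more explicit acyclicity argument; this is extra detail rather than a different route.
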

\begin{proof}
  The square labelled (d) commutes by functoriality and it is easy to
  see that it does not admit a zig-zag subdivision, so it is a
  diamond. The tricky part is showing the existence of a map
  $a'\tensor_i b' \to c$. 

  Let $x,y \in \var(a'\tensor_i b')$ and suppose that $x \tensor_k y
  \in a'\tensor_i b'$. There are a few cases to 
  consider.
  \begin{itemize}
    \item If $x,y \in a'$, then $\alpha$ implies that there is some $m
      \ge k$ such that $x \tensor_m y \in c$ or there is some $m> k$ such
      that $y \tensor_m x \in c$.
    \item If $x,y \in b'$, then $\beta$ implies that there is some $m
      \ge k$ such that $x \tensor_m y\in c$ or there is some $m> k$ such
      that $y \tensor_m x \in c$. 
    \item If $x\in a'$ and $y \in b'$, then $x \tensor_i y \in a'
      \tensor b$. So, by $\alpha$, there is some $m \ge i$ such that $x
      \tensor_m y \in c$ or there is some $m> i$ such that $y \tensor_m x
      \in c$  
  \end{itemize}
Putting all of the above facts together, it follows from Theorem
\ref{thm:maps} that there is a map $a'\tensor_i b' \to c$.
\end{proof}

Next, we investigate diamonds whose initial span rewrites nested
subterms. For a term $a$ and a subterm $b \le a$, we write $a\{b\}$ to
represent this nested term. 

\begin{lemma}\label{itnested}
  Let $a\{b\} \in \Fr(\M_n)$ and suppose that there are maps
  $\varphi : a\{b\} \to a'\{b\}$ and $\psi:b \to b'$. Then, in the following
  diagram, the square labelled (d) is a commutative diamond and there
  is a map $a'\{b'\} \to c$: 
  \[
  \vcenter{
    \def\objectstyle{\scriptstyle}
    \def\labelstyle{\scriptstyle}
    \begin{xy}
      (0,0)*+{a\{b\}}="a";
      (12,15)*+{a\{b'\}}="b";
      (12,-15)*+{a'\{b\}}="c";
      (55,0)*+{c}="d";
      (25,0)*+{a'\{b'\}}="e";
      (12,0)*+{(d)};
      {\ar@/^0.2pc/^{\varphi} "b";"e"};
      {\ar@/_0.2pc/_{a'\{\psi\}} "c";"e"};
      {\ar@{->}@/_1.5pc/_{\alpha} "c";"d"};
      {\ar@{->}@/^/^{a\{\psi\}} "a"; "b"};
      {\ar@{->}@/_/_{\varphi} "a"; "c"};
      {\ar@{->}@/^1.5pc/^{\beta} "b";"d"};
    \end{xy}
  }
  \]
\end{lemma}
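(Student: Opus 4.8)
The plan is to mirror the structure of the disjoint case (Lemma \ref{lem:itdisjoint}) as closely as possible, since the nested case is structurally analogous but requires slightly more care about how subterms sit inside one another. First I would dispense with the easy part: the square labelled $(d)$ commutes because $a\{\psi\}$ followed by $\varphi$ and $\varphi$ followed by $a'\{\psi\}$ are both the reduction $\varphi\{\psi\}: a\{b\} \to a'\{b'\}$ obtained by applying $\varphi$ and $\psi$ in parallel to the nested subterms; this is exactly the nested-subterm instance of functoriality and the commutativity follows by the same argument via (Nat 1) and (Nat 2) used in Lemma \ref{lem:cp}, part (2). That $(d)$ admits no zig-zag subdivision (and is therefore a diamond) follows as in Lemma \ref{lem:itdisjoint}: its four edges each rewrite one of the two disjoint ``slots'' and there is no intermediate vertex through which a zig-zag path could route.

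The substance of the lemma, as in the disjoint case, is the construction of a map $a'\{b'\} \to c$, and for this I would invoke Theorem \ref{thm:maps} as the sole engine. The goal is to verify the necessary-and-sufficient condition: for each $x,y \in \var(a'\{b'\})$ with $x \tensor_k y \in a'\{b'\}$, there is some $j \ge k$ with $x \tensor_j y \in c$, or some $j > k$ with $y \tensor_j x \in c$. I would split into cases according to where $x$ and $y$ lie relative to the distinguished subterm $b'$. If both $x,y \in b'$, then $x \tensor_k y \in b'$, and since $\beta: a\{b'\} \to c$ exists (note $b' \in a\{b'\}$, so the hypotheses of Theorem \ref{thm:maps} transfer to $x,y$ inside $b'$), the required witness is supplied by $\beta$. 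If neither $x$ nor $y$ lies in $b'$, or if one lies in $b'$ and the product $x\tensor_k y$ occurs at a node governing the $b$-slot, then $x\tensor_k y$ is already visible in $a'\{b\}$ after collapsing $b'$ down to a single variable (using the relation $t - X$ that substitutes $I$ for variables, which is compatible with reductions), so the map $\alpha: a'\{b\} \to c$ furnishes the witness.

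The delicate case — and I expect this to be the main obstacle — is when exactly one of $x,y$ lies strictly inside $b'$ and the other lies outside but the product $x \tensor_k y$ is only witnessed \emph{across} the boundary of the $b'$ slot. Here neither $\alpha$ nor $\beta$ directly applies to the pair $(x,y)$ as stated, because collapsing either slot destroys one of the two variables. The resolution should be that such a cross-boundary product $x \tensor_k y \in a'\{b'\}$ forces, by the nestedness, a product $b' \tensor_k z$ (or the appropriate containing product) at the slot level, and one argues that the index $k$ at which the whole subterm $b'$ enters a product in $a'\{b'\}$ is controlled by $\alpha$ applied to the collapsed variable standing for $b'$ together with $\beta$ applied internally; chaining the two inequalities $j \ge k$ from Theorem \ref{thm:maps} gives the required witness in $c$. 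I would make this precise by treating $b'$ as a single variable $w$ in $a'\{w\}$, applying $\alpha$ to obtain the containing product $w \tensor_j (\dots) \in c$, and then applying $\beta$ internally to locate $x,y$ within the image of $w$; the monotonicity of the dimension indices ($j \ge i$ in both clauses of Theorem \ref{thm:maps}) ensures the composite inequality goes the right way. Assembling all cases, Theorem \ref{thm:maps} yields the existence of $a'\{b'\} \to c$, completing the proof.
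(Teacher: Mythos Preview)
Your approach matches the paper's, which simply says the square commutes by naturality and that the rest is similar to Lemma~\ref{lem:itdisjoint}. Your case split via Theorem~\ref{thm:maps} is exactly the right engine.

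However, your ``delicate cross-boundary case'' is a phantom, and the chaining argument you sketch for it would not work as written. If $x \in \var(b')$ and $y \notin \var(b')$ with $x \tensor_k y \in a'\{b'\}$, then collapsing all variables except $x,y$ sends $b'$ to $x$ (units absorb), so the product $x \tensor_k y$ is witnessed in $a'\{x\}$, i.e.\ at a node of $a'$ governing the slot. This is precisely the situation you already covered in your second case. Moreover, since the interchange rules of $\M_n$ are balanced, $\var(b') = \var(b)$, so $x \in \var(b)$ and the same collapsing in $a'\{b\}$ yields $x \tensor_k y \in a'\{b\}$. Hence $\alpha$ alone gives the witness in $c$; no chaining with $\beta$ is needed. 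Your proposed chaining step --- ``apply $\alpha$ to obtain $w \tensor_j(\dots) \in c$'' and then ``apply $\beta$ internally to locate $x,y$ within the image of $w$'' --- does not type-check: $w$ is not a variable of $c$, and $\alpha$ need not preserve the $b$-slot as a coherent subterm, so there is no ``image of $w$'' to speak of.

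In short: drop the third case entirely and observe that every cross-boundary pair is already handled by your second case via $\alpha$, using $\var(b') \subseteq \var(b)$.
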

\begin{proof}
  The square labelled (d) commutes by naturality. The rest of the
  proof is similar to that of Lemma \ref{lem:itdisjoint}.
\end{proof}

We now know that source spans of the only remaining diamonds in
$\Fr(\M_n)$ rewrite overlapping terms.

\subsection{Interchange + associativity}

Let $j > i$. The first way in which interchange and associativity can
interact is in the term $X \tensor_i (c\tensor_j d) \tensor_i
(e\tensor_j f)$. Without loss of generality, we may assume that $X = a
\tensor_j b$, because we could always take $X = X \tensor_j I$. The
resulting span then gets completed into the internal associativity
axiom. One may then apply Theorem \ref{thm:maps} to show that there is
no other diamond with the same initial span.

The second way in which interchange can interact with associativity is
in the term $(a\tensor_j b)\tensor_i (c\tensor_j d\tensor_j e)$. In
this case, we get the following square, where the labels have the
evident components.

\[
\def\objectstyle{\scriptstyle}
\def\labelstyle{\scriptstyle}
\xymatrix{
  {(a\tensor_j b)\tensor_i (c\tensor_j d \tensor_j e)} \ar[rr]^{\eta}
  \ar[d]_{\eta} && {(a\tensor_i (c\tensor_j d))\tensor_j (b\tensor_i
    e)}\ar[d]^{\delta\tensor_j 1}\\
    {(a\tensor_i c)\tensor_j (b\tensor_i (d\tensor_j e))}
    \ar[rr]_{1\tensor_j \tilde{\delta}} &&
    {(a\tensor_i c)\tensor_j d \tensor_j (b\tensor_i e)}
}
\]

The above square commutes by substituting $(a\tensor_j I \tensor_j
b)\tensor_i (c\tensor_j d\tensor_j e)$ for the source and using
the external associativity axiom. Theorem \ref{thm:maps} easily yields
that there can be no other diamonds with the same initial span.

Similarly, a critical span arises at $(a\tensor_j b\tensor_j
c)\tensor_i(d\tensor_j e)$. The analysis is similar to the previous
case by inserting a unit to obtain $(a\tensor_j b \tensor_j
c)\tensor_i (d\tensor_j I\tensor_j e)$.

\subsection{Interchange + interchange}
Let $i < j < k$. An overlap between interchange rules occurs at $(a\tensor_j
b)\tensor_i ((c\tensor_k d)\tensor_j (e\tensor_j f))$. Since we have
strict units, we may assume that $a = a_1 \tensor_k a_t$ and $b = b_1
\tensor_k b_2$. We then obtain the initial span of the giant hexagon
axiom. The hexagon forms a diamond and it follows from Theorem
\ref{thm:maps} that there are no other diamonds with this initial
span.

\subsection{Interchange + units}

The critical spans arising from the interaction of interchange with
units yield the various Eckmann-Hilton maps. As we have seen, these
are not always joinable. When they are, they commute by the following
lemma. 

\begin{lemma}
  The following diagrams commute in  $\Fr(\M_n)$, where $1 \le i < j < k \le n$:
  \begin{center}
    \begin{tabular}{cc}
      $
      \def\objectstyle{\scriptstyle}
      \def\labelstyle{\scriptstyle}
      \xymatrix{
        &{a\tensor_j b} \ar[dr] & {} \\
        {a\tensor_i b} \ar[ur] \ar[rr] \ar@{}[urr]|{(1)} && {a\tensor_k b}
      }$

      &

      $
      \def\objectstyle{\scriptstyle}
      \def\labelstyle{\scriptstyle}
      \xymatrix{
        &{b\tensor_j a} \ar[dr] & {} \\
        {a\tensor_i b} \ar[ur] \ar[rr] \ar@{}[urr]|{(2)} && {a\tensor_k b}
      }$\\

      $
      \def\objectstyle{\scriptstyle}
      \def\labelstyle{\scriptstyle}
      \xymatrix{
        &{a\tensor_j b} \ar[dr] & {} \\
        {a\tensor_i b} \ar[ur] \ar[rr] \ar@{}[urr]|{(3)} && {b\tensor_k a}
      }$

      &

      $
      \def\objectstyle{\scriptstyle}
      \def\labelstyle{\scriptstyle}
      \xymatrix{
        &{b\tensor_j a} \ar[dr] & {} \\
        {a\tensor_i b} \ar[ur] \ar[rr] \ar@{}[urr]|{(4)} && {b\tensor_k a}
      }$
    \end{tabular}
  \end{center}
\end{lemma}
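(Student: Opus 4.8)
The plan is to derive all four diagrams from the giant hexagon condition for the triple $i<j<k$, after rewriting the two derived maps in terms of the basic interchanges. Unwinding the definitions, each of the maps $\iota^{pq}_{x,y}$ and $\tau^{pq}_{x,y}$ is, up to the strict unit isomorphisms, a single interchange $\eta^{pq}$ two of whose four arguments are the unit $I$: the dimension-raising map $\iota^{pq}_{x,y}$ puts $x$ and $y$ in the two outer argument positions, while the twisted map $\tau^{pq}_{x,y}$ puts them in the two inner positions. Substituting these presentations turns each of (1)--(4) into an equation between composites of interchange maps whose arguments all lie in $\{a,b,I\}$.

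First I would treat diagram (1), $\iota^{ij}\of\iota^{jk} = \iota^{ik}$. The idea is to specialize the eight variables of the giant hexagon, keeping only two of them non-trivial and sending the other six to $I$. Concretely, retaining $a$ and $h$ and setting $b,c,d,e,f,g$ to $I$, the top vertex collapses to $a\tensor_i h$, and the two legs of the hexagon become, once the internal and external unit conditions trivialize the degenerate interchanges, exactly $\iota^{ij}_{a,h}\of\iota^{jk}_{a,h}$ along one side and $\iota^{ik}_{a,h}$ along the other. Renaming $h$ to $b$ yields (1).

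Diagrams (2), (3) and (4) are handled by the same mechanism, but now the two surviving variables are placed so that one or both of the relevant interchanges appear in their twisted (inner-position) configuration. The four diagrams correspond precisely to the four choices of whether each of the two halves of the hexagon is twisted, so an appropriate unit substitution reduces each of them to the giant hexagon, with the unit conditions again collapsing the flanking interchanges to identities. Throughout, Theorem \ref{thm:maps} serves as a convenient check that the intermediate vertices really carry the reductions being asserted.

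The main obstacle is the unit bookkeeping. For each diagram one must pin down the correct six variables to send to $I$ and then verify that the three surviving interchanges are exactly the derived maps of the diagram, composed in the stated order. The twisted cases (2)--(4) are the delicate ones: because $\tau$ reverses its two arguments, one has to track the induced relabeling of $a$ and $b$ as it propagates through both legs of the hexagon, and making sure the orientations of the surviving maps match the claimed triangles in $\Fr(\M_n)$ is where the real care is required.
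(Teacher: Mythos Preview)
Your proposal is correct and matches the paper's proof essentially line for line: both argue by specialising six of the eight variables in the giant hexagon to $I$ and using the unit conditions to collapse the degenerate interchanges, with the four diagrams arising from the four placements of the two surviving variables. Your concrete substitution for (1), keeping the outer positions $a$ and $h$, is exactly the one the paper records.
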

\begin{proof}
  This follows from \cite[Lemma 4.22]{iterated}. More explicitly it
  follows from the giant hexagon axiom by making the following
  substitutions: 

  \bigbreak
  
  \begin{enumerate}
    \item $a\tensor_ib = ((a\tensor_k I)\tensor_j(I\tensor_k I))
      \tensor_i ((I\tensor_kI)\tensor_j(I\tensor_k b))$
      \medbreak
    \item $a\tensor_ib = ((I\tensor_k I)\tensor_j(a\tensor_k I))
      \tensor_i ((I\tensor_k b)\tensor_j(I\tensor_k I))$
      \medbreak
    \item $a\tensor_ib = ((I\tensor_k a)\tensor_j (I\tensor_k I))
      \tensor_i ((I\tensor_k I) \tensor_j (b\tensor_k I))$
      \medbreak
    \item $a\tensor_ib = ((I\tensor_k I)\tensor_j (I\tensor_k a))
      \tensor_i ((b\tensor_k I) \tensor_j (I\tensor_k I))$
  \end{enumerate}
\end{proof}

\subsection{Putting it all together}

We have seen that $\Fr(\M_n)$ is quasicycle-free and that every diamond
in $\Fr(\M_n)$ commutes. We can therefore apply Theorem
\ref{thm:maclane} to obtain the coherence theorem for iterated
monoidal categories.

\begin{theorem}
 If $a$ and $b$ are terms of $\Fr(\M_n)$ having no repeated variables,
 then there is at most one reduction $a \to b$ in $\Fr(\M_n)$. \qed
\end{theorem}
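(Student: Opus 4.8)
The plan is to apply Theorem \ref{thm:maclane} directly: since that theorem asserts that a finitely branching quasicycle-free rewriting $2$-theory is Mac Lane coherent if and only if every diamond in $\Red(\R)$ commutes, the coherence statement for $\M_n$ reduces to verifying the hypotheses for this particular theory. First I would record that, by Lemma~\ref{lem:irred} and the preceding lemma characterising general position in $\Fr(\M_n)$, a reduction $a \to b$ with $a,b$ having no repeated variables is precisely a reduction in general position, so the target statement is exactly the assertion that $\M_n$ is Mac Lane coherent restricted to such pairs.

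The two things that Theorem~\ref{thm:maclane} demands are that $\M_n$ be finitely branching and quasicycle-free, and that every diamond commute. Quasicycle-freeness is handled by Proposition~\ref{prop:imcterm}, whose proof passes to the reversed theory $\M_n^{-1}$ and observes that a reduct of a unit-free term has the same variables and the same number of function symbols, leaving only finitely many possibilities; the same observation gives finite branching. So the substantive work is the exhaustive classification of diamonds by the structure of their source span. By the trichotomy following the definition of a source span, a singular source span rewrites either disjoint, nested, or overlapping subterms. Lemma~\ref{lem:itdisjoint} disposes of the disjoint case and Lemma~\ref{itnested} disposes of the nested case, in each instance producing a commuting square (via functoriality or naturality, respectively) together with an application of Theorem~\ref{thm:maps} to show no rival diamond shares the source span.

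The heart of the proof is the overlapping case, which I would split according to the kinds of reduction rules that overlap: interchange with associativity, interchange with itself, and interchange with units. For interchange-with-associativity I would insert suitable strict units (for instance writing $X = X \tensor_j I$) so that the overlap becomes a substitution instance of the internal or external associativity axiom, and then invoke Theorem~\ref{thm:maps} to rule out competing diamonds. For interchange-with-interchange at a triply-nested term I would likewise insert units so that the source span becomes the initial span of the giant hexagon axiom, which forms a commuting diamond. For interchange-with-units the overlaps produce the Eckmann--Hilton derived maps $\iota^{ij}$ and $\tau^{ij}$; these need not be joinable (as the non-confluence lemma shows), but when they are, the four possible commuting triangles follow from \cite[Lemma 4.22]{iterated} by the explicit unit substitutions into the giant hexagon. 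Having shown every diamond commutes, I would conclude by citing Theorem~\ref{thm:maclane}.

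The main obstacle I expect is the overlapping case, and specifically the repeated reliance on Theorem~\ref{thm:maps} to certify that each source span admits \emph{only} the diamond just exhibited. Unlike the traditional Critical Pairs Lemma (Lemma~\ref{lem:cp}), Examples~\ref{ex:nested} and~\ref{ex:disjoint} warn that a given source span may bound several distinct diamonds, so it is not enough to exhibit one commuting completion; I must argue, case by case, that Theorem~\ref{thm:maps}'s necessary-and-sufficient condition for the existence of a reduction forces every diamond with that source span to be the one already analysed. Managing the unit insertions cleanly, so that each overlap genuinely reduces to an instance of one of the five $\M_n$ axioms without spurious extra cases, is the delicate bookkeeping that will carry the weight of the argument.
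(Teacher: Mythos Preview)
Your proposal is correct and follows essentially the same approach as the paper: the theorem is obtained by applying Theorem~\ref{thm:maclane} after verifying quasicycle-freeness via Proposition~\ref{prop:imcterm}, handling the disjoint and nested source spans via Lemmas~\ref{lem:itdisjoint} and~\ref{itnested}, and then exhausting the overlapping cases (interchange with associativity, with itself, and with units) using unit insertions, the coherence axioms, and Theorem~\ref{thm:maps} to preclude rival diamonds. The only minor remark is that finite branching for $\M_n$ is more directly obtained from Proposition~\ref{prop:linearfinite} (the equational theory is term-linear and the interchange rules are non-increasing) rather than from the argument inside Proposition~\ref{prop:imcterm}, though your observation also works.
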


Theorem \ref{thm:maclane} provided a valuable strategy for proving the
above coherence theorem for iterated monoidal categories. Although some
careful combinatorial investigations were still required, the overall
proof is conceptually straightforward. This demonstrates that even
reasonably complicated coherence problems for quasicycle-free
rewriting $2$-theories may be comparatively easily attacked with the
tools from Chapter \ref{ch:nunf}.\blanknonumber
\chapter{Conclusion}\label{ch:conclusions}

We have developed rewriting $2$-theories as an abstract framework for
studying coherence problems for covariant categorical
structures. While general coherence theorems have been developed
previously for
certain classes of covariant structures
\cite{power:coherence,Lack:descent}, the work has typically
been at an abstract categorical level and so does not yield any
techniques for constructing specific coherence diagrams. More recent
work on this problem has yielded an approach to obtaining coherence
axioms for invertible theories \cite{Fiore:laplaza}. However, the
coherence axioms chosen in \cite{Fiore:laplaza} are \emph{all} of the
diagrams in general position. Certainly, this vastly over-axiomatises 
most theories and the authors in \cite{Fiore:laplaza} note:

\begin{quote}
  ``It is not clear what general scheme would select coherence
  diagrams ``correctly'' in accordance with what one expects for
  specific examples of algebraic structures known.''
\end{quote}

The work in Chapter \ref{ch:unf} on complete rewriting $2$-theories
and in Chapter \ref{ch:nunf} on quasicycle-free rewriting $2$-theories
does, however, provide a general scheme for selecting coherence
diagrams in accordance with what one would expect for particular
algebraic structures. For complete rewriting $2$-theories, one only
needs to select a joining of each critical span in order to obtain a
complete set of coherence axioms. For quasicycle-free rewriting
$2$-theories, a complete set of coherence axioms is provided by the
basic diamonds. It is, however, generally more difficult to construct
coherence axioms in the quasicycle-free case. This is demonstrated by
the intricate investigation required in Chapter \ref{ch:iterated} for
iterated monoidal categories as opposed to the relatively
straightforward investigation of Catalan categories in Chapter
\ref{ch:catalan}.  

Our combinatorial approach to coherence has the additional benefit of
retaining a close link to classical one-dimensional universal
algebra. This has allowed us, in Chapter \ref{ch:structure}, to
use coherent categorifications of balanced equational theories to
build presentations of the associated structure monoids and
groups. Combined with our general techniques in chapters \ref{ch:unf}
and \ref{ch:nunf} for constructing coherence axioms, this provides a
powerful toolkit for developing presentations of groups and
monoids. This combination came to the fore in Chapter
\ref{ch:catalan}, where we constructed new presentations for the
higher Thompson groups $F_{n,1}$ and the Higman-Thompson groups $G_{n,1}$.

In the following section, we outline some additional questions raised
by the thesis. 

\section*{Further questions}

Both of our general coherence theorems for incomplete theories,
Theorem \ref{thm:lambek} and Theorem \ref{thm:maclane}, rely on 
the underlying structure being quasicycle-free. One may well call this
condition into question and wonder whether we can get away with a
weaker condition. For Lambek coherence, quasicycle-freeness does not
capture all covariant structures known to be Lambek coherent. For
example, braided monoidal categories are certainly not quasicycle free
and yet their Lambek coherence problem is solvable via the
Reidemeister moves \cite{JoyalStreet:Braided}. However, employing the
Reidemeister moves adds an additional rewrite system to the
reductions, thus expanding the amount of information available. 

\begin{q}
Are there general properties that ensure Lambek coherence for non
quasicycle-free rewriting $2$-theories? 
\end{q}

The reliance on quasicycle-freeness for Mac Lane coherence seems more
fundamental. However, two crucial ingredients of our theory rely
predominantly on acyclicity: Theorem \ref{thm:power} establishes that
the faces of a subdivision are themselves $st$-graphs, while
Proposition \ref{prop:diamond} shows that the faces of a maximal
subdivision are diamonds.  

\begin{q} What conditions on an acyclic rewriting $2$-theory
  ensure Mac Lane coherence?
\end{q}

It seems likely that acyclic rewriting $2$-theories in which every
diamond commutes are Mac Lane coherent. The major obstruction to
showing this is that maximal subdivisions are no longer guaranteed to
exist.  

As noted in Chapter \ref{ch:twostructures}, rewriting $2$-theories
with an empty set of coherence axioms correspond to the unconditional
fragment of rewriting logic. Meseguer has shown a strong connection
between rewriting logic and models of concurrency
\cite{Meseguer_rl}: the congruence classes of terms correspond to
states of the system, while reductions correspond to
processes. A parallel pair of reductions that are equal correspond in
this framework to a truly concurrent pair of processes. That is, they
correspond to a pair of processes that may be safely run in
parallel. In this way, one may view coherence axioms as specifications
that certain parallel pairs of processes that seemingly interact with
one another may in fact be safely run in parallel. In this way, the
Lambek coherence problem asks about the existence of a decision 
procedure for determining which processes may be safely run in
parallel. This computational interpretation of the Lambek coherence
problem motivates a more refined investigation of decision procedures
for the commutativity of diagrams arising from rewriting $2$-theories.  

\begin{q}\label{prob:comlexity}
What is the computational complexity of deciding whether a diagram
commutes in the structure generated by a Lambek coherent rewriting
$2$-theory?  
\end{q}

As we saw in Chapter \ref{ch:twostructures}, determining whether a
finitely presented rewriting $2$-theory is Mac Lane coherent is in
general undecidable. However, this does not rule out the possibility
of developing algorithms for tackling Mac Lane coherence. Indeed,
there exist many successful algorithms for determining whether a term
rewriting theory is terminating, even though this problem is also
undecidable in general --- a powerful such algorithm is provided by
the dependency pairs method \cite{DependencyPairs1,DependencyPairs2}. 

\begin{project}
  Develop algorithms for constructing coherent categorifications of
  labelled rewriting theories and for determining whether a given
  rewriting $2$-theory is Mac Lane coherent. 
\end{project}

A finite presentation of a coherent categorification of an equational
theory leads to an infinite set of singular morphisms. Thus, the
presentations constructed in Chapter \ref{ch:structure} yield an
infinite  presentation of the associated structure monoid or
group. This presentation has the nice property of 
imbueing the orbit graph of the resulting monoid or group with the
geometry of the categorical structure. However, many structure groups
are in fact finitely presentable. In particular, this is the case for
the groups $F_{n,1}$ and $G_{n,1}$. 

\begin{q}
Are there properties of a coherent categorification of an equational
theory that imply the finite presentability of the associated
structure monoid or group?
\end{q}

Our investigations have been wide-ranging, touching on topics from
category theory, computer science, universal algebra and group
theory. This has allowed us to to use computational insights to prove
theorems about mathematical objects, to construct general coherence
theorems that yield information about the actual coherence diagrams
and to build presentations of groups using very general
techniques. Hopefully future work will continue to exploit techniques
across traditional subject boundaries, so as to illustrate connections
and to foster dialogue.  \blanknonumber

\backmatter

\bibliographystyle{alpha}
\bibliography{papers_cnf,papers,papers_new,papers_structure}

\printindex
\end{document}